\newtheorem{prop}{Proposition}[section]
\newtheorem{thm}{Theorem}[section]
\newtheorem{cor}{Corollary}[section]
\newtheorem{lem}{Lemma}[section]
\theoremstyle{definition}
\newtheorem{defi}{Definition}[section]
\newtheorem{rem}{Remark}[section]
\numberwithin{equation}{section}
\def\R {\mathbb{R}}
\def\eps{\varepsilon}
\title[The $N$-membrane problem]{Free boundary regularity in the multiple membrane problem in the plane} 
\author{Ovidiu Savin}
\address{Department of Mathematics,	Columbia University, New York, USA}
\email{savin@math.columbia.edu}
\author{Hui Yu}
\address{Department of Mathematics,	National University of Singapore, Singapore}
\email{ huiyu@nus.edu.sg}
\thanks{O.~S.~is supported by  NSF grant DMS-1500438.}
\begin{document}

\begin{abstract}
We study the regularity of free boundaries in the multiple elastic membrane problem in the plane. We prove the uniqueness of blow-ups, and that the free boundaries are $C^{1,\log}$-curves near a regular intersection point.   
\end{abstract}

\maketitle


\section{Introduction}

 Given a positive integer $N$,  the $N$-membrane problem describes the shapes of $N$ elastic membranes under external forces. The membranes cannot penetrate each other, but they can coincide in a priori unknown regions, giving rise to $(N-1)$ free boundaries. The $N$-membrane problem can be viewed as a coupled system of $(N-1)$ obstacle problems with interacting free boundaries. It is the natural extension of the obstacle problem (which corresponds to the case $N=2$) to the vector valued case, and can be referred to as {\it the vectorial obstacle problem}.
 
 Mathematically, given a domain $\Omega\subset\R^d$, positive constants $\{\omega_k\}_{k=1,2,\dots,N}$, and  bounded functions $\{f_k\}_{k=1,2,\dots,N}$, we study the minimizer of the following convex functional 
 \begin{equation}\label{FirstEquation}
 (u_1,u_2,\dots,u_N)\mapsto\int_{\Omega}\sum \, \omega_k(\frac{1}{2}|\nabla u_k|^2+f_ku_k) \, dx \end{equation}
 over the class of  functions with prescribed data on $\partial\Omega$, and subject to the constraint \begin{equation}\label{SecondEquation}
 u_1\ge u_2\ge\dots\ge u_N \text{ in $\Omega$.}
 \end{equation} The function $f_k$ represents the force acting on the $k$th membrane, whose height is described by the unknown $u_k.$ Each $\omega_k $ represents the weight of the $k$th membrane. 
 
 Since the membranes cannot penetrate each other, the functions $\{u_k\}$ are well-ordered inside the domain. This leads to  the constraint \eqref{SecondEquation}. On the other hand, consecutive membranes can come in contact with each other. Between the \textit{contact region} $\{u_k=u_{k+1}\}$ and the \textit{non-contact region }$\{u_k>u_{k+1}\}$, we have the $k$th \textit{free boundary} 
$$ 
\Gamma_k:= \partial\{u_k>u_{k+1}\}.
$$
We consider the case of \textit{constant} force terms that  satisfy a \textit{non-degeneracy condition} specific in obstacle-type problems 
$$f_1>f_2>\dots>f_N.$$

The Euler-Lagrange equation is given in the form of the variational inequality 
\begin{equation}\label{EL1}
 \omega_i (v_i -u_i) \triangle u_i \le \omega_i (v_i -u_i)f_i,
 \end{equation}
which holds for all $v\in H^1(\Omega)$ that satisfy the constraint \eqref{SecondEquation}. Since the convex set defined by \eqref{SecondEquation} is invariant under addition of the same function and multiplication by the same positive number, we have further
\begin{equation}\label{avei}
\sum \, \omega_i \, \triangle u_i = \sum \omega_i f_i, \quad \quad \sum \omega_i u_i \triangle u_i = \sum \omega_i u_i f_i.
\end{equation}

Existence and uniqueness of the minimizer  were established  by Chipot and Vergara-Caffarelli \cite{CV}. They also proved that solutions are $C_{loc}^{1,\alpha}(\Omega)$ for all $\alpha\in(0,1)$. We obtained  the optimal $C^{1,1}$-regularity of solutions  and then performed a blow-up analysis in Savin-Yu  \cite{SY1}. 

The case when $N=2$ corresponds to the classical obstacle problem. Concerning this problem, there is  a large literature, see, for instance, \cite{C1,C2,M,W,CSV1,FSe}. For the case when $N=3$, the free boundary regularity was investigated recently in \cite{SY2}. The non-trivial analysis occurs near the points where the two free boundaries intersect. Exploiting a maximum principle satisfied by the pair $(u_1, -u_3)$ which is specific to $N=3$ membranes, we obtained  the sharp logarithmic rate of blow-up.  With this, we established the $C^{1,\log}$-regularity of the free boundaries near {\it regular intersections}, and the uniqueness of certain types of blow-up profiles.

In this work, we extend these results in the physical dimension $d=2$ to an arbitrary number of membranes $N$, and to all possible blow-up profiles. For arbitrary $N$, the setting is much more complicated as the complexity of the problem grows exponentially with $N$. Nevertheless, we are able to prove uniqueness of blow-ups as well as sharp free boundary regularity near a regular intersection point.
 A consequence of our results is that the free boundaries intersect tangentially if the corresponding coincidence sets have positive densities at the intersection point. This is one of the interesting features of the problem: the $(N-1)$ degrees of freedom of the problem do not usually match the degrees of freedom of the free boundaries when they intersect!

Uniqueness of blow-ups is a central problem in the regularity theory, and it is usually achieved through a differential inequality known as the log-epiperimetric inequality of the type
$$ \frac{d}{dr} W(u,r) \le - c \, W(u,r)^{\gamma} , \quad \quad \gamma <2.$$
Here $W$ represents the functional that appears in the (Weiss) monotonicity formula, translated so that it tends to $0$ as $r \to 0$.
For cones with smooth cross sections and when $W$ has analytic structure, a general method to establish the log-epiperimetric inequality  is based on the Lojasiewicz-Simon inequality. The method was developed by L. Simon \cite{S} in the setting of minimal surfaces. However, this strategy does not seem to apply in obstacle type problems as the constraint \eqref{SecondEquation} is polyhedral. The log-epiperimetric inequality in the standard obstacle problem was established by Colombo-Spolaor-Velichkov \cite{CSV1} by making use of the Fourier decomposition of the traces of $u$ on $\partial B_r$. The same authors extended their results to cones of even frequency for the thin obstacle problem \cite{CSV2}.

Recently in \cite{SY3,SY4}, we proposed an ad-hoc strategy to establish the uniqueness of certain blow-up cones in obstacle-type problems, which is inspired by our work for $N=3$. This is based on introducing approximate solutions, modeled by solutions of the linearized problem. These approximate solutions are so that they minimize the error of the right hand side in the Euler-Lagrange equation, and are used to approximate the dyadic rescalings of the actual solution $u$. Their construction usually involves solving appropriate obstacle problems on $\partial B_1$. The fact that the error cannot be improved reduces to a non-orthogonality condition, which often is given in the form of a nontrivial algebraic statement.  
The strategy is the following. 

Assume the solution $u$  is within an $\eps$ error of an approximate solution $v$ in $B_1$. Then we need to show that in a smaller ball $B_\rho$, either $u$ has a $\eps/2$ rescaled error with respect to another approximate solution $w$ (which would give a geometric convergence rate for the rescalings of $u$), or the energy of $u$ in $B_\rho$ decayed at least an $\eps^2$ amount i.e.
$$W(u,\rho) \le W(u,1) - c \eps^2.$$
This dichotomy is a consequence of the fact that $v$ is  ``the least error" approximation among functions which project in the same point on the tangent space given by the linearized equation. 
Then we establish an inequality of the type $W(u,1) \le \eps^{1+\mu}$ for some $\mu>0$, which together with the inequality above gives a discrete version of the log-epiperimetric inequality and leads to the uniqueness of blow-up limits.  

In the present work, we follow the same strategy. An important point is that in dimension $d=2$ all cones are classified, and this plays a key role in the algebra involved, see Section 4.  The construction of approximate solutions relies on the solvability of the global problem in 1D, which we investigate in Section 3. Throughout the paper we use the bold face letter notation for vectors, say $${\bf u}=(u_1,..,u_N).$$

Before we introduce our results a few simplifications are in order. We may assume that all $N$ free boundaries pass through the origin, 
$$0 \in \cap \Gamma_i,$$
since an intersection point involving fewer free boundaries can be reduced locally to the same problem with fewer membranes. Also, after subtracting the average from all $u_k$, we may assume that the average of the $u$'s and $f$'s is $0$ (see \eqref{avei}):
$$\sum \omega_k \, u_k =0, \quad \sum\omega_k \, f_k=0.$$
In \cite{SY1}, we showed that the quadratic rescalings
$${\bf u}_r(x):=r^{-2} {\bf u}(rx),$$
converge on subsequences as $r \to 0$ to a $2$-homogenous solution ${\bf p}$, i.e. a \textit{cone}. Moreover, in dimension $d=2$, we classified the family $\mathcal C_2$ of cones as extensions of 1D cones to two dimensions (see next section for more details). 

We state the main results.

\begin{thm}\label{TIntro1}
Assume that $d=2$ and ${\bf p} \in \mathcal C_2$ is a blow-up limit for $\bf u$ at the origin. Then, $\bf p$ is unique and
$${\bf u}(x)={\bf p}(x) + O(|x|^2(-\log |x|)^{-1}).$$
\end{thm}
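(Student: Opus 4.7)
The plan is to implement the iterative scheme from \cite{SY3,SY4} outlined in the introduction, specialized to the 2D $N$-membrane setting. Fix the candidate cone $\mathbf{p}\in\mathcal{C}_2$ obtained as a subsequential blow-up limit of $\mathbf{u}$ at the origin. I would first introduce a family of \emph{approximate solutions} of the form $\mathbf{v}=\mathbf{p}+\mathbf{h}$, where $\mathbf{h}$ parametrizes the tangent space of admissible linearizations at $\mathbf{p}$. Using the 1D global solutions constructed in Section 3 together with the classification of $\mathcal{C}_2$ from Section 4, each admissible $\mathbf{h}$ should arise as a $2$-homogeneous global solution to a linearized coupled obstacle problem in which the contact/non-contact pattern of $\mathbf{p}$ is frozen.

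Next, I would work with dyadic rescalings $\mathbf{u}_r(x):=r^{-2}\mathbf{u}(rx)$ and the error
\[
\varepsilon(r):=\inf_{\mathbf{v}}\,\|\mathbf{u}_r-\mathbf{v}\|_{L^\infty(B_1)},
\]
the infimum taken over admissible approximate solutions $\mathbf{v}$. The key technical lemma is a dichotomy: there exist $\rho\in(0,1)$ and $c>0$ such that whenever $\|\mathbf{u}_r-\mathbf{v}\|_{L^\infty(B_1)}\le\varepsilon$, then either $\varepsilon(\rho r)\le\tfrac12\varepsilon(r)$, or the Weiss-type energy drops a quantum,
\[
W(\mathbf{u},\rho r)\le W(\mathbf{u},r)-c\,\varepsilon(r)^2.
\]
The proof of this dichotomy is where the non-orthogonality algebra enters. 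The object $\mathbf{v}$ is chosen as the best $L^2$-approximation modulo the tangent space, so if the error fails to halve at scale $\rho$, a compactness-contradiction argument combined with the orthogonality of $\mathbf{u}_r-\mathbf{v}$ to the tangent directions forces genuine energy decay. The algebraic condition to verify is that no nontrivial ``error direction'' is simultaneously orthogonal to the tangent space and fails to produce energy drop; I would check it by case analysis over the explicit 2D classification in $\mathcal{C}_2$.

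I would complement the dichotomy with the quantitative smallness estimate
\[
W(\mathbf{u},r)\le C\,\varepsilon(r)^{1+\mu}
\]
for some $\mu>0$, obtained by integration by parts using the Euler--Lagrange relation \eqref{EL1} and the optimality of $\mathbf{v}$ (which is minimal among linearized competitors). Combining this bound with the dichotomy yields a discrete log-epiperimetric inequality: whenever $\varepsilon$ fails to halve at the next dyadic scale, $W$ must drop by at least $c\,W^{2/(1+\mu)}$, an exponent strictly less than $2$. Iterating along dyadic scales then forces $W(\mathbf{u},r)\to 0$ at a logarithmic rate, giving $\varepsilon(r)=O(|\log r|^{-1})$ and, after undoing the rescaling, the stated estimate $\mathbf{u}(x)=\mathbf{p}(x)+O(|x|^2(-\log|x|)^{-1})$. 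Uniqueness of the blow-up follows at once, since the rescalings $\mathbf{u}_r$ then converge to the same $\mathbf{p}$ as $r\to 0$ (not merely along a subsequence).

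The main obstacle I expect is the non-orthogonality verification inside the dichotomy. For $N=3$ the argument in \cite{SY2} leaned on the special maximum principle for the pair $(u_1,-u_3)$; for arbitrary $N$ the linearized system is genuinely coupled and its tangent space at $\mathbf{p}$ depends sharply on the combinatorial type of the cone, that is, on which pairs of consecutive membranes coincide and along which half-lines through the origin. The 2D classification from Section 4, together with the 1D global problem solved in Section 3, should reduce the verification to a finite list of algebraic identities, but this is precisely where the exponential complexity in $N$ mentioned in the introduction is concentrated.
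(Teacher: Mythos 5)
Your skeleton is the paper's strategy: a class of approximate solutions, a ``halve the error or drop the Weiss energy by $c\varepsilon^2$'' dichotomy, the smallness bound $W(\mathbf{u},r)-W(\mathbf{p})\le C\varepsilon^{3/2}$, and an iteration lemma (Lemma \ref{Sequences}) giving the $O(|\log r|^{-1})$ rate. But three points diverge or are missing, and the first of them is conceptual rather than cosmetic.

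First, the approximate solutions are \emph{not} additive perturbations $\mathbf{p}+\mathbf{h}$ with a frozen contact pattern. They are ${\bf p}(x,{\bf b})={\bf h}(x_2,x_1{\bf b})$, where for each fixed $x_1$ the function ${\bf h}(\cdot,x_1{\bf b})$ is the \emph{exact nonlinear} global 1D solution of Section 3 with linear asymptotics $x_1{\bf b}$. These are exact solutions in the $x_2$ variable, solve the 2D system with error $O(|{\bf b}|^2)$, their free boundaries are curves that move as ${\bf b}$ varies, and the ordering constraint holds identically. The whole engine of the argument is the error map ${\bf e}({\bf b})$ extracted from the asymptotics of ${\bf h}$, which is homogeneous of degree $2$ and piecewise quadratic precisely because the free boundaries of the 1D slices move with ${\bf b}$. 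A linearization with frozen coincidence pattern would discard exactly the nonlinearity that produces ${\bf e}$ and drives the non-orthogonality; it would also violate the constraint $u_1\ge\dots\ge u_N$, forcing you to re-project at every step.

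Second, the non-orthogonality is not settled by finite case analysis over $\mathcal{C}_2$. The paper proves a single algebraic statement (Lemma \ref{eb}): ${\bf e}({\bf b})={\bf e}(-{\bf b})$ if and only if ${\bf b}$ is a multiple of the translation direction $\tau$, i.e.~${\bf p}(\cdot,{\bf b})$ is a rotation of ${\bf p}$. The proof runs through the 2D classification via Lemma \ref{c1} (each angular non-contact sector with a monotone source must have opening $\ge\pi$), which gives the statement uniformly across all connected 1D cones, not branch by branch. This interacts with Corollary \ref{cor31} to quantify the energy drop in terms of $\mathrm{dist}({\bf b}/\|{\bf b}\|,\pm\tau/\|\tau\|)$.

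Third, you do not address the degenerate cones ${\bf p}_*\in\mathcal{P}\setminus\mathcal{P}^c$, which is where the real difficulty lives and which occupies all of Section 6. There the 2D blow-up is not a rotation of a single 1D profile but a union of $m\ge 2$ rotated connected groups glued along rays (see \eqref{degp}). The approximate solutions become vectors of group-wise 1D solutions that must satisfy a global ordering condition at the interfaces, the compactness of rescaled errors needs a Harnack-type propagation along and across the singular rays (Lemmas \ref{l52}--\ref{Han1d3}), and one must prove that the rescaled errors agree on the rotation lines of each group before the non-orthogonality can be applied group by group. Restricting to $\mathcal{P}^c$ establishes only Theorem \ref{Td2}, not all of $\mathcal{C}_2$, so this omission is a genuine gap in the proposal.
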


Among the two-dimensional cones, the one of least energy is given by rotations of
 $$
 {\bf p_0}(x_2):= \frac 12 (x_2^+)^2 {\bf f},
 $$
 which represents the situation when all coincidence sets are given by the same half-plane.
 If ${\bf p}_0$ appears as a blow-up limit at the origin then we say that $0$ is a \textit{regular intersection point} for the free boundaries $\Gamma_i$. Near these points, the free boundaries enjoy the following regularity:

 \begin{thm}\label{TIntro2}
 Assume $d=2$ and $$|{\bf u}-{\bf p}_0| \le \eps_0 \quad \mbox{in $B_1$}$$ for a constant $\eps_0$ depends on $N, {\bf f}$ and  $\omega$.

 Then each $\Gamma_i$ is a $C^{1,\log}$-curve in $B_{1/2}$.  \end{thm}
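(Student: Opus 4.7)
The plan is to derive Theorem \ref{TIntro2} by applying Theorem \ref{TIntro1} at each free boundary point in $B_{1/2}$ and converting the pointwise log-rate of convergence to the blow-up into a log-modulus of continuity for the tangent direction of $\Gamma_i$.

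The first step is to propagate the closeness hypothesis $|{\bf u}-{\bf p}_0|\le\eps_0$ quantitatively throughout $B_{1/2}$. Precisely, for every free boundary point $x_0\in\Gamma_i\cap B_{1/2}$, after translating to $x_0$, the function ${\bf u}(x_0+\cdot)$ must be $\eps_1$-close in $B_{1/4}$ (with $\eps_1\to 0$ as $\eps_0\to 0$) to some cone in $\mathcal C_2$ that is a small rotation of ${\bf p}_0$, possibly involving only a subset of the membranes when $x_0$ lies on some but not all of the $\Gamma_j$. In the latter case the problem reduces locally to the same problem with fewer membranes, as noted in the introduction. The key inputs are the classification of two-dimensional cones developed in Section 4, together with the fact that ${\bf p}_0$ is the unique minimum-energy cone and is isolated modulo rotations within $\mathcal C_2$.

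Next, I would apply Theorem \ref{TIntro1} at each such $x_0$ to produce a unique blow-up ${\bf p}_{x_0}$ with the quantitative rate
$${\bf u}(x_0+h)={\bf p}_{x_0}(h)+O(|h|^2(-\log|h|)^{-1}).$$
The direction $\nu_i(x_0)$ of the half-plane contact set of ${\bf p}_{x_0}$ gives the unit normal of $\Gamma_i$ at $x_0$. To compare normals at two nearby points $x_0,x_1\in\Gamma_i$, set $r=2|x_0-x_1|$ and consider the rescalings $r^{-2}{\bf u}(x_0+ry)$ and $r^{-2}{\bf u}(x_1+ry)$. These agree up to a unit translation in $y$, and each approximates its respective cone within $O((-\log r)^{-1})$ on $B_1$. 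Matching them forces
$$|\nu_i(x_0)-\nu_i(x_1)|\le C(-\log|x_0-x_1|)^{-1}.$$
A standard graph-parametrization argument then realizes $\Gamma_i$ as a graph over its tangent line with tangent direction of $C^{0,\log}$-modulus, which is precisely the definition of a $C^{1,\log}$-curve.

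The main obstacle is the propagation step. Although ${\bf p}_0$ sits at the bottom of the energy hierarchy, the family $\mathcal C_2$ is large and admits profiles whose contact sets point in genuinely different tangent directions. One must quantitatively exclude the possibility that, at some free boundary point $x_0$ arbitrarily close to the origin, the blow-up jumps away from the small-rotation neighborhood of ${\bf p}_0$. This stability/rigidity statement inside $\mathcal C_2$ is what underlies both the well-definedness of $\nu_i$ and the log-modulus estimate above, and it relies crucially on the algebraic classification of two-dimensional cones worked out in Section 4.
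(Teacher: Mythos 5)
There is a genuine gap, and you have essentially pinpointed it yourself without providing a proof: the propagation step and the uniformity of the constants in the log-rate are the whole substance of the theorem, not a technicality.

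The root difficulty is that Theorem~\ref{TIntro1} is not a uniform statement that can be applied pointwise as a black box. Its proof, via Lemma~\ref{L41}, Proposition~\ref{PM} and Lemma~\ref{Sequences}, relies on the chain of inequalities $W({\bf p}_0) \le W({\bf u},r) \le W({\bf p}_0) + C\eps^{3/2}$, and the \emph{lower} bound $W({\bf u},r)\ge W({\bf p}_0)$ comes from Weiss monotonicity together with the fact that $0\in\cap\Gamma_i$ forces the blow-up to be an admissible cone with energy at least $W({\bf p}_0)$. At a generic free boundary point $x_0\in\Gamma_{i_0}$ that does not lie on all of the $\Gamma_j$, this lower bound is not available, so the argument that makes the dyadic errors $\eps_k$ sum like $(\log)^{-1}$ simply does not start. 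Your plan of first reducing to a multi-membrane problem with fewer membranes and then applying Theorem~\ref{TIntro1} runs into the same issue one level down: the scale $s_0(x_0)$ at which the decoupling takes place depends on $x_0$, and one must control how close ${\bf u}$ is to its eventual cone \emph{at that scale}, which requires running the iteration from scale $1$ down to $s_0(x_0)$, i.e.\ exactly the work you are trying to avoid.

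The paper's Section~7 does not derive the free boundary regularity as a corollary of uniqueness of blow-ups. It runs a fresh iteration using the richer family of approximate solutions ${\bf p}(\cdot,{\bf b}_0,{\bf b}_1)$ from Definition~\ref{pbb}, in which ${\bf b}_0$ tracks a translation and ${\bf b}_1$ a rotation; this is needed precisely because the centre of the iteration is not an intersection point of all free boundaries. The crucial new ingredient is a replacement for the missing energy lower bound: Lemmas~\ref{-3}, \ref{-2} and \ref{wpb3} establish $W({\bf p}(\cdot,{\bf b}))\ge W({\bf p}_0)+c\eps^{3/2}$ when ${\bf b}$ is quantitatively away from the pure-translation line, and Lemma~\ref{-2} is proved by a topological degree argument producing a genuine solution ${\bf u}_{\bf b}$ with $0\in\cap\Gamma_i$ that is $\eps$-approximated by the given ${\bf p}(\cdot,{\bf b})$. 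The iteration (Lemma~\ref{dic3} for the dichotomy, Lemma~\ref{Preg} for the epiperimetric-type step, Lemma~\ref{-1} for the final bookkeeping) proceeds by induction on the number $N$ of membranes, with the Case~1/Case~2 split governed by whether $|{\bf b}_0|$ crosses a threshold comparable to $\eps$, and the chaining of log-rates across the change of scale $r_m\sim|{\bf b}_0|\eps^{-1/2}$ at which decoupling occurs is exactly what makes the final modulus $(\eps^{-1/2}+|\log|x||)^{-1}$ come out uniform. None of this appears in your proposal, and in particular your final matching-of-normals step presupposes a uniform log-rate at every free boundary point, which is the conclusion to be proved rather than a consequence of Theorem~\ref{TIntro1}.
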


The paper is structured as follows. In Section 2, we introduce the notations, and collect some general facts about the maximum principle and the optimal regularity of solutions. In Section 3, we study the global 1D problem which is crucial to our analysis. In Sections 4 and 5, we prove Theorem \ref{TIntro1} for those non-degenerate cones (connected cones) ${\bf p}$ for which all their coincidence sets have non-empty interiors. In Section 6 we prove Theorem \ref{TIntro1} for all other degenerate cones. Finally, in Section 7 we prove Theorem \ref{TIntro2}.

We conclude the introduction with a game theoretical interpretation of the $N$-membrane problem. Suppose there are $N$ players $P_1$,..,$P_N$ which hold $N$ tickets $1,2,..,N$ and a token that moves on a lattice in $\Omega$. Each round the token moves randomly to an neighboring vertex and the players can interchange their tickets according to the following rule: the player with the ticket 1 can choose any ticket he wishes, after that the player with the ticket 2 can choose from the remaining $N-1$ tickets and so on. Moreover, in order for a player to hold onto the ticket 1 for one round he needs to pay the amount $f_1$, and for the ticket 2 the amount $f_2$, etc. When the token exits the domain, the payoff for the ticket $k$ holder  is given by the boundary data $\varphi_k$. If all players optimize their strategies then the solution $u_k$ to the discrete $N$-membrane problem (with weights $\omega_k=1$) represents the expected payoff of the player holding the ticket $k$, while the coincidence sets give the optimal strategies on the exchange of tickets. 


\section{Notation and preliminaries}
In this section we introduce the notations used through the paper, and collect some basic properties of solutions to the $N$-membrane problem, such as optimal regularity, maximum principle and introduce the cones in one and two dimensions. 

{\bf Notation.} 

 \noindent ${\bf u}=(u_1,..,u_N)$. 

\noindent ${\bf 1}=(1,1,..,1)$.
 \begin{equation}\label{ugev}\mbox{${\bf u} \ge {\bf v}$ means $u_i \ge v_i$ for all $i$.}
\end{equation}
\noindent  For $I \subset \{1,..,N\}$,  $u_I$ denotes the average of $u_i$ with $i \in I$
\begin{equation}\label{u_ave}
u_I:= \sum_{i \in I} \frac{\omega_i}{\sum_{I} \omega_j} \, u_i.
\end{equation}
\noindent  $\mathcal P$ denotes the collection of 1D cones, see Definition \ref{calP}.

\noindent $\mathcal P^c \subset \mathcal P$ are the connected 1D cones, see Definition \ref{calP}.

\noindent $B({\bf p})$ is the space associated to the branches ${\bf p} \in \mathcal P^c$, see Definition \ref{B(p)}.

\noindent ${\bf h}(x,{\bf b})$ is the global 1D solution with linear asymptotics given by ${\bf b} \in B({\bf p})$, see Definition \ref{hxb3}.

\noindent $\tau \in B({\bf p})$ is generated by the 1-translation, see Definition \ref{tau}.

\noindent ${\bf e}({\bf b})$ is the error function, see Definition \ref{ER}.

\noindent ${\bf p}(x,{\bf b})$ the approximate solution generated by ${\bf b}$, see Definition \ref{pb}.

\noindent ${\bf p}(x,{\bf b}_0, {\bf b}_1)$, see Definition \ref{pbb}.

\noindent $\mathcal S (r,{\bf p}, \eps )$, see Definition \ref{S}.

\noindent $W({\bf u},r)$ the Weiss functional, see Section 5.

\noindent ${\bf p}_*\in \mathcal P \setminus \mathcal P^c$ denotes a degenerate 1D cone.

\noindent ${\bf p}_*^i \in \mathcal P^c$ are the connected 1D cones which make ${\bf p}_*$, see Section 6.

\noindent $\mathcal S (r,{\bf p}_*, \eps )$, see Definition \ref{S_sec}.

\noindent $\sigma$-connected, see Definition \ref{scon}

\noindent We denote by $c_i$, $C_i$ constants depending on $N$, $d$, ${\bf f}$, $\omega$, and call them universal constants.

\

If $h$ is a function with $\triangle h= const.$, then ${\bf u} - h {\bf 1}$ solves the $N$-membrane problem with forces ${\bf f} - (\triangle h){\bf 1}$, see
\eqref{EL1}-\eqref{avei}. Thus, without loss of generality we assume throughout that the $f$'s have average $0$
$$ \sum \omega_i f_i =0,$$
and by \eqref{avei}, $\sum \omega_i u_i$ is harmonic. 

Often we subtract the average of the $u_i$ from each function so that we reduce to the case $\sum \omega_i u_i=0$. When this holds we say that $u$ solves the Problem $P_0$.

\begin{defi}\label{P0}
We say that ${\bf u}$ solves the problem $P_0$ if it is a solution to the $N$-membrane problem and also $\sum \omega_i u_i=0$.
\end{defi}
The Euler-Lagrange equation gives that in an open region where $l$ membranes coincide $u_m < u_{m+1}= u_{m+2}=..=u_{m+l} < u_{m+l+1}$, the common function $u_{m+1}$ satisfies
$$\triangle u_{m+1} = f_I, \quad I:=\{m+1,..,m+l \},$$
i.e. the force acting on each of the $l$ membranes in this coincidence region is the average of the $l$ forces $f_i$. 

\

{\bf Optimal regularity.} Existence and uniqueness of solutions in $H^1(\Omega)$ follows easily from the standard methods in the calculus of variations. The optimal $C^{1,1}$ regularity of solutions was obtained in \cite{SY1}. We sketch the proof for completeness. 
We show that $u_i \in C^{1,1}_{loc}$ and
\begin{equation}\label{EL2}
\triangle u_i = \sum_{j \le i \le k} \, f_{A_{jk}} \, \chi_{A_{jk}}, \quad \quad A_{jk}:=\{u_{j-1}<u_j=..=u_k< u_{k+1} \}.
\end{equation}

{\it Lipschitz regularity.}
If ${\bf v} \in H^1(B_1)$ in another solution, then by adding the variational inequalities \eqref{EL1} for ${\bf u}$ and ${\bf v}$ we find
$$ \omega_i (v_i- u_i) \triangle (v_i - u_i) \ge 0 \quad \Longrightarrow \quad   \triangle \left(\omega_i (v_i-u_i)^2 \right) \ge 0,$$
hence $ \sum \omega_i (v_i-u_i)^2$ is subharmonic. This shows that
$$\|{\bf v}-{\bf u}\|_{L^\infty(B_{1/2})} \le C \|{\bf v}-{\bf u}\|_{L^2(B_1)}.$$
Taking $\bf v$ to be a translation of ${\bf u}$, we obtain
$$\|\nabla {\bf u}\|_{L^\infty(B_{1/2})} \le C \|\nabla {\bf u}\|_{L^2(B_{1})}.$$

{\it $C^{1,1}$ regularity.}

\begin{lem}\label{c11} Assume ${\bf u}$ solves the $N$-membrane problem in $B_1$. Then
\begin{equation}\label{fbd}
|\triangle u_m| \le C |{\bf f}|,
\end{equation}
\begin{equation}\label{fbd2}
\|{\bf u}\|_{C^{1,1}(B_{1/2})} \le C \left( \|{\bf u}\|_{L^\infty(B_1)} + |{\bf f}| \right).
\end{equation}
\end{lem}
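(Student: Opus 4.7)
The plan is to establish \eqref{EL2} in the distributional sense, from which \eqref{fbd} is immediate since each $f_{A_{jk}}$ is a convex combination of $f_j,\dots,f_k$ and thus satisfies $|f_{A_{jk}}| \le |{\bf f}|$. The $C^{1,1}$-estimate \eqref{fbd2} will then follow by a standard obstacle-type rescaling/compactness argument, the Lipschitz bound having already been established just above the lemma.

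For \eqref{EL2}, I would proceed in two stages. On the open stratum $A_{jk}^\circ = \{u_{j-1} < u_j = \dots = u_k < u_{k+1}\}^\circ$, the common value $v := u_j = \dots = u_k$ is a locally unconstrained minimizer of the scalar functional $\int \bigl(\sum_{m=j}^k \omega_m\bigr)\bigl(\tfrac12|\nabla v|^2 + f_{A_{jk}} v\bigr)\,dx$, because any small perturbation of $v$ preserves the strict ordering with $u_{j-1}$ and $u_{k+1}$. Hence $\triangle v = f_{A_{jk}}$ classically there, which gives \eqref{EL2} almost everywhere. To promote this to distributional equality (i.e., to rule out singular mass of $\triangle u_i$ concentrated on the free boundaries), I would combine two ingredients: the identity $\sum_i \omega_i \triangle u_i = \sum_i \omega_i f_i$ from \eqref{avei}, together with the one-sided distributional bounds $\sum_{j \le i} \omega_j \triangle u_j \le \sum_{j \le i} \omega_j f_j$ obtained by testing the (summed) variational inequality \eqref{EL1} with the admissible perturbation $(u_1+\eta,\dots,u_i+\eta,u_{i+1},\dots,u_N)$ for an arbitrary nonnegative test function $\eta$. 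Running this for every $i$ and telescoping in $i$ isolates each $\triangle u_i$ as a bounded measurable function coinciding a.e. with the right side of \eqref{EL2}.

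Given \eqref{fbd}, the bound \eqref{fbd2} follows by a standard obstacle-type argument. Calderon-Zygmund applied to \eqref{fbd} gives $u_i \in W^{2,p}_{loc}$ for all finite $p$, hence $u_i \in C^{1,\alpha}_{loc}$ for all $\alpha < 1$. For the step from $C^{1,\alpha}$ to $C^{1,1}$, I would run a rescaling-and-contradiction argument: if \eqref{fbd2} failed, suitable quadratic rescalings around a sequence of near-free-boundary points would converge to a nontrivial global $2$-homogeneous solution of the $N$-membrane problem with vanishing $L^\infty$-data and zero forcing, and this is ruled out by the bounded-Laplacian identity \eqref{EL2} together with the ordering \eqref{SecondEquation}.

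The main obstacle is the distributional identification of $\triangle u_i$ in the second paragraph: pointwise a.e. the claim is a triviality once the strata are understood, but showing that no Radon-measure singular part can sit on the free-boundary set $\cup_m \Gamma_m$ is the actual content of the lemma. This is appreciably more delicate for $N \ge 3$ than for the classical single-obstacle problem, since several $\Gamma_m$ can meet at a single point and the test perturbations of the variational inequality must be chosen carefully to isolate each $\triangle u_i$.
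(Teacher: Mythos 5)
Your plan inverts the paper's logical order: you want to first establish \eqref{EL2} distributionally and then read off \eqref{fbd}, whereas the paper proves \eqref{fbd} directly and only afterwards deduces \eqref{EL2} as a remark (by looking at Lebesgue points once $C^{1,1}$ is in hand). The inversion is where the trouble lies. Your mechanism for killing the singular part of each $\triangle u_i$ --- test \eqref{EL1} with the admissible perturbations $(u_1+\eta,\dots,u_i+\eta,u_{i+1},\dots,u_N)$ and $(u_1,\dots,u_i,u_{i+1}-\eta,\dots,u_N-\eta)$ to get
\[
\mathrm{sing}\Bigl(\sum_{k\le i}\omega_k\triangle u_k\Bigr)\le 0,
\qquad
\mathrm{sing}\Bigl(\sum_{k> i}\omega_k\triangle u_k\Bigr)\ge 0,
\]
and combine with $\sum_k\omega_k\triangle u_k=\sum_k\omega_k f_k\,dx$ --- does not close. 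These three facts are mutually consistent with every partial sum $\sum_{k\le i}\omega_k\triangle u_k$ carrying a nonzero, nonpositive singular measure; they only say the singular parts of the two complementary partial sums are opposite in sign, which is automatic. Telescoping $\omega_i\triangle u_i = \sum_{k\le i}\omega_k\triangle u_k-\sum_{k\le i-1}\omega_k\triangle u_k$ expresses an individual $\triangle u_i$ as a difference of two signed measures, each with non-positive singular part, and that difference can have a singular part of either sign. Even for $N=2$ this line of reasoning does not rule out concentration of $\triangle u$ on the free boundary; the classical obstacle problem needs the quadratic detachment of $u$ from the obstacle, not just the variational inequality, to get $C^{1,1}$. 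You correctly flag the singular-part issue as the ``actual content,'' but the tool you propose for it is too weak.

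The paper goes a different way and avoids the issue entirely: induction on $N$. After normalizing $\sum\omega_i u_i=0$, the function $w:=u_1-u_N\ge 0$ satisfies $\triangle w\le f_1-f_N$ from \eqref{EL1}, while the inductive hypothesis controls $\triangle w$ in $L^\infty$ on $\{w>0\}$ because the problem decouples into fewer membranes there. Thus $w$ solves a scalar obstacle problem with $L^\infty$ right-hand side and inherits quadratic growth $w\le C\,d(\cdot,K)^2$ away from $K:=\{w=0\}$; since $|u_m|\le w$ (using the zero-average normalization and the ordering), each $u_m$ also detaches quadratically from $K$, which gives $|\triangle u_m|\le C$ in the viscosity sense on $K$, and off $K$ it is again the inductive hypothesis. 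The $C^{1,1}$ estimate is then a tangent-paraboloid argument keyed to the largest ball around a point that misses all free boundaries. If you want to repair your proposal, the quadratic detachment of $w$ from $K$ (hence of each $u_m$ from the set where all membranes coincide) is exactly the missing ingredient that forces the singular parts to vanish; without it, the variational inequality alone does not decide.
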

\begin{proof} We use induction induction on $N$. 
The case $N=1$ is trivial. 

For $N>1$, after subtracting the average, we may assume that $\sum \omega_i u_i=0$, and say also that $|\bf f|=1$.
We start with \eqref{fbd}.

The set where all membranes coincide is 
$$K:=\{u_i=0, \quad \forall i\}=\{ u_1=u_N\}.$$
The inequality \eqref{EL1} implies $\triangle u_1 \le f_1$, $\triangle u_N \ge f_N$ hence $\triangle (u_1-u_N) \le f_1- f_N$. 
This means that $w:=u_1-u_N \ge 0$, satisfies $ \triangle w \le C $ in $\Omega$ and, by the induction hypothesis $|\triangle w| \le C $ in the set 
$\{w>0\}=\Omega \setminus K$. This shows that $w$ solves a scalar obstacle problem with right hand side bounded in $L^\infty$, which implies the standard quadratic growth away from its zero set
$$w(x) \le C \, d(x,K)^2,$$
where $d(x, K)$ denotes the distance from $x$ to the set $K$. Then $|u_1|,|u_N| \le w$ satisfy the same inequality, and it holds for all other $|u_m|$. This shows $|\triangle u_m| \le C $ on $K$ in the viscosity sense, while outside $K$ the inequality holds (in the viscosity sense) by the induction hypothesis. In conclusion \eqref{fbd} is proved.

As a consequence $u_m-u_{m+1} \ge 0$ solves an obstacle problem with a $L^\infty$ right hand side, and it satisfies the standard quadratic growth behavior
\begin{equation}\label{quadsep}
u_{m} - u_{m+1} \le C \,  d(x, \Gamma_k)^2 \quad \mbox{in $\{u_m> u_{m+1}\}$}.
\end{equation}

Next we prove \eqref{fbd2} by showing that each function $u_m$ admits a tangent paraboloid by above/below of opening $1+ \|u\|_{L^\infty}$. For simplicity we prove this at the origin. 

Let $r \ge 0$ denote the radius of the smallest ball around the origin $B_r$ which intersects all free boundaries $\Gamma_i$. Notice that in $B_r$ the problem decouples into two multi-membranes problems involving fewer membranes than $N$. 

If $r \ge 3/4$, then we can apply the induction hypothesis in $B_r$ and get the desired conclusion  in $B_{1/2}$. 
If $r \in (0,3/4)$, then by \eqref{quadsep} we conclude that $u_{m} - u_{m+1} \le C (r^2 + |x|^2)$ for all $m$. Since the average of the $u$'s is 0 we find 
$$|u_m| \le C (r^2 +|x|^2).$$ In $B_r$ we may apply again the induction hypothesis (for the rescaling ${\bf u}(rx)/r^2$). Then we conclude that $u_m$ admits a global tangent polynomial of opening $C$ by above/below at the origin (outside $B_r$ we use the inequality above).

Finally, if $r=0$, we obtain as above $|u_m| \le C |x|^2$ which gives again the desired estimate.

\end{proof}

\begin{rem}
Lemma \ref{c11} implies \eqref{EL2} by considering Lebesgue points for $A_{jk}$ where ${\bf u}$ is twice differentiable. If we assume that ${\bf f}$ satisfies the nondegenerate condition $f_1>f_2>..>f_N$ then the right hand side for $\triangle(u_{m}-u_{m+1})$ is positive, and we obtain also the quadratic growth by below 
$$\max_{B_r(x_0)} (u_{m} - u_{m+1}) \ge c r^2 \quad \mbox{if $x_0 \in \Gamma_m$,}$$
for some $c>0$ universal.
\end{rem}

{\bf Maximum Principle.} The maximum principle takes the following form in the setting of the $N$-membrane problem.

\begin{lem}[Maximum Principle] 
If $\bf u$ and $\bf v$ are 2 solutions with $\bf u \ge \bf v$ on $\partial \Omega$, then $\bf u \ge \bf v$ in $\Omega$. 

Moreover, if $u_i(x_0)=v_i(x_0)$ for some $x_0 \in \Omega$, then $u_i=v_i$.
\end{lem}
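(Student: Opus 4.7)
The plan is to treat the two statements separately.

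For the weak comparison, I would use the standard truncation/substitution argument. Define the componentwise max and min, $\bar{\bf u}:={\bf u}\vee{\bf v}$ and $\underline{\bf u}:={\bf u}\wedge{\bf v}$. Both respect the ordering \eqref{SecondEquation}: from $u_k\ge u_{k+1}$ and $v_k\ge v_{k+1}$ one has $\max(u_k,v_k)\ge\max(u_{k+1},v_{k+1})$, and similarly for the minimum. The boundary hypothesis ${\bf u}\ge{\bf v}$ gives $\bar{\bf u}={\bf u}$ and $\underline{\bf u}={\bf v}$ on $\partial\Omega$, so each is an admissible competitor for the variational problem minimized by the corresponding function. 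The pointwise Sobolev identities
$$|\nabla(a\vee b)|^2+|\nabla(a\wedge b)|^2=|\nabla a|^2+|\nabla b|^2,\qquad (a\vee b)+(a\wedge b)=a+b,$$
imply that the energies decompose as $E(\bar{\bf u})+E(\underline{\bf u})=E({\bf u})+E({\bf v})$. The minimality of ${\bf u}$ and ${\bf v}$, together with uniqueness of the minimizer \cite{CV}, then forces $\bar{\bf u}={\bf u}$, which is the desired comparison ${\bf u}\ge{\bf v}$ in $\Omega$.

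For the strong comparison, set $w_j:=u_j-v_j\ge 0$. Adding \eqref{EL1} for ${\bf u}$ tested against ${\bf v}$ to the analogous inequality with the roles swapped yields $\sum_j\omega_j w_j\triangle w_j\ge 0$, and therefore
$$\triangle\Big(\sum_j\omega_j w_j^2\Big)\ge 2\sum_j\omega_j|\nabla w_j|^2\ge 0,$$
so $\sum_j\omega_j w_j^2$ is subharmonic. Taking localized variations that perturb only the $i$-th component also produces the pointwise sign inequalities $\triangle u_i\le f_i$ on $\{u_{i-1}>u_i\}$ and $\triangle u_i\ge f_i$ on $\{u_i>u_{i+1}\}$, with analogous statements for $v_i$. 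If $w_i(x_0)=0$ at an interior $x_0$, the ordering ${\bf u}\ge{\bf v}$ propagates coincidences at $x_0$: any $u_j(x_0)=u_i(x_0)$ with $j<i$ forces $v_j(x_0)=u_j(x_0)$, and any $v_j(x_0)=v_i(x_0)$ with $j>i$ forces $u_j(x_0)=v_j(x_0)$. This singles out an interval $\{j_-,\dots,j_+\}$ of indices on which ${\bf u}$ and ${\bf v}$ agree at $x_0$, with strict separations holding in a neighborhood on either side. At the endpoints the pointwise sign inequalities combined with the non-degeneracy $f_1>\cdots>f_N$ give a one-sided Laplacian bound for $w_i$ in a neighborhood of $x_0$; since $w_i\ge 0$ attains an interior minimum at $x_0$ (so $\triangle w_i(x_0)\ge 0$), this forces $w_i\equiv 0$ near $x_0$. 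Connectedness of $\Omega$ then concludes the argument.

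The main obstacle is the case analysis in the second step: since the coincidence cells of ${\bf u}$ and ${\bf v}$ need not align, one must check every possible local configuration of their coincidence structures at $x_0$. It is precisely here that the strict ordering $f_1>\cdots>f_N$ is essential, converting every mismatched configuration into a strict sign difference between $\triangle u_i$ and $\triangle v_i$ which is incompatible with $w_i\ge 0$ attaining an interior minimum at $x_0$.
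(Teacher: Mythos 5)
Your proof of the weak comparison ${\bf u}\ge{\bf v}$ via the componentwise max/min truncation, the energy-splitting identity, and uniqueness of the minimizer is correct; the paper does not spell this step out (its proof goes straight to the strong maximum principle), so you have supplied a standard but missing ingredient there.

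The strong comparison argument, however, has a genuine gap. You claim that near $x_0$ the non-degeneracy $f_1>\cdots>f_N$ forces a one-sided Laplacian bound for the single component $w_i=u_i-v_i$, and then appeal to the strong minimum principle. That bound is false in general: by \eqref{EL2}, at a nearby point $y$ one has $\triangle u_i(y)=f_{I(y)}$ and $\triangle v_i(y)=f_{J(y)}$ where $I(y)$ and $J(y)$ are the coincidence groups of $u_i$ and $v_i$ at $y$, and before one knows $u_i\equiv v_i$ these groups can vary independently inside $I(x_0)$ and $J(x_0)$. For example, if at $x_0$ one has $u_1=u_2=u_3$, $v_1=v_2>v_3$, $u_1(x_0)=v_1(x_0)$ (so $i=1$, $I=\{1,2,3\}$, $J=\{1,2\}$, consistent with $\min I\ge\min J$, $\max I\ge\max J$), then at a nearby $y$ with $u_1(y)>u_2(y)$ and $v_1(y)=v_2(y)$ one gets $\triangle w_1(y)=f_1-f_{\{1,2\}}>0$. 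The same sign failure already appears for $N=2$ when all four membranes touch at $x_0$. Your closing sentence — that "every mismatched configuration" produces a strict negative sign for $\triangle w_i$ — is therefore incorrect, and the strong minimum principle for $w_i$ alone does not apply.

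The paper sidesteps this exactly by averaging. Setting $I=\{m:u_m(x_0)=u_i(x_0)\}$ and $J=\{m:v_m(x_0)=v_i(x_0)\}$ (for which $\min I\ge\min J$ and $\max I\ge\max J$), one perturbs the membranes $u_m$ with $m\in I\cap J$ jointly upward by $\eps\varphi\ge0$ — admissible near $x_0$ because $u_{\min I-1}>u_{\min I}$ strictly there — which gives $\triangle u_{I\cap J}\le f_{I\cap J}$ in a neighborhood of $x_0$. The symmetric downward perturbation of ${\bf v}$, using $v_{\max J}>v_{\max J+1}$ strictly near $x_0$, gives $\triangle v_{I\cap J}\ge f_{I\cap J}$. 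Thus $u_{I\cap J}-v_{I\cap J}\ge0$ is superharmonic near $x_0$ and vanishes at $x_0$, hence vanishes identically there; since each $u_m-v_m\ge0$ and their weighted average over $I\cap J$ is zero, each $u_m=v_m$ on that neighborhood, in particular for $m=i$. Replacing the individual difference $w_i$ by the averaged difference $u_{I\cap J}-v_{I\cap J}$ is precisely the step your proof is missing — it is only at the level of this average that the variational inequality hands you a Laplacian bound of a definite sign.
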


\begin{proof}
Let $I \subset \{1,..,N\}$ be the set of $m$'s for which $u_m(x_0)=u_i(x_0)$ and similarly define $J$ the set of membranes that coincide with $v$ at $x_0$. We have $\max I \ge \max J$, $\min I \ge \min J$. Then the average function $u_{I \cap J}$ (see \eqref{u_ave})
satisfies
$$ \triangle u_{I \cap J} \le f_{I \cap J} $$
in a neighborhood of $x_0$, since we may perturb the membranes $u_m$ with $m \in I \cap J$ upwards by a positive function $\eps \varphi$, $\varphi \in C_0^\infty (B_r(x_0))$ and 
keep satisfying the constraint \eqref{SecondEquation}.  Similarly,
$$ \triangle v_{I \cap J} \ge f_{I \cap J}.$$ Since $u_{I \cap J} \ge v_{I \cap J}$ and they coincide at $x_0$ we find that they coincide in $B_r(x_0)$.

\end{proof}

{\bf 1D and 2D cones.}

\begin{defi}\label{calP}
We denote the space of 1D cones by $\mathcal P$:
$$\mathcal P=\{ {\bf p} | \quad \mbox{$\bf p$ is a homogenous of degree 2 solution, and $0 = \cap \Gamma_k$}\}.$$
We denote by $\mathcal P^c$ the solutions $\bf p \in \mathcal P$ which 
are non-trivially \textit{connected }in the sense that each coincidence set $\Lambda_m:=\{u_m=u_{m+1}\}$ is a half-line (or equivalently has nonempty interior),
$$\mathcal P^c=\{ {\bf p }\in \mathcal P| \quad int \, \Lambda_m \ne \emptyset \quad \quad \forall m \le N-1\}.$$

\end{defi}

There are $3^{N-1}$ elements in $\mathcal P$, since there are 3 options for each of the coincidence sets $\Lambda_m$: $(-\infty,0]$, $\{0\}$, $[0,\infty)$, and there are $2^{N-1}$ elements in $\mathcal P^c$. 

A particular solution in $\mathcal P^c$ is $\bf p_0$ which has the components $p_i= \frac{f_i}{2} (x^+)^2$. 
It turns out that $\bf p_0$ and its reflection $\bf p_0(-x)$ are the least energy solutions among all $\bf p \in \mathcal P$.

\

In \cite{SY1} we showed that the space of 2D cones $\mathcal C_2$ is generated by 1D cones in the following way. If ${\bf p} \in \mathcal P^c$ then its 2D extension coincides with ${\bf p}(x_2)$ up to rotations. 

If ${\bf p}_*   \in \mathcal P \setminus \mathcal P^c$ (i.e. a {\it degenerate} cone) then we first decompose ${\bf p}_*$ as a union of $m \ge 2$ connected cones in $\mathcal P^c$. Each of these cones is extended to 2D, and then modified by a harmonic function and an angle of rotation, see Section 6 for more details.

\

{\bf A convergence lemma.} We state a lemma about sequences and the convergence of series, which we use in the main result. In out setting $w_n$ will represent the Weiss energy of $u$ in the ball of radius $\rho^N$, while $\eps_n$ the rescaled error between $u$ and an approximate solution.

\begin{lem}\label{Sequences}
Let $w_n$ and $\eps_n$ be two sequences of real numbers between $0$ and $1$. Suppose that 
$$w_{n+1}\le C_0 \, \eps_n^{3/2},$$ and either
$$w_{n+1}\le w_n \quad\mbox{ and} \quad \eps_{n+1}=\eps_{n}/2,$$
or $$w_{n+1}\le w_n-c\eps_n^2 \quad \mbox{ and} \quad \eps_{n+1}=C \eps_n .$$Then 
\begin{equation}\label{EvenSum}\sum_{n\ge k}\eps_n \le M k^{-1},\end{equation}
for some $M$ depending only on $c$, $C$, $C_0$.
\end{lem}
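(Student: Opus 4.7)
The plan is to extract successively stronger information from the hypotheses in three stages: first a tail bound of the form $\sum_{n\ge k}\eps_n^2 \le C_5\,\eps_k^{3/2}$, then $T_k \le C_6\,\eps_k^{1/2}$ for the tail $T_k := \sum_{n\ge k}\eps_n$, and finally $T_k \le M/k$ by iterating a super-linear recursion. The degenerate case $C<1/2$ gives $\eps_{n+1}\le\eps_n/2$ in both alternatives and hence purely geometric decay, so I assume $C\ge 1/2$.

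\textbf{Stage 1.} Both alternatives give $w_{n+1}\le w_n$, and combining $w_{n+1}\le C_0\eps_n^{3/2}$ with $\eps_{n+1}\in\{\eps_n/2,\,C\eps_n\}$ upgrades this to the pointwise bound $w_n\le C_1\,\eps_n^{3/2}$. Introduce the Lyapunov quantity $\phi_n := w_n + \lambda\,\eps_n^2$ with $\lambda>0$ chosen so small that $\lambda(C^2-1)<c$. A direct case check yields $\phi_{n+1}\le\phi_n - c'\eps_n^2$ in both alternatives; telescoping and using the bound on $w_k$ gives
\begin{equation*}
\sum_{n\ge k}\eps_n^2 \le C_5\,\eps_k^{3/2}. \qquad (\star)
\end{equation*}

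\textbf{Stage 2.} Define dyadic halving times $k_0:=k$ and $k_{i+1}:=\min\{n>k_i:\eps_n\le\eps_{k_i}/2\}$ (which exist because $(\star)$ forces $\eps_n\to 0$). Since $C\ge 1/2$, a single step multiplies $\eps$ by at least $1/2$, so $\eps_{k_{i+1}}\ge\eps_{k_{i+1}-1}/2>\eps_{k_i}/4$; together with the definition this pins down $\eps_k\,4^{-i}<\eps_{k_i}\le\eps_k\,2^{-i}$. By minimality, $\eps_n>\eps_{k_i}/2$ on $(k_i,k_{i+1})$, so $(\star)$ at level $k_i$ bounds the block length by
\begin{equation*}
k_{i+1}-k_i \le 1 + 4C_5/\eps_{k_i}^{1/2} \le 1 + 4C_5\cdot 2^i/\eps_k^{1/2}.
\end{equation*}
Applying Cauchy--Schwarz on the block $[k_i,k_{i+1})$, combined with $(\star)$ and the upper bound $\eps_{k_i}\le\eps_k\,2^{-i}$, yields a block-sum estimate bounded by a constant multiple of $\eps_k^{1/2}\,2^{-i/4}$; summing the geometric series in $i$ gives $T_k\le C_6\,\eps_k^{1/2}$.

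\textbf{Stage 3.} Rearranging $T_k\le C_6\,\eps_k^{1/2}$ as $\eps_k\ge(T_k/C_6)^2$ and substituting into the identity $T_{k+1}=T_k-\eps_k$ gives $T_{k+1}\le T_k\bigl(1-T_k/C_6^2\bigr)$, hence $1/T_{k+1}\ge 1/T_k + 1/C_6^2$; iteration yields $T_k\le C_6^2/k$, proving the lemma with $M=C_6^2$. The delicate step is Stage 2: one must simultaneously exploit the lower bound $\eps_{k_i}\ge\eps_k\,4^{-i}$ (to bound the block length via $(\star)$) and the upper bound $\eps_{k_i}\le\eps_k\,2^{-i}$ (to bound the in-block $L^2$ mass), and it is precisely this matching two-sided control on $\eps_{k_i}$ that makes the summation in $i$ converge with rate $\eps_k^{1/2}$, which is exactly the power needed to close Stage 3.
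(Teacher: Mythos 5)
Your proof is correct, and Stage~1 coincides with the paper's argument: the Lyapunov quantity $\phi_n = w_n + \lambda\eps_n^2$ (the paper calls it $a_n$) decreases by $c'\eps_n^2$ at every step, which telescopes to $\sum_{n\ge k}\eps_n^2\lesssim\phi_k\lesssim\eps_k^{3/2}$. Where the two arguments part ways is what they do with this. The paper combines $\phi_{n+1}\le\phi_n-c'\eps_n^2$ with $\eps_n^2\gtrsim\phi_n^{4/3}$ to get $\phi_{n+1}\le\phi_n-C\phi_n^{4/3}$, hence the explicit decay $\phi_n\lesssim n^{-3}$, and then deduces $\sum_{n\ge k}\eps_n\lesssim k^{-1}$ by Cauchy--Schwarz on dyadic blocks $[2^jk,2^{j+1}k)$ using $\eps_n\lesssim(\phi_n-\phi_{n+1})^{1/2}$. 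You instead keep everything parameterized by $\eps$: your Stage~2 establishes the intermediate tail bound $T_k\lesssim\eps_k^{1/2}$ via the dyadic halving times $k_i$ and Cauchy--Schwarz in each block, and your Stage~3 closes the loop with the observation that $T_{k+1}=T_k-\eps_k\le T_k(1-T_k/C_6^2)$ forces $1/T_k$ to grow linearly. Your route is a bit longer (it builds the extra inequality $T_k\lesssim\eps_k^{1/2}$, whereas the paper passes directly through $\phi_n\lesssim n^{-3}$), but it has the pedagogical advantage of making the role of each power of $\eps$ transparent, and Stage~3 is an elegant replacement for the $n^{-3}$ decay. You correctly flag and dispose of the degenerate regime $C<1/2$, where the halving--time technology would break down but the decay is anyway geometric. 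The two proofs are genuinely parallel --- both are one Lyapunov decrement plus one dyadic Cauchy--Schwarz --- just reorganized; either would serve.
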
 
\begin{proof}
We only sketch the proof (see \cite{SY3} for more details). 

The sequence $a_n:=w_n + c' \eps_n^2$, satisfies $a_{n+1} \le a_n - c \eps_n^2 \le a_n - C a_n^{4/3}$ which implies $a_n \le C n^{-3}.$ The conclusion follows by adding the inequalities $$\eps_n \le C (a_n -a_{n+1})^{1/2}.$$
\end{proof}


\section{The 1D Problem}

In this section, we study the $N$-membrane problem in 1D. For each cone ${\bf p} \in \mathcal P^c$ and vector ${\bf b}$ associated to the branches of ${\bf p}$, we show that there is a unique global solution with linear asymptotics given by ${\bf b} x$ at $\pm \infty$. We also introduce the error function ${\bf e}({\bf b})$, which plays an important role in the study of approximate solutions.

\

In the 1D problem, each component of the solution is piecewise quadratic, and the difference between consecutive membranes is convex. This means that the coincidence set $\{u_m=u_{m+1}\}$ is an interval. 
Recall that $\mathcal P^c$ represents the connected 1D cones, see Definition \ref{calP}.
If ${\bf p} \in \mathcal P^c$, then the graphs of all the components of $\bf p$ consists of 
$(N+1)$ disjoint half quadratics starting at the origin i.e. $a (x^+)^2$ or $a (x^-)^2$. 
This is because any two consecutive graphs of the $p_i$ have precisely a half quadratic in common. 
We call these disjoint quadratics the {\it branches} of $\bf p$. 
The right branches of $p$ are the graphs over $[0,\infty)$ and the left branches the ones over $(- \infty,0]$. 
The condition $\sum \omega_i p_i=0$ implies that the right (respectively left) branches average to 0 when counting their weights and multiplicities.  

We associate a real number $b_k$ to each of the branches of $\bf p$ with the compatibility condition that the average of these numbers on the right (respectively left) branches equals 0. The collection of these $b_k$ is denoted by ${\bf b} \in B({\bf p})$.

\begin{defi}\label{B(p)}
For each ${\bf p}\in\mathcal{P}^c$, 
the space $B({\bf p})$ consists of vectors $${\bf b}=(b_1^-, b_2^-,..,b_N^-, b_1^+,..,b_N^+),$$ with the property that $\sum \omega_i b_i^-=\sum \omega_i b_i^+=0$ and 
$$ \mbox{$b_i^-=b_{i+1}^-$ if $p_i=p_{i+1}$ on $(-\infty ,0]$, \quad \quad  $b_i^+=b_{i+1}^+$ if $p_i=p_{i+1}$ on $[0, \infty)$.}$$
\end{defi}

Clearly, $B({\bf p}) \subset \R^{2N}$ is a $N-1$ dimensional linear subspace. 

We want to solve the $N$-membrane problem after perturbing the branches of a solution $\bf p \in \mathcal P^c$ by $x {\bf b}$.

\begin{prop}\label{1D}
Given ${\bf b} \in B({\bf p})$, there exists a unique solution $u$ to the problem $P_0$ in $\R$ which satisfies
$$u_i = p_i + b_i^{\pm} x^{\pm} + o(|x|)  \quad \mbox{as $x \to \pm \infty$}$$
where $b_i=b_i^\pm$ is the number associated to the branch of $p_i$.
\end{prop}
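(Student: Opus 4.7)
The plan is to obtain the solution as a locally uniform limit of minimizers on bounded intervals, and to obtain uniqueness via a subharmonicity argument that exploits the connectedness of ${\bf p}$.

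\textbf{Existence.} For each large $R$, let ${\bf u}^R$ be the unique minimizer (Chipot--Vergara-Caffarelli) of the $N$-membrane energy on $[-R,R]$ with Dirichlet boundary data
$$u_i^R(\pm R)=p_i(\pm R)+b_i^\pm(\pm R)^\pm,$$
which is correctly ordered once $R$ is large enough that the quadratic $p_i$-terms dominate. The compatibility conditions $\sum\omega_i b_i^\pm=0$ combined with $\sum\omega_i p_i=0$ and \eqref{avei} force $\sum\omega_i u_i^R\equiv 0$, so each ${\bf u}^R$ solves $P_0$.

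\textbf{A priori structure at infinity.} The key 1D fact is that $u_m^R-u_{m+1}^R$ is non-negative and convex, with piecewise second derivative bounded below by $f_m-f_{m+1}>0$ on the set where it is positive (since on each maximal piece of constant coincidence pattern, $(u_m-u_{m+1})''=f_I-f_J\ge f_m-f_{m+1}$ for the two groups $I\ni m$, $J\ni m+1$). Hence each coincidence set $\{u_m^R=u_{m+1}^R\}$ is a single interval. A convexity/growth argument using the quadratic boundary data then shows that outside a window $[-A,A]$, with $A$ independent of $R$, the coincidence pattern of ${\bf u}^R$ matches that of ${\bf p}$ on the corresponding half-line. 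On each outer group piece, $u_I^R$ satisfies $(u_I^R)''=f_I$, so $u_I^R(x)=\tfrac{f_I}{2}x^2+\gamma_I^R x+\delta_I^R$; matching with the boundary data pins $\gamma_I^R=b_I^\pm+O(1/R)$ and $\delta_I^R=O(1)$. Combined with the $C^{1,1}$-estimate of Lemma \ref{c11}, this gives local precompactness; extracting a subsequence, ${\bf u}^R\to{\bf u}$ locally in $C^1$, and the limit ${\bf u}$ solves $P_0$ on ${\mathbb R}$ with the prescribed asymptotic $u_i=p_i+b_i^\pm x^\pm+O(1)=p_i+b_i^\pm x^\pm+o(|x|)$.

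\textbf{Uniqueness.} Let ${\bf u},{\bf v}$ be two $P_0$-solutions with the same asymptotics and set $w_i:=v_i-u_i$. The structural argument above applies to any such solution, so outside some $[-A,A]$ both ${\bf u}$ and ${\bf v}$ equal $p_i+b_i^\pm x^\pm$ plus group-constant offsets; hence each $w_i$ is bounded and constant on each outer piece, with $w_i'\equiv 0$ there. Using each solution as a competitor in the variational inequality \eqref{EL1} and subtracting yields
$$\sum_i \omega_i w_i\triangle w_i \ge 0$$
in the weak sense. Integrating over $[-R,R]$ for $R>A$ and integrating by parts, the boundary terms $\sum_i\omega_i[w_iw_i']_{-R}^{R}$ vanish, so $\int_{-R}^{R}\sum_i\omega_i(w_i')^2\le 0$. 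Hence each $w_i$ is constant on ${\mathbb R}$. The connectedness of ${\bf p}\in\mathcal{P}^c$ — each consecutive pair $(m,m+1)$ coincides on at least one half-line, which forces $u_m=u_{m+1}$ and $v_m=v_{m+1}$ and thus $w_m=w_{m+1}$ there — makes all the $w_i$ equal to a common constant, and the $P_0$ constraint $\sum\omega_i w_i=0$ pins it to zero.

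\textbf{Main obstacle.} The principal technical difficulty is the a priori structural step: showing that outside a window of size independent of $R$, the coincidence pattern of any relevant $P_0$-solution coincides with ${\bf p}$'s. This requires combining the convexity of $u_m-u_{m+1}$, the strict lower bound $(u_m-u_{m+1})''\ge f_m-f_{m+1}>0$ on the complement of the coincidence set, and the quadratic growth of the asymptotic data on half-lines where $p_m>p_{m+1}$. Once this rigidity is in place, both existence (via $C^{1,1}$ compactness) and uniqueness (via the subharmonicity argument above) proceed along standard lines.
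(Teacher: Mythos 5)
Your overall strategy coincides with the paper's: truncate to $[-R,R]$ with linearly corrected Dirichlet data, establish a uniform estimate on the transition region, extract a locally $C^{1,1}$-compact limit, and prove uniqueness by an integration-by-parts/subharmonicity argument combined with the connectedness of ${\bf p}\in\mathcal P^c$ and the $P_0$-normalization. Your uniqueness step is essentially identical in content to the paper's (which instead notes that $\sum\omega_i(u_i-v_i)^2$ is convex and bounded, hence constant, so all gradients vanish); both are fine once the a priori structure at infinity is available for arbitrary $P_0$-solutions with the prescribed asymptotics.

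The genuine gap is the ``a priori structure at infinity'' step, which you correctly flag as the main obstacle but do not close, and which you also misdiagnose. The three ingredients you list --- convexity of $u_m^R-u_{m+1}^R$, the strict lower bound $(u_m^R-u_{m+1}^R)''\ge f_m-f_{m+1}>0$ off the coincidence set, and quadratic growth of the boundary data --- are \emph{not} by themselves enough to place all transitions in a window $[-A,A]$ with $A$ independent of $R$. The difficulty is that $(u_m^R-u_{m+1}^R)''$ takes several values (depending on the full coupled coincidence pattern), and the relevant \emph{upper} bound for it is in general strictly larger than the lower bound $f_m-f_{m+1}$; a convexity argument comparing $g(\gamma)=g'(\gamma)=0$ to the $\tfrac12 aR^2$-size boundary value then only gives a transition point of size $O(R)$, not $O(1)$, unless the asymptotic second-derivative jump happens to coincide with the maximal possible jump. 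The different consecutive differences interact through the system, and one cannot decouple them pair by pair. The paper resolves exactly this by a global barrier argument: it compares ${\bf u}^R$ with ${\bf p}+(t+M|x|){\bf 1}$, and pins the touching constant $t_0\le CM^2$ using the \emph{quantitative} strong maximum principle of Lemma~\ref{MP1d}, a compactness statement specific to connected cones ${\bf p}\in\mathcal P^c$. This yields the two-sided bound $|{\bf u}^R-{\bf p}|\le (CM^2+M|x|){\bf 1}$, which, together with the quadratic separation of consecutive membranes, localizes all transitions in $[-CM,CM]$ uniformly in $R$. Your proposal needs this (or some substitute) spelled out; as written, the central estimate is asserted rather than proved, and the ``convexity/growth'' sketch does not supply it.
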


\begin{proof}
We first show the existence. 

We solve the problem in the interval $[-R,R]$ with boundary data $u_i=p_i + b_i x$ and obtain a solution ${\bf u}^R$, and 
then let $R \to \infty$. We need some uniform estimates. 

Let $M > \max |b_i|$, and let $t_0$ be the first value as we decrease $t$ for which the inequality
$${\bf p} + (t+M |x|) {\bf 1} > {\bf u} ^R \text{ on $[-R,+R]$}$$
fails. When $t=t_0$ then we need to replace $>$ with $\ge$ above and equality holds at some $x_0$ for some $i$-component. 

Notice that $t_0 \ge 0$ which follows from the inequality written at $x=0$ and $\sum \omega_i u_i=\sum \omega_i p_i=0$. 
The left hand side is a solution to our problem in each interval $(-R,0)$, $(0,R)$ 
and by the strong maximum principle it follows that the first contact point must be $x_0=0$, 
since at the end points $\pm R$ we have strict inequality by the choice of $M$.  

We claim that $t_0 \le C \,  M^2$ with $C$ a universal constant. We choose $K= \delta^{-1} M$ with $\delta>0$ the universal constant from Lemma \ref{MP1d} below, and then define $\bf v$ as the translation of ${\bf u}$
$${\bf v}:={\bf u}^R- (t_0 + M K) {\bf 1}.$$
We have
$$ {\bf p} \ge {\bf v} \quad \mbox{in $[-K,K]$,}$$
and  
$$\quad v_i(0) = - MK = p_i(0) - \delta K^2.$$
By Lemma \ref{MP1d} (rescaled) we find ${\bf v}(0) \ge - K^2 {\bf 1}$ which means  $u_j^R(0) \ge  t_0 + MK - K^2$ 
and the claim follows from $\sum \omega_i u^R_i(0)=0$. A symmetric argument gives 
$$
{\bf p} + (CM^2+M |x|) {\bf 1} \ge {\bf u} ^R \ge  {\bf p} - (CM^2+M |x|) {\bf 1}.
$$
Since $u^R_{i+1}-u^R_i$ has to grow quadratically away from the free boundary, it follows that if $p_i=p_{i+1}$ say on $[0, \infty)$ then 
$u^R_i=u^R_{i+1}$ on $[CM,R)$ for some $C$ universal. In particular $u^R_i$ and $p_i + b_i x$ have the same constant as second derivative on $
[CM, R)$. Their difference is at most $C M^2$ as at the end points of the interval. 
As $R \to \infty$ we can extract a subsequence which converges uniformly on each compact set and has the asymptotic expansion required. 

For the uniqueness, we argue as above and obtain that $u_i$ has the same second derivative as $p_i + b_i x$ in a neighborhood of $\infty$ (or $-\infty$) and therefore they must differ by a constant. Thus if $\bf v$ is another solution, $\sum \omega_i (u_i-v_i)^2$ is convex and bounded and therefore it is a constant. In particular $\nabla (u_i - v_i)=0$ for each $i$, thus $u_i-v_i$ is constant for each $i$. Since the branches of $\bf u$ and $\bf v$ are connected we find that these constants are independent of $i$, and since their average is $0$, they all must be $0$.

\end{proof}

We give a quantified version of the strong maximum principle for solutions near $\bf p \in \mathcal P^c$.
\begin{lem}\label{MP1d}
Let $\bf p \in \mathcal P^c$ and let ${\bf v}$ be a solution of our problem (not necessarily of average $0$) with ${\bf p} \ge {\bf v}$ in $[-1,1]$, $v_i(0) \ge - \delta$ for some $i$. Then $v_j(0) \ge -1$ for all $j$, provided that $\delta$ is sufficiently small.
\end{lem}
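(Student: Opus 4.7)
The plan is to argue by contradiction and compactness. Suppose the lemma fails; then there exists a sequence $\delta_n \to 0$ and 1D solutions ${\bf v}^n$ on $[-1,1]$ with ${\bf p} \ge {\bf v}^n$ and, after extracting a subsequence, indices $i,j$ satisfying $v^n_i(0) \ge -\delta_n$ and $v^n_j(0) < -1$. Since $v^n_1 \ge \cdots \ge v^n_N$, the case $j \le i$ is incompatible with $\delta_n < 1$, so $j > i$ and in particular $v^n_1(0) \to 0$.

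First I would establish a uniform $L^\infty$ bound on ${\bf v}^n$. From $v^n_1 \le p_1$ above, $v^n_1(0) \ge v^n_i(0) \ge -\delta_n$ from below, and $|(v^n_1)''| \le C|{\bf f}|$ (Lemma \ref{c11}), a Taylor expansion about the minimum of $v^n_1$ gives $\|v^n_1\|_{L^\infty([-1,1])} \le C$. If $\|v^n_k\|_{L^\infty}$ blew up for some $k>i$, I would rescale by $M_n := \max_k \|v^n_k\|_{L^\infty}$ to obtain $\bar{\bf v}^n := {\bf v}^n/M_n$ solving the membrane problem with forces ${\bf f}/M_n \to 0$; the limit $\bar{\bf v}^\infty$ then consists of affine components with $\bar{\bf v}^\infty \le 0$ on $[-1,1]$, $\bar v^\infty_k(0)=0$ for $k \le i$, and unit sup-norm. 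An iterated application of the Maximum Principle using ${\bf p}\in\mathcal P^c$ (see below) forces $\bar{\bf v}^\infty \equiv 0$, contradicting the normalization and yielding $M_n \le C$.

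Given the uniform bound, Lemma \ref{c11} delivers a uniform $C^{1,1}$ estimate, and a subsequence of ${\bf v}^n$ converges in $C^{1,\alpha}_{loc}$ to a solution ${\bf v}^\infty$ with ${\bf p} \ge {\bf v}^\infty$, $v^\infty_i(0) = p_i(0) = 0$, and $v^\infty_j(0) \le -1$. By the Maximum Principle lemma, the equality $v^\infty_i(0) = p_i(0)$ forces $v^\infty_i \equiv p_i$ on $(-1,1)$.

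The crux is then to propagate this identity to all components using the connectedness of ${\bf p}$. For upward propagation ($k \to k-1$), at any interior point $x_0$ of the coincidence half-line $H_{k-1}$ of $(p_{k-1},p_k)$ the squeeze $p_{k-1}= p_k = v^\infty_k \le v^\infty_{k-1} \le p_{k-1}$ produces $v^\infty_{k-1}(x_0)=p_{k-1}(x_0)$, and the Maximum Principle extends this globally. For downward propagation ($k\to k+1$), assuming $v^\infty_{k+1} < v^\infty_k$ throughout the interior of the coincidence half-line $H_k$, I compare the equations $(v^\infty_k)'' = f_I$ and $(p_k)''=f_J$, where $I$ is the coincidence group of ${\bf v}^\infty$ containing $k$ but not $k+1$ and $J$ is the ${\bf p}$-group containing both $k$ and $k+1$; running this argument on both $H_k$ and the complementary half-line and using the non-degeneracy $f_1 > \cdots > f_N$ yields a contradiction. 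Hence equality $v^\infty_{k+1} = v^\infty_k$ must occur at some interior point of $H_k$, and the Maximum Principle then delivers $v^\infty_{k+1}\equiv p_{k+1}$.

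Iterating in both directions from the index $i$ gives $v^\infty_k \equiv p_k$ for every $k$; in particular $v^\infty_j(0)=0$, contradicting $v^\infty_j(0) \le -1$. I expect the principal obstacle to be the downward propagation: the ordering only delivers an upper bound on $v^\infty_{k+1}$, so ruling out strict inequality on the coincidence half-line requires carefully matching the piecewise-constant second derivatives of $v^\infty_k$ and $p_k$ against the non-degenerate averages $f_I, f_J$ on both sides of the origin.
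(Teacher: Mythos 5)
Your overall strategy (compactness as $\delta \to 0$, identify the blow-up limit with ${\bf p}$, contradiction) matches the paper's, and your propagation mechanism is sound. In particular the downward step is the right observation: on the half-line where $p_k = p_{k+1}$, if $v^\infty_{k+1}$ stays strictly below $v^\infty_k = p_k$, then $(v^\infty_k)'' = f_I$ with $I = J \cap \{1,\dots,k\}$ while $(p_k)'' = f_J$ with $k+1 \in J$, and the strict decrease of the $f$'s gives $f_I > f_J$, contradicting $v^\infty_k \equiv p_k$. This is exactly the algebra behind the paper's one-line remark that $(p_1,\dots,p_l)$ is a \emph{strict supersolution} to the $l$-membrane problem whenever $l < N$.

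However, your $L^\infty$-bound step has a genuine gap. After dividing by $M_n \to \infty$ the barrier becomes ${\bf p}/M_n \to 0$ and the force ${\bf f}/M_n \to 0$, so the limiting object $\bar{\bf v}^\infty$ solves the force-free $N$-membrane problem subject only to $\bar{\bf v}^\infty \le 0$. The coincidence half-lines of ${\bf p}$ — which is precisely what your ``iterated Maximum Principle using ${\bf p} \in \mathcal P^c$'' needs — are wiped out in the rescaled limit, and the propagation argument no longer applies. Indeed the limit problem has perfectly admissible nonzero solutions: take $\bar v^\infty_k = 0$ for $k \le i$ and $\bar v^\infty_k = -1$ for $k > i$. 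This has unit sup-norm, is ordered, affine, below $0$, and matches at the origin for $k \le i$. So no contradiction arises, and the claimed $M_n \le C$ is not established.

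The fix is the one the paper uses, and it absorbs your propagation idea directly rather than via a preliminary $L^\infty$ bound: work only with the membranes $v_1,\dots,v_l$ that remain connected to $v_i$ inside a small interval $[-c,c]$; these are uniformly bounded by connectedness and the universal $C^{1,1}$ estimate, and one may pass to a limit $\tilde{\bf v}$ solving the $l$-membrane problem with $\tilde{\bf v} \le (p_1,\dots,p_l)$ and equality at the origin. If $l < N$, the very disconnection that defined $l$ now forces the second-derivative mismatch $f_{J \cap \{1,\dots,l\}} > f_J$ on the half-line where $p_l = p_{l+1}$, i.e.\ your downward obstruction applied at the edge of the connected group, without ever requiring $v^\infty_{l+1}$ to exist. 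The paper phrases this as: $(p_1,\dots,p_l)$ is a strict supersolution and the strong maximum principle then rules out contact from below. Once $l=N$ (all membranes connected near the origin), the conclusion $v_j(0) \ge -1$ follows from $|v_j''| \le C|{\bf f}|$ and $v_i(0) \ge -\delta$ by integration, without any compactness at all.
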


\begin{proof}
The inequality is clear if $j \le i$. It suffices to show that the collection of the graphs of the $v_j$ with $j \ge i$ are all connected in the strip $\{|x| \le c\}$ for some $c$ small. Assume not, and them let $l \ge i$ be the last membrane connected to $v_i$ in $[-c,c]$. Then $v_1,.,v_l$ are uniformly bounded in $[-c,c]$, and solve the $l$-membrane problem in $[-c,c]$. By compactness (for fixed $l$), as $\delta \to 0$ we obtain a limiting solution $\tilde v$ of the $l$-membrane problem which is below $(p_1,..,p_l)$ and with $\tilde v_i(0)=p_i(0)=0$. Since $l < N$, $(p_1,..,p_l)$ is a strict supersolution to the $l$-membrane problem, and we contradict the maximum principle between $p$ and $\tilde v$.

\end{proof}

\begin{defi}\label{hxb3}
Given ${\bf p} \in \mathcal P^c$ and ${\bf b} \in B({\bf p})$, we denote by 
$${\bf h} (x,{\bf b})$$
the unique solution ${\bf u}$ from Proposition \ref{1D} to the problem $P_0$ which has linear coefficients $\bf b$ in its asymptotic expansion at $\pm \infty$
$$u_i = p_i + b_i x + o(|x|)  \quad \mbox{as $x \to \pm \infty$}.$$
\end{defi}

\begin{defi}\label{tau}
Notice that ${\bf p}(x+ 1)$ has linear coefficients $\tau_i := p_i'/x$ in its expansion at $\pm \infty$. Hence if ${\bf b}=s {\bf{\tau}}$ then
$$ {\bf h}(x, s {\bf \tau})={\bf p}(x+s),$$
or more generally
$$ {\bf h}(x, {\bf b}+s {\bf \tau})={\bf h}(x+s,{\bf b}).$$

\end{defi}

\begin{lem}\label{L2.2}
The function ${\bf h}(x, {\bf b})$ is homogenous of degree 2 in the variables $x$ and ${\bf b}$, and is $C^{1,1}$ and piecewise quadratic in the $x$ variable. 

Moreover,
$$h_i = p_i + b_i x + O(\|{\bf b}\|^2),$$
and outside the interval $[-C\|{\bf b}\|, C\|{\bf b}\|]$ we have
 $$h_i = p_i + b_i x + e_i,$$
 with $e_i$ a constant which depends only on the branch.  
\end{lem}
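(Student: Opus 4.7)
The plan is to extract all four assertions from the existence and uniqueness machinery already set up in Proposition \ref{1D} and its proof, combined with the scaling symmetry of the $N$-membrane problem.

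First I would establish joint $2$-homogeneity in $(x,{\bf b})$. The problem is invariant under the rescaling ${\bf u}(x)\mapsto \lambda^{-2}{\bf u}(\lambda x)$, which also preserves the zero-average constraint defining $P_0$. Set ${\bf v}(x):=\lambda^{-2}{\bf h}(\lambda x,\lambda {\bf b})$ for $\lambda>0$. Using ${\bf p}(\lambda x)=\lambda^2 {\bf p}(x)$, the asymptotics of $\bf h$ become $v_i=p_i+b_i x+o(|x|)$ at $\pm\infty$, and the uniqueness in Proposition \ref{1D} yields ${\bf v}={\bf h}(\cdot,{\bf b})$, i.e.\ ${\bf h}(\lambda x,\lambda {\bf b})=\lambda^2 {\bf h}(x,{\bf b})$. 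Piecewise quadratic plus $C^{1,1}$ regularity in $x$ is immediate: Lemma \ref{c11} gives $C^{1,1}$, and the one-dimensional form of \eqref{EL2} shows that each $h_i''$ is piecewise constant, taking values in the set of averages $\{f_I\}$.

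For the two quantitative claims, I would reread the barrier step in the proof of Proposition \ref{1D}. With $M:=\max_i|b_i|\asymp\|{\bf b}\|$, that argument produced the pointwise bound $|{\bf u}^R-{\bf p}|\le C M^2+M|x|$ on $[-R,R]$, which passes to the limit ${\bf h}$. It also showed that for $|x|\ge C M$ the quadratic separation \eqref{quadsep} forces $h_i=h_{i+1}$ on every ray where $p_i=p_{i+1}$, so the coincidence pattern of $\bf h$ matches that of $\bf p$ outside $[-C\|{\bf b}\|,C\|{\bf b}\|]$. Consequently $h_i''\equiv p_i''$ on the unbounded branches, hence $h_i-p_i-b_i x$ is affine there; the asymptotic expansion $h_i=p_i+b_i x+o(|x|)$ kills the slope, leaving a constant $e_i$, and membranes sharing a branch share this constant because $h_i=h_{i+1}$ and $b_i=b_{i+1}$ on that ray by the definition of $B({\bf p})$. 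Evaluating at $|x|=C M$ and using the barrier bound gives $|e_i|\le C M^2$; combined with $|h_i-p_i-b_i x|\le C M^2+2M|x|\le C'M^2$ inside $[-CM,CM]$, this yields the global estimate $h_i=p_i+b_i x+O(\|{\bf b}\|^2)$.

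The main subtlety I expect is the consistency of the branch constants $e_i$: one needs the coincidence pattern of $\bf h$ at large $|x|$ to respect precisely the pattern encoded in $B({\bf p})$, so that whenever two membranes share a branch of $\bf p$ they automatically share both the linear coefficient $b_i=b_{i+1}$ and the additive constant $e_i=e_{i+1}$. This is where the non-degeneracy $f_1>\cdots>f_N$ and the definition of $\mathcal P^c$ enter, via the two-sided quadratic growth away from free boundaries in \eqref{quadsep} and the remark following Lemma \ref{c11}.
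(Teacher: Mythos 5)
Your proof is correct and matches the paper's approach. The paper does not give a separate proof of Lemma \ref{L2.2}: all four assertions are harvested from the construction in the proof of Proposition \ref{1D} (the barrier bound $|{\bf u}^R-{\bf p}|\le CM^2+M|x|$, the stabilization of the coincidence pattern for $|x|\ge CM$ via quadratic separation, and the fact that on each stabilized branch $h_i''\equiv p_i''$ so $h_i-p_i-b_ix$ is affine and the asymptotics force it to be constant), together with the scaling symmetry and uniqueness for the joint $2$-homogeneity — which is exactly the route you take, including the correct observation that $b_i=b_{i+1}$ on a shared branch by Definition \ref{B(p)} forces $e_i=e_{i+1}$ there.
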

\begin{defi}\label{ER}
We refer to the function ${\bf b} \mapsto {\bf e}$ which maps $B({\bf p})$ to $B({\bf p})$ as {\it the error function} (which is a homogenous of degree 2 map).
\end{defi}
It turns out that ${\bf h}(x,{\bf b})$ is $C^{1,1}$ in the $\bf b$ variable as well. The proof of this fact is technical and can be skipped on a first reading.
 
\begin{lem}\label{L2.3}
The function ${\bf h}(x, {\bf b})$ is piecewise quadratic and of class $C^{1,1}$ in both variables $x$ and ${\bf b}$. In particular the error map ${\bf e}({\bf b})$ is piecewise quadratic in $\bf b$.
\end{lem}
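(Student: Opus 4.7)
The plan is to partition the parameter space $B(\mathbf{p})\setminus\{0\}$ into finitely many open conical \emph{chambers}, each characterized by a fixed combinatorial structure of the 1D solution, to establish that on each chamber $\mathbf{h}(x,\mathbf{b})$ is jointly polynomial of degree at most $2$ in $(x,\mathbf{b})$, and then to verify $C^{1,1}$ matching across the chamber walls. By the degree-$2$ homogeneity of Lemma \ref{L2.2}, it suffices to prove the regularity on the unit sphere of $B(\mathbf{p})$. The \emph{combinatorial type} of $\mathbf{h}(\cdot,\mathbf{b})$ is the ordered list $(I_1,\dots,I_k)$ of coincidence patterns on its successive quadratic pieces, of which there are only finitely many; the set of $\mathbf{b}$'s realizing a fixed type is an open cone, which supplies the chamber decomposition.

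Inside a chamber $\Omega_\alpha$, the breakpoints $x_1(\mathbf{b})<\dots<x_{k-1}(\mathbf{b})$ and the polynomial coefficients are determined explicitly as follows. On either side of an interior breakpoint $x_j$, the relevant component $u_m$ is a quadratic whose second derivative is the constant $f_{I_j}$ (respectively $f_{I_{j+1}}$) fixed by the combinatorial type, so $u_m'$ is affine in $x$; the derivative-matching condition at $x_j$ is therefore a \emph{linear} equation in $x_j$ with coefficients linear in $\mathbf{b}$, and Cramer's rule gives $x_j(\mathbf{b})$ as a linear function of $\mathbf{b}$. Propagating this through the chain, the linear coefficient of each quadratic piece is also linear in $\mathbf{b}$, while the value-matching condition forces the constant term of each piece to pick up a correction of the form $\tfrac12(f_m-f_{m+1})x_j(\mathbf{b})^2$, hence quadratic in $\mathbf{b}$. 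Consequently, on each cell $\{x_{j-1}(\mathbf{b})\le x\le x_j(\mathbf{b}),\ \mathbf{b}\in\Omega_\alpha\}$, the expression $h_i(x,\mathbf{b})$ is jointly a polynomial of degree at most $2$ in $(x,\mathbf{b})$. The piecewise-quadratic statement for $\mathbf{e}(\mathbf{b})$ then follows by evaluating this on the outermost intervals, where Lemma \ref{L2.2} identifies $h_i-p_i-b_ix$ with the branch constants.

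The remaining step is $C^{1,1}$ matching across a wall shared by two chambers $\Omega_\alpha,\Omega_\beta$, where the combinatorial type degenerates (typically by two neighboring breakpoints $x_j,x_{j+1}$ coalescing, or by an intermediate coincidence set pinching to a point). Continuity of $\mathbf{b}\mapsto\mathbf{h}(\cdot,\mathbf{b})$ in $L^\infty_{\mathrm{loc}}$---a consequence of the uniqueness in Proposition \ref{1D} together with the uniform bounds in Lemma \ref{L2.2}---forces the two polynomial-in-$\mathbf{b}$ expressions to agree on the wall, and combined with the global $C^{1,1}$-in-$x$ regularity of $\mathbf{h}$ from Lemma \ref{c11}, this agreement propagates to first-order $\mathbf{b}$-derivatives on the wall. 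The delicate part, and what I expect to be the main obstacle, is precisely this wall analysis: at a wall where $x_j(\mathbf{b})$ and $x_{j+1}(\mathbf{b})$ merge, the quadratic-in-$\mathbf{b}$ corrections to the constants on either side must fit together so that the normal derivative of the two polynomial expressions along the wall agrees, and this ultimately reduces to an algebraic identity among the averages $f_I$ and the weights $\omega_i$ enforced by the derivative-matching chain and the constraint $\sum\omega_if_i=0$.
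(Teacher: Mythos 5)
Your chamber decomposition and the computation of $\mathbf{h}$ as a degree-$2$ polynomial in $(x,\mathbf{b})$ on each chamber are essentially the paper's Step~1 (the paper parametrizes by the free boundary vector $\mathbf{\Gamma}$, decomposes $(x,\mathbf{\Gamma})$-space into the $N!$ cones given by orderings, and passes to $\mathbf{b}$-coordinates through the piecewise-linear invertible map $\mathbf{\Gamma}=A_\pi\mathbf{b}$; this is the same information repackaged). The gap is precisely at what you call ``the delicate part.'' Your claim that continuity of $\mathbf{b}\mapsto\mathbf{h}(\cdot,\mathbf{b})$ together with the $C^{1,1}$-in-$x$ regularity ``propagates to first-order $\mathbf{b}$-derivatives on the wall'' does not follow: continuity of a piecewise-quadratic function across a wall together with smoothness in a \emph{transversal} variable says nothing about the normal $\mathbf{b}$-derivative of the two polynomial branches (e.g.\ $h(x,b)=|b|$ is continuous and smooth in $x$ but not $C^1$ in $b$). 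And you then defer the actual verification to an unproven ``algebraic identity among the $f_I$ and $\omega_i$,'' which is exactly the content that needs to be supplied.

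The paper's Step~2 closes this by a quantitative, perturbative argument that avoids direct algebra. At a wall where $\Gamma_k$ and $\Gamma_l$ coalesce, one first splits into the topologically trivial case ($u_{0,k}(\Gamma_{0,k})>u_{0,l}(\Gamma_{0,l})$, where the coefficients $\mu^{\pm}_{ij}$ are unchanged across the wall and the two polynomials coincide) and the degenerate case where the membranes touch at a point $Z$. In the latter case, the authors analytically continue the quadratic formula from the chamber $\{\Gamma_k\le\Gamma_l\}$ to a full $\eps$-neighborhood of the wall, producing a competitor $\mathbf{v}$ that fails the Euler--Lagrange conditions only on the interval $[\Gamma_l,\Gamma_k]$ (length $O(\eps)$) and only for the membranes through $Z$; uniform $C^{1,1}$ bounds on that interval plus a comparison/translation argument then give $|\mathbf{v}-\mathbf{p}(\cdot,\mathbf{b})|\le C\eps^2$. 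Since both $\mathbf{v}$ and the exact solution on the other chamber are quadratic in $(x,\mathbf{b})$ and differ by $O(\eps^2)$, their first-order jets must agree on the wall. This is the missing step in your proposal: the $C^1$ matching is obtained from an $O(\eps^2)$ error estimate on an extended competitor, not derived from continuity plus $C^{1,1}$-in-$x$, and not reduced to an identity you can check by hand. To repair your argument you would need to carry out this extension-and-comparison estimate (or an equivalent direct verification of the normal-derivative match using the explicit $\mu^{\pm}_{ij}$), which is the substantive content of the lemma.
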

\begin{proof}
Each solution $\bf u$ to the problem $P_0$ which is asymptotic to $\bf p$ at infinity, in the sense that $R^{-2}{\bf u}(Rx) \to {\bf p}$ must be of the form ${\bf h}(x, {\bf b})$ and is uniquely determined by $\bf b$. 

On the other hand each such solution is also uniquely determined by the location of the free boundaries $\Gamma_i$. For example $u_1$ and $u_2$ 
coincide on the side of $\Gamma_1$ where their corresponding branches agree and they must differ on the other side of $\Gamma_1$. So if we know the locations of all the $
\Gamma_i$, $1 \le i \le N-1$, then we know in each of the corresponding subintervals determined by the $\Gamma_i$ which membranes 
coincide, and thus the second derivatives of all the $u_i$ are uniquely determined. In other words if we arrange the free boundary points in increasing order $\Gamma_{i_1} \le \Gamma_{i_2} \le \Gamma_{i_{N-1}}$, then each $u_k''$ is determined on the interval $[\Gamma_{i_j}, \Gamma_{i_{j+1}}]$ by the permutation $\pi=\{i_1,..,i_{N-1}\}$ of $\{1,2,..,N-1\}$.
We can then integrate these second derivatives and construct a solution $\bf u$ to the problem $P$ with free boundaries $\Gamma_i$. Since the graphs of all the membranes are connected the solution $\bf u$ is unique up to a linear function. We explain more in detail how to construct ${\bf u}$ inductively in the following way. 

Assume that the top membrane $p_1$ of $\bf p$ is free on the left and has the common branch with $p_2$ on the right. Then we construct $u_1$ on the left of $\Gamma_1$ as $\frac {f_1}{2} (x-\Gamma_1)^2$ and then on the right of $\Gamma_1$ we need to add to this quadratic a linear combination of terms $[(x-\Gamma_k)^+]^2$ according to values of $u_1''$ on the subintervals $[\Gamma_{i_j}, \Gamma_{i_{j+1}}]$ to the right of $\Gamma_1$. Then we construct $u_2$ as equal to $u_1$ on the right side of $\Gamma_1$ and on then on the left of $\Gamma_1$ we need to adjust it by adding to $u_1$ a linear combination of terms $[(x-\Gamma_k)^-]^2$ according to the values of $u_2''$ on the subintervals to the left of $\Gamma_1$. 
Then we define $u_3$ as equal to $u_2$ on the side of $\Gamma_2$ where the branches of $p_2$ and $p_3$ coincide, and modify it on the other side 
of $\Gamma_2$ according to the values of $u_3''$. We continue this process till $u_N$. By construction $u_1 \ge u_2 \ge .. \ge u_N$ ( since $u_k'' \ge 
u_{k+1}''$ which is a consequence of nondegeneracy), and the Euler-Lagrange equations are satisfied, hence $\bf u$ is a solution of the problem $P$ 
with the given free boundaries $\Gamma_k$.   
By construction each $u_i$ is of the form
\begin{equation}\label{ui}
u_i = \sum_k \mu^+_{ki} [(x-\Gamma_k)^+]^2 + \mu^-_{ki} [(x-\Gamma_k)^-]^2
\end{equation}
where the coefficients $\mu_{ki}^\pm$ are determined only by the permutation $\pi$. 
We obtain a solution to $P_0$ after subtracting their total average from each one of them. The corrected $u_i$ have the same form as above.
The corresponding vector $\bf b$ for this solution is obtained from the asymptotic expansion of the $u_i$'s at $\pm \infty$, which means that ${\bf b}$ is a linear combination of the $\Gamma_i$ with coefficients depending on the $\mu_{ik}^\pm$. Since ${\bf b}$ is uniquely determined by the $\Gamma_i$'s it follows that the map $(\Gamma_1,..,\Gamma_{N-1}) \mapsto {\bf b}$ is an invertible linear map on each open region of $\R^{N-1}$ where the $\Gamma_i$ do not change the order. This linear map depends only on the permutation $\pi$ and in each such region $\Gamma_k$ is a linear function of ${\bf b}$.

We view the function constructed above as a function of $N$ variables ${\bf u}(x, {\bf \Gamma})={\bf p}(x,{\bf b})$, and notice that ${\bf u}(x,{\bf \Gamma})$ is purely quadratic in its variable in each of the $N!$ convex polyhedral regions determined by the relative orders between the variables $x$, $\Gamma_1,..,\Gamma_{N-1}$. In each such region ${\bf \Gamma}= A_{\pi} {\bf b}$ for an invertible linear map $A_\pi$. Thus, when viewed as a function of $(x, {\bf b})$, ${\bf u}$ is still purely quadratic in its variables in the corresponding $N!$ polyhedral convex regions in the $(x, \bf b)$ variables. 

\

{\it Step 2:} ${\bf u}$ is $C^{1,1}$ in the $(x, {\bf b})$ variables.

 It suffices to show that the normal derivatives of the quadratic polynomials on each side of a common $N-1$ dimensional face between 2 adjacent regions coincide. Then $C I \ge D_{(x,{\bf b})}^2 {\bf p} \ge C I$ except on a set of dimension $N-2$, and this inequality can then be extended by continuity on the remaining lower dimensional set as well.

We consider a point $(x_0,{\bf b}_0)$ on a common $N-1$ dimensional face between two regions. Let ${\bf u}_0(x)={\bf p}(x,{\bf b}_0)$ be the corresponding solution for 
${\bf b}_0$ and let  ${\bf \Gamma}_0$ be the free boundary vector associated with ${\bf u}_0$. In the $(x,{\bf \Gamma})$ variables, a common $N-1$ dimensional face between two regions corresponds to the case when two of the $N$ coordinates of $(x, {\bf 
\Gamma})$ coincide and all the others are different. 

\

{\it Case 1:} $x_0$ coincides with $\Gamma_{0,k}$. 

As we let $x$ vary near $x_0$ and keep ${\bf \Gamma}_0$ fixed, the derivatives of ${\bf u_0}$ match at $\Gamma_{0,k}$ since ${\bf u}_0$ is a $C^{1,1}$ function. This means that the directional derivative with respect to the $x$-direction at $(x_0,{\bf b}_0)$ agree. This direction is transversal to the face $x= \Gamma_k$ (since $\Gamma_k$ is linear in ${\bf b}$ near $(x_0,{\bf b}_0)$) and the conclusion follows.

\

{\it Case 2:} $\Gamma_{0,k} =\Gamma_{0,l}$ for some $k < l$. 
We study the behavior of the solution ${\bf u}$ as we vary $\bf \Gamma$ in an $\eps$ neighborhood near ${\bf \Gamma}_0$.  

If $u_{0,k}(\Gamma_{0,k}) > u_{0,l}(\Gamma_{0,l})$ then there is no change in the topology of the graph of ${\bf u}$ as we vary ${\bf \Gamma}$. This means that the right hand sides for ${\bf u}''$ in the subintervals determined by ${\bf \Gamma}$ are not affected when $\Gamma_k$ and $\Gamma_l$ cross each other. The coefficients $\mu_{ij}^\pm$ in \eqref{ui} remain the same on either side of $\Gamma_k=\Gamma_l$ and the two polynomials coincide.

Next we assume that  $u_{0,k}(\Gamma_{0,k}) = u_{0,l}(\Gamma_{0,l})$, and denote by $Z \in \R^2$ the point on the graph of ${\bf u}_0$ where $k$th and $l$th membranes coincide. We prove our claim by extending the solution given by \eqref{ui} when $\Gamma_k \le \Gamma_l$ to a whole $\eps$ - neighborhood of ${\bf b}_0$ and then show that it differs from the exact solution by at most $C \eps^2$. 

Let $v(x, {\bf b})$ denote the right hand side of \eqref{ui} corresponding to the permutation $\pi$ with $\Gamma_k< \Gamma_l$, where $\Gamma_k$ are viewed as linear functions of ${\bf b}$. 
When $\Gamma_k({\bf b}) \le  \Gamma_l({\bf b})$ then $v$ is the solution to the problem $P_0$ (with asymptote $\bf b$). However, when $\Gamma_k >  \Gamma_l$ then ${\bf v}$ might fail to solve our problem near $Z$. We collect hear the properties of $\bf v$ in this case:

1)  By construction $\bf v$ is a $C^{1,1}$ function and ${\bf v}''$ is constant in each of the $N$ subintervals defined by ${\bf \Gamma}$.

2) $|{\bf\Gamma}-{\bf \Gamma_0}| = O(\eps)$ and $|{\bf v} - {\bf u}_0 |= O(\eps)$ on any compact interval.

3) The quadratic polynomial expressions in $(x,{\bf b})$ that define ${\bf v}$ in the open subintervals of $\Gamma$ remain constant as we exchange the order of $\Gamma_k$ and $\Gamma_l$, except for the ones in the interval between $\Gamma_l$ and $\Gamma_k$. 
Outside this interval the membranes of $\bf v$ that coincide when $\Gamma_k \le  \Gamma_l$ continue to coincide, and their right hand sides remain constant. In particular, ${\bf v}$ has the vector ${\bf b}$ in its asymptotic expansion at $\pm \infty$, and its average is $0$ away from  $[\Gamma_l,\Gamma_k]$. 

4) In a neighborhood of the interval $[\Gamma_l,\Gamma_k]$, for the membranes $v_i$ for those $i$'s for which $Z$ does not belong to the graph of 
the $i$th membrane of ${\bf u}_0$, their polynomial expressions remain constant. Indeed, for such $i$, $u_i''$ has no discontinuity at $\Gamma_k$ or $
\Gamma_l$ thus $\mu_{ik}^+=\mu_{ik}^-$ and $\mu_{il}^+=\mu_{il}^-$, and the orders of the $\Gamma_k$, $\Gamma_l$ do not affect the polynomial 
expressions for $v_i$.

5) Let $J$ denote the indexes of the membranes of ${\bf u}_0$ which pass through $Z$. If $j \in J$ and ${\bf u}$ is a solution near ${\bf u}_0$ then near 
$\Gamma_{0,k}$ we have 
$$u_j = u_k \quad \mbox{ if $j \le k$}, \quad u_j=u_{k+1} \quad \mbox{ if $k+1 \le j \le l$}, \quad u_j =u_{l+1} \quad \mbox{if $j \ge l+1$}.$$
The same equalities hold if we replace ${\bf u}$ by ${\bf v}$. Indeed, by 3) the equalities hold in this neighborhood outside the interval $[\Gamma_l,\Gamma_k]$. They 
hold also inside this interval which is a consequence of the fact that the difference between two $v_j$'s is a $C^{1,1}$ function with constant second 
derivative.  

Thus there are 3 different profiles for the functions $v_j$ with $j \in J$ which do not satisfy 
the correct Euler-Lagrange in $[\Gamma_l,\Gamma_k]$. These 3 profiles are connected either at $\Gamma_k$ or $\Gamma_l$, since by 3) 
$v_k=v_{k+1}$ and $v_{l+1}=v_l$ either to the left of $\Gamma_l$ or the right of $\Gamma_k$. The 3 profiles are 
uniformly $C^{1,1}$ thus they differ by at most $C \eps^2$ in this interval.

We remark that the $v_j$ with $j \in J$ (the 3 profiles) might not be monotone with respect to $j$. However, outside a $C \eps$ neighborhood of $[\Gamma_l,
\Gamma_k]$ they become ordered with respect to $j$ due to the nondegeneracy condition that holds outside this interval.

\

Now we prove that $|{\bf v} - {\bf p}(x,{\bf b})| \le C \eps^2$. Let $t_0$ be the first value as we decrease $t$ for which the inequality
$ {\bf p} + t > {\bf v},$
fails. Since ${\bf p}$ and ${\bf v}$ have the same asymptotic expansion at $\pm \infty$, and ${\bf v}$ is a solution except on the interval 
$[\Gamma_l,\Gamma_k]$ for the $v_j$'s with $j \in J$, it follows that there exists $x_0$ in this interval for which $v_j(x_0)=p_j(x_0) + t_0$. Since all 
$v_j$ and all $p_j$ are connected in this interval and are uniformly $C^{1,1}$ if follows that $|v_j - (p_j + t_0)| \le C \eps^2$ for all $j \in J$ in an $\eps$-
neighborhood of the interval $[\Gamma_l,\Gamma_k]$. However, outside this neighborhood both graphs of ${\bf v}$ and $\bf p$ solve the problem $P$ 
(with the same asymptotic expansion at $\pm \infty$), hence this inequality can be extended everywhere. Now $|t_0| \le C \eps^2$ is a consequence of the null average of ${\bf v}$ and $\bf p$ outside $[\Gamma_l,\Gamma_k]$. 

\end{proof}


\section{Approximate solutions}

In this section we define the class of the approximate solutions ${\bf p}(x,{\bf b})$ in $\R^2$ which are perturbations of the one-dimensional profile ${\bf p}(x_2)$ with ${\bf p} \in \mathcal P^c$, and collect some of their properties. We establish the algebraic statement that the error in the Euler-Lagrange equation cannot be improved further unless ${\bf p}(x,{\bf b})$ is a rotation of ${\bf p}$, see Lemma \ref{eb}. In Corollary \ref{C1}, we obtain the convergence of the rescaled errors between ${\bf u}$ and an approximate solution ${\bf p}(x,{\bf b})$.

\

We begin with the definition of the approximate solution ${\bf p}(x,{\bf b})$. 
\begin{defi}\label{pb}
Given ${\bf p} \in \mathcal P^c$ and ${\bf b} \in B({\bf p})$, we denote by 
$${\bf p}(x, {\bf b}) = {\bf h}\left(x_2, x_1{\bf b}\right).$$
\end{defi}  
Clearly ${\bf p}(x,{\bf b})$ is a homogenous of degree 2 function in its variables.

\begin{lem}\label{L31}
${\bf v}(x):={\bf p}(x, {\bf b}) $ satisfies

a) it solves the Euler-Lagrange equations with error $C \|{\bf b}\|^2$. Precisely ${\bf v} \in C^{1,1},$
$v_1 \ge .. \ge v_N$ and in an open region where $v_i > v_{i+1}$ and $v_k > v_{k+1}$ we have
$$ |\triangle v_I - f_I| \le C |{\bf b}|^2 , \quad \quad \mbox{with} \quad I=\{i+1,..,k\}.$$

b) $$v_i(x)=p_i(x_2)+b_i x_1 x_2 + O(|{\bf b}|^2 x_1^2),$$
and in the cone $\{ |x_2| \ge C \|{\bf b}\| |x_1|\} $ with $C$ large universal 
$$ \triangle v_i= \triangle p_i + 2e_i({\bf b})\chi_{\{x_1 \ge 0\}} +  2e_i(-{\bf b})\chi_{\{x_1 \le 0\}}.$$
where ${\bf e}({\bf b})$ is the error function defined in Definition \ref{ER}.
\end{lem}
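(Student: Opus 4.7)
The strategy is to view ${\bf v}(x_1,x_2):={\bf p}(x,{\bf b}) = {\bf h}(x_2, x_1{\bf b})$ as the composition of the 1D global solution ${\bf h}$ with the linear map $(x_1,x_2)\mapsto (x_2, x_1{\bf b})$, so that every claim in the lemma follows by applying the chain rule to the structural results on ${\bf h}$ already established in Section 3.

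For part (a), the $C^{1,1}$ regularity of ${\bf v}$ in $(x_1,x_2)$ is immediate from the joint $C^{1,1}$ regularity of ${\bf h}$ in $(y,{\bf c})$ (Lemma \ref{L2.3}), while the ordering $v_1\ge\cdots\ge v_N$ holds on every horizontal slice $\{x_1=\text{const}\}$ because ${\bf h}(\cdot,x_1{\bf b})$ is a 1D solution of $P_0$. For the Laplacian error, I would split $\triangle=\partial_{x_1}^2+\partial_{x_2}^2$. The $x_1$-derivatives land on the second argument, so the chain rule gives
\[
\partial_{x_1}^2 v_i = \sum_{j,k}\partial^2_{b_j b_k} h_i(x_2,x_1{\bf b})\, b_j b_k,
\]
which is $O(|{\bf b}|^2)$ a.e. by the $C^{1,1}$ estimate on ${\bf h}$ in ${\bf b}$. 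In an open region where $v_i>v_{i+1}$ and $v_k>v_{k+1}$ with $v_{i+1}=\cdots=v_k$, for each fixed $x_1$ the 1D profile ${\bf h}(\cdot,x_1{\bf b})$ has the same coincidence pattern in a neighborhood of $x_2$, and the 1D Euler--Lagrange equation gives $\partial_{x_2}^2 h_{i+1}(\cdot,x_1{\bf b})=f_I$ there. Since $v_I=v_{i+1}$ on this region, summing the two contributions yields $|\triangle v_I - f_I|\le C|{\bf b}|^2$.

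For part (b), the first displayed formula is obtained by substituting $(y,{\bf c})=(x_2,x_1{\bf b})$ into the Taylor expansion $h_i(y,{\bf c})=p_i(y)+c_i y + O(\|{\bf c}\|^2)$ from Lemma \ref{L2.2}. For the formula for $\triangle v_i$, the same lemma asserts that once $|y|\ge C\|{\bf c}\|$ the remainder is exactly a constant $e_i({\bf c})$ depending only on the branch; the region $\{|x_2|\ge C\|{\bf b}\||x_1|\}$ is precisely where this asymptotic applies after the substitution. Using that ${\bf e}$ is homogeneous of degree $2$, the remainder equals $x_1^2\, e_i({\bf b})$ when $x_1\ge 0$ and $x_1^2\, e_i(-{\bf b})$ when $x_1\le 0$. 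Differentiating twice in $x_1$ produces the factor $2e_i(\pm{\bf b})$ displayed in the lemma, the purely $x_2$-dependent term $p_i(x_2)$ contributes $\triangle p_i$, and the bilinear cross-term $b_i x_1 x_2$ is harmonic.

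There is no substantive obstacle here: the lemma is essentially a bookkeeping consequence of Lemmas \ref{L2.2}--\ref{L2.3}, the chain rule, and the degree-2 homogeneity of ${\bf h}$ and ${\bf e}$. The only point that needs care is the sign of $x_1$ in the remainder, which is why the two pieces $e_i({\bf b})\chi_{\{x_1\ge 0\}}$ and $e_i(-{\bf b})\chi_{\{x_1\le 0\}}$ appear separately; one should verify once that the coincidence pattern of ${\bf h}(\cdot, x_1{\bf b})$ outside the 1D free-boundary interval $\{|y|\le C\|{\bf b}\||x_1|\}$ agrees with that of ${\bf p}$, so that the asymptotic form from Lemma \ref{L2.2} really does apply throughout the cone $\{|x_2|\ge C\|{\bf b}\||x_1|\}$ on each side of $\{x_1=0\}$.
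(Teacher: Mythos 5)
Your proof is correct and follows essentially the same route as the paper: both hinge on viewing ${\bf p}(x,{\bf b})={\bf h}(x_2,x_1{\bf b})$ as a 1D solution in $x_2$ for each fixed $x_1$ (so that $\triangle v_I - f_I=\partial_{x_1x_1}v_I$), then controlling $\partial_{x_1}^2 v$ and reading off the error term from Lemma \ref{L2.2} outside the interval $[-C\|{\bf b}\||x_1|,C\|{\bf b}\||x_1|]$. The only cosmetic difference is that you compute $\partial_{x_1}^2 v$ by the chain rule in ${\bf b}$ (using Lemma \ref{L2.3}), whereas the paper uses the degree-2 Euler identity $v_{11}=2{\bf h}-2t{\bf h}_t+t^2{\bf h}_{tt}$, which needs only Lemma \ref{L2.2}.
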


\begin{proof} By definition $v$ solves the Euler-Lagrange equations in the $x_2$ variable hence
$$ \triangle v_I - f_I = \partial_{x_1 x_1} v_I.$$
Using the homogeneity of ${\bf h}$ we find
$$ {\bf v}_{11}= 2 {\bf h} - 2 t {\bf h}_t + t^2 {\bf h}_{tt},$$
where $h$ and its derivatives are evaluated at $(t, \frac{x_1}{|x_1|} \, {\bf b})$ with $t:=x_2/|x_1|$. 

Moreover, by Lemma \ref{L2.2}, the right hand side is constant in each of the 4 connected regions of the set $\{|x_2| > C \|{\bf b}\| |x_1|\} \setminus \{x_1=0\}$ and equals $$  {\bf v}_{11}= 2 {\bf e} ({\bf b})\chi_{\{x_1 \ge 0\}} +  2 {\bf e}(-{\bf b})\chi_{\{x_1 \le 0\}}.$$
\end{proof}

\begin{defi}\label{pbb}
Similarly we may define the more general class of functions 
$${\bf p}(x, {\bf b}_0,{\bf b}_1) = {\bf h}\left(x_2, {\bf b_0} + x_1{\bf b}_1\right).$$
When ${\bf b}_0=0$ we are in the situation of Definition \ref{pb} and then use the simpler notation ${\bf p}(x,{\bf b}_1)$ for ${\bf p}(x, {\bf 0},{\bf b}_1)$ as before.  
\end{defi}

We give the corresponding lemma for this more general class of solutions.
\begin{lem}\label{L32}
${\bf v}(x):={\bf p}(x, {\bf b}_0,{\bf b}_1) $ satisfies

a) it solves the Euler-Lagrange equations with error $C \|{\bf b}\|^2$. Precisely ${\bf v} \in C^{1,1},$
$v_1 \ge .. \ge v_N$ and in an open region where $v_i > v_{i+1}$ and $v_k > v_{k+1}$ we have
$$ |\triangle v_I - f_I| \le C |{\bf b}_1|^2 , \quad \quad \mbox{with} \quad I=\{i+1,..,k\}.$$

b) $$v_i(x)=p_i(x_2)+ b_{0,i} x_2 + b_{1,i} x_1 x_2 + O(|{\bf b}_0|^2 + |{\bf b}_1|^2 x_1^2).$$

\end{lem}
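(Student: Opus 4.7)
The plan is essentially to upgrade the argument of Lemma \ref{L31} by exploiting the joint $C^{1,1}$ regularity of ${\bf h}(x_2,{\bf b})$ in both variables that was established in Lemma \ref{L2.3}. For each fixed $x_1$, the slice
\[ x_2 \mapsto {\bf v}(x_1, x_2) = {\bf h}\bigl(x_2, {\bf b}_0 + x_1 {\bf b}_1\bigr) \]
is precisely the unique 1D solution of the problem $P_0$ from Proposition \ref{1D} with linear asymptote ${\bf b}_0 + x_1 {\bf b}_1$. Consequently, the ordering $v_1 \ge \cdots \ge v_N$ and the Euler--Lagrange equation in the $x_2$-variable hold exactly, so in any open region where $v_i > v_{i+1}$ and $v_k > v_{k+1}$ with $I = \{i+1,\ldots,k\}$ we have $\partial_{x_2 x_2} v_I = f_I$, and therefore
\[ \triangle v_I - f_I \;=\; \partial_{x_1 x_1} v_I. \]

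The main estimate is thus a pointwise bound on $\partial_{x_1 x_1}{\bf v}$. Since the map $x_1 \mapsto {\bf b}_0 + x_1 {\bf b}_1$ is affine with constant derivative ${\bf b}_1$, the chain rule gives $\partial_{x_1 x_1} v_i = D^2_{\bf b} h_i({\bf b}_1,{\bf b}_1)$ wherever the second derivative exists. By Lemma \ref{L2.3}, ${\bf h}$ is piecewise quadratic in both variables, so $D^2_{\bf b} {\bf h}$ is defined a.e. and takes only finitely many constant values on the polyhedral pieces. Moreover, by the joint degree-$2$ homogeneity of ${\bf h}$ in $(x_2,{\bf b})$ from Lemma \ref{L2.2}, $D^2_{\bf b} {\bf h}$ is zero-homogeneous; since it is already bounded on each of finitely many pieces, it is uniformly bounded everywhere. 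This yields $|\partial_{x_1 x_1} v_i| \le C|{\bf b}_1|^2$, proving part (a). The $C^{1,1}$ regularity of ${\bf v}$ follows likewise by composition of a $C^{1,1}$ map with a linear map.

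For part (b), I would substitute the one-dimensional expansion from Lemma \ref{L2.2},
\[ h_i(x_2,{\bf b}) = p_i(x_2) + b_i\,x_2 + O(|{\bf b}|^2), \]
into ${\bf b} = {\bf b}_0 + x_1 {\bf b}_1$. Reading off the linear part in $x_1$ and using the elementary bound $|{\bf b}_0 + x_1 {\bf b}_1|^2 \le 2|{\bf b}_0|^2 + 2 x_1^2 |{\bf b}_1|^2$ produces the claimed expansion. There is no real obstacle beyond these substitutions; the lemma is essentially Lemma \ref{L31} extended to allow a nonzero constant perturbation ${\bf b}_0$ in the asymptote, and the zero-homogeneity of $D^2_{\bf b}{\bf h}$ is what keeps the error term free of any $x_1$-dependence that would otherwise spoil the bound.
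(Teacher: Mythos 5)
Your proof is correct and follows the same route the paper indicates: the paper's proof of Lemma \ref{L32} is one line, referring to the bound $|D^2 {\bf h}|\le C$ from Lemma \ref{L2.3} and the 1D expansion of Lemma \ref{L2.2}, which is exactly what you compute via $\partial_{x_1x_1}v_i = D^2_{\bf b} h_i({\bf b}_1,{\bf b}_1)$ and the substitution ${\bf b}={\bf b}_0 + x_1{\bf b}_1$. You simply fill in the details the paper leaves implicit, including the zero-homogeneity justification for the global Hessian bound.
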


\begin{proof}
The proof is the same as above, and follows from $|D^2 {\bf h}| \le C$ (see Lemma \ref{L2.3}) and Lemma \ref{L2.2}.
\end{proof}

\begin{lem}\label{eb}
${\bf e}({\bf b})= {\bf e} (- {\bf b})$ if and only if ${\bf b}=s \tau$ for some $s \in \R$, where $\tau$ is defined in Definition \ref{tau}. 
\end{lem}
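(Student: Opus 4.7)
My plan is to prove both implications. The ``if'' direction is a direct computation from the translation identity of Definition \ref{tau}, while the ``only if'' will use a shift formula for ${\bf e}$ combined with the explicit free-boundary parametrization of Lemma \ref{L2.3}.

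For the ``if'' direction, I use ${\bf h}(x, s\tau) = {\bf p}(x+s)$. On the right branch, where $p_i(y) = \tfrac12 \tau_i^+ y^2$, expanding $p_i(x+s) = p_i(x) + s\tau_i^+ x + \tfrac12 s^2 \tau_i^+$ for $x$ large positive and matching with $h_i = p_i + b_i x + e_i$ at $b_i = s\tau_i^+$ yields $e_i^+(s\tau) = \tfrac12 s^2 \tau_i^+$; the analogous computation on the left gives $e_i^-(s\tau) = \tfrac12 s^2 \tau_i^-$. Hence ${\bf e}(s\tau) = \tfrac12 s^2 \tau$ is an even function of $s$, so ${\bf e}(s\tau) = {\bf e}(-s\tau)$.

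For the converse, I first derive the shift formula
$$e_i^\pm({\bf b} + s\tau) = e_i^\pm({\bf b}) + s b_i^\pm + \tfrac12 s^2 \tau_i^\pm,$$
by reading off the coefficients in the asymptotic expansions of the two sides of the translation identity ${\bf h}(x, {\bf b} + s\tau) = {\bf h}(x+s, {\bf b})$ as $x \to \pm\infty$. Applying this formula to both ${\bf b}$ and $-{\bf b}$ shows that the difference ${\bf e}({\bf b}) - {\bf e}(-{\bf b})$ is invariant under ${\bf b} \mapsto {\bf b} + s\tau$, so it descends to a function on the quotient $B({\bf p})/\langle\tau\rangle$. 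Fixing a linear complement $V$ of $\langle\tau\rangle$ in $B({\bf p})$, I reduce the claim to showing that for ${\bf b} \in V$, the equality ${\bf e}({\bf b}) = {\bf e}(-{\bf b})$ forces ${\bf b} = 0$.

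To complete this step, I invoke the parametrization from Lemma \ref{L2.3}: ${\bf h}(x, {\bf b})$ is uniquely determined by its free-boundary positions $\Gamma_1({\bf b}), \dots, \Gamma_{N-1}({\bf b})$, which depend piecewise linearly on ${\bf b}$ with an invertible linear map on each permutation region. For ${\bf b} \in \langle\tau\rangle$ all $\Gamma_k$ coincide, whereas for ${\bf b} \in V \setminus \{0\}$ they spread out. Expressing $e_i^\pm({\bf b})$ as an explicit quadratic in the $\Gamma_k$ via \eqref{ui} and imposing ${\bf e}({\bf b}) = {\bf e}(-{\bf b})$ produces a system of algebraic identities relating $\Gamma_k({\bf b})$ and $\Gamma_k(-{\bf b})$; combined with the non-degeneracy $f_1 > \dots > f_N$ and the invertibility of the permutation maps, these identities should force all $\Gamma_k({\bf b})$ to coincide, i.e.\ ${\bf b} \in \langle\tau\rangle$ and hence ${\bf b} = 0$ in $V$.

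The main obstacle lies in this final algebraic step. Since the map ${\bf b} \mapsto \Gamma({\bf b})$ is only piecewise linear, the region containing ${\bf b}$ may correspond to a different permutation $\pi$ than that containing $-{\bf b}$, so the matching identities must be verified across all relevant combinatorial configurations. A useful reduction is the moment identity $e_i^+({\bf b}) - e_i^-({\bf b}) = -\int x\,(h_i''(x,{\bf b}) - p_i''(x))\,dx$, which recasts the symmetry condition as a statement about the piecewise-constant function $h_i''$ and its jumps at the $\Gamma_k$; together with the linear constraints defining $B({\bf p})$ this should make the case analysis tractable.
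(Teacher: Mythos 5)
Your reductions are correct and cleanly done: the shift formula $e_i^\pm({\bf b}+s\tau)=e_i^\pm({\bf b})+sb_i^\pm+\tfrac12 s^2\tau_i^\pm$ follows from ${\bf h}(x,{\bf b}+s\tau)={\bf h}(x+s,{\bf b})$ exactly as you describe, with the ``if'' direction dropping out at ${\bf b}=0$, and the invariance of ${\bf e}({\bf b})-{\bf e}(-{\bf b})$ under ${\bf b}\mapsto{\bf b}+s\tau$ is a genuine reduction to a codimension-one complement of $\langle\tau\rangle$. The moment identity $e_i^+({\bf b})-e_i^-({\bf b})=-\int x\,(h_i''(x,{\bf b})-p_i''(x))\,dx$ (which follows from integrating $h_i-p_i-b_ix$ twice against the piecewise-constant second derivative) is also a reasonable bookkeeping device.

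However, the proof has a genuine gap precisely where you flag it: the final step, showing that ${\bf e}({\bf b})={\bf e}(-{\bf b})$ forces all $\Gamma_k({\bf b})$ to coincide, is never actually carried out. You write that the identities ``should force'' the free boundaries to collapse and that the case analysis ``should'' be tractable, but nothing is proved. The difficulty is not cosmetic: the map ${\bf b}\mapsto\Gamma({\bf b})$ is piecewise linear with a different invertible linear map $A_\pi$ on each of $(N-1)!$ permutation regions, and ${\bf b}$ and $-{\bf b}$ generally lie in different regions, so there is no single closed quadratic form in the $\Gamma_k$ to equate. For general $N$ the direct combinatorial attack does not obviously terminate, and you have not exhibited any mechanism (monotonicity, convexity, an energy) that breaks ties across permutation types. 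The paper avoids this entirely by working in two dimensions: it interprets ${\bf e}({\bf b})={\bf e}(-{\bf b})$ as the statement that the homogeneous degree-$2$ function ${\bf v}(x)={\bf p}(x,{\bf b})$ has matching Laplacians in the sectors adjacent to the positive and negative $x_2$-axis, and then invokes the elementary angular rigidity Lemma \ref{c1} (each ``free'' angular interval must have length at least $\pi$, and exactly $\pi$ forces a constant Laplacian) to peel off the membranes one at a time and conclude that all free boundary rays coincide with a single line. That geometric argument replaces the open-ended case analysis in your final step; as written, your proposal stops short of a proof.
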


Notice that ${\bf b}=s \tau$ is equivalent to ${\bf p}(x,{\bf b})= {\bf p}(x_2 + s x_1)$.

As a consequence of the homogeneity of ${\bf e}$ we can quantify the difference between ${\bf e}({\bf b})$ and ${\bf e} (- {\bf b})$ in terms of the distance from $\bf b$ to the line of direction $\tau$.

\begin{cor}\label{cor31}
There exists a strictly increasing continuous function $$\rho: [0,2] \to [0, \infty), \quad \mbox{with} \quad \rho(0)=0,$$ such that
$$\frac{|{\bf e}({\bf b})- {\bf e} (- {\bf b})|}{ \|{\bf b}\|^{2}} \ge  \rho \left (dist \left (\frac {{\bf b}}{\| {\bf b}\|}, \pm \frac {\tau}{\|\tau\|}\right )  \right), \quad \quad \forall {\bf b} \ne 0.$$
\end{cor}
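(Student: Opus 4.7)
The plan is to reduce the inequality to a statement on the unit sphere of $B({\bf p})$ and then pass from the qualitative non-vanishing provided by Lemma \ref{eb} to a quantitative modulus via compactness and continuity.

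First, by Lemma \ref{L2.2} the map ${\bf h}(x,{\bf b})$ is homogeneous of degree $2$ in $(x,{\bf b})$, and hence so is the error map ${\bf e}$ of Definition \ref{ER}. In particular the quotient
$$F({\bf b}) \;:=\; \frac{\|{\bf e}({\bf b}) - {\bf e}(-{\bf b})\|}{\|{\bf b}\|^{2}}, \qquad {\bf b} \in B({\bf p})\setminus\{0\},$$
depends only on the direction ${\bf b}/\|{\bf b}\|$, so it descends to a well-defined function on the unit sphere $S := \{{\bf b}\in B({\bf p}):\|{\bf b}\|=1\}$. By Lemma \ref{L2.3}, ${\bf e}$ is continuous (in fact piecewise quadratic), so $F$ is continuous on $S$. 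Moreover, $S$ is compact because $B({\bf p})$ is finite-dimensional.

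Next, I would invoke Lemma \ref{eb}: $F$ vanishes at a direction ${\bf b}\in S$ if and only if ${\bf b}=\pm\tau/\|\tau\|$. Define
$$\omega(t) \;:=\; \min\Bigl\{\, F({\bf b}) \;:\; {\bf b}\in S,\; \mathrm{dist}\bigl({\bf b},\{\pm\tau/\|\tau\|\}\bigr) \ge t \,\Bigr\}, \qquad t\in[0,2].$$
By continuity of $F$ and compactness of the admissible set, the minimum is attained; for $t>0$ the minimizer lies away from $\pm\tau/\|\tau\|$, so Lemma \ref{eb} gives $\omega(t)>0$. Clearly $\omega$ is nondecreasing and $\omega(0)=0$.

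Finally, I would replace $\omega$ by a slightly smaller \emph{strictly increasing continuous} function $\rho$ with $\rho(0)=0$, e.g.\ by setting
$$\rho(t) \;:=\; \frac{t}{4}\,\omega(t/2),$$
which is continuous and strictly increasing on $[0,2]$ (since $\omega$ is nondecreasing and positive away from $0$) and satisfies $\rho(t)\le \omega(t)$ after trivial adjustment, or if preferred by taking a strictly increasing mollification of $\omega$ from below. Applying the resulting inequality to ${\bf b}/\|{\bf b}\|$ and multiplying back by $\|{\bf b}\|^{2}$ using the homogeneity of $F$ yields the claim.

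The only conceptual ingredient is Lemma \ref{eb} combined with the continuity of ${\bf e}$ from Lemma \ref{L2.3}; everything else is a standard compactness/modulus argument, so I do not expect a genuine obstacle here. The mildly technical point is producing a \emph{strictly} increasing continuous $\rho$, but this is handled by the concrete formula above.
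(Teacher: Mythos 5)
Your proof is correct and takes essentially the same route the paper intends: the paper gives no explicit argument for Corollary~\ref{cor31}, simply asserting it ``as a consequence of the homogeneity of ${\bf e}$'' and Lemma~\ref{eb}, with the remark afterward that $\rho(s)\ge cs^2$ follows from the piecewise quadratic structure of ${\bf e}$. Your compactness/modulus-of-continuity argument is precisely the standard unpacking of that one-line justification; the only thing the paper adds that you do not is the explicit quantitative lower bound $\rho(s)\ge cs^2$, which one reads off directly from the piecewise quadratic (hence $C^{1,1}$) structure of ${\bf b}\mapsto {\bf e}({\bf b})$ near the zero set $\pm\tau/\|\tau\|$, whereas you obtain only an abstract modulus.
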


Since $e({\bf b})$ is piecewise quadratic in ${\bf b}$ it follows that $\rho(s) \ge c s^2$.

\

{\it Proof of Lemma \ref{eb}.} One implication is trivial. 

Due to the homogeneity of ${\bf e}$ it suffices to assume that ${\bf e}({\bf b})= {\bf e} (- {\bf b})$ and $\|{\bf b}\| \le \delta$ for some small $\delta$ 
universal. Let $\Gamma_i^+$ denote the free boundaries for the $1D$ solution ${\bf h}(t, {\bf b})$ and $\Gamma_i^-$ the free boundaries of  ${\bf h}(t, -
{\bf b})$. We want to show that all $\Gamma_i^+$ coincide and that $\Gamma_i^-=-\Gamma_i^+$.

By the lemma above, the function ${\bf v}(x):={\bf p}(x, {\bf b}) $ is a solution to the problem $P$ with an error $C \delta^2$, in the sense that 

1) $v \in C^{1,1}$, $v_1 \ge .. \ge v_N$, 

2) the free boundaries of ${\bf v}$ are given by the rays $x_2= \Gamma_i^+ x_1$ in $\{x_1>0\}$ and $x_2= -\Gamma_i^- x_1$ in $\{x_1<0\}$,

3) in each of the sectors determined by these rays, the component $v_i$ solves the equation $\triangle v_i= g_I$ with $g_I$ a constant, and $|g_I-f_I| \le C \delta^2$, where $I$ is the set of $j$'s for which $v_j=v_i$ in that sector. 

Notice that ${\bf e}({\bf b})= {\bf e} (- {\bf b})$ is equivalent to the statement that the corresponding right hand sides $g_I$ agree on either side of the $x_2$-axis on the 
two sectors that contain the positive respectively negative $x_2$-axis. Also, if $\delta$ is chosen small then the nondegeneracy condition holds for the 
right hand sides $g$, i.e. $\triangle v_i > \triangle v_k$ if $v_i > v_k$.
Now we can argue as in the classification of homogenous solutions in 2D to conclude that all free boundaries coincide with a single line passing 
through the origin, which gives the desired conclusion. We provide the details.

 We denote by $(r,\theta)$ the polar coordinates in $\R^2$. Recall the following elementary lemma from \cite{SY1}:
 
 \begin{lem}\label{c1}
Assume $w$ is homogenous of degree 2 and is defined in the angle $\theta \in [0,\alpha]$ with $w=0$, $\nabla w =0$ on the rays $\theta=0$, $\theta = \alpha$. If $$\triangle w=\varphi \ge 0$$
and $\varphi$ is a step function which is nondecreasing in $[0,\gamma]$, and nonincreasing in $[\gamma, \alpha]$ for some $\gamma$, then $$\alpha \ge \pi.$$ Moreover, if $\alpha=\pi$ then $\varphi$ must be constant. 
\end{lem}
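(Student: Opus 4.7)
The plan is to reduce the problem to a one-dimensional ODE via $w(r,\theta)=r^2g(\theta)$, extract orthogonality constraints from the boundary data, and then use unimodality to pin down $\alpha$ and the form of $\varphi$. Homogeneity together with the polar-coordinate computation $\triangle(r^2g)=g''+4g$ turns the equation into $g''+4g=\varphi$ on $[0,\alpha]$ with the overdetermined boundary conditions $g(0)=g'(0)=g(\alpha)=g'(\alpha)=0$. Pairing this ODE with any $h$ solving $h''+4h=0$ and integrating by parts twice, all four boundary terms vanish and leave the orthogonality
\[
\int_0^\alpha \varphi(\theta)\, e^{2i\theta}\, d\theta=0.
\]

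To exploit unimodality, use the layer-cake decomposition: the superlevel sets $\{\varphi>t\}=[a(t),b(t)]$ are intervals containing $\gamma$ with $a(t)\in[0,\gamma]$ nondecreasing and $b(t)\in[\gamma,\alpha]$ nonincreasing, and $\varphi=\int_0^\infty \chi_{[a(t),b(t)]}\, dt$. The direct computation $\int \chi_{[a,b]}\, e^{2i\theta}\, d\theta=e^{i(a+b)}\sin(b-a)$ rewrites the orthogonality as
\[
\int_0^\infty e^{i(a(t)+b(t))}\sin(b(t)-a(t))\, dt=0.
\]
Suppose now $\alpha<\pi$ and $\varphi\not\equiv 0$. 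Then $\sin(b-a)>0$ on a positive-measure set, while all phases $e^{i(a+b)}$ lie in a common arc of length at most $\alpha<\pi$, strictly shorter than a half-circle. Projecting the integral onto the bisector of that arc (multiply by $e^{-ic}$ where $c$ is the midpoint and take the real part) yields a strictly positive number, contradicting vanishing. Hence $\alpha\ge\pi$.

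For the equality case $\alpha=\pi$, the same rotation by $e^{-i(\gamma+\pi/2)}$ now gives a nonnegative integrand that must vanish pointwise, and this forces $(a(t),b(t))$ into one of the four degenerate pairs $\{(0,\pi),(0,\gamma),(\gamma,\pi),(\gamma,\gamma)\}$ for a.e.\ $t$. The imaginary part of the orthogonality identity then reads $\sin\gamma\cdot(\mu_+-\mu_-)=0$, where $\mu_\pm$ denote the Lebesgue measures of $\{t:(a(t),b(t))=(\gamma,\pi)\}$ and $\{t:(a(t),b(t))=(0,\gamma)\}$. Combined with the monotonicity of $a$ and $b$, this rules out the staircase profiles and leaves only $\varphi$ a nonnegative multiple of $\chi_{[0,\pi]}$, i.e., constant (the boundary cases $\gamma=0$ or $\gamma=\pi$ being similar).

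The main obstacle is the equality case: the real-part condition alone still admits nontrivial unimodal step functions such as $t_1\chi_{[0,\pi]}+(t_2-t_1)\chi_{[\gamma,\pi]}$, and eliminating them requires combining both components of the orthogonality identity with the monotonicity of the superlevel endpoints $a(t),b(t)$.
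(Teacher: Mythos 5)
Your argument is correct. The reduction of $\triangle w = \varphi$ via $w=r^2g(\theta)$ to the ODE $g''+4g=\varphi$ with the four boundary conditions $g(0)=g'(0)=g(\alpha)=g'(\alpha)=0$ is accurate, and the Fredholm orthogonality $\int_0^\alpha \varphi\,e^{2i\theta}\,d\theta=0$ follows by the integration by parts you describe. The layer-cake decomposition $\varphi=\int_0^\infty\chi_{[a(t),b(t)]}\,dt$ (valid for any nonnegative unimodal $\varphi$, and in particular for the step functions of the hypothesis) converts the orthogonality into $\int_0^\infty e^{i(a+b)}\sin(b-a)\,dt=0$, and since $a+b\in[\gamma,\gamma+\alpha]$ the phases occupy an arc of length $\alpha$: for $\alpha<\pi$ and $\varphi\not\equiv 0$, projecting on $e^{i(\gamma+\alpha/2)}$ yields a strictly positive integral, which is the contradiction. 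The equality analysis is also sound: the real part of $e^{-i(\gamma+\pi/2)}$ times the identity forces $(a(t),b(t))$ into the four degenerate pairs almost everywhere; the imaginary part then reads $\sin\gamma\,(\mu_+-\mu_-)=0$, while the nestedness of superlevel sets forces $\mu_+\mu_-=0$, so both vanish when $\gamma\in(0,\pi)$, leaving $\varphi$ a constant multiple of $\chi_{[0,\pi]}$ (and the boundary cases $\gamma\in\{0,\pi\}$ are immediate). You correctly identify the one subtlety — the real part alone does not kill profiles like $t_1\chi_{[0,\pi]}+(t_2-t_1)\chi_{[\gamma,\pi]}$ — and you correctly supply the missing ingredient (imaginary part plus monotonicity).

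For context, the paper does not reprove this lemma; it invokes it as an elementary fact from the earlier reference \cite{SY1}, so there is no in-text proof to compare against. Your Fredholm-plus-layer-cake argument is a clean and self-contained route that in fact requires only unimodality, not the piecewise-constant structure, so it establishes a slightly more general statement than the one cited.
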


We restrict our attention to the values of $v_i$ on the unit circle $\partial B_1$. We know that each two consecutive membranes $v_i$ and $v_{i+1}$ are connected (agree) at least on an open interval that contains either $(0,1)$ or $(0,-1)$, and they do not agree on the whole circle. 

We focus on those intervals $I \subset \partial B_1$ where $\{v_k>v_{k+1}\}$ and $v_k=v_{k+1}$ at the end points and in addition $\triangle v_k$ is constant in $I$.

{\it Claim:} Each such interval has length greater than or equal to $\pi$.

Indeed, we look at a minimal such interval and we apply Lemma \ref{c1} to the difference  
$$w_k := v_k - v_{k+1},$$
which vanishes of order two at the end points of $I$. Moreover, 
$$\triangle w_k = \varphi_k:=g_k-g_{k+1}  >0  \quad \mbox{on $I$,} $$
The minimality of $I$ implies that the nested sets $\{ v_{k+1}=v_{k+m} \}$ are connected (intervals) in $I$, and therefore $w_k$, $\varphi_k$ satisfy the hypotheses of the Lemma \ref{c1}.

\

The claim implies that $\{v_1>v_2\}$ consists of exactly one interval $I_1$ of length at least $\pi$. 
In the cone generated by $I_1$, the function $v_1$ coincides with a quadratic polynomial $Q$. Denote by $\tilde v_1$ this polynomial $Q$ in the complement of the angle  generated by $I_1$. Here we can apply one more time the argument of Claim above by using the function $\tilde w_1:= \tilde v_1-v_2$ and conclude that also the complement has length at least $\pi$ on the unit circle. 

In conclusion $I_1$ consists exactly of a half-circle. 
Lemma \ref{c1} gives in addition that $\triangle v_2$ is in fact constant on $I_1$ and its complement. This in turn implies that $v_2$ and $v_3$ either 
coincide or are disjoint in each of these two intervals. By arguing as above with $v_2$ and $v_3$, instead of $v_1$ and $v_2$ we find that also $\triangle v_3$ must be 
constant in each of these two intervals, which gives that $\{v_3 =v_4\}$ is either $I_1$ or its complement. 
We can argue like this inductively and reach that all the free boundaries must coincide.

\qed

\begin{defi}\label{S}
Given ${\bf p} \in \mathcal P^c$, we say that a solution $\bf u$ to the problem $P_0$ is $\eps$- approximated in $B_r$ and write
$$ {\bf u} \in \mathcal S (r,{\bf p}, \eps )$$ 
if, after a rotation around the origin, ${\bf u}$ satisfies
$$ |{\bf u} - {\bf p}(\cdot, {\bf b})| \le \eps r^2 \quad \mbox{in $B_r$}, $$
$$\mbox{for some} \quad {\bf b} \in B({\bf p}), \quad \mbox{with} \quad |{\bf b}| \le \delta \eps^{1/2},$$
with $\delta$ a small universal constant (to be made precise later).
\end{defi}

\begin{lem}\label{l300}
Assume that 
\begin{equation}\label{e30}
 {\bf u} \in \mathcal S (1,{\bf p}, \eps ) .
 \end{equation}
 Then in $B_{3/4}$ we have $\Gamma_i \subset \{|x_2| \le C \sqrt \eps\}$ for all $i$, and
 \begin{equation}\label{e32}
|\triangle (u_i - p_i(\cdot, {\bf b}))| \le \delta \eps \quad \quad \mbox{in} \quad \{|x_2| \ge C \sqrt \eps\} \cap B_{3/4}. 
\end{equation}
\end{lem}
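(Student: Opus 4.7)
The plan is to first localize every free boundary of ${\bf u}$ into the thin strip $\{|x_2|\le C\sqrt\eps\}$, and then to use this localization to match the contact pattern of ${\bf u}$ with that of ${\bf p}(\cdot,{\bf b})$ on the complement, so that the Laplacians agree up to the $O(|{\bf b}|^2)$ error furnished by Lemma \ref{L31}(b).

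The starting observation is that, by Definition \ref{pb} and Lemma \ref{L2.2}, the approximate solution ${\bf v}(x):={\bf p}(x,{\bf b})={\bf h}(x_2,x_1{\bf b})$ is purely piecewise quadratic outside the cone $\{|x_2|\le C_0\|{\bf b}\||x_1|\}$; since $\|{\bf b}\|\le\delta\eps^{1/2}$, this cone lies inside $\{|x_2|\le C_0\delta\sqrt\eps\}\cap B_1$. On $\{x_2>C_0\delta\sqrt\eps\}\cap B_1$ a pair $p_j(\cdot,{\bf b})$, $p_{j+1}(\cdot,{\bf b})$ coincide precisely when the corresponding right-branches of ${\bf p}$ coincide, and otherwise the 1D quadratic nondegeneracy combined with the expansion in Lemma \ref{L31}(b) gives
$$p_j(\cdot,{\bf b}) - p_{j+1}(\cdot,{\bf b}) \;\ge\; c\,x_2^2 \;-\; C\delta\sqrt\eps\,|x_1 x_2| \;-\; C\delta^2\eps\,x_1^2,$$
with a symmetric statement on $\{x_2<-C_0\delta\sqrt\eps\}$.

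First I would show $\Gamma_i\cap B_{3/4}\subset\{|x_2|\le C\sqrt\eps\}$ by contradiction. Suppose $x_0\in\Gamma_i\cap B_{3/4}$ has, say, $x_{0,2}\ge C\sqrt\eps$, the universal constant $C$ to be chosen large. If the branches of $p_i,p_{i+1}$ do not coincide in ${\bf p}$ on $[0,\infty)$, the displayed lower bound yields $p_i(x_0,{\bf b})-p_{i+1}(x_0,{\bf b})\ge 3\eps$ once $C$ is large and $\delta$ small, hence $u_i(x_0)-u_{i+1}(x_0)\ge\eps>0$, contradicting $x_0\in\Gamma_i$. Otherwise $p_i(\cdot,{\bf b})\equiv p_{i+1}(\cdot,{\bf b})$ on a ball $B_r(x_0)$ with $r=c'\sqrt\eps$ chosen so that $B_r(x_0)\subset\{x_2>C_0\delta\sqrt\eps\}\cap B_1$; on this ball $0\le u_i-u_{i+1}\le 2\eps$, yet the strict nondegeneracy of ${\bf u}$ from the Remark after Lemma \ref{c11} gives $\max_{B_r(x_0)}(u_i-u_{i+1})\ge c r^2 = c(c')^2\eps$, which exceeds $2\eps$ if $c'$ is taken sufficiently large (still compatible with the previous constraint on $C$). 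This contradicts the bound and completes the localization.

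Once $\Gamma_i\cap B_{3/4}\subset\{|x_2|\le C\sqrt\eps\}$, the regions $\{x_2>C\sqrt\eps\}\cap B_{3/4}$ and $\{x_2<-C\sqrt\eps\}\cap B_{3/4}$ contain no free boundary of ${\bf u}$, so the contact structure of ${\bf u}$ is constant on each of these connected sets. I would then match it pairwise with that of ${\bf p}(\cdot,{\bf b})$. The direction ``$p_j>p_{j+1}\Rightarrow u_j>u_{j+1}$'' is the lower-bound argument above. For ``$p_j\equiv p_{j+1}\Rightarrow u_j=u_{j+1}$'', suppose instead that $u_j>u_{j+1}$ throughout the connected region; then $w:=u_j-u_{j+1}\ge 0$ is bounded by $2\eps$, while the strict ordering $f_1>\dots>f_N$ forces $\triangle w\ge c_0>0$ on the region. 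Applying the mean-value inequality to the subharmonic function $w-c_0|x-x_c|^2/4$ on any ball $B_\rho(x_c)$ of universal radius inside the region yields $\max_{B_\rho(x_c)}w\ge c_0\rho^2/4$, contradicting $w\le 2\eps$ for $\eps$ small. Hence the contact sets of ${\bf u}$ and of ${\bf p}(\cdot,{\bf b})$ match pair by pair in $\{|x_2|\ge C\sqrt\eps\}\cap B_{3/4}$, and Lemma \ref{L31}(b) gives $|\triangle(u_i-p_i(\cdot,{\bf b}))|\le C|{\bf b}|^2\le C\delta^2\eps\le\delta\eps$ upon choosing $\delta$ small universal.

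The main obstacle is the last implication ``$p_j\equiv p_{j+1}\Rightarrow u_j=u_{j+1}$'', which rules out a persistent opening of ${\bf u}$ where ${\bf p}(\cdot,{\bf b})$ is in contact. This is exactly the point where the strict nondegeneracy $f_1>\dots>f_N$ is decisive: it promotes $w=u_j-u_{j+1}$ to a strictly subharmonic nonnegative function whose size on a universal ball is forced above the $O(\eps)$ closeness budget.
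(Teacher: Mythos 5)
Your proposal is correct and follows essentially the same approach as the paper: first confine each $\Gamma_i$ to the strip $\{|x_2|\le C\sqrt\eps\}$ using the lower-bound expansion of $p_i(\cdot,{\bf b})-p_{i+1}(\cdot,{\bf b})$ from Lemma~\ref{L31}(b) together with the quadratic nondegeneracy of $u_i-u_{i+1}$, and then conclude that the contact patterns of ${\bf u}$ and ${\bf p}(\cdot,{\bf b})$ agree outside the strip so that $\triangle u_i = f_I = \triangle p_i(x_2)$ there and the residual is $O(|{\bf b}|^2)\le\delta\eps$. You spell out the step ``$p_j\equiv p_{j+1}\Rightarrow u_j=u_{j+1}$'' via strict subharmonicity of $u_j-u_{j+1}$, which the paper leaves implicit in the phrase ``as a consequence''; this is a correct and welcome elaboration rather than a different route.
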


\begin{proof}
Any two consecutive membranes, say $u_i$ and $u_{i+1}$, coincide on one side of this strip $\{|x_2| \le C \sqrt \eps\}$ 
and are separated on the opposite side, 
depending on whether the membranes $p_i$ and $p_{i+1}$ of the 1D- solution ${\bf p} \in \mathcal P^c$ 
coincide to the right or left of the origin. 

Indeed, assume that $p_i=p_{i+1}$ to the left of the origin, and then 
$$p_i(x,{\bf b})-p_{i+1}(x,{\bf b}) \ge c \left[(x_2 - C |x_1{\bf b}|)^+ \right]^2,$$
$$p_i(x,{\bf b})=p_{i+1}(x,{\bf b}) \quad \mbox{ if} \quad  x_2 \le - C |x_1{\bf b}|.$$
The bound $|{\bf b}| \le \delta \sqrt \eps$ from Definition \ref{S} and \eqref{e30} implies that 
 $$u_i > u_{i+1} \quad \mbox{ in} \quad  B_1 \cap \{x_2 \ge C \sqrt \eps\},$$ 
 $$|u_i-u_{i+1}| \le 2 \eps \quad \mbox{ in} \quad  B_1 \cap \{x_2 \ge C \sqrt \eps\}.$$ 
 The claim $$\Gamma_i \subset \{|x_2| \le C \sqrt \eps\} \cap B_{1-C \sqrt \eps},$$ follows since $u_i$ and $u_{i+1}$ separate quadratically away from 
 their free boundary $ \Gamma_i$.

As a consequence we find that in $\{|x_2| \ge C \sqrt \eps\} \cap B_{3/4} $,
$$\triangle u_i= \triangle (p_i(x_2))=f_I \quad \mbox{ in} \quad \{|x_2| \ge C \sqrt \eps\},$$
and, by Lemma \ref{L31}, 
\begin{equation*}\label{e322}
|\triangle (u_i - p_i(\cdot, {\bf b}))| \le C |{\bf b}|^2 \le C \delta^2 \eps \le \delta \eps,
\end{equation*}
provided $\delta$ is sufficiently small. 
\end{proof}

\begin{lem}\label{l36}Assume that $ {\bf u} \in \mathcal S (1,{\bf p}, \eps ).$ Then in $B_{1/2}$
$$ |{\bf u} - {\bf p}(\cdot, {\bf b})| \le C \, \eps (|x_2|+ \sqrt \eps)^\alpha,$$
for some $\alpha>0$ small, universal.
\end{lem}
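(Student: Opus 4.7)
The plan is to establish the H\"older-type decay of $w_i := u_i - p_i(\cdot, {\bf b})$ by a dyadic iteration in the distance to the strip. Three facts are at our disposal: (i) $|w_i| \le \eps$ in $B_1$, by hypothesis; (ii) $|\triangle w_i| \le \delta\eps$ in the bulk region $\{|x_2| \ge C\sqrt\eps\}\cap B_{3/4}$, by Lemma \ref{l300}; (iii) $w_i$ is $C^{1,1}$ with universal second-derivative bound $|D^2 w_i| \le C$ (Lemma \ref{c11}), and $w_i(0)=0$, $\nabla w_i(0)=0$ because both ${\bf u}$ and ${\bf p}(\cdot,{\bf b})$ vanish to second order at the common free-boundary intersection at the origin.

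Setting
\[
\omega(r) := \sup_{B_{1/2} \cap \{|x_2| \le r\}} |{\bf w}|, \qquad r \in [\sqrt\eps, 1/2],
\]
the goal is the geometric decrement
\[
\omega(r/2) \le \theta\,\omega(r), \qquad r \in [2\sqrt\eps, 1/2],
\]
for some universal $\theta \in (0,1)$. Iterating this inequality from $\omega(1/2) \le \eps$ down to scales of order $\sqrt\eps$ gives $\omega(r) \le C\eps\, r^\alpha$ with $\alpha := \log(1/\theta)/\log 2 > 0$. The claimed estimate $|{\bf w}| \le C\eps(|x_2|+\sqrt\eps)^\alpha$ in $B_{1/2}$ then follows: for $|x_2| \ge \sqrt\eps$ directly, and in the inner strip $\{|x_2| \le \sqrt\eps\}$ from the boundary bound $\omega(\sqrt\eps) \le C\eps^{1+\alpha/2}$.

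The decrement $\omega(r/2) \le \theta\,\omega(r)$ is the crux, and I would prove it by a blow-up/compactness contradiction. If it fails, there exist sequences $\eps_k \to 0$, ${\bf u}^{(k)} \in \mathcal S(1, {\bf p}, \eps_k)$, and $r_k \ge 2\sqrt{\eps_k}$ with $\omega^{(k)}(r_k/2)/\omega^{(k)}(r_k) \to 1$. Rescaling $v^{(k)}(y) := w^{(k)}(r_k y)/\omega^{(k)}(r_k)$, one obtains functions bounded by $1$, almost-harmonic outside the rescaled strip $\{|y_2| \le C\sqrt{\eps_k}/r_k\}$, and vanishing at the origin (together with their gradient, via the universal $C^{1,1}$-bound). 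A subsequential limit $v_\infty$ is harmonic away from a limiting strip, bounded by $1$, vanishes at $0$, and attains its $L^\infty$-norm both on $\{|y_2| \le 1\}$ and on $\{|y_2| \le 1/2\}$. The strong maximum principle then forces $v_\infty$ to be locally constant in the component of harmonicity touching the origin, and the vanishing at $0$ forces $v_\infty \equiv 0$ there, contradicting the normalisation.

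The main obstacle is executing the compactness step carefully, since the choice of normalisation must produce a nontrivial, regular limit. This is delicate because $\omega(r_k)$ may be substantially smaller than $r_k^2$, so that the universal $C^{1,1}$-norm does not rescale uniformly; and the degenerate regime $r_k \sim \sqrt{\eps_k}$, in which the rescaled strip does not shrink in the limit, requires separate treatment using the quadratic bound $|w_i| \le C|x|^2$ that follows from the vanishing of the first two jets at the origin.
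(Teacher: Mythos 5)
Your proposal has a genuine gap in the compactness step: the strong maximum principle argument does not actually force the blow-up limit to be trivial. The limit $v_\infty$ is harmonic only away from $\{y_2=0\}$, and the constraints you carry to the limit are that it is bounded by $1$, vanishes at the origin, and achieves its $L^\infty$-norm on both $\{|y_2|\le1\}$ and $\{|y_2|\le1/2\}$. But the piecewise function that equals $1$ on $\{y_2>0\}$ and $0$ on $\{y_2\le0\}$ (or a vector version with different constants on either side) satisfies all of these: it is harmonic away from the line, bounded by $1$, vanishes at the origin since $0$ lies on that line, and achieves its sup at every point with $y_2>0$. The strong maximum principle only gives constancy \emph{within} each half-plane component; it cannot see across the strip $\{y_2=0\}$, so ``vanishing at $0$'' imposes nothing beyond continuity up to the boundary at that one point. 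It therefore does not force the limit to vanish in $\{y_2>0\}$. Ruling out such limits is precisely the content of the lemma, and it is not a consequence of almost-harmonicity and boundedness alone.

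The missing mechanism is the coupling between membranes across the free boundary, which is what actually transports information through the strip $\{|x_2|\le C\sqrt\eps\}$. The paper's proof makes this coupling explicit: one picks a point $Z=(z,0)$ on the $x_1$-axis (not just the origin), assumes $u_j(Y) \ge p_j(Y,\mathbf{b})$ at a point $Y$ above $Z$, uses the Harnack inequality to improve the bound for $u_j$ (and all membranes connected to $u_j$ on that side) on the upper boundary of a small rectangle, and then constructs an explicit subsolution of the form ${\bf p}(x_2, {\bf d}, {\bf b}) + (c_1\eps_k q - \eps_k)\mathbf{1}$ (see Definition \ref{pbb}) to propagate the improvement through the free boundary down to $B_{\rho r_k}(Z)$ and to \emph{all} membranes. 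The choice of the tilted $1$D vector ${\bf d} = c_1\eps_k r_k^{-1}{\bf t}$, with ${\bf t}\in B({\bf p})$ selected according to which branches coincide, is exactly what encodes the membrane coupling; a purely compactness-based argument as you set up does not see this structure. Two smaller issues: your normalisation $v^{(k)}(0)=0$ requires $w(0)=0$, i.e.\ $0\in\cap\Gamma_i$, which is not part of the hypothesis $\mathbf{u}\in\mathcal S(1,\mathbf{p},\eps)$ (you only know $|w(0)|\le\eps$, and this is not small relative to $\omega^{(k)}(r_k)$ in general); and the sup defining your $\omega(r)$ may be achieved far from $x_1=0$, so centring the blow-up at the origin discards the point responsible for the failure of decay, whereas the paper runs the iteration at every $Z$ on the $x_1$-axis.
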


\begin{proof}
We pick a point $Z=(z,0)$, $|z| \le 1/2$ on the $x_1$ axis. It suffices to show by induction that for $k \ge 0$,
$$ |u_i - p_i(\cdot, {\bf b})| \le \eps_k:= \eps (1-c)^{k} \quad \mbox{in} \quad B_{r_k}(Z), \quad r_k:=\rho^{k+1},$$
as long as $r_k \ge C' \sqrt \eps$, where $\rho$, $c$ are small, universal constant.

Assume the induction hypothesis holds for $k$ and suppose that $\bf p$ has at least two branches on the right (in the $x_2$-direction). 
We denote by $Y:=Z+ \frac 12 r_k e_2$, and we claim that if 
\begin{equation}\label{ujy}
u_j(Y) \ge p_j(Y,{\bf b})  \quad \mbox{for some $j$,}
\end{equation}
then
\begin{equation}\label{usubi}
u_i - p_i(\cdot,{\bf b}) \ge (c-1) \eps_k  \quad \mbox{in} \quad B_{\rho r_k}(Z), \quad \forall \, i.
\end{equation}
By Lemma \ref{l300}, we know that 
$$|\triangle (u_i - p_i(\cdot, {\bf b}))| \le \delta \eps \le \delta \eps_k r_k^{-2}, \quad \mbox{in} \quad \{|x_2| \ge C \sqrt \eps\} \cap B_{r_k}(Z),$$
and $$u_i - p_i(\cdot, {\bf b}) \ge - \eps_k \quad \mbox{ in} \quad  B_{r_k}(Z),$$ by the induction hypothesis. 
We prove \eqref{usubi} by comparing $\bf u$ with an explicit subsolution $\bf v$ in the rectangle 
$$R:=\{|x_1-z| \le r_k/2\} \times \{|x_2| \le 4 \rho r_k \}.$$
The Harnack inequality and \eqref{ujy} imply that
\begin{equation}\label{ujs}
u_j - p_j(\cdot, {\bf b}) \ge  (c_0-1) \eps_k \quad \mbox{on} \quad \partial R \cap \{x_2= 4 \rho r_k\},
\end{equation}
for some $c_0=c_0(\rho)$ universal. This inequality holds for all other membranes which coincide with $u_j$ in the region $\{x_2 \ge C \sqrt \eps\}$. We denote by 
$J$ these indexes $l$ for which $u_l(Y)=u_j(Y)$, and remark that $J$ depends only on the branch configuration of ${\bf p}$. We let ${\bf t} \in B({\bf p})$ be defined as $t_i^-=0$ for all $i$, and 
$$\mbox{$t_i^+=1$ if $i \in J$, and $t_i^+=-\mu$ otherwise.}$$ 
The constant $\mu>0$ is chosen such that the 
average of all the $t_i^+$ equals $0$, so that ${\bf t} \in B({\bf p})$.

We define the barrier (see Definition \ref{pbb})
$${\bf v}(x):={\bf p}(x_2,  {\bf d},{\bf b}) + (c_1 \eps_k  q((x-Z)/r_k) -\eps_k) {\bf 1} ,$$
\begin{equation}\label{dq}
{\bf d}:= c_1 \, \eps_k r_k^{-1} {\bf t}, \quad q(x):= \frac \mu 2 (x_2 + 2 \rho) + x_2^2 -\frac 12 x_1^2,
\end{equation}
where $c_1$ is small, depending on the constant $c_0$ above. The polynomial $q$ and the constant $\rho$ are chosen such that $\triangle q =1$,
\begin{equation}\label{qineq}
q + t_i x_2^+\ge c_2:=\frac 12 \mu \rho  \quad \mbox{in $B_\rho$,}   
\end{equation}
and on the boundary of the rescaled rectangle 
$$R_0:= \{|x_1| \le 1/2\} \times \{|x_2| \le 4 \rho \},$$
we have
$$q + t_i x_2^+ \le - c_2 \quad \mbox {on} \quad \partial R_0 \setminus \{x_2= 4 \rho\} ,\quad \forall i,$$
\begin{equation}\label{q+}
q + t_i x_2^+ \le - c_2  \quad \mbox {on} \quad \partial R_0 \quad \mbox{if} \quad i \notin J.
\end{equation}
We check that ${\bf u} \ge {\bf v}$ on $\partial R$, and ${\bf v}$ is a subsolution to the problem $P$. 

By Lemma \ref{L32}, ${\bf p}(x,  {\bf d},{\bf b})$ solves the problem $P$ 
with an error $$C |{\bf b}|^2 \le C \delta^2 \eps \le \delta \eps \le \delta \eps_k r_k^{-2},$$ and since $\triangle q =1$ it follows that ${\bf v}$ is a subsolution to the problem $P$ if $\delta$ is sufficiently small ($\delta \le c_1$).

Notice that $\eps_k r_k^{-2}$ is increasing with $k$, and when $r_k \sim C' \sqrt \eps$ then $$\eps_k r_k^{-2} \le C \eps^\alpha \le \delta^2 \quad \mbox{ provided that $\eps \le \eps_0(\delta)$}.$$ Thus,
$$C |{\bf d}|^2 \le C \eps_k^2 r_k^{-2} \le \delta \eps_k, \quad \quad \mbox{and} \quad C |{\bf b}|^2x_1^2 \le \delta \eps_k,$$
and by Lemmas \ref{L32} part b)
\begin{equation}\label{rkZ}
|p_i(x,  {\bf d}, {\bf b}) - p_i(x,  {\bf b}) - c_1 \eps_k r_k^{-1} \, t_i \, x_2^+| \le 3 \delta \eps_k \quad \mbox{in} \quad B_{r_k}(Z).
\end{equation}
Using the inequalities \eqref{q+} of $q$ on $\partial R_0$ we obtain that   
$$ v_i \le p_i(\cdot,  {\bf b}) + \eps_k ( 3 \delta - c_1 \, c_2 -1) \le p_i(x,  {\bf b}) -\eps_k \le u_i \quad \mbox {on} \quad \partial R \quad \mbox{if} \quad i \notin J,$$
$$v_i \le p_i(x,  {\bf b}) -\eps_k \le u_i \quad \mbox {on} \quad \partial R \setminus \{x_2= 4 \rho r_k\} ,\quad \forall i.$$
Finally, on $\partial R \cap \{x_2= 4 \rho r_k\}$ and $i \in J$ we have by \eqref{ujs}
$$v_i \le p_i(x,  {\bf b}) + (C(\mu,\rho) c_1-1) \eps_k \le u_i,$$
provided that $c_1$ is chosen small so that $C(\mu,\rho) c_1 \le c_0$. 

In conclusion ${\bf u} \ge {\bf v}$ on $\partial R$, and the inequality holds in the whole $R$ by the maximum principle. In particular, by \eqref{qineq} in $B_{\rho r_k}$
$$ u_i \ge v_i \ge p_i(\cdot, {\bf b}) + (-3 \delta + c_1 c_2 -1) \eps_k \ge p_i(\cdot, {\bf b}) + (c -1) \eps_k.$$
\end{proof}

\begin{cor}\label{C1}
If $ {\bf u}_m \in \mathcal S (1,{\bf p}, \eps_m ),$ for a sequence of $\eps_m \to 0$, then, up to a subsequence, then each of the rescaled error functions
$$\eps_m^{-1} \left(u_{m,j} - p_j(\cdot,{\bf b}_m )\right)$$ converges uniformly in $B_{1/2}$ to a limit $w_j$ that satisfies 
$$\|w_j\|_{L^\infty} \le 1, \quad w_j =0  \quad \mbox{on $x_2=0$,}$$
and $$|\triangle w_j| \le \delta \quad \mbox{away from $\{x_2=0\}$.}$$
More precisely, $\triangle w_j$ is constant in each quadrant   
$$\triangle w_j = -2e_j({\bf b}) \chi_{\{x_1 >0 \}} -  2e_j(-{\bf b}) \chi_{\{x_1 <0 \}}\quad \mbox{in $\{x_2<0\} \cup \{x_2>0\}$,}$$
where ${\bf b} \in B({\bf p})$ is the limit of  
$$ {\bf b}:= \lim_{m \to \infty} \eps_m^{-\frac 12}{\bf b}_m, \quad \quad |{\bf b}| \le \delta.$$
\end{cor}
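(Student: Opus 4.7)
Set $v_{m,j} := \eps_m^{-1}\bigl(u_{m,j} - p_j(\cdot, {\bf b}_m)\bigr)$; by Definition \ref{S}, $\|v_{m,j}\|_{L^\infty(B_1)} \le 1$. Since $|{\bf b}_m| \le \delta\eps_m^{1/2}$, the rescaled vectors $\tilde{\bf b}_m := \eps_m^{-1/2}{\bf b}_m$ stay in the closed ball of radius $\delta$ inside the finite-dimensional space $B({\bf p})$, so after passing to a subsequence $\tilde{\bf b}_m \to {\bf b}$ with $|{\bf b}| \le \delta$; this is the vector that appears in the statement.

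The plan is to upgrade the $L^\infty$ bound to uniform convergence on $B_{1/2}$ via two complementary compactness inputs. Lemma \ref{l36} gives the H\"older decay
\[
|v_{m,j}(x)| \le C\bigl(|x_2| + \sqrt{\eps_m}\bigr)^\alpha \quad \text{in } B_{1/2},
\]
which is what will force $w_j = 0$ on the line $\{x_2=0\}$ in the limit. Away from that line, Lemma \ref{l300} locates the free boundaries of ${\bf u}_m$ inside the strip $\{|x_2| \le C\sqrt{\eps_m}\}$ and yields $|\triangle v_{m,j}| \le \delta$ on $\{|x_2| \ge C\sqrt{\eps_m}\} \cap B_{3/4}$. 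Together with $\|v_{m,j}\|_{L^\infty} \le 1$, interior elliptic estimates give uniform $C^{1,\alpha}$ control of $v_{m,j}$ on compact subsets of $B_{3/4} \setminus \{x_2=0\}$, so Arzel\`a--Ascoli produces a subsequence converging locally uniformly there. The H\"older bound above then extends the convergence uniformly to all of $B_{1/2}$ and yields $w_j = 0$ on $\{x_2=0\}$.

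It remains to identify $\triangle w_j$ precisely. In $\{|x_2| \ge C\sqrt{\eps_m}\} \cap B_{3/4}$ the free boundary configuration of ${\bf u}_m$ mirrors that of the 1D cone ${\bf p}$, so $\triangle u_{m,j} = f_I = \triangle p_j(x_2)$ for the same index set $I$ that determines $p_j$. On the other hand Lemma \ref{L31}(b) gives, in the cone $\{|x_2| \ge C|{\bf b}_m|\,|x_1|\}$,
\[
\triangle p_j(\cdot, {\bf b}_m) = \triangle p_j(x_2) + 2 e_j({\bf b}_m)\,\chi_{\{x_1 \ge 0\}} + 2 e_j(-{\bf b}_m)\,\chi_{\{x_1 \le 0\}}.
\]
Subtracting and exploiting the degree-two homogeneity of ${\bf e}$, we obtain on the intersection of these two regions
\[
\triangle v_{m,j} = -2 e_j(\tilde{\bf b}_m)\,\chi_{\{x_1 \ge 0\}} - 2 e_j(-\tilde{\bf b}_m)\,\chi_{\{x_1 \le 0\}}.
\]
Since $|{\bf b}_m| \to 0$, any compact subset of $\{x_2 \ne 0\}$ lies in $\{|x_2| \ge C|{\bf b}_m||x_1|\}$ for $m$ large; combining with $\tilde{\bf b}_m \to {\bf b}$ and the continuity of ${\bf e}$ (Lemma \ref{L2.3}) recovers the stated formula on each of the four open quadrants, and the bound $|\triangle w_j| \le \delta$ follows from $|{\bf e}({\bf b})| \le C|{\bf b}|^2 \le C\delta^2 \le \delta$.

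The main technical obstacle is pushing convergence \emph{up to} the line $\{x_2=0\}$: interior elliptic estimates alone yield convergence only on $\{x_2 \ne 0\}$, and without the quantitative H\"older decay of Lemma \ref{l36} one could neither secure the boundary identity $w_j = 0$ on $\{x_2=0\}$ nor exclude a singular distributional contribution to $\triangle w_j$ concentrated on that line.
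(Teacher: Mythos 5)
Your proposal is correct and follows essentially the same route the paper sketches: uniform $L^\infty$ bound plus Lemma \ref{l300} (interior Laplacian control away from $\{x_2=0\}$) and Lemma \ref{l36} (H\"older decay near $\{x_2=0\}$) give uniform convergence and the boundary identity, while Lemma \ref{L31}(b) combined with degree-two homogeneity of ${\bf e}$ identifies $\triangle w_j$ on the quadrants. The bookkeeping with $\tilde{\bf b}_m = \eps_m^{-1/2}{\bf b}_m$, the scaling $e_j({\bf b}_m) = \eps_m\, e_j(\tilde{\bf b}_m)$, and the observation that compact subsets of $\{x_2\neq 0\}$ eventually fall inside the cone $\{|x_2|\ge C|{\bf b}_m||x_1|\}$ are exactly the points the paper's terse proof is implicitly invoking.
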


\begin{proof}
The convergence to a limit $w_j$ as above follows directly from Lemmas \ref{l300} and \ref{l36}. The second part is a consequence of $|{\bf b}_k| \le \delta \eps_k^{1/2}$ (see Definition \ref{S}), and Lemma \ref{L31} 
part b), after recalling that the function ${\bf e}({\bf b})$ is homogenous of degree 2 in $\bf b$ (see Definition \ref{ER}). 

\end{proof}

\section{Weiss monotonicity}

In this section we establish the upper bound for the Weiss energy in Lemma \ref{L41} and the main dichotomy result Proposition \ref{PM}, which give Theorem \ref{TIntro1} in the case of non-degenerate cones.

\

We denote by 
$$E({\bf u},r):= r^ {-(n+2)} \int_{B_r}\sum \,  \omega_k(\frac{1}{2}|\nabla u_k|^2+f_ku_k) \, dx$$
and
$$F({\bf u},r):= r^{-(n+3)}\int_{\partial B_r} \sum \omega_k u_k^2 \, d \sigma.$$
The Weiss functional is
$$ 
W({\bf u},r):= E({\bf u},r)- F({\bf u},r).$$

We compute
\begin{align*} \frac{d}{dr} W({\bf u},r) = r^ {-(n+2)} \int_{\partial B_r}\sum& \,  \omega_k \left(\frac{1}{2}|\nabla u_k|^2+f_ku_k - 2 r^{-1}u_k u_{k,\nu} + 4 r^{-2}u_k^2 \right) \, dx \\
& \quad \quad - (n+2) r^{-1} E({\bf u},r) \\
= r^ {-(n+2)} \int_{\partial B_r}\sum& \,  \frac {\omega_k}{2} (u_{k,\nu} - \frac 2 r u_k)^2 \, d \sigma + \frac {n+2} {r} (E({\bf u}_h,r) - E({\bf u},r)) \\
\ge r^ {-(n+2)} \int_{\partial B_r}\sum &\, \frac {\omega_k}{2} (u_{k,\nu} - \frac 2 r u_k)^2 \, d \sigma,
\end{align*}
where ${\bf u}_h$ denotes the homogenous of degree 2 extension of the boundary data of ${\bf u}$ on $\partial B_r$, and in the last inequality we used the minimality of ${\bf u}$ for the energy $E$ in $B_r$.

\begin{lem}\label{L41}
Assume that $ {\bf u} \in \mathcal S (1,{\bf p}, \eps ).$ Then
$$W({\bf u},1/2) \le W({\bf p}) +  C \eps^{3/2}.$$
\end{lem}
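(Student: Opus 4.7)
The plan is to combine the Weiss monotonicity just established with an energy-competitor argument for ${\bf u}$. The monotonicity $\tfrac{d}{dr} W({\bf u}, r) \ge 0$ immediately gives $W({\bf u}, 1/2) \le W({\bf u}, 1)$, so it suffices to prove $W({\bf u}, 1) \le W({\bf p}) + C \eps^{3/2}$. Setting ${\bf v} := {\bf p}(\cdot, {\bf b})$, $\eta := \sqrt\eps$, and a smooth radial cutoff $\phi$ with $\phi \equiv 0$ on $B_{1-\eta}$, $\phi \equiv 1$ on $\partial B_1$, $|\nabla \phi| \le C/\eta$, the convex combination $\tilde{\bf w} := (1-\phi){\bf v} + \phi{\bf u}$ inherits the ordering $\tilde w_1 \ge \cdots \ge \tilde w_N$ (since both ${\bf u}$ and ${\bf v}$ are admissible and $\phi \in [0,1]$) and agrees with ${\bf u}$ on $\partial B_1$. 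Minimality of ${\bf u}$ then yields $W({\bf u}, 1) \le W(\tilde{\bf w}, 1)$, as $F$ depends only on boundary values.

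The next step is to show $W(\tilde{\bf w}, 1) \le W({\bf v}) + C\eps^{3/2}$, where $W({\bf v}, r) = W({\bf v})$ is independent of $r$ by 2-homogeneity of ${\bf v}$. The difference $\tilde{\bf w} - {\bf v} = \phi({\bf u} - {\bf v})$ is supported in the strip of width $\eta = \sqrt\eps$ with $|\tilde{\bf w} - {\bf v}| \le \eps$, and the $C^{1,1}$ bound on both ${\bf u}$ and ${\bf v}$ (Lemma \ref{c11} and Lemma \ref{L2.2}) combined with $|{\bf u} - {\bf v}| \le \eps$ gives the interpolation estimate $|\nabla({\bf u} - {\bf v})| \le C\sqrt\eps$; hence $|\nabla(\tilde{\bf w} - {\bf v})| \le C\sqrt\eps$ as well. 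Every ``bulk'' strip term in $E(\tilde{\bf w}, 1) - E({\bf v}, 1)$ is then of size $O(\eps^{3/2})$. The dangerous term is the cross term $\int_{B_1}\nabla v_k \cdot \nabla(\tilde w_k - v_k)\,dx$, which after integration by parts over $B_1$ produces the boundary contribution $\sum \omega_k \int_{\partial B_1}(u_k - v_k) v_{k, \nu}\,d\sigma$; the 2-homogeneity of ${\bf v}$ gives $v_{k,\nu} = 2 v_k$ on $\partial B_1$, so this equals $2 \sum \omega_k \int_{\partial B_1}(u_k - v_k) v_k\,d\sigma$. On the other hand, expanding $\tilde w_k^2 - v_k^2 = 2(u_k - v_k) v_k + (u_k - v_k)^2$ yields $F(\tilde{\bf w}, 1) - F({\bf v}, 1) = 2 \sum \omega_k \int_{\partial B_1}(u_k - v_k) v_k\,d\sigma + O(\eps^2)$, which exactly cancels the boundary contribution from $E$ upon forming $W = E - F$.

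The final step is the cone comparison $W({\bf v}) \le W({\bf p}) + C|{\bf b}|^3 \le C\delta^3 \eps^{3/2}$. Writing $v_k = p_k + b_k x_1 x_2 + r_k$ from Lemma \ref{L2.2}, where $|r_k| \le C|{\bf b}|^2 x_1^2$ and $r_k$ equals a constant multiple of $x_1^2$ on each of the four quadrants of $B_1$ outside the transition cone $\{|x_2| \le C|{\bf b}||x_1|\}$ (with constants given by the error map ${\bf e}(\pm {\bf b})$ from Definition \ref{ER}), I expand $E({\bf v}) - E({\bf p})$ and $F({\bf v}) - F({\bf p})$. Linear-in-${\bf b}$ contributions vanish by $x_1$-oddness against the 1D profile ${\bf p}(x_2)$. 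The pure $b_k^2$ contribution satisfies the key identity $\tfrac12 \int_{B_1}|x|^2\,dx = \int_{\partial B_1}(x_1 x_2)^2\, d\sigma = \pi/4$, so it cancels exactly between the kinetic part of $E$ and the surface term of $F$. The remaining $O(|{\bf b}|^2)$ contributions—the cross terms $\nabla p_k \cdot \nabla r_k$, the approximate-solution error $\Delta v_I - f_I = 2 e_I(\pm {\bf b})$ from Lemma \ref{L31}(a), and contributions from the transition cone of area $\sim |{\bf b}|$—combine, using the identities $\sum_k \omega_k b_k^\pm = 0$ and $\sum_k \omega_k e_k^\pm = 0$ from Definition \ref{B(p)}, to yield the desired $O(|{\bf b}|^3)$ remainder.

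The main obstacle is the $|{\bf b}|^2$-coefficient cancellation in the last step: without it one would only obtain an $O(\delta^2 \eps)$ bound, which exceeds $\eps^{3/2}$ for all small $\eps$ and would not suffice for the discrete log-epiperimetric inequality used in the sequel.
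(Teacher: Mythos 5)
Your proposal is correct and gives a genuinely different route from the paper's proof, at least for the solution-to-approximant comparison. The paper does not invoke Weiss monotonicity or construct a cutoff competitor at this stage: instead it directly expands $W({\bf v},1/2) = W({\bf u},1/2) + \eps^2 I_1 + \eps I_2$ with ${\bf w}:=\eps^{-1}({\bf v}-{\bf u})$, and shows $I_2 \ge -C\eps$ using the variational inequality $\sum\omega_k(f_k-\triangle u_k)w_k\ge 0$ together with the identity $v_{k,\nu}=\tfrac2r v_k$ on $\partial B_r$; this yields the somewhat sharper bound $W({\bf u},1/2)\le W({\bf v})+C\eps^2$ (which is re-used with roles reversed in Lemma~\ref{-3}). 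Your monotonicity-plus-cutoff argument, with the convexity observation that $\tilde{\bf w}=(1-\phi){\bf v}+\phi{\bf u}$ remains admissible and with the exact cancellation of the $O(\eps)$ boundary contributions between the cross term of $E$ and the leading term of $F$, is a clean and standard alternative in the log-epiperimetric-inequality style; it only yields $O(\eps^{3/2})$, but that suffices for the stated lemma. The one place your write-up is noticeably thinner than the paper's is the cone comparison $W({\bf v})\le W({\bf p})+C\eps^{3/2}$. Your leading-order cancellations (the $x_1$-oddness, and the $\tfrac12\int_{B_1}|x|^2 = \int_{\partial B_1}(x_1x_2)^2 = \pi/4$ identity, which reflects the invariance of $W$ under addition of the harmonic polynomial $b_kx_1x_2$) are correct, but the claim that the residual $O(|{\bf b}|^2)$ contributions ``combine'' to $O(|{\bf b}|^3)$ is asserted rather than argued. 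The paper's path is tighter: using degree-$2$ homogeneity of ${\bf v}-{\bf p}$, one gets $W({\bf v})-W({\bf p})=\int_{B_1}\sum\omega_k(v_k-p_k)\bigl(f_k-\tfrac12\triangle v_k-\tfrac12\triangle p_k\bigr)\,dx$, and one then splits into the transition cone $\{|x_2|\lesssim|{\bf b}||x_1|\}$ (measure $\sim|{\bf b}|$, integrand $O(|{\bf b}|^2)$) and its complement, where the estimate relies on $v_k-p_k$ being constant in $k$ across each coincidence group of ${\bf p}$ together with $\sum_{k\in I}\omega_k(f_k-f_I)=0$ and $|{\bf e}(\pm{\bf b})|\le C|{\bf b}|^2$. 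Those are the identities actually doing the cancellation, not just $\sum_k\omega_kb_k^\pm=0$ and $\sum_k\omega_ke_k^\pm=0$; you should spell them out if you want the $O(|{\bf b}|^3)$ (or even $O(\eps^{3/2})$) bound to be airtight.
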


\begin{proof}
We denote by ${\bf v}:={\bf p}(\cdot,{\bf b})$ and we prove the following inequalities
\begin{equation}\label{4100}
W({\bf u},1/2) \le W({\bf v}) + C \eps^2,
\end{equation}
and
\begin{equation}\label{4101}
W({\bf v}) \le W({\bf p}) + C \eps^{3/2}.
\end{equation}
In order to obtain \eqref{4100} we write
$${\bf v}={\bf u} + \eps {\bf w}, \quad \quad |{\bf w}| \le 1.$$
By Lemmas \ref{l300}, \ref{l36} we know that outside the strip $\{ |x_2| \le C \sqrt \eps\}$ each component $w_k$ satisfies $|\triangle w_k|\le \delta$, hence
\begin{equation}\label{nabw}
|\nabla {\bf w}| \le C(|x_2| + \sqrt \eps) ^{\alpha -1}, \quad \mbox{in} \quad \{ |x_2| \ge C \sqrt \eps\}\cap B_{1/2}.
\end{equation}
Inside the strip, the $C^{1,1}$ norm of $w_k$ is bounded by $C \eps^{-1}$, hence 
\begin{equation}\label{nabw2}
|\nabla {\bf w}| \le C \eps^{-1/2} \quad \mbox{in} \quad \{ |x_2| \le C \sqrt \eps\}\cap B_{1/2}.
\end{equation}
Then, with $r=1/2$, we write
$$W({\bf v}, r) = W({\bf u},r) + \eps^2 r^{n-2} I_1 +  \eps r^{n-2}  I_2,$$
with
$$I_1:= \int_{B_r}\sum \,  \frac{\omega_k}{2}|\nabla w_k|^2 dx - r^{-1} \int_{\partial B_r} \sum \omega_k w_k^2 \, d \sigma,$$
\begin{align*}
I_2:=&\int_{B_{r}} \sum \omega_k (\nabla u_k \cdot \nabla w_k + f_k w_k) dx - 
\int_{\partial B_r} \sum\omega_k  \, \frac 2 r u_k w_k d \sigma\\
= &\int_{B_{r}} \sum \omega_k (f_k -\triangle u_k) w_k dx + \int_{\partial B_r} \sum  \omega_k (u_{k,\nu} - \frac 2 r u_k) w_k d \sigma\\
\ge & \quad \eps \quad \int_{\partial B_r} \sum  \omega_k (-w_{k,\nu} + \frac 2 r w_k) w_k d \sigma.
\end{align*}
In the last inequality we used (see \eqref{EL1}) 
\begin{equation}\label{uineq}
\sum \omega_k (f_k -\triangle u_k) w_k \ge 0,
\end{equation} and that ${\bf v}$ is homogenous of degree 2.
From \eqref{nabw}-\eqref{nabw2} we infer that $I_2 \ge - C \eps$. Since $I_1 \ge -C$ we conclude that \eqref{4100} holds.

For the second inequality \eqref{4101} we argue similarly. We denote
$${\bf p}={\bf v} + {\bf g},$$
for some ${\bf g}$ that satisfies (see Lemma \ref{L31} part b))
$$|{\bf g}| \le C \sqrt \eps \quad \mbox{in $B_1$,} \quad \quad |{\bf g}| \le C \eps \quad \mbox{in $\{ |x_2| \le C \sqrt \eps\} \cap B_1$.}$$
We have
$$W({\bf p}) = W({\bf v}) + I_3$$
with
\begin{align*}
I_3:=&\int_{B_{1}} \sum \omega_k \left(\nabla v_k \cdot \nabla g_k + \frac 12 |\nabla g_k|^2 + f_k g_k \right) dx - \int_{\partial B_1} \sum  \omega_k  \,(2 v_k g_k + g_k^2) d \sigma\\
=& \int_{B_1}  \sum \omega_k \left(f_k - \triangle v_k - \frac 12 \triangle g_k \right) g_k \, dx,
\end{align*}
where we have used that ${\bf v}$ and ${\bf g}$ are homogenous of degree 2. 

We estimate the last integral.
When $x$ belongs to the strip
 $ \{|x_2| \le C \sqrt \eps\}$ then 
 $$|g_k| \le C \eps \quad \mbox{and} \quad  |f_k - \triangle v_k - \frac 12 \triangle g_k| \le C,$$
 while outside the strip we have (see Lemma \ref{L31} part a) and Lemma \ref{l300})
$$| \sum \omega_k \left(f_k - \triangle v_k \right) g_k|\le C \eps ^{3/2}, \quad \quad |\triangle g_k| \le \eps.$$
Thus $|I_3| \le C \eps^{3/2}$, and \eqref{4101} is proved.

\end{proof}

\begin{prop}\label{PM}
Assume that $ {\bf u} \in \mathcal S (1,{\bf p}, \eps ),$ with $\eps \le \eps_0$. Then either
$$ {\bf u} \in \mathcal S (\rho,{\bf p}, \frac \eps 2),$$
or
$$ {\bf u} \in \mathcal S (\rho,{\bf p}, C \eps ), \quad \mbox{and} \quad W({\bf u}, \rho) \le W({\bf u},1) - c  \, \eps^2.$$
Here $\rho$, $\eps_0$, $c$ (small) and $C$ (large) denote universal constants.
\end{prop}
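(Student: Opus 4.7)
The plan is to argue by contradiction and compactness, in the spirit of \cite{SY3,SY4}. Suppose the proposition fails along a sequence ${\bf u}_m\in\mathcal S(1,{\bf p},\eps_m)$ with $\eps_m\to 0$ and associated vectors ${\bf b}_m\in B({\bf p})$; I will construct an explicit improvement and derive a contradiction. By Corollary \ref{C1}, the rescaled errors $w_{m,j}:=\eps_m^{-1}(u_{m,j}-p_j(\cdot,{\bf b}_m))$ converge, along a subsequence and up to an additional small rotation, uniformly in $B_{1/2}$ to limits $w_j$ satisfying $\|w_j\|_\infty\le 1$, $w_j=0$ on $\{x_2=0\}$, and $\triangle w_j=-2e_j({\bf b})\chi_{\{x_1>0\}}-2e_j(-{\bf b})\chi_{\{x_1<0\}}$ in $\{x_2\ne 0\}$, where ${\bf b}=\lim \eps_m^{-1/2}{\bf b}_m$, $|{\bf b}|\le\delta$.

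The admissible infinitesimal perturbations of ${\bf p}(\cdot,{\bf b}_m)$ that produce another approximate solution are: (i) replacing ${\bf b}_m$ by ${\bf b}_m+\eps_m\Delta{\bf b}$ with $\Delta{\bf b}\in B({\bf p})$, giving the leading-order increment $(\Delta b)_j\,x_1 x_2$; and (ii) rotating by an angle $\eps_m\theta$, which by Definition \ref{tau} amounts to the variation $\Delta{\bf b}=-\theta\tau\in B({\bf p})$ and is therefore absorbed in (i). I pick $\Delta{\bf b}\in B({\bf p})$ that best absorbs the $2$-homogeneous content of ${\bf w}$ at the origin, set ${\bf b}_m':={\bf b}_m+\eps_m\Delta{\bf b}$, and note that $|\Delta{\bf b}|\le C\|{\bf w}\|_\infty\le C$ yields $|{\bf b}_m'|\le\delta(\eps_m/2)^{1/2}$ for $m$ large. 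By uniform convergence,
$$\eps_m^{-1}\bigl(u_{m,j}-p_j(\cdot,{\bf b}_m')\bigr)\longrightarrow\tilde w_j:=w_j-(\Delta b)_j\,x_1 x_2\quad\text{in }C(\overline{B_{1/2}}).$$
If $\tilde w_j(x)=o(|x|^2)$ at the origin, then $|\tilde w_j|\le\rho^2/4$ in $B_\rho$ for some universal $\rho$, and ${\bf u}_m\in\mathcal S(\rho,{\bf p},\eps_m/2)$ for $m$ large, contradicting our assumption.

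Otherwise, the residual $\tilde w_j$ has a nontrivial $2$-homogeneous component orthogonal to the admissible directions---a situation that can only arise when the PDE for ${\bf w}$ has a genuinely asymmetric right-hand side, i.e., when ${\bf b}\not\propto\tau$ (cf.\ Lemma \ref{eb} and Corollary \ref{cor31}). I then combine the Weiss monotonicity
$$W({\bf u}_m,1)-W({\bf u}_m,\rho)\ \ge\ \int_\rho^{1}r^{-(n+2)}\int_{\partial B_r}\sum\tfrac{\omega_k}{2}\bigl(u_{k,\nu}-\tfrac{2}{r}u_k\bigr)^2 d\sigma\,dr,$$
with the fact that $p_j(\cdot,{\bf b}_m)$ is exactly $2$-homogeneous, so $u_{k,\nu}-\tfrac{2}{r}u_k=\eps_m(w_{m,k,\nu}-\tfrac{2}{r}w_{m,k})+o(\eps_m)$. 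Passing to the limit and applying Corollary \ref{cor31} to quantify the size of the $2$-homogeneous residual in terms of $|e({\bf b})-e(-{\bf b})|$, the right-hand side is bounded from below by $c\eps_m^2$. This yields $W({\bf u}_m,\rho)\le W({\bf u}_m,1)-c\eps_m^2$, the second alternative, again contradicting our assumption.

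The main obstacle is the algebraic step separating the two scenarios: proving that whenever the optimal absorber $\Delta{\bf b}$ still leaves a nontrivial $2$-homogeneous residual in $\tilde w_j$, that residual necessarily drives the Weiss energy down by $c\eps_m^2$. This reduces to analyzing the $2$-homogeneous Dirichlet--Poisson problem on $\R^2\setminus\{x_2=0\}$ with piecewise-constant right-hand side, where the non-orthogonality from Lemma \ref{eb} and the quantitative version in Corollary \ref{cor31}---asserting that ${\bf b}=s\tau$ is the only direction eliminating the asymmetry---guarantee that no admissible $\Delta{\bf b}$ can cancel a leftover residual, closing the dichotomy.
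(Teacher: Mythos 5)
Your overall strategy matches the paper's — contradiction plus compactness via Corollary~\ref{C1}, and splitting according to whether the limit rescaled error can be absorbed by an admissible update of ${\bf b}$ — but there is a genuine gap at the crucial step where you claim the improved approximate solution satisfies the constraint $|{\bf b}_m'|\le\delta(\eps_m/2)^{1/2}$.

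You set ${\bf b}_m'={\bf b}_m+\eps_m\Delta{\bf b}$ with $|\Delta{\bf b}|\le C$ and assert that this yields $|{\bf b}_m'|\le\delta(\eps_m/2)^{1/2}$ for $m$ large. That is false in general: you only know $|{\bf b}_m|\le\delta\eps_m^{1/2}$, so $|{\bf b}_m'|\le\delta\eps_m^{1/2}+C\eps_m$, and since $\eps_m\ll\eps_m^{1/2}$, you get $|{\bf b}_m'|\le\delta\eps_m^{1/2}(1+o(1))$, which is \emph{not} a factor of $\sqrt 2$ smaller. The membership condition for $\mathcal S(\rho,{\bf p},\eps_m/2)$ requires precisely this extra shrinkage, and an $O(\eps_m)$ change in ${\bf b}$ is far too small to produce it. This is exactly the obstruction the paper flags and circumvents: it first uses the (assumed failed) energy inequality to prove that the limit ${\bf w}$ is $2$-homogeneous in $B_{1/2}$, deduces via Lemma~\ref{eb} that ${\bf b}=s\tau$, and only then applies a \emph{rotation of coordinates by an angle $\sim\eps_m^{1/2}s$} — not an $O(\eps_m)$ infinitesimal rotation — which absorbs the $\tau$-component of ${\bf b}_m$ and reduces the vector in the rotated frame to ${\bf d}_m=o(\eps_m^{1/2})$. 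Your remark that rotations are "absorbed in (i)" as $\Delta{\bf b}=-\theta\tau$ is only valid for rotations of order $\eps_m$, so you have quietly discarded the one mechanism that can actually satisfy the constraint.

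Relatedly, the logical ordering matters: you try to decide first whether the residual $\tilde w_j$ vanishes to high order, but under the assumption that \emph{both} alternatives fail, the Weiss inequality $W({\bf u}_m,1)-W({\bf u}_m,\rho)\le c_m\eps_m^2$ already forces ${\bf w}$ to be $2$-homogeneous, so there is no residual in your sense; the content of the proof is that $2$-homogeneity together with Lemma~\ref{eb} forces ${\bf b}=s\tau$, and that is what makes the $\eps_m^{1/2}$-rotation available. Without inserting that step, your dichotomy cannot close.
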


\begin{rem}
If ${\bf v}_1$ and ${\bf v}_\rho$ denote the approximate solutions of the type ${\bf p}(\cdot, {\bf b})$ in $B_1$ respectively $B_\rho$, that appear in the conclusion of Proposition \eqref{PM},  (see Definition \ref{S}), then they must be $C \eps$-close to each other i.e. 
$$\|{\bf v}_1-{\bf v}_\rho\|_{L^\infty(B_1)}\le C \eps.$$
\end{rem}

\begin{proof} We remark that the first conclusion of the second alternative ${\bf u} \in \mathcal S (\rho,{\bf p}, C \eps )$ is obvious, by taking $C = \rho^{-2}$.

We prove the statement by compactness. We fix $\rho=1/4$, $C=\rho^{-2}$, and assume that there exists a sequence of ${\bf u}_m$, ${\bf b}_m$, $\eps_m \to 0$ for which the conclusion 
does not hold with $c_m = 1/m \to 0$. 
By Corollary \ref{C1} we may extract a subsequence of the rescaled errors
$${\bf w}_m:=\eps_m^{-1} \left({\bf u}_{m} - {\bf p}(\cdot, {\bf b}_m )\right)$$ 
which converges uniformly in $B_{1/2}$ (and in $C^1_{loc}(B_{1/2} \setminus\{ x_2=0\})$) to a limit function ${\bf w}$ which satisfies
$$ w_j=0 \quad \mbox{on} \quad \{x_2=0\},$$
$$\triangle w_j =-2e_j({\bf b}) \chi_{\{x_1 >0 \}} -  2e_j(-{\bf b}) \chi_{\{x_1 <0 \}}\quad \mbox{in $\{x_2<0\} \cup \{x_2>0\}$,}$$
where ${\bf b} \in B({\bf p})$ is the limit of  
$$ {\bf b}:= \lim_{m \to \infty} \eps_m^{-\frac 12}{\bf b}_m, \quad \quad |{\bf b}| \le \delta.$$
Since
\begin{align*}
\eps_m^{-2}(W({\bf u}_m, 1)-W({\bf u}_m, \rho) ) &= \eps_m^{-2} \int_\rho^1 \frac {d}{dr} W({\bf u}_m, r) dr \\
& \ge \int_{B_1 \setminus B_\rho} r^ {-(n+2)} \sum \, \frac {\omega_k}{2} (\partial_\nu w_{m,k} - \frac 2 r w_{m,k})^2 \, d \sigma, 
\end{align*}
we may take $m \to \infty$ and conclude that ${\bf w}$ is homogenous of degree 2 in $B_{1/2}$ (first in $B_{1/2} \setminus B_\rho$ by the inequality above, and then in $B_{1/2}$ by unique continuation). This implies that ${\bf e}({\bf b})={\bf e}(-{\bf b})$ and by Lemma \ref{eb} we conclude that 
\begin{equation}\label{betas}
{\bf b}=s \tau \quad \mbox{ for some} \quad s \in [-C \delta, C\delta].
\end{equation}
Moreover,
$$ w_j:= \gamma_j \,  x^2_2 + \left (t_j^+  \chi_{\{x_2>0\}} + t_j^- \chi_{\{x_2<0\}}\right) x_1 x_2,$$
with $\gamma_j=-e_j({\bf b})$, and
$$|{\bf \gamma}| = |{\bf e}({\bf b})| \le C |{\bf b}|^2 \le C \delta^2 \le \delta.$$
Moreover, since the average of $w_j$ is $0$ then ${\bf t} \in B({\bf p})$, $|{\bf t}| \le C$.
Using Lemma \ref{L31} part b), we find that
$${\bf p} (\cdot, {\bf b}_m + \eps_m {\bf t} )= {\bf p} (\cdot, {\bf b}_m) + \eps_m {\bf w} - \eps_m \, x_2^2 \,  \gamma  + O\left((|{\bf b}_m|^2 + |{\bf b}_m+ \eps_m {\bf t}|^2)  x_1^2\right)$$
hence
\begin{equation}\label{usubm2}
|{\bf u}_m - {\bf p} (\cdot, {\bf b}_m + \eps_m {\bf t})| \le  \eps_m ( \delta + C \delta^2) \rho^2 \le \frac{\eps_m}{4} \rho^2 \quad \mbox{in} \quad B_\rho.
\end{equation}
We cannot yet conclude that $ {\bf u}_m \in \mathcal S (\rho,{\bf p}, \eps_m/ 2),$ and reach a contradiction since we do not know that 
$$|{\bf b}_m + \eps_m {\bf t}| \le \delta (\eps_m/2)^{1/2}.$$
We achieve this after a rotation of coordinates. 
We use \eqref{betas} and write $${\bf b}_m + \eps_m {\bf t}=\eps_m^{1/2} (s \tau + {\bf d}_m) \quad \mbox{ with} \quad  {\bf d}_m \to 0,$$ and find (see Definition \ref{tau})
\begin{align*}
{\bf p} (x, {\bf b}_m + \eps_m {\bf t})&={\bf h} (x_2, x_1\eps_m^{1/2} (s \tau + {\bf d}_m))\\
& = {\bf h} (x_2+\eps_m^{1/2}s x_1, x_1\eps_m^{1/2} {\bf d}_m).
\end{align*}
Denote by $(y_1,y_2)$ the new coordinates in the rotated system 
$$y_1:=  (1+ \eps_m s^2)^{-1}(x_1 - \eps_m^{1/2} s x_2), \quad  y_2:= (1+ \eps_m s^2)^{-1}(x_2+\eps_m^{1/2}s x_1),$$
and notice that
$$x_2+\eps_m^{1/2}s x_1=y_2 + O( \eps_m s^2|y|), \quad x_1\eps_m^{1/2} {\bf d}_m=y_1 \eps_m^{1/2} {\bf d}_m + O( \eps_m s|y|).$$
Thus, since ${\bf h}$ is homogenous of degree 2 and has bounded second derivatives,
\begin{align}\label{change}
\nonumber {\bf p} (x, {\bf b}_m + \eps_m {\bf t})=&{\bf h}\left(y_2 + O( \eps_m s^2|y|), y_1 \eps_m^{1/2} {\bf d}_m + O( \eps_m s|y|)\right)\\
\nonumber =&{\bf h}\left(y_2 , y_1 \eps_m^{1/2} {\bf d}_m\right) + O( \eps_m s|y|^2)\\
=& {\bf p}(y,\eps_m^{1/2} {\bf d}_m) + O( \eps_m s|y|^2).
\end{align}
The error term is bounded by (see \eqref{betas}) 
$$|O(\eps_m s|y|^2)| \le C \delta \eps_m |y|^2 \le \frac{\eps_m}{4} |y|^2$$
provided that $\delta$ is chosen small. Also, for all large $m$,
$$|\eps_m^{1/2} {\bf d}_m|\le \delta (\eps_m/2)^{1/2},$$
and by \eqref{usubm2} we conclude $ {\bf u}_m \in \mathcal S (\rho,{\bf p}, \eps_m /2),$ which is a contradiction.

\end{proof}

\begin{thm}\label{Td2}
Assume that $d=2$ and ${\bf p} \in \mathcal P^c$ is a blow-up limit for $\bf u$ at the origin. Then, $\bf p$ is unique and
$${\bf u}(x)={\bf p}(x_2) + O(|x|^2(-\log |x|)^{-1}).$$
\end{thm}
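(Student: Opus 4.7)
The plan is to iterate the dichotomy of Proposition \ref{PM} at dyadic scales $\rho^n$ and combine it with the Weiss estimate of Lemma \ref{L41} via Lemma \ref{Sequences}. Since ${\bf p}\in\mathcal{P}^c$ is a blow-up limit for ${\bf u}$, there is some $r_0>0$ such that the rescaling ${\bf u}_{r_0}(x):=r_0^{-2}{\bf u}(r_0 x)$ lies in $\mathcal S(1,{\bf p},\eps_0)$ with $\eps_0$ below the universal threshold. Replacing ${\bf u}$ by ${\bf u}_{r_0}$, we may assume ${\bf u}\in\mathcal S(1,{\bf p},\eps_0)$. Inductively applying Proposition \ref{PM} produces rotations $R_n$, vectors ${\bf b}_n\in B({\bf p})$ and parameters $\eps_n$ with ${\bf u}\in\mathcal S(\rho^n,{\bf p},\eps_n)$ in the $R_n$-frame. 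Writing $w_n:=W({\bf u},\rho^n)-W({\bf p})$, the alternatives of Proposition \ref{PM} give either $\eps_{n+1}=\eps_n/2$ with $w_{n+1}\le w_n$ (by monotonicity of $W$), or $\eps_{n+1}=C\eps_n$ with $w_{n+1}\le w_n-c\eps_n^2$. Lemma \ref{L41} applied at scale $\rho^n$ together with the scale-invariance of $W$ additionally yields $w_{n+1}\le C\eps_n^{3/2}$, so Lemma \ref{Sequences} applies and produces the decisive bound
$$\sum_{n\ge k}\eps_n \le \frac{M}{k}.$$

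By the remark following Proposition \ref{PM}, the approximate solutions ${\bf v}_n$ (corresponding to ${\bf b}_n$, in the $R_n$-frame) satisfy $\|{\bf v}_n-{\bf v}_{n+1}\|_{L^\infty(B_{\rho^n})}\le C\eps_n\rho^{2n}$; homogeneity of degree $2$ upgrades this to $|{\bf v}_n(x)-{\bf v}_{n+1}(x)|\le C\eps_n|x|^2$ on all of $\R^2$. Telescoping together with the summability above shows that the ${\bf v}_n$ form a Cauchy sequence converging to a profile ${\bf v}_\infty$ with $|{\bf v}_n(x)-{\bf v}_\infty(x)|\le C|x|^2/n$. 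The pair (rotation $R_n$, vector ${\bf b}_n$) must be tracked jointly: by Definition \ref{tau}, an infinitesimal rotation of the frame is equivalent to adding a $\tau$-multiple to ${\bf b}$, so the one-parameter $\tau$-freedom is absorbed into the rotation. Once quotiented by this ambiguity, the sequence is Cauchy and the limit is of the form ${\bf v}_\infty={\bf p}(\nu\cdot x)$ for a unique unit vector $\nu$, which gives uniqueness of the blow-up. The quantitative bound follows by combining $|{\bf u}(x)-{\bf v}_n(x)|\le\eps_n\rho^{2n}$ at scale $\rho^n$ with $|{\bf v}_n(x)-{\bf v}_\infty(x)|\le C|x|^2/n$: for $|x|\sim\rho^n$ we have $n\sim -\log|x|/\log\rho^{-1}$, producing the advertised rate $O(|x|^2(-\log|x|)^{-1})$.

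The principal technical obstacle will be the control of the rotations. The per-step rotation appearing in the proof of Proposition \ref{PM} is only bounded by $O(\eps_n^{1/2})$, and $\sum\eps_n^{1/2}$ may fail to converge from the decay $\eps_n\le Cn^{-3/2}$ alone. The way around this is to observe that both ${\bf v}_n$ and ${\bf v}_{n+1}$ must approximate ${\bf u}$ to precision $\eps_n\rho^{2n}$ on the common ball $B_{\rho^{n+1}}$, so after passing to the $B_1$-scale by homogeneity of degree $2$, the joint change in $(R_n,{\bf b}_n)$ modulo the $\tau$ direction is forced to be $O(\eps_n)$ rather than $O(\eps_n^{1/2})$. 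This is the step where the algebraic structure of $B({\bf p})$ and Definition \ref{tau} enter essentially, and it is precisely this improvement from $\eps_n^{1/2}$ to $\eps_n$ that makes the convergence of the rotations (hence uniqueness) and the sharp logarithmic rate possible.
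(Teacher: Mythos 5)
Your proposal is correct and reconstructs precisely the argument the paper intends (the paper's proof of Theorem~\ref{Td2} simply cites Lemma~\ref{L41}, Proposition~\ref{PM}, and Lemma~\ref{Sequences} and omits the details). You correctly identify that $w_n:=W({\bf u},\rho^n)-W({\bf p})\ge 0$ by Weiss monotonicity plus the assumed blow-up, that Lemma~\ref{L41} gives $w_{n+1}\le C\eps_n^{3/2}$, that the two Proposition~\ref{PM} alternatives feed directly into Lemma~\ref{Sequences}, and -- the genuinely delicate point -- that the per-step rotation is only $O(\eps_n^{1/2})$ so one must instead track the approximate solutions ${\bf v}_n$ themselves via the Remark after Proposition~\ref{PM}, which hands you the $C\eps_n$ increment that makes $\sum\eps_n\le M/k$ usable and yields the logarithmic rate.
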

\begin{proof}
It follows from Lemma \ref{L41}, Proposition \ref{PM} and Lemma \ref{Sequences}. We omit the details.
\end{proof}


\section{The degenerate cones}

In this section we prove Theorem \ref{TIntro1} for degenerate 2D cones. The main ideas are similar to the ones of the previous section, however the convergence of the rescaled errors is much more delicate in this case. Also the compactness argument is more involved due to the geometry of singular cones.  

\

We consider 1D cones which do not belong to $\mathcal P^c$, and their two-dimensional analogues. 
Fix such a one-dimensional cone $${\bf p}_* \in \mathcal P \setminus \mathcal P^c.$$ We can decompose ${\bf p}_*$ as a union of $m \ge 2$ cones in $\mathcal P^c$ as follows. 

Let $k_1 < k_2 < ..< k_{m-1}$ be the indices $k$ with trivial coincidence sets, i.e. 
$$\{p_{*,k}=p_{*,k+1}\} =\{0\}.$$ The consecutive membranes in each of the $m$ groups 
$\{p_{*,k_i}, p_{*,k_i+1},..,p_{*,k_{i+1}-1}\}$ are connected nontrivially on a half-line. After subtracting the average $q_{*,i}$ (a quadratic polynomial) from each group we define the corresponding vector
$${\bf p}^i_* := (p_{*,k_{i-1}+1},..,p_{*,k_i})-(q_{*,i},q_{*,i},...,q_{*,i}): \R \to\R^{k_{i}-k_{i-1}},$$
and ${\bf p}^i_*$ is a connected cone for the $k_i-k_{i-1}$ membranes.
Thus we can write ${\bf p}_*$ as a union of $m$ connected cones
\begin{equation}\label{p*}
{\bf p}_*=({\bf p}^1_* + q_{*,1} {\bf 1}, ...,   {\bf p}^m_* + q_{*,m} {\bf 1}), \quad \quad {\bf p}^i_* \in \mathcal P^c.
\end{equation}
The analogue cones in 2D corresponding to ${\bf p}_*$ have the form
\begin{equation}\label{degp}
{\bf p}=({\bf p}^1 + q_{1} {\bf 1}, ...,   {\bf p}^m + q_{m} {\bf 1}), 
\end{equation}
with $q_i$ quadratic polynomials such that $$\triangle q_i=q_{*,i}'', \quad \quad \sum \omega_k q_i=0,$$ and with ${\bf p}^i$ obtained from ${\bf p}_*^i$ after a rotation. Here ${\bf p}_*^i$ represents the trivial extension from 1D to 2D while the angle of rotation depends on $i$. The polynomials $q_i$ and rotations ${\bf p}^i$ are constrained by the condition $p_{k} \ge p_{k+1}$ which must hold for all $k \ge 1$. This condition needs to be checked only for consecutive membranes  belonging to different connected groups, i.e. when $k$ is one of the $k_i$'s, since it is clearly satisfied within each connected group.
 
When ${\bf p} \in \mathcal C_2$ is a 2D-cone extension of ${\bf p}_*$ as in \eqref{degp} we write
$$ {\bf p} \in \mathcal P({\bf p}_*).$$
For such a cone ${\bf p}$, the free boundaries $$\Gamma_k:=\partial \{p_k >p_{k+1} \}$$ with $k_{i-1} <k<k_i$ 
coincide with a single line, the line of the rotation of ${\bf p}^i_*$ (whenever ${\bf p}^i_*$ consists of at least two membranes). 
When $k=k_i$ then the free boundary $\Gamma_k$ is the same as the coincidence set $\{p_k=p_{k+1}\}$, and we show that it 
is either the origin, one ray, or two rays passing through the origin. We make this more precise.

\begin{lem}\label{2ray}
$\Gamma_{k_i}$ consists of at most two rays that make an angle strictly greater than $\pi /2$.
\end{lem}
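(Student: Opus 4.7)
Let $w := p_{k_i}-p_{k_i+1}$. As the difference of two 2-homogeneous $C^{1,1}$ functions, $w$ is 2-homogeneous, $C^{1,1}$, non-negative, and $\Gamma_{k_i}=\{w=0\}$; by 2-homogeneity this zero set is a union of rays from the origin, possibly together with open sectors. From \eqref{degp}, write $p_{k_i}(x)=\phi_i(x\cdot\nu_i)+q_i(x)$ and $p_{k_i+1}(x)=\psi_{i+1}(x\cdot\nu_{i+1})+q_{i+1}(x)$, where $\phi_i\le 0$ and $\psi_{i+1}\ge 0$ are the one-dimensional piecewise quadratic branches (bottom of group $i$, top of group $i+1$), $\nu_i\perp\ell_i$, and $q_i,q_{i+1}$ are 2-homogeneous quadratic polynomials. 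Hence
\[
w(x)=\phi_i(x\cdot\nu_i)-\psi_{i+1}(x\cdot\nu_{i+1})+Q(x),\qquad Q:=q_i-q_{i+1},
\]
a single 2-hom quadratic polynomial on each of the at most four open sectors cut by $\ell_i$ and $\ell_{i+1}$. In $\{w>0\}$, $p_{k_i}$ coincides at each point with an index set $I\subset\{k_{i-1}+1,\dots,k_i\}$ and $p_{k_i+1}$ with $J\subset\{k_i+1,\dots,k_{i+1}\}$, and nondegeneracy $f_1>\cdots>f_N$ gives $\varphi:=\Delta w=f_I-f_J>0$; moreover $\varphi$ is piecewise constant with jumps only across $\ell_i$ (which changes $I$) and $\ell_{i+1}$ (which changes $J$).

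Next I rule out 2D zero sectors. If $w\equiv 0$ on an open sector $S$ avoiding $\ell_i\cup\ell_{i+1}$, the single polynomial expression for $w$ in $S$ vanishes identically on $\R^2$. Propagating this identity into the three adjacent sectors across $\ell_i,\ell_{i+1}$ (matching $C^1$-ly on the common rays) and using $w\ge 0$ together with the sign constraints $\phi_i\le 0,\psi_{i+1}\ge 0$ forces enough of the branch coefficients to vanish that $w=Q$ reduces to a single 2-hom positive semidefinite quadratic, whose zero set is at most one line through the origin; the conclusion holds directly in that case. Otherwise $\{w=0\}\cap S^1$ consists of finitely many points.

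For each connected component $C\subset S^1$ of $\{w>0\}\cap S^1$ of angular width $\alpha$, write $w=r^2 g(\theta)$, so $g\ge 0$, $g=g'=0$ at $\partial C$, and $g''+4g=\varphi$ on $C$. A combinatorial inspection of how $I$ and $J$ are re-shuffled on crossing $\ell_i$ and $\ell_{i+1}$, using the nondegeneracy $f_1>\cdots>f_N$ and the branch configurations of groups $i,i+1$, shows that the at most three values of $\varphi$ along $C$ always arrange themselves in a unimodal pattern (non-decreasing then non-increasing); Lemma \ref{c1} then yields $\alpha\ge\pi$. Three or more zero rays on $S^1$ would produce three or more positivity components each of width $\ge\pi$ and summing to $2\pi$, impossible. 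Hence $\Gamma_{k_i}$ contains at most two rays; with exactly two, both complementary components have width $\ge\pi$, forcing each to equal $\pi$, the two rays to be antipodal, and the angle between them to be $\pi>\pi/2$.

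The main obstacle is the unimodality claim for $\varphi$ in the previous paragraph. Depending on whether the branch configuration of group $i$ (resp. $i+1$) is such that crossing $\ell_i$ (resp. $\ell_{i+1}$) raises or lowers the partial force average $f_I$ (resp. $f_J$), one must check that the sub-arcs of $C$ are ordered compatibly with the monotonicity hypothesis of Lemma \ref{c1}; this rests on the monotonicity of $f_I$ in the bottom sub-group $I$ driven by $f_1>\cdots>f_N$. The polynomial propagation step used to exclude 2D zero sectors is the other main technical point; both arguments leverage in an essential way the explicit structure of ${\bf p}\in\mathcal P({\bf p}_*)$ recorded in \eqref{degp}.
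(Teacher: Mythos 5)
Your argument takes a genuinely different route from the paper, but it has two serious gaps.

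\textbf{The unimodality claim is not established, and it fails in general.} Your application of Lemma~\ref{c1} to a positivity arc $C$ hinges on $\varphi=\triangle w$ being a step function that is nondecreasing then nonincreasing along $C$. You defer this to ``a combinatorial inspection'' and then acknowledge it as ``the main obstacle,'' but the claim is false as stated. The four sectors cut by $\ell_i,\ell_{i+1}$ carry four values of $\varphi=\triangle p_{k_i}-\triangle p_{k_i+1}$: using $f_1>\cdots>f_N$, one checks that $\triangle p_{k_i}$ is larger on the side of $\ell_i$ where $p_{k_i}$ coincides with the rest of its group (there it equals an average $f_L$ with $L\ni k_i$ a bottom segment of group $i$, hence $f_L>f_{k_i}$), and $\triangle p_{k_i+1}$ is smaller on its connected side. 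Therefore $\varphi$ is largest on the ``both connected'' sector, smallest on the ``both free'' sector, and intermediate on the two mixed sectors; moreover the ``both free'' and ``both connected'' sectors are diametrically opposite. If a positivity arc runs mixed $\to$ both-free $\to$ mixed, $\varphi$ has an interior local \emph{minimum}, exactly the configuration Lemma~\ref{c1} excludes. Nothing in your write-up rules this out.

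\textbf{The conclusion you derive is too strong, which is itself a red flag.} With two zero rays you conclude both complementary arcs have width $\ge\pi$, hence $=\pi$ and the rays are antipodal. But the lemma states only ``angle strictly greater than $\pi/2$,'' and this weaker bound is what the later construction of a harmonic quadratic negative on $\Gamma_{k_i}$ actually uses and requires; the two rays can be non-antipodal. Worse, if both arcs truly had width exactly $\pi$, the ``moreover'' clause of Lemma~\ref{c1} would force $\varphi$ constant on each half-circle, which generically fails since $\ell_i$ and $\ell_{i+1}$ cut across them --- your own argument becomes self-contradictory. This again points to the collapse of the unimodality hypothesis. (A smaller point: ruling out a 2D zero sector needs no polynomial propagation --- $\triangle w=\triangle p_{k_i}-\triangle p_{k_i+1}\ge c>0$ by nondegeneracy, so $w$ cannot vanish on any open set; the paper notes this just before the lemma.)

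For comparison, the paper's proof is much shorter and avoids the unimodality issue entirely. It writes $p_{k_i}=\varphi_1+a_1[(x\cdot\nu_1)^+]^2$ and $p_{k_i+1}=\varphi_2-a_2[(x\cdot\nu_2)^+]^2$ with $\varphi_j$ homogeneous quadratics, $a_j\ge 0$, and $\triangle\varphi_1=f_{k_i}>f_{k_i+1}=\triangle\varphi_2$. The inequality $p_{k_i}\ge p_{k_i+1}$ means $\varphi_2-\varphi_1$ lies below the nonnegative function $a_1[(x\cdot\nu_1)^+]^2+a_2[(x\cdot\nu_2)^+]^2$, and the coincidence rays are exactly where it touches from below. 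Since $\varphi_2-\varphi_1$ is a strictly superharmonic quadratic, its positivity set is two opposite open sectors each of opening strictly less than $\pi/2$; there can be at most one coincidence ray in each, and any two such rays therefore make an angle strictly greater than $\pi/2$. Lemma~\ref{c1} enters only for the preliminary ``at most two rays'' step, where one restricts to a single half-plane (one side of $\ell_i$), so $\varphi$ has at most one jump and the monotonicity hypothesis is automatic.
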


\begin{proof}

Lemma \ref{c1} which implies that in each half-plane where $\triangle p_{k_i}$ is constant (or where $\triangle p_{k_i+1}$ is constant), the coincidence 
set cannot contain two distinct rays, unless they coincide with the boundary of the half-plane and both $\triangle p_{k_i}$, $\triangle p_{k_i+1}$ are 
constant on either side of the line. 

This proves that there are at most 2 rays in $\Gamma_{k_i}$. 

Next we denote by $\varphi_j$ the multiplicity 1 parts of  $p_{k_i}$ and $p_{k_i+1}$:
$$p_{k_i}= \varphi_1 + a_1 [(x \cdot \nu_1)^+]^2, \quad \quad p_{k_i+1}= \varphi_2 - a_2 [(x \cdot \nu_2)^+]^2,$$
with $\varphi_j$ homogenous quadratic polynomials, and the constants $a_j \ge 0$. Moreover, by non-degeneracy 
$$\triangle \varphi_1 = f_{k_i} > f_{k_i+1} =\triangle \varphi_2.$$
The coincidence rays are the ones along which $\varphi_2 - \varphi_1$ is tangent by below to the piecewise quadratic function
$$a_1 [(x \cdot \nu_1)^+]^2 + a_2 [(x \cdot \nu_2)^+]^2 \ge 0.$$
If there are two coincidence rays, they must belong to the two different components of $\{\varphi_2 - \varphi_1 >0\}$. 
The conclusion follows since $\varphi_2 - \varphi_1$ is a strictly superharmonic homogenous quadratic polynomial.

\end{proof}

We prove Theorem \ref{Td2} for the degenerate cones.  
\begin{thm}\label{Td2_sec}
Assume that $d=2$ and ${\bf p} \in \mathcal P({\bf p}_*)$ is a blow-up limit for $\bf u$ at the origin. Then, $\bf p$ is unique and
$${\bf u}(x):={\bf p}(x) + O(|x|^2(-\log |x|)^{-1}).$$
\end{thm}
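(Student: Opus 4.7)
The plan is to parallel the strategy of Sections 4--5, but applied component-wise to the decomposition ${\bf p}=({\bf p}^1+q_{1}{\bf 1},\ldots,{\bf p}^m+q_{m}{\bf 1})$ from \eqref{degp}. First I would introduce a class $\mathcal{S}(r,{\bf p}_*,\eps)$ analogous to Definition \ref{S}, requiring that ${\bf u}$ be $\eps r^2$-close in $B_r$ to some admissible ${\bf p}\in\mathcal{P}({\bf p}_*)$, together with a vector ${\bf b}=({\bf b}^1,\ldots,{\bf b}^m)$ whose components ${\bf b}^i\in B({\bf p}_*^i)$ perturb the $i$th connected piece via the 1D profile ${\bf h}$ of Section 3. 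The parameters now comprise, besides the vectors ${\bf b}^i$, the rotation angle of each connected group and the linear/quadratic parts of the $q_i$, all subject to the admissibility constraints $p_k\ge p_{k+1}$ at the degenerate junctions $k=k_i$ enforced by Lemma \ref{2ray}.

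Second, I would establish the analogues of Lemmas \ref{L41} and \ref{l36}. Away from the finitely many rays making up the free boundaries of ${\bf p}$, the membranes decouple into independent $(k_i-k_{i-1})$-membrane problems, so the pointwise $C^\alpha$-decay of Lemma \ref{l36} applies in each open sector, component by component. The Weiss functional is additive across the connected groups modulo the harmonic corrections generated by the $q_i$, so the upper bound $W({\bf u},1/2)\le W({\bf p})+C\eps^{3/2}$ follows from the same integration-by-parts expansion as in Lemma \ref{L41}, used simultaneously on each group.

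Third, and most importantly, I would prove the dichotomy analogous to Proposition \ref{PM}: either ${\bf u}\in\mathcal{S}(\rho,{\bf p}_*,\eps/2)$ after adjusting the rotation angles and polynomials $q_i$, or $W({\bf u},\rho)\le W({\bf u},1)-c\eps^2$. The compactness step rescales the error ${\bf w}_m:=\eps_m^{-1}({\bf u}_m-{\bf p}_m(\cdot,{\bf b}_m))$ to a limit ${\bf w}$. Weiss monotonicity forces ${\bf w}$ to be homogeneous of degree $2$, and Lemma \ref{eb} applied inside each connected group forces each ${\bf b}^i$ to lie along its translation direction $\tau^i$. The residual in ${\bf w}$ is then a combination of infinitesimal rotations of each group and quadratic corrections to the $q_i$, which can be absorbed into a new admissible element of $\mathcal{P}({\bf p}_*)$ via the change-of-coordinates trick used at the end of the proof of Proposition \ref{PM}. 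Once the dichotomy is in place, Lemma \ref{Sequences} produces the logarithmic modulus $O(|x|^2(-\log|x|)^{-1})$.

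The hard part will be the compactness at the degenerate free boundaries $\Gamma_{k_i}$, which by Lemma \ref{2ray} may be a single point, one ray, or two rays meeting at an obtuse angle. The moduli space of admissible ${\bf p}\in\mathcal{P}({\bf p}_*)$ has boundary precisely where one of these configurations transitions to another, and the infinitesimal perturbation recorded in ${\bf w}$ may try to open up a point contact into a two-ray one, or to close a ray. Ruling this out requires showing that the $\sigma$-connected structure of ${\bf u}$ at scale $r$ is inherited at scale $\rho r$, which reduces to a nontrivial non-orthogonality statement about the error map ${\bf e}$ across different connected groups. Once this structural persistence is established, the dichotomy and the summation via Lemma \ref{Sequences} go through exactly as in the non-degenerate case, yielding both uniqueness of the blow-up ${\bf p}$ and the claimed rate.
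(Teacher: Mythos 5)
Your overall strategy matches the paper's: introduce a class $\mathcal{S}(r,{\bf p}_*,\eps)$ built from the decomposition \eqref{degp}, prove a Weiss upper bound and a dichotomy analogous to Proposition \ref{PM}, and sum with Lemma \ref{Sequences}. However, your account of the degenerate junctions $\Gamma_{k_i}$ — which you correctly identify as the crux — contains two genuine gaps.

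\textbf{First}, the pointwise $C^\alpha$-decay (the analogue of Lemma \ref{l36}) does \emph{not} follow ``component by component'' from the connected case: the decoupling only helps away from the junction rays, whereas the decay is precisely needed in a $\sqrt\eps$-neighborhood of those rays, where the groups interact. The paper resolves this with an entirely new technical layer: a Harnack-type inequality for 1D solutions of the $N$-membrane problem relative to an ordered approximate solution (Lemmas \ref{Han1d}, \ref{Han1d2}, \ref{Han1d3}), built on the $\sigma$-connectedness notion of Definition \ref{scon}. That inequality propagates a one-sided gain from a single ray to a neighborhood, for exactly the right set $J_k$ of indices and while preserving admissibility (Remark \ref{r52}); this, not a property of ${\bf e}$, is what makes Lemma \ref{l52} (hence Corollary \ref{C1*}) go through.

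\textbf{Second}, your statement that the persistence of the junction structure ``reduces to a nontrivial non-orthogonality statement about the error map ${\bf e}$ across different connected groups'' is not how the paper argues, and I do not see how one would formulate such a cross-group statement. In Proposition \ref{PM*} the map ${\bf e}$ is only applied \emph{within} each group (as in Lemma \ref{eb}), and the cross-group control comes from two separate geometric facts both tied to Lemma \ref{2ray}. (i) One shows the group averages $w_{J_i}$ are \emph{harmonic} quadratics: $\triangle w_L\le 0$ is supported on $\Gamma_{k_i}$, and since $\Gamma_{k_i}$ has at most two rays at an angle $\ne\pi/2$, a $2$-homogeneous function cannot carry a negative singular Laplacian there. (ii) After the per-group rotation, the improved approximation $\tilde{\bf v}_m$ may fail the ordering $\tilde v_{m,k}\ge\tilde v_{m,k+1}$ at $k=k_i$, but only by $O(\delta\eps_m|x|^2)$; admissibility is restored by adding to each group a harmonic quadratic polynomial $C_i\delta\eps_m h_i$ that is negative on $\Gamma_{k_i}\setminus\{0\}$ — existence of such $h_i$ is exactly what the obtuse-angle conclusion of Lemma \ref{2ray} buys you. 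Without these two ingredients the compactness step cannot close.

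So: correct skeleton, but the central difficulty is resolved by the $\sigma$-connected Harnack machinery and the obtuse-angle/harmonic-correction mechanism, not by an algebraic statement about ${\bf e}$ across groups.
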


The strategy of proof is the same as in the Sections 3 and 4. First we introduce a family of approximate solutions near cones ${\bf p} \in \mathcal P({\bf p}_*)$ similar to Definition \ref{S}. In this case, an approximate solution ${\bf v}$ consists of a collection of vector-functions ${\bf v}^i$ as in Section 3, with each of them approximating a connected group of ${\bf p}$. More precisely ${\bf v}$ has the form
\begin{equation}\label{bfv}
{\bf v}=({\bf v}^1,..,{\bf v}^m), \quad \quad v_k \ge v_{k+1} \quad \forall k, \quad \quad \sum \omega_k v_k=0,
\end{equation}
$${\bf v}^i={\bf p}^i(x, {\bf b}_i) + q_{i} {\bf 1}, \quad \quad \quad |{\bf b}_i|\le \delta \eps^{1/2},$$ 
with $q_i$ quadratic polynomials with $\triangle q_i=q_{*,i}''$, $\sum \omega_k q_i=0$ and ${\bf p}^i(\cdot, {\bf b}_i)$ represents an $\eps$-approximation of a rotation of the connected 1D cone ${\bf p}^i_*$, as in Definition \ref{pb}.

We make precise the definition of the solutions ${\bf u}$ which can be approximated by such ${\bf v}$'s.

\begin{defi}\label{S_sec}
Given a 1D cone ${\bf p}_*$ as in \eqref{p*}, we say that a solution $\bf u$ to the problem $P_0$ is $\eps$- approximated in $B_r$ by ${\bf p}_*$ and write
$$ {\bf u} \in \mathcal S (r,{\bf p}_*, \eps )$$ 
if, there exists an admissible ${\bf v}$ as in \eqref{bfv} above such that
$$ |{\bf u} - {\bf v}| \le \eps r^2 \quad \mbox{in $B_r$}, \quad \quad |{\bf b}_i|\le \delta \eps^{1/2},$$
with $\delta$ a small universal constant (to be made precise later).
\end{defi}

By definition, ${\bf v} \in C^{1,1}$ is homogenous of degree 2, and the coincidence set between consecutive connected groups i.e. $\{ v_k=v_{k+1}\}$ with $k=k_i$ has empty interior in $\R^2$, since $\triangle (v_k - v_{k+1}) \ge c >0$. Moreover, on the unit circle this difference grows quadratically away from its minimum points, hence the set where $v_k$ and $v_{k+1}$ are $\eps$ close to each other in $B_1$
$$D_k^\eps:=\{v_k - v_{k+1} \le 2 \eps \}\cap B_1$$ 
is included in a $C \eps^{1/2}$-neighborhood of at most 2 rays passing through the origin. The upper bound on the number of rays follows by compactness, since ${\bf v}$ must converge to an element ${\bf p} \in \mathcal P({\bf p}_*)$ as $\eps \to 0$.

By Lemma \ref{L31} part a), ${\bf v}$ satisfies the Euler-Lagrange equations with $\delta \eps$-error
$$|\triangle v_I - f_I| \le C \delta^2 \eps \le \delta \eps.$$
Moreover, if $\nu_i$ denotes the unit direction of rotation for ${\bf p}^i$, so that ${\bf p}^i(\cdot, {\bf b}_i)$ is the $\eps$-approximation of ${\bf p}^i_*(x \cdot \nu_i)$ then,  by Lemma \ref{L31} part b), in $B_1 \cap \{|x \cdot \nu_i| \ge \eps^{1/2}\}$ we have
\begin{equation}\label{bfvi}
\triangle {\bf v}^i= q_{*,i}''+ \triangle {\bf p}_*^i + 2 {\bf e}({\bf b}_i)\chi_{\{x \cdot \nu_i^\perp \ge 0\}} +  2{\bf e}(-{\bf b}_i)\chi_{\{x \cdot \nu_i^\perp \le 0\}}.
\end{equation}

If a solution $ {\bf u} $ is $\eps$-approximated by ${\bf v}$ in $B_1$, then in $B_{1- C \eps^{1/2}}$ the coincidence sets for ${\bf u}$ and ${\bf v}$ agree away from the set 
\begin{equation}\label{Deps}
D^\eps:=\cup_{k=k_i} D_k^\eps \cup_i \{|x \cdot \nu_i| \le C\eps^{1/2} \},
\end{equation}
with $D_k ^\eps$ and $\nu_i$ as above. The set $D^\eps$ lies in a $C \eps^{1/2}$ neighborhood of a finite number of rays. As a consequence we have the analogue of Lemma \ref{l300} in our setting.

\begin{lem}\label{l51}
Assume that $ {\bf u} \in \mathcal S (1,{\bf p}_*, \eps )$ is $\eps$-approximated by ${\bf v}$ in $B_1$.
 
 Then in $B_{3/4}$ we have $\Gamma_k \subset D^\eps$ for all $k$, and
 \begin{equation}\label{e52}
|\triangle (u_k - v_k| \le \delta \eps \quad \mbox{in} \quad B_{3/4} \setminus D^\eps. 
\end{equation}
\end{lem}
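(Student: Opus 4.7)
The plan is to follow the template of Lemma \ref{l300}, treating each connected block ${\bf v}^i={\bf p}^i(\cdot,{\bf b}_i)+q_i{\bf 1}$ separately and then handling the inter-block indices $k=k_i$ as a distinct case; together these account for the two pieces of $D^\eps$ in \eqref{Deps}.

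First, to establish $\Gamma_k\subset D^\eps$ in $B_{3/4}$, fix $k$. If $k_{i-1}<k<k_i$, the pair $v_k,v_{k+1}$ is taken from ${\bf p}^i(\cdot,{\bf b}_i)$, so the analysis of Lemma \ref{l300} applies verbatim: outside the strip $\{|x\cdot\nu_i|\le C\eps^{1/2}\}$ we have either $v_k=v_{k+1}$ (on the contact side of $\nu_i$) or $v_k-v_{k+1}\ge c(x\cdot\nu_i)^2\ge 2\eps$ (on the non-contact side), with $C$ large universal. On the non-contact side, $|{\bf u}-{\bf v}|\le\eps$ forces $u_k>u_{k+1}$, ruling out any free boundary point. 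On the contact side one invokes the non-degenerate quadratic growth of $u_k-u_{k+1}$ from $\Gamma_k$ (Remark following Lemma \ref{c11}): a free boundary point there would force $\max_{B_r}(u_k-u_{k+1})\ge cr^2>2\eps$ for $r\sim\eps^{1/2}$ once $C$ is large enough, contradicting $u_k-u_{k+1}\le 2\eps$. If instead $k=k_i$, the very definition of $D_{k_i}^\eps$ yields $v_{k_i}-v_{k_i+1}>2\eps$ outside $D_{k_i}^\eps$, hence $u_{k_i}>u_{k_i+1}$ and no free boundary point lies outside $D^\eps$.

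Next, once $\Gamma_k\subset D^\eps$ for every $k$, the coincidence partition of ${\bf u}$ is locally constant on each component of $B_{3/4}\setminus D^\eps$ and matches the partition read off from ${\bf v}$. Hence $\triangle u_k=f_J$ there, where $J$ is the common coincidence group. On the other hand, using $\triangle q_i=q_{*,i}''$ together with the sub-problem Euler--Lagrange identity $\triangle p^i_{*,k}=f_J-q_{*,i}''$, formula \eqref{bfvi} gives $\triangle v_k=f_J+O(|{\bf b}_i|^2)$. The bound $|{\bf b}_i|\le\delta\eps^{1/2}$ then yields $|\triangle(u_k-v_k)|\le C\delta^2\eps\le\delta\eps$ provided $\delta$ is sufficiently small, establishing \eqref{e52}.

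The main technical point is the bookkeeping in the Laplacian step (verifying that $q_{*,i}''$ exactly shifts the sub-problem forces back to the full forces $f_J$) and, in the confinement step, the use of non-degenerate quadratic growth on the contact side of each $\nu_i$-strip; otherwise the argument is a direct adaptation of the non-degenerate case in Lemma \ref{l300}.
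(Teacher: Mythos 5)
Your proof is correct and follows the same route the paper intends: the authors do not give a separate proof body for Lemma~\ref{l51}, instead presenting it as a direct consequence of the preceding discussion (the definition of $D^\eps$, the observation that the coincidence sets of $\bf u$ and $\bf v$ agree off $D^\eps$, and formula~\eqref{bfvi}) as ``the analogue of Lemma~\ref{l300}.'' Your splitting into intra-group indices $k_{i-1}<k<k_i$ (where the argument of Lemma~\ref{l300} is applied within the block ${\bf p}^i(\cdot,{\bf b}_i)$) and inter-group indices $k=k_i$ (where $D_{k_i}^\eps$ trivially confines $\Gamma_{k_i}$), together with the $q_{*,i}''$-bookkeeping that restores the sub-problem forces $f_k-q_{*,i}''$ to the true forces $f_J$, is exactly the intended argument. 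One half-step you gloss over (as does the paper): on the contact side, confining $\Gamma_k$ to $D^\eps$ gives only that each component of $B_{3/4}\setminus D^\eps$ is entirely in $\{u_k>u_{k+1}\}$ or entirely in $\{u_k=u_{k+1}\}$; to conclude that it matches $\bf v$'s pattern you additionally need to rule out the first possibility on the contact side, which follows from the same non-degeneracy $\triangle(u_k-u_{k+1})\ge c_0>0$ on $\{u_k>u_{k+1}\}$ combined with $u_k-u_{k+1}\le 2\eps$ and the macroscopic size of the component. This is the same estimate you already invoke, so the proof is sound.
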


In the next lemma we establish a H\"older modulus of continuity for the rescaled differences $(u_k-v_k)/\eps$.

\begin{lem}\label{l52}
Assume that $ {\bf u} \in \mathcal S (1,{\bf p}_*, \eps )$ is $\eps$-approximated by ${\bf v}$ in $B_1$. 

Fix $z \in B_{3/4} \setminus B_{1/4},$ and $r \in [C \eps^{1/2}, c]$. We have 
$$ w_k - 2 \eps r^\alpha \le u_k \le w_k + 2 \eps r ^\alpha \quad \quad \mbox{in $B_r(z)$, for some $\alpha>0$,}$$
with ${\bf w}$ an admissible function in $B_r(z)$ obtained from ${\bf v}$ by appropriate translating constants $\zeta_k$ (depending on $r$ and $z$),
$$w_k:=  v_k + \zeta_k, \quad \quad \quad w_k \ge w_{k+1} \quad \forall k.$$ 
Moreover, if $B_r(z)$ intersects $\{ x \cdot \nu_i\}=0$ then the constants $\zeta_k$ are all equal when $k$ belongs to the $i$-th group  $k \in \{ k_{i-1} +1,.., k_{i} \}$.
\end{lem}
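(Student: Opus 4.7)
The plan is to adapt the dyadic iteration used in Lemma \ref{l36} to the patchwork structure of ${\bf v}$. Fix $z \in B_{3/4} \setminus B_{1/4}$ and consider the dyadic radii $r_k = \rho^{k+1}$ for a small universal $\rho$. I would prove by induction on $k$ that, as long as $r_k \ge C \eps^{1/2}$, there exist constants $\zeta^{(k)}_1, \ldots, \zeta^{(k)}_N$ satisfying the admissibility condition $\zeta^{(k)}_j = \zeta^{(k)}_{j+1}$ whenever $v_j = v_{j+1}$ at some point of $B_{r_k}(z)$, together with
\[
\|u_j - v_j - \zeta^{(k)}_j\|_{L^\infty(B_{r_k}(z))} \le \eps_k := \eps (1-c)^k.
\]
Admissibility is equivalent to $w_j \ge w_{j+1}$ on $B_{r_k}(z)$ for $w_j := v_j + \zeta^{(k)}_j$, and the ``moreover'' clause of the lemma follows at once: in a connected group $I_i$, all members of ${\bf p}^i(\cdot, {\bf b}_i)$ coincide along the line $\{x \cdot \nu_i = 0\}$, so whenever this line meets $B_{r_k}(z)$ the $\zeta^{(k)}_j$ must be a single constant over $j \in I_i$.

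The inductive step mimics the dichotomy of Lemma \ref{l36}. I would pick a reference point $Y = z + \tfrac12 r_k e$ for a direction $e$ chosen to avoid the (finite) collection of degeneracy rays of ${\bf v}$---the free-boundary rays $\{x\cdot\nu_i=0\}$ within each group, and the between-group rays $\Gamma_{k_i}$ from Lemma \ref{2ray}. By Harnack applied on each maximal connected block of ${\bf v}$ at $Y$, either $u_j(Y) \ge v_j(Y) + \zeta^{(k)}_j + c_0 \eps_k$ for some $j$ and we may push the corresponding block of $\zeta$'s upward by $c \eps_k$, or the symmetric alternative holds. To justify the push I would reuse the subsolution barrier from Lemma \ref{l36}, assembled block-by-block as ${\bf p}^i(x, {\bf d}_i, {\bf b}_i) + c_1 \eps_k q((x-z)/r_k){\bf 1}$ on each group, with $|{\bf d}_i| \lesssim \eps_k r_k^{-1}$; the hypothesis $r_k \ge C \eps^{1/2}$ keeps the Euler--Lagrange error $\le \delta \eps_k r_k^{-2}$, so the barrier is a strict subsolution. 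A telescoping argument yields $|\zeta^{(k)}| \le C \eps$ throughout. Finally, choosing $k$ with $r_k \le r \le r_{k-1}$ gives the H\"older bound $\eps_k \le C \eps r^\alpha$ with $\alpha := |\log(1-c)|/|\log \rho|$.

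The main obstacle is the bookkeeping when $B_{r_k}(z)$ meets several degeneracy rays simultaneously. Because all such rays pass through the origin and $|z| \ge 1/4$, the angular separation between any two of them inside $B_{r_k}(z)$ is bounded below universally, so only a bounded number of rays can appear; but one must verify that the block-wise barrier fits consistently into the resulting sector decomposition, and that the strict separation $f_{k_i} > f_{k_i+1}$ between consecutive groups---together with the structure of between-group rays provided by Lemma \ref{2ray}---keeps the candidate ${\bf w}$ admissible across any $\Gamma_{k_i}$ that happens to cross $B_{r_k}(z)$. Once this configuration analysis is settled, the barrier construction proceeds exactly as in the connected-cone proof.
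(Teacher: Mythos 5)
Your general architecture---a dyadic induction with a Harnack dichotomy at each scale---is indeed how the paper proceeds, but the proposal is missing the key structural tool that makes the degenerate case tractable, and one of your intermediate claims is actually false.

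The claim that \emph{``the moreover clause follows at once''} from admissibility of $\mathbf{w}$ is wrong. If the line $\{x\cdot\nu_i=0\}$ crosses $B_r(z)$, then for each adjacent pair $j, j+1$ in the $i$-th group, the fact that $v_j-v_{j+1}\to 0$ as one approaches the line from the non-coincidence side forces $\zeta_j\ge\zeta_{j+1}$; admissibility therefore only forces the $\zeta_j$ to be non-increasing across the group, not equal. The equality of the $\zeta_j$ within the group is a nontrivial \emph{conclusion} of the propagation argument, not a byproduct of admissibility. In the paper it comes from part (b) of the multi-membrane Harnack inequality (Lemma \ref{Han1d3}): when the line $\{x\cdot\nu_i=0\}$ crosses $B_{\rho r}(z)$, all consecutive coincidence sets $\{v_{j-1}=v_j\}$ within that group occupy more than a fixed fraction of the angular interval, and precisely for this reason the improvement at the reference direction propagates to \emph{every} membrane of the group and by the same amount.

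This points to the deeper gap: the proposal has no analogue of the set $J_k$ that Lemmas \ref{Han1d}--\ref{Han1d3} build. When you find $u_k(Y)\ge v_k(Y)+\zeta_k+c_0\eps_k$ at your reference point $Y$ (chosen off the degeneracy rays), the question of which other $\zeta_j$ you are allowed to push is subtle and asymmetric. The improvement travels to indices $j\le k$ that are $c_0\sigma$-connected to $k$; it travels to indices $j>k$ only when the intervening coincidence sets $\{v_k=v_{k+1}\},\ldots,\{v_{j-1}=v_j\}$ occupy a definite fraction of the arc. ``Push the corresponding block'' conflates two different sets: the indices that coincide with $k$ at $Y$ (too small, and would break the ``moreover'' clause when the block is split by $\{x\cdot\nu_i=0\}$) and the whole group $I_i$ (too large without the coincidence-length criterion to justify the propagation). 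Reusing the barrier of Lemma \ref{l36} verbatim ``block-by-block'' does not resolve this: that barrier construction uses a single vector $\mathbf{t}\in B(\mathbf{p})$ determined by the branch configuration at $Y$, and it does not account for the asymmetric propagation across membranes that may be disconnected at $Y$ but connected elsewhere in $B_{r}(z)$. The paper avoids this by first proving the 1D membrane Harnack inequality by induction on $|I^c|$ (Lemma \ref{Han1d}), upgrading it to cylinders (Lemma \ref{Han1d2}) and to polar rectangles (Lemma \ref{Han1d3}), and only then applying it; without that apparatus the inductive step has no controlled way to propagate the gain to the right set of indices while preserving both admissibility and the ``moreover'' conclusion.

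Finally, a minor but relevant technical point: the paper's induction is carried out against the multiplicative corrections $v_k+\zeta_{k,m}|x|^2$ rather than your additive $v_k+\zeta_k$. This keeps the comparison functions $2$-homogeneous, which is needed to invoke the polar-coordinate version Lemma \ref{Han1d3} and the operator $-\partial_{\theta\theta}-4$ on the circle; with additive constants that step would require separate justification.
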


We postpone the proof of Lemma \ref{l52} to the end of this section. As a consequence we obtain the following version of Corollary \ref{C1} in our setting. The difference is that, in the limit, the rescaled errors must agree along the direction of rotation for each of the connected groups of the limiting cone ${\bf p}$.

\begin{cor}\label{C1*}
If $ {\bf u}_m \in \mathcal S (1,{\bf p}_*, \eps_m ),$ are $\eps_m$-approximated by ${\bf v}_m$, for a sequence of $\eps_m \to 0$, then, up to a subsequence, ${\bf v}_m \to {\bf p} \in \mathcal P({\bf p}_*)$ and each of the rescaled error functions
$$\eps_m^{-1} \left(u_{m,j} - v_{m,j}\right)$$ converges uniformly on compact sets of $B_{1/2} \setminus\{0\}$ to a continuous limit $w_j$ that satisfies 
$$\|w_j\|_{L^\infty} \le 1, \quad w_j =w_l  \quad \mbox{on $\{x\cdot \nu_i=0\}$, whenever } \quad j,l \in \{k_{i-1}+1,..,k_i \},$$
where $\nu_i$ is the direction of rotation for ${\bf p}^i$.
\end{cor}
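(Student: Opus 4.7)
The plan is to follow the same compactness strategy as in Corollary \ref{C1}, with additional care because Lemma \ref{l52} only controls the rescaled errors away from the origin, and because the equality on the rotation lines has to be tracked across connected groups.

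First I would extract a subsequence along which ${\bf v}_m \to {\bf p}$. The rotation angles of the connected blocks, the quadratic corrections $q_{m,i}$ (bounded since ${\bf v}_m$ is uniformly bounded and $\triangle q_{m,i}=q_{*,i}''$ is prescribed), and the vectors ${\bf b}_{m,i}$ (which satisfy $|{\bf b}_{m,i}|\le \delta \eps_m^{1/2}\to 0$) all range in compact sets. After passing to a subsequence the rotations stabilize to unit vectors $\nu_i$, $q_{m,i}\to q_i$, and ${\bf b}_{m,i}\to 0$, so by Lemma \ref{L2.2} the block ${\bf v}_m^i$ converges uniformly on compact sets to ${\bf p}^i+q_i{\bf 1}$, where ${\bf p}^i$ is the rotation of ${\bf p}_*^i$ by $\nu_i$. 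The constraints $v_{m,k}\ge v_{m,k+1}$ and $\sum_k \omega_k v_{m,k}=0$ pass to the limit, giving ${\bf p}\in\mathcal P({\bf p}_*)$.

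Next I would set $w_{m,j}:=\eps_m^{-1}(u_{m,j}-v_{m,j})$. The definition of $\mathcal S(1,{\bf p}_*,\eps_m)$ gives $\|w_{m,j}\|_{L^\infty(B_1)}\le 1$ immediately. To upgrade to equicontinuity on a compact set $K\subset B_{1/2}\setminus\{0\}$, I would feed Lemma \ref{l52} into the rescaled errors: for each $z\in K$ and each $r\in[C\eps_m^{1/2},c]$ it produces a constant $\eta_{m,k}(z,r):=\eps_m^{-1}\zeta_{m,k}(z,r)$ with
$$|w_{m,k}-\eta_{m,k}(z,r)|\le 2r^\alpha \quad\mbox{in } B_r(z).$$
Once $m$ is large enough that $C\eps_m^{1/2}<\tfrac12\,\mathrm{dist}(K,0)$, this gives an oscillation bound $4r^\alpha$ at every dyadic scale, hence a uniform $C^\alpha$ estimate on $K$. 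Arz\'ela--Ascoli together with a diagonal argument through an exhaustion of $B_{1/2}\setminus\{0\}$ by compact sets then produces a further subsequence along which $w_{m,j}\to w_j$ uniformly on compacta, with the pointwise bound $|w_j|\le 1$ preserved.

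Finally, the identity $w_j=w_l$ on $\{x\cdot\nu_i=0\}\cap(B_{1/2}\setminus\{0\})$ for $j,l\in\{k_{i-1}+1,\ldots,k_i\}$ would come from the second assertion of Lemma \ref{l52}: whenever $B_r(z)$ meets the rotation line, the constants $\zeta_{m,k}$ agree across the $i$-th group, so $|w_{m,j}-w_{m,l}|\le 4r^\alpha$ on $B_r(z)$; sending $m\to\infty$ and then $r\to 0$ at each such $z$ gives the claim. The main obstacle in the whole argument is absorbed into the hypothesis Lemma \ref{l52} itself; once that H\"older-type control is in hand, the remaining work is just the careful compactness book-keeping on the punctured ball, with the unavoidable caveat that the H\"older constant provided by Lemma \ref{l52} blows up as $z\to 0$, which is precisely why the conclusion is stated only on compacta of $B_{1/2}\setminus\{0\}$.
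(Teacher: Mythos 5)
Your proposal is correct and fills in exactly the argument the paper has in mind: the paper states Corollary \ref{C1*} as a consequence of Lemma \ref{l52} in the same way that Corollary \ref{C1} follows from Lemmas \ref{l300} and \ref{l36}, and you supply those details — extracting subsequential limits of the rotation angles, block vectors $\mathbf{b}_{m,i}\to 0$ and quadratic corrections $q_{m,i}$; deriving $C^\alpha$ equicontinuity on compacta of $B_{1/2}\setminus\{0\}$ from the $\zeta_k$-translated oscillation bound of Lemma \ref{l52}; and reading off the equality $w_j=w_l$ on $\{x\cdot\nu_i=0\}$ from the ``Moreover'' clause of that lemma, letting first $m\to\infty$ and then $r\to 0$. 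The one point you state without justification is that $\mathbf{v}_m$ (hence the $q_{m,i}$) is uniformly bounded; this does follow, but it uses the admissibility constraints in \eqref{bfv} — each 2-homogeneous difference $v_{m,k_i}-v_{m,k_i+1}\ge 0$ has constant positive Laplacian, so its harmonic quadratic part is controlled, and combined with the normalization $\sum\omega_k v_{m,k}=0$ this pins down all the $q_{m,i}$ — and it is worth saying this out loud rather than treating it as given.
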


Another consequence of Lemma \ref{l52} is that the corresponding version of Lemma \ref{L41} holds in the degenerate setting.

\begin{lem}\label{L41*}
Assume that $ {\bf u} \in \mathcal S (1,{\bf p}_*, \eps ).$ Then
$$W({\bf u},1/2) \le W({\bf p}) +  C \eps^{3/2}.$$
\end{lem}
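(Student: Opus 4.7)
The plan is to mimic the two-step argument of Lemma \ref{L41}. Let ${\bf v}$ denote the admissible approximation of ${\bf u}$ given by Definition \ref{S_sec}, and let ${\bf p} \in \mathcal P({\bf p}_*)$ be the 2D cone obtained by setting each ${\bf b}_i=0$ in ${\bf v}$, so that ${\bf v}^i = {\bf p}^i(\cdot,{\bf b}_i) + q_i{\bf 1}$ and ${\bf p} = ({\bf p}^i + q_i {\bf 1})_i$. Then I write
$$W({\bf u},1/2) - W({\bf p}) = \bigl(W({\bf u},1/2) - W({\bf v})\bigr) + \bigl(W({\bf v}) - W({\bf p})\bigr),$$
and bound each piece by $C\eps^{3/2}$. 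The essential input is that Lemmas \ref{l51} and \ref{l52} now play the roles of Lemmas \ref{l300} and \ref{l36}; the only geometric change is that the degenerate set $D^\eps$ is a $C\sqrt{\eps}$-neighborhood of finitely many rays through the origin rather than a single strip, but it still has area $O(\sqrt{\eps})$ in $B_{1/2}$ and arclength $O(\sqrt{\eps})$ on each circle $\partial B_r$.

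For the first piece, set ${\bf w}:=({\bf v}-{\bf u})/\eps$, so $|{\bf w}|\le 1$. The $C^{1,1}$ estimate gives $|\nabla w_k|\le C\eps^{-1/2}$ in $D^\eps$, while Lemma \ref{l52} together with standard interior elliptic estimates applied on balls $B_r(z)\subset B_{1/2}\setminus D^\eps$ yield
$$|\nabla w_k(x)| \le C\bigl(d(x,D^\eps) + \sqrt{\eps}\bigr)^{\alpha-1}$$
outside $D^\eps$. Expanding
$W({\bf v},1/2) = W({\bf u},1/2) + \eps^2 I_1 + \eps I_2$
exactly as in the proof of Lemma \ref{L41}, one has $I_1 \ge -C$ trivially from $|{\bf w}|\le 1$, and $I_2 \ge \eps \int_{\partial B_{1/2}} \sum \omega_k (-w_{k,\nu} + 4 w_k)w_k$ after invoking the minimality inequality $\sum \omega_k(f_k - \triangle u_k)(v_k - u_k) \ge 0$ (valid since ${\bf v}$ is admissible). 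Splitting the boundary integral into $D^\eps$ (length $\sqrt{\eps}$, integrand $\eps^{-1/2}$) and its complement (integrable singularity $d^{\alpha-1}$) gives $I_2 \ge -C\eps$, hence $W({\bf u},1/2) \le W({\bf v}) + C\eps^2$.

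For the second piece, the polynomials $q_i{\bf 1}$ cancel in ${\bf v}-{\bf p}$, so the difference splits by connected groups: ${\bf g}^i := {\bf p}^i(\cdot,{\bf b}_i) - {\bf p}^i$. By Lemma \ref{L31}(b) applied in the coordinate frame of the $i$-th rotation axis $\nu_i$, each ${\bf g}^i$ obeys $|{\bf g}^i|\le C\sqrt{\eps}$ in $B_1$ with the sharper bound $|{\bf g}^i|\le C\eps$ in $\{|x\cdot\nu_i|\le C\sqrt{\eps}\}$, and $|\triangle {\bf g}^i|\le C\eps$ outside that strip. Using the homogeneity of ${\bf v}$ and ${\bf p}$, the energy difference becomes
$$W({\bf p}) - W({\bf v}) = \int_{B_1} \sum \omega_k \bigl(f_k - \triangle v_k - \tfrac{1}{2}\triangle g_k\bigr) g_k\,dx.$$
Each of the $m$ strips $\{|x\cdot\nu_i|\le C\sqrt{\eps}\}$ contributes area $\sqrt{\eps}$ times integrand of size $\eps$, total $O(\eps^{3/2})$; in the complement $(f_k-\triangle v_k)$ and $\triangle g_k$ are $O(\eps)$ while $g_k = O(\sqrt{\eps})$, again giving $O(\eps^{3/2})$.

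The main obstacle I expect is the bookkeeping around the rays between distinct connected groups, where $v_{k_i}$ and $v_{k_i+1}$ belong to different ${\bf v}^i$'s and nearly coincide. Lemma \ref{l52} is formulated with the caveat that the translating constants $\zeta_k$ are forced to agree within each connected group when the ball $B_r(z)$ meets the rotation axis, and this is exactly what is needed to interpret $\nabla w_k$ consistently and to ensure that the sharp pointwise decay estimate propagates across the interior of $B_{1/2}$ without picking up a uniform defect. Once this geometric coordination is implemented, the two integrations above run identically to the non-degenerate case.
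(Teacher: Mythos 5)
Your plan follows the same two-step decomposition as the paper's own proof ($W({\bf u},1/2)\le W({\bf v})+C\eps^2$ and $W({\bf v})\le W({\bf p})+C\eps^{3/2}$), using Lemmas \ref{l51}, \ref{l52} in place of Lemmas \ref{l300}, \ref{l36}, so the approach is essentially identical. There are, however, two real gaps.

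First, and most importantly, you never address what $W({\bf p})$ actually means in the conclusion. You choose a specific ${\bf p}$ by setting each ${\bf b}_i=0$ in ${\bf v}$, but (i) that function need not lie in $\mathcal P({\bf p}_*)$ at all, since the inequality $p_{k_i}\ge p_{k_i+1}$ at the group interfaces was only known for the admissible ${\bf v}$ and can be broken by an $O(\delta\sqrt{\eps})$ amount once you remove the terms ${\bf p}^i(\cdot,{\bf b}_i)-{\bf p}^i$; and (ii) even if it did, the lemma tacitly asserts the bound against \emph{any} cone in $\mathcal P({\bf p}_*)$. The paper handles both issues in one stroke by first proving that $W$ is constant on $\mathcal P({\bf p}_*)$ (via the invariance of the quadratic Weiss functional under the addition of a homogeneous degree-two harmonic polynomial to each connected group, checked by the mean-value property and integration by parts). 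That same invariance applies to your not-necessarily-admissible ${\bf p}$ and identifies your $W({\bf p})$ with the common value on $\mathcal P({\bf p}_*)$, closing the gap — but you never invoke it, and as written your $W({\bf p})$ is not the quantity in the statement.

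Second, in your estimate of $W({\bf p})-W({\bf v})$ away from the strips you claim $(f_k-\triangle v_k)=O(\eps)$ for each index $k$. That is false wherever several membranes of a group $i$ coincide: there $\triangle v_k=f_I+O(\eps)$ for the coincidence set $I$, which differs from $f_k$ by an $O(1)$ amount whenever $|I|\ge 2$. What is true — and what you need — is that the $g_k$ are equal within such an $I$ (so you may factor out $g_I$) and that $\sum_{k\in I}\omega_k(f_k-\triangle v_k)=(\sum_I\omega_k)(f_I-\triangle v_I)=O(\eps)$. This is exactly what the paper's reduction to the group energies $W^i$ with the modified forces $f_k^i=f_k-\triangle q_i$ accomplishes cleanly. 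The conclusion survives, but the estimate must be made after summing over coincidence sets, not termwise.
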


\begin{proof}
First we remark that $W({\bf p})$ is the same for all ${\bf p} \in \mathcal P({\bf p}_*)$. 

The quantity
$$ J(w)=\int_{B_1} \frac 12 \omega(|\nabla w|^2 + f w) \, dx - \int_{\partial B_1} \omega \, w^2 \, d \sigma,$$
remains invariant if we replace $w$ by $w + q$ with $q$ a homogenous of degree 2 harmonic polynomial (here $f$ and $\omega$ are constants). This 
follows easily after applying the mean value property for $q$ and then by integration by parts.

From \eqref{degp}, we see that each of the connected groups ${\bf p}^i + q_i {\bf 1}$ that form ${\bf p}$, 
is obtained from $i$th connected group of the trivial extension of ${\bf p}^*$ to 2D, 
after a rotation and the addition of a homogenous of degree 2 harmonic polynomial. The remark above implies $ W({\bf p})=W({\bf p}^*)$. 

The proof follows from Lemma \ref{L41} since the inequalities \eqref{4100},\eqref{4101}  i.e.
\begin{equation}\label{4100*}
W({\bf u},1/2) \le W({\bf v}) + C \eps^2,
\end{equation}
and
\begin{equation}\label{4101*}
W({\bf v}) \le W({\bf p}) + C \eps^{3/2}.
\end{equation}
continue to hold, where ${\bf v}$ is the $\eps$-approximation of ${\bf u}$ given in Definition \ref{S_sec}.

Indeed, for \eqref{4100*} we only used that $\eps^{-1}|\nabla (u_k-v_k)|$ is integrable on $\partial B_{1/2}$ which, as in Section 4, is a consequence of 
Lemmas \ref{l51} and \ref{l52}. 

The second inequality can be reduced to the one from Section 4 for each of the connected groups. 
Recall that the $i$th connected groups of ${\bf v}$, and ${\bf p}$ are given by
$${\bf p}^i(\cdot, {\bf b}_i) + q_i{\bf 1} \quad \quad \mbox{and} \quad {\bf p}^i + q_i {\bf 1}.$$

We claim that
\begin{equation}\label{wip}
W({\bf v})-W({\bf p}) = \sum_i  \, W^i({\bf p}^i(\cdot, {\bf b}_i) )-W^i({\bf p}^i) \le C \eps^{3/2},
\end{equation}
where $W^i$ denotes the Weiss energy corresponding to the $i$-th connected group
$$W^i({\bf w}^i):= \sum_{k_{i-1}<k\le k_i } \left(\int_{B_1}  \omega_k(\frac{1}{2}|\nabla w_k|^2+f^i_kw_k) \, dx - \int_{\partial B_1} \omega_k w_k^2 d \sigma \right),$$
with $f_k^i:=f_k - \triangle q_i$.
The equality in \eqref{wip} follows easily from the identity
$$ J(w+q) - J(v+q) = J(w) - J(v) - \int_{B_1} \omega \, (\triangle q)(w-v) dx, $$
which holds for any homogenous quadratic polynomial $q$.

\end{proof}

We are ready to prove the corresponding version of Proposition \ref{PM} for degenerate cones ${\bf p}_*$.

\begin{prop}\label{PM*}
Assume that $ {\bf u} \in \mathcal S (1,{\bf p}_*, \eps ),$ with $\eps \le \eps_0$. Then either
$$ {\bf u} \in \mathcal S (\rho,{\bf p}_*, \frac \eps 2),$$
or
$$ {\bf u} \in \mathcal S (\rho,{\bf p}_*, C \eps ), \quad \mbox{and} \quad W({\bf u}, \rho) \le W({\bf u},1) - c  \, \eps^2.$$
\end{prop}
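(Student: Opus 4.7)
The strategy is to follow the compactness/contradiction scheme used for Proposition \ref{PM}, but now carried out simultaneously on each of the $m$ connected groups that make up ${\bf p}_*$. Suppose by contradiction that the conclusion fails, so that there exist sequences ${\bf u}_m$, admissible ${\bf v}_m=({\bf v}_m^1,\dots,{\bf v}_m^m)$ as in \eqref{bfv} and $\eps_m\to 0$ for which neither alternative holds (with the constant $c$ replaced by $c_m=1/m$). By Corollary \ref{C1*} and compactness, after passing to a subsequence ${\bf v}_m\to{\bf p}\in\mathcal P({\bf p}_*)$ and the rescaled errors
$$w_{m,j}:=\eps_m^{-1}(u_{m,j}-v_{m,j})$$
converge locally uniformly in $B_{1/2}\setminus\{0\}$ to a continuous limit ${\bf w}$. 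The failure of the energy-decrease alternative, combined with the Weiss monotonicity computation
$$\eps_m^{-2}\bigl(W({\bf u}_m,1)-W({\bf u}_m,\rho)\bigr)\ge \int_{B_{1}\setminus B_\rho} r^{-(n+2)} \sum \tfrac{\omega_k}{2}\bigl(\partial_\nu w_{m,k}-\tfrac{2}{r}w_{m,k}\bigr)^2 d\sigma,$$
forces ${\bf w}$ to be homogeneous of degree $2$ in $B_{1/2}\setminus\{0\}$, and then on $B_{1/2}$ by removal of singularities for the piecewise-Laplace equation it satisfies.

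Next I would analyze ${\bf w}$ group by group. Fixing a connected group $i$ and working in the rotated frame given by $\nu_i$, the components $(w_j)_{k_{i-1}<j\le k_i}$ satisfy exactly the description of Corollary \ref{C1}: they are homogeneous of degree $2$, continuous, vanish on $\{x\cdot\nu_i=0\}$ up to a common constant, and have piecewise-constant Laplacians
$$\triangle w_j=-2e_j({\bf b}_i)\chi_{\{x\cdot\nu_i^\perp>0\}}-2e_j(-{\bf b}_i)\chi_{\{x\cdot\nu_i^\perp<0\}}$$
in each half-plane, where ${\bf b}_i\in B({\bf p}_*^i)$ is the limit of $\eps_m^{-1/2}{\bf b}_{m,i}$. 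The homogeneity of ${\bf w}^i$ forces ${\bf e}({\bf b}_i)={\bf e}(-{\bf b}_i)$, so Lemma \ref{eb} applied to each group yields ${\bf b}_i=s_i\tau_i$. On each group, the restriction of ${\bf w}$ therefore has the same explicit form
$$w_j=\gamma_j^i x_2^2+(t_j^{i,+}\chi_{\{x_2>0\}}+t_j^{i,-}\chi_{\{x_2<0\}})x_1x_2+\zeta^i$$
as in the proof of Proposition \ref{PM}, plus a translating constant $\zeta^i$ provided by Lemma \ref{l52}.

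Now I would build the improved approximate solution at scale $\rho$. Within each group $i$, the pure-translation perturbation $s_i\tau_i$ is absorbed by rotating the $i$th frame by the angle $\eps_m^{1/2}s_{m,i}$, exactly as in the change-of-coordinates \eqref{change} of Proposition \ref{PM}. Between groups, the constants $\zeta^i$ are absorbed by adjusting the quadratic polynomials $q_i\to q_i+h_i$ with $h_i$ a harmonic quadratic polynomial chosen so that the weighted-average constraint $\sum\omega_k q_i=0$ and $\triangle q_i=q_{*,i}''$ are preserved. Together with the ${\bf t}_i\in B({\bf p}_*^i)$ coming from Corollary \ref{C1*}, these pieces assemble into a new admissible ${\bf v}'_m$ in the sense of \eqref{bfv}, and the estimates \eqref{usubm2}--\eqref{change} carried out groupwise give $|{\bf b}'_{m,i}|\le\delta(\eps_m/2)^{1/2}$ and
$$|{\bf u}_m-{\bf v}'_m|\le \tfrac{\eps_m}{2}\rho^2\quad\text{in }B_\rho,$$
contradicting the failure of the first alternative.

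The main obstacle is verifying the admissibility ordering $v'_{m,k}\ge v'_{m,k+1}$ across the interfaces $k=k_i$ between consecutive groups after performing independent rotations and harmonic adjustments on each group. Inside a single group this is automatic from the connectedness of the local cone, but between groups the ordering relies on $p_{k_i}-p_{k_i+1}$ being a strictly superharmonic piecewise-quadratic function vanishing on $\Gamma_{k_i}$. The key input here is Lemma \ref{2ray}: the coincidence rays of $\Gamma_{k_i}$ form an angle strictly greater than $\pi/2$, so independent rotations of the two adjacent frames by $O(\eps_m^{1/2})$ and adjustments of $q_i,q_{i+1}$ by $O(\eps_m)$ produce only second-order violations of $p_{k_i}\ge p_{k_i+1}$, which are reabsorbed into the harmonic perturbations of the $q_i$ using the strict superharmonicity $\triangle(q_i-q_{i+1})=f_{k_i}-f_{k_i+1}>0$.
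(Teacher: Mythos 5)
Your overall strategy — compactness/contradiction, homogenizing the limit via Weiss monotonicity, then decomposing group by group — matches the paper, and you correctly identify admissibility across group interfaces as a nontrivial point. However, there are two substantial gaps, one of which is the heart of the degenerate case.

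\textbf{The piecewise-Laplacian form of $\triangle w_j$ is not given by Corollary \ref{C1*} and is not free.} You assert that within each group the limit $w_j$ ``satisfies exactly the description of Corollary \ref{C1}'' with the explicit sign-dependent right-hand side $\triangle w_j=-2e_j({\bf b}_i)\chi_{\{x\cdot\nu_i^\perp>0\}}-2e_j(-{\bf b}_i)\chi_{\{x\cdot\nu_i^\perp<0\}}$. In the degenerate setting Corollary \ref{C1*} only gives uniform convergence on compact sets of $B_{1/2}\setminus\{0\}$ and the matching condition $w_j=w_l$ on $\{x\cdot\nu_i=0\}$; it does not control $\triangle w_j$ across the rays $\Gamma_{k_i}$ where consecutive groups interact. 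This is precisely where the work lies. The paper isolates this as a separate Claim: first one shows that the group average $w_{J_i}$ is harmonic by proving $\triangle w_L\le 0$, that equality holds away from $\Gamma_{k_i}$, and that homogeneity plus Lemma \ref{2ray} (the rays of $\Gamma_{k_i}$ form an angle $\ne\pi/2$) force $w_L$ to be a harmonic quadratic. One then shows $\triangle w_j$ picks up no singular part on a coincidence ray $\ell\subset\Gamma_{k_i}$ by analyzing the subset $J_i'\subset J_i$ of membranes tangent to $p_{k_i+1}$ along $\ell$, distinguishing the cases $J_i'=J_i$ and $J_i'\subsetneq J_i$. None of this appears in your proposal; without it the ``explicit form'' of $w_j$ is unjustified and the application of Lemma \ref{eb} to conclude ${\bf b}_i=s_i\tau_i$ does not go through.

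\textbf{The translating term is a quadratic harmonic, not a constant.} You write $w_j=\gamma_j^i x_2^2 + (\dots)x_1x_2 + \zeta^i$ with $\zeta^i$ a constant coming from Lemma \ref{l52}. But ${\bf w}$ is homogeneous of degree $2$, so an additive constant must vanish; what actually appears is a harmonic quadratic polynomial $\bar q_i = w_{J_i}$, whose harmonicity needs the Claim. The constants in Lemma \ref{l52} vary with scale and location, and only aggregate into $\bar q_i$ in the blow-up limit — this step cannot be dismissed.

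Your handling of admissibility is conceptually on the right track (you cite Lemma \ref{2ray} and notice that independent $O(\eps^{1/2})$ rotations of adjacent frames may break the ordering at $\Gamma_{k_i}$), but ``reabsorb via strict superharmonicity'' is not the paper's mechanism; the paper instead inductively adds $C_i\delta\eps_m h_i$ with $h_i$ a harmonic quadratic negative on $\Gamma_{k_i}\setminus\{0\}$, whose existence is exactly Lemma \ref{2ray}, and then restores $\sum\omega_k \bar v_{m,k}=0$. You also omit the final extension of the $B_{2\rho}\setminus B_\rho$ estimate to $B_\rho$ by the maximum principle using that $\bar{\bf v}_m$ is an approximate solution with $\delta\eps_m$ error.
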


\begin{proof}
As before we prove the statement by compactness. 

We fix $\rho=1/4$, $C=\rho^{-2}$, and assume that there exists a sequence of ${\bf u}_m$, ${\bf v}_m$, $\eps_m \to 0$ for which the conclusion 
does not hold with $c_m = 1/m \to 0$. 

By Corollary \ref{C1*} we may extract a subsequence $${\bf v}_m \to {\bf p} \in {\mathcal P}({\bf p}_*),$$ and rescaled errors
$${\bf w}_m:=\eps_m^{-1} \left({\bf u}_{m} - {\bf v}_m\right)$$ 
which converge uniformly of compact sets of $B_{1/2} \setminus \{0\}$ to a limit function ${\bf w}$. 

Denote by $\nu_i$ the direction of rotation for the $i$th connected cone ${\bf p}^i$ of ${\bf p}$, and by $\Gamma_{k_i}$ the coincidence set $\{ p_{k}
=p_{k+1}\}$ for $k=k_i$, which by Lemma \ref{2ray} consists of at most 2 rays that form an obtuse angle. 
The sets $D^\eps$ defined in \eqref{Deps} converge in the Haussdorff distance to the collection of rays
$$D^0:= \cup \Gamma_{k_i} \cup_i \{x \cdot \nu_i =0\},$$
and the convergence of ${\bf w}_m$ to ${\bf w}$ is in $C_{loc}^1 (B_{1/2} \setminus D^0)$. As in the proof of Proposition \ref{PM}, the inequality
$$W({\bf u}_m,1) - W({\bf u}_m, \rho) \le  c_m  \, \eps_m^2,$$
implies that the limit $\bf w$ is homogenous of degree $2$ in $(B_{1/2} - B_{\rho}) \setminus D^0$, hence in $B_{1/2} \setminus B_\rho$ by continuity. 

{\it Claim:} If $k$ belongs to the $i$-th connected group $J_i:=\{ k_{i-1}+1,.., k_i\}$ then
$$ w_k=w_l \quad \mbox{on} \quad \{x \cdot \nu_i=0\}, \quad \forall k,j \in J_i,$$
\begin{equation}\label{wham}
\triangle \, \, w_{J_i} =0, \quad \quad w_{J_i}:=\sum_{k \in J_i} \frac{\omega_k}{\sum_{J_i} \omega_j} \, w_k,
\end{equation}
and on each half space determined by the line $x \cdot \nu_i=0$ 
\begin{equation}\label{wsub}
\triangle w_j =-2e_j({\bf b}^i) \chi_{\{x \cdot \nu_i^\perp  >0 \}} -  2e_j(-{\bf b}^i) \chi_{\{x \cdot \nu_i^\perp  <0 \}},
\end{equation}
where 
${\bf b}^i \in B({\bf p}^i)$ is the limit of  
$$ {\bf b}^i:= \lim_{m \to \infty} \eps_m^{-\frac 12}{\bf b}^i_m, \quad \quad |{\bf b}^i| \le \delta.$$

{\it Proof of Claim:} 
Notice that $$\triangle u_{m,L} \le f_L = \triangle v_{m,L}, \quad \quad L:=\{j \le k_j\},$$
which implies that 
$$\triangle w_L \le 0.$$
On the other hand outside any small neighborhood of $\Gamma_{k_i}$, $p_{k_i}> p_{k_i+1}$ which implies the same inequality for the membranes of ${\bf u}_m$. This means that the inequality above is an equality, which gives 
$$\triangle w_L =0 \quad \mbox{outside $\Gamma_{k_i}$.}$$
Since $w_L$ is homogenous of degree two and $\Gamma_{k_i}$ consists of at most 2 rays that form an angle different than $\pi/2$ we conclude that $w_L$ must be a harmonic quadratic polynomial. This implies \eqref{wham}.

The equality \eqref{wsub} follows in $B_{1/2} \setminus D^0$ by  \eqref{bfvi}. In fact it can only fail on the rays $\Gamma_{k_{i-1}} \cup \Gamma_{k_i}$ along which the $i$th connected group can interact with the $i-1$ respectively $i+1$ groups. Indeed, in a compact set outside these rays the graphs of $u_k$ with $k \in J_i$ are disconnected from the ones with $k \notin J_i$, and we are in the situation of Section 4. More precisely, we only need to check \eqref{wsub} for those indices $j\in J_i$ and near the rays for which the membrane $p_j$ is either tangent to $p_{k_i+1}$ or $p_{k_{i-1}}$.

It remains to show that if the membrane $p_j$ is tangent to $p_{k_i+1}$, then $\triangle w_j$ carries no singular part on $\Gamma_{k_i}$ whenever $\Gamma_{k_i}$ is not included in $ x \cdot \nu_i=0$.  
Pick such a ray $\ell \in \Gamma_{k_i} \setminus \{x \cdot \nu_i=0 \}$ and let $J_i' \subset J_i$ denote those indices $j$ in the $i$th group for which $p_j=p_{k_i}$ along $\ell$. Since $\ell$ is away from the line $x \cdot \nu_i =0$ we conclude that $p_j=p_{k_i}$ in a neighborhood of $\ell$. Using that ${\bf v}_m$, ${\bf u}_m$ are small perturbations of ${\bf p}$ we find that in an open neighborhood $\mathcal U$ of  $\ell  \cap (B_1 \setminus B_\rho)$, 
$$v_j=v_{k_i}, \quad  u_j=u_{k_i} \quad \mbox{ if $ j\in J_i'$}.$$
In particular in this neighborhood $w_j=w_{k_i}$ if $j \in J_i'$, hence
$$\triangle w_j = \triangle w_{J_i'}  \quad \mbox{in $\mathcal U$}. $$
If $J_i'=J_i$ then $\triangle w_j=0$ by \eqref{wham} which shows that $\triangle w_j$ has no singular part on $\ell$. If $J_i' \ne J_i$ then 
there is strict separation in $\mathcal U$ between the membranes $p_j$ with $j \in J_i'$ and $j \in L \setminus J_i'$. This separation holds also for the membranes of ${\bf u}_m$, and ${\bf v}_m$ hence 
$$\triangle u_{m,L \setminus J_i'}=f_{L\setminus J_i' },$$
and since ${\bf v}_m$ is an approximate solution with $\delta \eps_m$ error we find that
$$ |\triangle w_{L \setminus J_i'}| \le \delta \quad \mbox{in $\mathcal U$.}$$
Using that $w_L$ is harmonic we find $|\triangle w_{J_i'} | \le C \delta$. This shows that $\triangle w_j$ has no singular part on $\ell$ if $j \in J_i'$, and the claim is proved.

\qed

Now we can argue as in the end of the proof of Proposition \ref{PM}. The claim implies that ${\bf e}({\bf b}^i)={\bf e}(- {\bf b}^i)$, hence, by Lemma \ref{eb},
 $${\bf b}^i=s_i \tau^i \quad \mbox{ for some $s_i \in [-C \delta,C \delta]$},$$ 
 and $\tau^i$ as in Definition \ref{tau}. Moreover, for $j \in J_i$,
$$w_j:= \bar q_i + \gamma_j (x \cdot \nu)^2 + \left (t_j^+  \chi_{\{x \cdot \nu >0\}} + t_j^- \chi_{\{x\cdot \nu_i<0\}}\right) (x \cdot \nu_i) (x \cdot \nu_i^\perp), $$
with $\gamma_j=-e_j({\bf b}^i)$, $\bar q_i= w_{J_i}$ a harmonic quadratic polynomial, and the components $t_j^\pm$ form a vector ${\bf t}^i \in B({\bf p}^i)$.
Since $|\gamma| \le C \delta^2$, we infer that
$$|{\bf u}^i_m - [{\bf p}^i(x,{\bf b}^i_m + \eps_m {\bf t}^i) + (q_i +\eps_m \bar q_i) {\bf 1}]| \le C \delta^2 \eps_m \, \rho^2 \quad \mbox{in $B_{2 \rho} \setminus B_\rho$.}$$
As in \eqref{change}, we can rotate the axis $\nu_i$ of ${\bf p}^i$ by an angle $\sim \eps_m^{1/2}$ and rewrite
$${\bf p}^i(x,{\bf b}^i_m + \eps_m {\bf t}^i)={\bf p}^i (\tilde x,\eps_m^{1/2} {\bf d}^i_m) + O (\delta \eps_m |x|^2), \quad \quad {\bf d}^i_m \to 0,$$
with $\tilde x$ representing the coordinates in the rotated system of coordinates. Thus
\begin{equation}\label{bfui}
|{\bf u}^i_m - \tilde {\bf v}^i_m| \le C \delta \eps_m \, \rho^2 \quad \mbox{in $B_{2 \rho} \setminus B_\rho$,}
\end{equation}
with
$$ \tilde {\bf v}^i_m:= {\bf p}^i (\tilde x,\eps_m^{1/2} {\bf d}^i_m) + q_i + \eps_m \bar q_i.$$
We don't know yet that the family $\tilde {\bf v}$ is admissible since the inequality $\tilde v_{m,k} \ge \tilde v_{m,k+1}$ might fail slightly when $k=k_i$ 
near $\Gamma_{k_i}$. By \eqref{bfui}, this inequality can fail by at most $C \delta \eps_m |x|^2$. We can modify each group of $\tilde {\bf v}_m$ by a harmonic quadratic polynomial of size $\delta \eps_m$, and construct an admissible approximate solution $\bar {\bf v}_m$. Indeed, assume that 
$\bar{\bf v}_m^1$,..,$\bar{\bf v}_m^{i-1}$ were constructed. Then we can add $C_i \delta \eps_m h_i(x)$ to all membranes of $\tilde {\bf v}_m^{i}$ with $h_i$ a harmonic quadratic polynomial which is negative on $\Gamma_{k_i}\setminus \{0\}$, which exists in view of Lemma \ref{2ray}. We can choose $C_i$ sufficiently large to guarantee that $\bar {\bf v}_m^i$ lies below $\bar {\bf v}_m^{i-1}$. After constructing $\bar {\bf v}_m$, we can subtract its average (a harmonic polynomial) from all of its components, so that $\sum \omega_k \bar v_{m,k}=0$. In conclusion, \eqref{bfui} implies that
$$|{\bf u}^i_m - \bar {\bf v}^i_m| \le C' \delta \eps_m \, \rho^2 \quad \mbox{in $B_{2 \rho} \setminus B_\rho$,}$$
with $\bar {\bf v}_m$ satisfying the admissible conditions \eqref{bfv} with $\eps_m$ replaced by $\eps_m/2$.

Finally, since $\bar {\bf v}_m$ solves the system with error $\delta \eps_m$, it follows by maximum principle that the inequality above can be extended to $B_\rho$ after relabeling the constant  $C'$. Thus
$$|{\bf u}^i_m - \bar {\bf v}^i_m| \le C'' \delta \eps_m \, \rho^2 \le \frac{\eps_m}{2} \rho^2 \quad \mbox{in $B_{\rho}$,}$$
provided $\delta$ is chosen small. We obtain ${\bf u}_m \in \mathcal S ({\bf p}_*,\rho, \eps_m/2)$ and reached a contradiction.
\end{proof}

The remaining of the section is devoted to the proof of Lemma \ref{l52} which relies on a version of the Harnack inequality for 1D membranes. 
 
 \begin{lem}\label{Han1d}
 Assume that ${\bf u} \ge {\bf v}$ are 1D solutions to the $N$ membrane problem in $[-1,1]$ and $$\mbox{$u_k(0) \le v_k(0) + \sigma$, for some $k$ and $\sigma \ge 0$.}$$ 
 Then $$u_k \le v_k + C \sigma \quad \mbox{ in $[-1,1]$}$$ 
 for some $C$ depending only on $N$, and the weights $\omega_i$.
 \end{lem}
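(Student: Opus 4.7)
The plan is to prove this by contradiction and compactness, invoking in the limit the strong maximum principle from the Maximum Principle lemma of Section~2.

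The first step is to assume no universal $C$ works and produce a sequence of counterexamples: 1D solutions ${\bf u}^{(n)} \ge {\bf v}^{(n)}$ on $[-1,1]$, indices $k_n$, and $\sigma_n \ge 0$ with $u^{(n)}_{k_n}(0) - v^{(n)}_{k_n}(0) \le \sigma_n$ but $L_n := \sup_{[-1,1]}(u^{(n)}_{k_n} - v^{(n)}_{k_n}) \ge n \sigma_n$. After passing to a subsequence with $k_n \equiv k$, I will normalize by dividing both ${\bf u}^{(n)}$ and ${\bf v}^{(n)}$ by $L_n$ and then subtracting a common affine function $\ell_n(x){\bf 1}$ so that the rescaled $\tilde u^{(n)}_k$ vanishes at $0$ together with its derivative. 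Both operations preserve the solution structure: the first rescales the forces to ${\bf f}/L_n$, and the second has zero Laplacian in 1D. This produces a pair of $N$-membrane solutions with $\tilde {\bf u}^{(n)} \ge \tilde {\bf v}^{(n)}$, $\sup(\tilde u^{(n)}_k - \tilde v^{(n)}_k) = 1$, and $(\tilde u^{(n)}_k - \tilde v^{(n)}_k)(0) \to 0$.

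Next I would split into cases. In the main case where $L_n$ stays bounded below along a subsequence, the rescaled forces remain bounded, and the Taylor bound on $\tilde u^{(n)}_k$ at $0$, together with the ordering $\tilde u^{(n)}_1 \ge \ldots \ge \tilde u^{(n)}_N$ and the quadratic separation \eqref{quadsep} between consecutive membranes, gives uniform $L^\infty$ bounds on all components. Lemma~\ref{c11} then furnishes uniform $C^{1,1}$ estimates on compact subsets of $(-1,1)$, so I can extract a $C^1_{loc}$-convergent subsequence to a limiting pair $(\tilde {\bf u}, \tilde {\bf v})$ of $N$-membrane solutions (with forces ${\bf f}/L$ for some $L\in[c,\infty]$, the case $L=\infty$ giving zero forces and linear components) satisfying $\tilde {\bf u} \ge \tilde {\bf v}$, $\tilde u_k(0) = \tilde v_k(0)$, and $\sup(\tilde u_k - \tilde v_k) \ge 1$. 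The strong maximum principle then forces $\tilde u_k \equiv \tilde v_k$, contradicting the sup inequality. The degenerate case $L_n \to 0$ requires a different scaling, since dividing by $L_n$ blows up the forces; I would instead use a quadratic spatial rescaling $x \mapsto \sqrt{L_n}\, x$ centered at a maximizer of $w^{(n)}_k$, which keeps the forces fixed and lets the domain expand to $\R$. A parallel compactness argument produces a limiting global 1D pair to which the strong maximum principle again applies, yielding the contradiction.

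The hardest step will be the uniform $L^\infty$ bound on \emph{all} components $\tilde u^{(n)}_i, \tilde v^{(n)}_i$ after a single common affine subtraction, since only $\tilde u^{(n)}_k$ is controlled directly. The ordering bounds each neighboring component from one side, while the quadratic separation at free boundaries together with the $C^{1,1}$ bound on each gap $\tilde u^{(n)}_i - \tilde u^{(n)}_{i+1}$ bounds the other. Propagating these bounds outward from index $k$ through the full vector is the main technical content of the compactness step.
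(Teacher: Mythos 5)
Your compactness-by-contradiction scheme is very different from the paper's argument, which is direct and quantitative: it forms the weighted average $u_I$ over the index set $I$ of membranes close to $u_k$ at $0$, observes that $\triangle u_I \le f_I \le \triangle v_I$ on the interval where the block $I$ is unobstructed from above by $u_{j_0-1}$ and from below by $v_{j_0+m+1}$, so that $u_I - v_I \ge 0$ is concave there, runs a one-sided Harnack estimate on that concave function, extends $I$ at the first contact point, and iterates by induction on $N - |I|$; no compactness enters and the constant is explicit. More importantly, your scheme has a gap that cannot be patched without essentially rebuilding that induction: the claimed uniform $L^\infty$ bound on \emph{all} components after a single affine normalization is false. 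Quadratic separation \eqref{quadsep} bounds $u_m - u_{m+1}$ only through $\mathrm{dist}(x,\Gamma_m)^2$ and is vacuous when $\Gamma_m$ is empty in $[-1,1]$, i.e.\ when $u_m > u_{m+1}$ strictly throughout $[-1,1]$. Such completely decoupled membranes can sit an arbitrarily large distance from $u_k$, and in your sequence of counterexamples $\tilde u_m^{(n)} \to +\infty$ while $\tilde u_k^{(n)}$ stays normalized — there is nothing to extract. Restricting to the membranes actually connected to $u_k$ inside $[-1,1]$ is the right instinct, but the connected block is $n$-dependent, and controlling how it changes in the limit brings back exactly the combinatorics the paper's induction handles directly.

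There are two further problems. The normalization $\sup_{[-1,1]}(\tilde u_k^{(n)} - \tilde v_k^{(n)}) = 1$ need not survive a $C^1_{loc}((-1,1))$ limit: the rescaled Laplacian bound on $\tilde w_k^{(n)} := \tilde u_k^{(n)} - \tilde v_k^{(n)}$ is of order $|{\bf f}|/L_n$, which for $L_n$ merely bounded can exceed $1$, allowing the maximum of the nonnegative, nearly-vanishing-at-$0$ function $\tilde w_k^{(n)}$ to concentrate at $\pm 1$ and disappear on compact subsets, leaving a limit with $\tilde w_k \equiv 0$ and no contradiction. And in the degenerate case $L_n \to 0$, the quadratic rescaling $x \mapsto \sqrt{L_n}\,x$ centered at a maximizer of $w_k^{(n)}$ sends the anchor point $x = 0$, where the hypothesis $u_k(0) - v_k(0) \le \sigma_n$ is imposed, off to infinity as the domain expands to $\R$; the contradiction hypothesis evaporates in the blow-up limit and there is nothing left to feed to the strong maximum principle.
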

 
 \begin{proof}
 We prove the statement by induction on the cardinality of the complement of the set of indices $I$ defined as
 $$ I:=\{j| \quad u_j(0) \le v_j(0) + a \}.$$
 Precisely we show that there exists a constant $C(|I|)$ depending only on the cardinality $|I|$ of the set $I$, such that in $[-1,1]$
 $$u_j \le v_j + C(|I|) \sigma, \quad  \quad \forall j \in I.$$
 If $|I|=N$, then $I=\{1,..,N\}$. We have $v_I(0)+ \sigma \ge u_I(0) \ge v_I(0)$ and since $u_I - v_I \ge 0$ is harmonic in $[-1,1]$ we conclude that $u_I \le v_I + 2 \sigma$ which gives the desired conclusion.
 
 Assume that $|I| <N$, and denote $I=\{j_0, .., j_0 + m\}$. Let $(a, b)$ be the largest interval containing $0$ on which the inequalities 
 $$\mbox{$u_{j_0-1} > u_{j_0}$ 
 and $v_{j_0+m} > v_{j_0+m+1}$ hold.}$$
 Notice that the origin is interior to this interval, since otherwise either $j_0-1$ or $j_0+m+1$ would belong to $I$ as well. 
 
 Assume that $|a| \le |b|$ and say that $u_{j_0-1}(a)=u_{j_0}(a)$. 
 
 In the interval $(a,b)$ the same argument as above applies. 
 Indeed, in this interval the membranes $u_j$, and respectively $v_j$, with $j \in I$, can be perturbed upwards, and respectively downwards. We find
 $ \triangle u_I \le f_I \le \triangle v_I $ hence $u_I - v_I \ge 0$ is a concave function in $(a,b)$. We conclude that 
 \begin{equation}\label{ulevj}
 u_j \le v_j + C_1 \sigma \quad \mbox{in $[a,|a|]$} \quad \quad \forall j \in I.
 \end{equation}
 In particular at $x=a$ we have $$u_k=u_{k+1} \le v_{k+1} + C_1 \sigma \le v_k + C_1 \sigma, \quad k=j_0-1.$$ 
 We can apply the induction hypothesis on the largest interval $L_a$ centered at $a$ which is included in $[-1,1]$ with $\tilde \sigma = C_1 \sigma$, and then the corresponding set of indices $\tilde I$ contains $I$ and $j_0-1$. We find that
 \begin{equation}\label{ulevj2}
 u_j \le v_j + C_2 \sigma \quad \mbox{in $L_a$} \quad \quad \forall j \in I \cup \{j_0-1\}.
 \end{equation}
 If $L_a$ contains the origin, then we can apply one more time the induction hypothesis at the origin and obtain the desired conclusion in the whole 
 interval $[-1,1]$. Otherwise, the inequality \eqref{ulevj} is valid in $[a,b]$ after relabeling $C_1$ if necessary. We can argue as above at the other end point $b$ and obtain a similar inequality as \eqref{ulevj2} in the largest interval $L_b \subset [-1,1]$ centered at $b$. Since $[-1,1]$ is covered by $L_a$, $[a,b]$ and $L_b$ we obtain the inductive conclusion for $I$.    

 \end{proof}

 We introduce the notion of $\sigma$-connectedness in $B_r \subset \R^n$ for membranes whose collection of $\sigma$-neighborhood of their graphs form a connected set. 

 \begin{defi}\label{scon}
 We say that the membranes $v_{j}$ and $v_{j+m}$ are $\sigma$-connected in $B_r$ if we can find points $x_i \in B_r$ with $j+1 \le i \le j+m$ such that $v_{i-1}(x_i) \le v_{i}(x_i) + \sigma$.
 \end{defi}

 \begin{rem}
After relabeling the constant $C$, the conclusion of Lemma \ref{Han1d} holds for all indices $j \le k$ for which $u_j$ is $\sigma$-connected to $u_k$ in the half-interval $$I:=[- \frac 12, \frac 12],$$ or $j \ge k$ for which $v_j$ is $\sigma$-connected to $v_k$ in $I$. 
 \end{rem}
 
 An equivalent statement is the following.

 \begin{cor}\label{c51}
 Assume that ${\bf u} \ge {\bf v}$ are 1D solutions to the $N$ membrane problem in $[-1,1]$ and $$\mbox{$u_k(1) \ge v_k(1) + \sigma$, for some $k$ and $\sigma \ge 0$.}$$ Then
 $$ u_j \ge v_j + c \sigma \quad \mbox{in} \quad I,$$
 for all $j \le k$ for which $v_j$ is $c\sigma$-connected to $v_k$ in $I$, and all $j \ge k$ for which $u_k$ is $c\sigma$-connected to $u_k$ in $I$. Here $c=C^{-1}$ depends only on $N$ and $\omega_i$.
 \end{cor}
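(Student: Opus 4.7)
The plan is to prove Corollary \ref{c51} by contradiction, combining a single-index Harnack inequality (a rescaled contrapositive of Lemma \ref{Han1d}) with an iterative propagation along the $c\sigma$-connectedness chain. I focus on the case $j \le k$; the case $j \ge k$ follows by a symmetric argument with the roles of $u$ and $v$ swapped.

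First I would establish a single-index Harnack: for any fixed index $i$, any $y \in [-1,1]$, and any $z \in I = [-1/2,1/2]$, if $u_i(y) - v_i(y) \ge \tau$ then $u_i(z) - v_i(z) \ge c_1 \tau$ for a universal $c_1 > 0$. This is obtained in two rescaled applications of the contrapositive of Lemma \ref{Han1d} at the single index $i$ (where the connectedness hypothesis is trivially satisfied): first propagate from $(i,y)$ to $(i,0)$ via the lemma on $[-1,1]$ centered at $0$, then from $(i,0)$ to $(i,z)$ via the lemma on the interval $[0,2z]$ or $[2z,0]$ centered at $z$, both of which lie in $[-1,1]$ since $z \in I$. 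Applied with $y=1$ and the hypothesis $u_k(1) \ge v_k(1) + \sigma$, this yields $u_k - v_k \ge c_1 \sigma$ on all of $I$.

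Next I propagate this bound from index $k$ down to $j$ along the chain $x_{j+1},\ldots,x_k \in I$ witnessing $v_{i-1}(x_i) \le v_i(x_i) + c\sigma$. Inductively suppose $u_i - v_i \ge c'_n \sigma$ on $I$; then at $x_i \in I$, using $u_{i-1} \ge u_i$ and the connectedness bound,
\begin{equation*}
u_{i-1}(x_i) - v_{i-1}(x_i) \ge u_i(x_i) - v_i(x_i) - c\sigma \ge (c'_n - c)\sigma,
\end{equation*}
and applying the single-index Harnack at index $i-1$ with $y = x_i \in I$ upgrades this to $u_{i-1} - v_{i-1} \ge c_1(c'_n - c)\sigma$ on all of $I$. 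The recursion $c'_{n+1} = c_1(c'_n - c)$ with $c'_0 = c_1$ has explicit solution $c'_n = c_1^{n+1} - c_1 c(1-c_1^n)/(1-c_1)$, which stays bounded below by $c_1^{n+1}/2$ for $n \le N-1$ provided $c$ is chosen universally small (say $c \le c_1^{N+1}/4$, assuming WLOG $c_1 \le 1/2$). Setting the corollary's constant $c := c_1^{N+1}/4$ then ensures both this smallness and $c \le c'_{k-j}$, so that $u_j - v_j \ge c\sigma$ on $I$, establishing the corollary with $C := c^{-1}$.

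The main obstacle is the careful bookkeeping of constants through this $N$-step iteration: each step multiplicatively shrinks by $c_1 < 1$ and additively loses $c_1 c$, so without the smallness $c \ll c_1^N$ the constant could eventually turn negative after $O(N)$ steps. Fixing $c$ as a sufficiently small universal constant depending on $N$ and $\omega_i$ resolves this, and the rest of the argument---the single-index Harnack via rescaled contrapositives and the pointwise chain-propagation---is then routine.
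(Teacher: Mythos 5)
Your proof is correct. The paper does not actually prove this corollary --- it is presented as ``an equivalent statement'' to the preceding Remark, which itself is left as a relabeling of constants in Lemma \ref{Han1d} --- but your argument is exactly the right way to unpack that claim: a rescaled, scale-invariant single-index Harnack obtained from the contrapositive of Lemma \ref{Han1d} (valid on subintervals because the $N$-membrane problem is scale-invariant under quadratic rescaling), followed by propagation along the $c\sigma$-connectedness chain using the membrane ordering, with the universal constant $c$ chosen small enough depending on $N$ and $\omega_i$ so that the recursion $c'_{n+1}=c_1(c'_n-c)$ stays positive over at most $N-1$ steps. One minor remark: the clause ``$u_k$ is $c\sigma$-connected to $u_k$'' in the statement is a typo for ``$u_j$ is $c\sigma$-connected to $u_k$,'' which you silently and correctly correct.
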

 
 We now consider the the case when ${\bf u}$ is defined in the cylindrical domain $$\mathcal R:= B_{C_n}' \times [-1,1]\subset \R^n,$$ with $C_n$ a large constant that depends only on $n$ and ${\bf v}$ is one-dimensional and
  \begin{equation}\label{odap}
 \mbox{${\bf v}$ solves the Euler-Lagrange equation in $[-1,1]$ with a $c_0 \sigma$ - error,}
 \end{equation}
 for some $c_0$ sufficiently small. 
  
  \begin{lem}\label{Han1d2}
 Assume that ${\bf u}$ is a solution in $\mathcal R$ and ${\bf v}$ satisfies \eqref{odap} and
 $${\bf u}(x',x_n) \ge {\bf v}(x_n) \quad \mbox{in} \quad \mathcal R,$$  
 and $$ u_k(x',l) \ge v_k(l) + \sigma \quad \mbox{for some $l \in [-1,1]$}.$$ 
 for some $\sigma \le \sigma_0$ universal. Then 
$$ u_j \ge v_j + c_0 \sigma \quad \mbox{in} \quad \frac 12 \, \mathcal R,$$
 for all $j \in J_k$ which consists of the indices $j$ such that
 
 a) either $j \le k$ and $v_j$ is $c_0\sigma$-connected to $v_k$ in $I$, 
 
 b) or $j >k$ and the coincidence sets $\{v_k=v_{k+1}\}$, $\{v_{k+1}=v_{k+2}\}$,..,$\{v_{j-1}=v_j\}$ have length more than $1/10$ in $I$. 
 \end{lem}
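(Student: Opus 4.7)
The plan is to adapt the inductive argument of Lemma \ref{Han1d} to the cylindrical setting $\mathcal{R}$, using the elliptic Harnack inequality in place of the one-dimensional concavity of averaged differences. The building block is the observation that for any consecutive index block $L = \{j_1,\dots,j_2\}$ which is strictly separated from its neighbors for $\bf u$ in an open $\Omega \subset \mathcal{R}$, and whose corresponding block for $\bf v$ is also strictly separated on the $x_n$-projection of $\Omega$, the weighted average $U_L := u_L - v_L \ge 0$ satisfies
\[
|\triangle U_L| \le c_0 \sigma \qquad \text{in } \Omega.
\]
Indeed, the Euler-Lagrange equations give $\triangle u_L = f_L$ and \eqref{odap} gives $|\triangle v_L - f_L|\le c_0\sigma$; nonnegativity follows from $u_j \ge v_j$ for all $j$. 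Thus $U_L$ is a nonnegative, almost-harmonic function to which the standard Harnack inequality applies with error $O(c_0\sigma)$.

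First I would propagate the hypothesis $U_k(x',l)\ge\sigma$ vertically along the slice $\{x'\}\times[-1,1]$ by mimicking the chain argument in the proof of Lemma \ref{Han1d}. The slice of $\bf u$ is not itself a 1D solution, but the transverse Laplacian $\triangle_{x'} u_j$ is bounded by Lemma \ref{c11}, contributing only an additive $O(\sigma_0)$ error to the inductive inequalities. For $j$ as in case (a), the chain of $c_0\sigma$-near-coincidences of $\bf v$ provides the points $x_i\in I$ with $v_{i-1}-v_i\le c_0\sigma$, and the inductive enlargement of the index block proceeds verbatim. For $j$ as in case (b), the coincidence segments of length $\ge 1/10$ on which $v_{i-1}=v_i$ force $u_{i-1}=u_i$ there, which allows the bound on $U_k$ to transfer across the block via the equality $U_{i-1}=U_i$ on a segment of definite size.

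Next I would spread the bound $U_j(x'_0, l')\ge c\sigma$ laterally by the standard elliptic Harnack inequality applied to the nonnegative almost-harmonic function $U_L$, where $L$ denotes the relevant block containing $j$. Because $\bf v$ is purely one-dimensional, the strict separations needed to place $U_L$ in the almost-harmonic regime hold over cylindrical slabs in $\mathcal R$, which is the natural geometry for Harnack; the constant $C_n$ is chosen large so that the half-cylinder $\tfrac12\mathcal{R}$ lies deep inside these slabs. Combining the Harnack bound on $U_L$ with the pointwise inequalities $U_j\ge 0$ and the $C^{1,1}$-regularity of both $\bf u$ and $\bf v$ to distribute the averaged lower bound to each component yields $u_j\ge v_j+c_0\sigma$ on $\tfrac12 \mathcal{R}$.

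The main obstacle is tracking the accumulation of errors along the induction: each step in the enlargement of the index set multiplies the universal Harnack constant by a factor depending only on $N$ and $\omega$, while the transverse Laplacian and the $c_0\sigma$ near-solution error of $\bf v$ enter additively. Ensuring the total error after at most $N$ iterations is still dominated by $c_0\sigma$ dictates the smallness of $\sigma_0$ and the relationship between $c_0$ and the universal constant $C$ from Lemma \ref{Han1d}.
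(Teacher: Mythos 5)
Your proposal diverges substantially from the paper's argument and contains two gaps that would be hard to repair.

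First, you propose to ``propagate the hypothesis $U_k(x',l)\ge\sigma$ vertically along the slice $\{x'\}\times[-1,1]$ by mimicking the chain argument in the proof of Lemma~\ref{Han1d},'' claiming that since the slice of $\bf u$ is not a $1$D solution, the transverse Laplacian $\triangle_{x'}u_j$ ``contributes only an additive $O(\sigma_0)$ error.'' This is not correct: by Lemma~\ref{c11} the transverse Laplacian is bounded by a \emph{universal} constant, i.e.\ it is $O(1)$, not $O(\sigma)$, and there is no hypothesis in the lemma forcing it to be small (the assumption $\mathbf{u}\ge\mathbf{v}$ gives no $C^2$ control). An $O(1)$ source term on the slice would completely wash out the $\sigma$--scale information you are trying to propagate over a unit interval, so the chain argument from Lemma~\ref{Han1d} cannot be run directly on slices of $\mathbf{u}$. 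The paper avoids this entirely: it never slices $\mathbf{u}$. Instead it introduces a genuine $1$D auxiliary solution $\mathbf{w}$ with boundary data $\mathbf{v}(\pm1)$, proves $|\mathbf{w}-\mathbf{v}|\le c_0\sigma$ by a direct barrier computation, then builds a second $1$D auxiliary $\bar{\mathbf{w}}$ with the perturbed data $\bar w_j(1)=\max\{w_j(1),w_k(1)+\sigma\}$ for $j\le k$, applies Corollary~\ref{c51} to the pair $(\bar{\mathbf{w}},\mathbf{w})$, and finally compares $\mathbf{u}$ with the \emph{explicit subsolution} $\bar{\mathbf{w}}+c_1\sigma(x_n^2-4C_n^{-2}|x'|^2-1)\mathbf{1}$ by the maximum principle in $\mathcal{R}$. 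All the sensitive $1$D analysis happens on $\mathbf{w},\bar{\mathbf{w}}$, which genuinely solve the $1$D problem.

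Second, in treating case (b) you write that ``the coincidence segments of length $\ge 1/10$ on which $v_{i-1}=v_i$ force $u_{i-1}=u_i$ there.'' That implication is false: $\mathbf{v}$ is only a comparison function lying below $\mathbf{u}$, and $v_{i-1}=v_i$ on some $x_n$--interval does not make $u_{i-1}=u_i$ anywhere in $\mathcal{R}$; the membranes of $\mathbf{u}$ can remain strictly separated there. The role of the length--$1/10$ coincidence intervals in the paper's proof is more subtle: since $|\mathbf{w}-\mathbf{v}|\le c_0\sigma$ with $c_0$ small and $\sigma\le\sigma_0$, these intervals force the \emph{auxiliary $1$D solution} $\bar{\mathbf{w}}$ to be $0$--connected across the block (the coincidence of two consecutive components of a $1$D solution persists where they were separated by at most $O(c_0\sigma)$ and close on a long interval), which is exactly what allows Corollary~\ref{c51} to pass the improvement $\bar w_j\ge w_j+4c_1\sigma$ upward through the block. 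The propagation happens for $\bar{\mathbf w}$, not for $\mathbf{u}$.

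A secondary issue: you rely on Harnack for the block-averaged differences $U_L=u_L-v_L$, but the claim $\triangle u_L = f_L$ requires the block $L$ to be strictly separated at both ends \emph{for} $\mathbf{u}$ on the relevant subdomain, and you do not supply a mechanism to locate such subdomains inside $\mathcal{R}$ before the conclusion is known. The paper's barrier construction also sidesteps this localization problem because the maximum principle for the full vector problem needs no separation hypotheses.

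In short, while your high-level intuition (propagate the gap vertically through the index chain, then spread laterally via Harnack/comparison) matches the geometry of the problem, the paper realizes it in a fundamentally different and cleaner way: replace $\mathbf{v}$ by two auxiliary genuine $1$D solutions $\mathbf{w},\bar{\mathbf{w}}$, run the $1$D Harnack lemma (Corollary~\ref{c51}) on those, and then compare $\mathbf{u}$ against an explicit barrier built from $\bar{\mathbf{w}}$. You would need to adopt this barrier/auxiliary-solution strategy to make your outline rigorous; as written, the slice argument and the transfer of $\mathbf{v}$-coincidences to $\mathbf{u}$ are both false steps.
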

 
 \begin{rem}\label{r52}
 Notice that the collection of functions $v_j$ when $j \notin J_k$ and $v_j + c_0 \sigma$ when $j \in J_k$, which bounds $u_j$ by below, is admissible in $\frac 12 \mathcal R$. 
 \end{rem}
 \begin{proof} We assume first that $l=1$ and then explain how to deduce the more general statement from this case.
 
 Let ${\bf w}$ be the 1D solution in $[-1,1]$ with the boundary data given by ${\bf v}$. 
 
 We compare ${\bf w}$ with ${\bf v} \pm c_0 \sigma (|x|^2-1) \, {\bf 1} $ in $
 [-1,1]$ and find
 \begin{equation}\label{barw-v}
 |w_j - v_j| \le c_0 \sigma \quad \quad \forall j.
 \end{equation} 
 In particular $w_j$ are $3 c_0 \sigma$-connected in $I$ if $j \le k$ and $j \in J_k$.
  
 Let $\bar {\bf w}$ be the 1D solution with boundary data ${\bf w}$ at $-1$ and 
 $$\mbox{$\bar {w}_j (1)= w_j(1)$ if $j >k$ and $\bar {w}_j (1)= \max \{ w_j(1), w_k(1) + \sigma\} $ if $j \le k$. }$$
 Clearly
 $$|\bar w_j - w_j| \le \sigma \quad \forall j, \quad \mbox{in} \quad [-1,1],$$
 which together with \eqref{barw-v} and $\sigma \le \sigma_0$ implies that the $\bar w_j$ are $0$-connected in $I$ if $j > k$ and $j \in J_k$.
 
 By Corollary \ref{c51} applied to $\bar {\bf w}$, ${\bf w}$, we can find $c_1=c_1(N,\omega_i)$ such that
 \begin{equation}\label{5002}
 \bar w_j \ge w_j + 4c_1 \sigma \ge v_j + 3 c_1 \sigma \quad \mbox{in} \quad I,\quad \quad  \forall j \in J_k,
 \end{equation}
provided we choose $c_0 \le c_1$.

 Next we compare ${\bf u}$ with the subsolution $$\bar {\bf w} + c_1 \sigma (x_n^2 - 4 C_n^{-2} |x'|^2 -1) \,  {\bf 1},$$
 in $\mathcal R$ and obtain
 $$ u_j \ge \bar w_j - 2 c_1 \sigma \quad \mbox{in} \quad  \frac 12 \mathcal R , \quad \forall j,$$
 which, by \eqref{5002}, gives the conclusion $u_j \ge v_j + c_1 \sigma$ for all $j \in J_k$.
 
 It suffices to check the claim on $\partial \mathcal R$. On $\partial \mathcal R \setminus \{x_n=1\}$ the test function is below ${\bf v}$ and therefore below ${\bf u}$. This inequality holds also on $\partial \mathcal R  \cap \{ x_n =1\}$ by hypothesis. This completes the case $l=1$.

Next we discuss the case when $l$ is arbitrary. The same proof applies if $|l| \ge 3/4$. In the case when, say $l \in [0, 3/4)$, then the arguments above show that an inequality of the form $$u_k (x', -3/4) \ge v_k(-3/4) + c_1' \sigma \quad \mbox{ if} \quad |x'| \le \frac 34 C_n,$$
holds for the index $j=k$ at $-3/4$. 
Again we may repeat that  proof above with $\tilde l = -3/4$ and $\tilde \sigma = c_1' \sigma$, and obtain the conclusion by choosing $c_0$ much smaller if necessary.  

 \end{proof}
 
 We provide a version of Lemma \ref{Han1d2} when ${\bf v}$ is a homogenous of degree 2 approximate solution in a rectangular domain in polar coordinates $\mathcal R_\tau \subset \R^2$ defined as
 \begin{equation}\label{rtau}
 \mathcal R_\tau:=\{(r, \theta)| \quad |\theta| \le \tau, \quad |r-1| \le C \tau \}, \quad \quad \mbox{with} \quad \tau < \tau_0.
 \end{equation}
 
 \begin{lem}\label{Han1d3}
 Assume that ${\bf u}$ is a solution to the $N$-membrane problem in $\mathcal R_\tau$, and ${\bf v}$ is a $C^{1,1}$ homogenous of degree 2 function which solves the Euler-Lagrange equation in $\mathcal R _\tau$ with $c_0 \sigma \tau^{-2}$ error. 
 
 If ${\bf u} \ge {\bf v}$ in $\mathcal R_\tau$, and $u_k \ge v_k + \sigma|x|^2$ on a ray $\mathcal R_\tau \cap \{\theta= l\}$ then
 $$ u_j \ge v_j + c_0 \sigma |x|^2 \quad \mbox{in} \quad \mathcal R_{\tau/2},$$
 for all $j \in J_k$ for which either $j \le k$ and $v _j$ is $c_0\sigma$-connected to $v_k$, or $j > k$ and the coincidence sets $\{v_k=v_{k+1}\}$, $\{v_{k+1}=v_{k+2}\}$,..,$\{v_{j-1}=v_j\}$ have length more than $\tau/10$ in the interval $ \theta \in [-\tau/2,\tau/2]$.
 \end{lem}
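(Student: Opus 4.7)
The plan is to reduce Lemma \ref{Han1d3} to its flat-domain analogue, Lemma \ref{Han1d2}, via a rescaling in polar coordinates that exploits the 2-homogeneity of ${\bf v}$ and the near-Euclidean geometry of $\mathcal R_\tau$ for $\tau_0$ small.

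First I would introduce the change of variables $\Phi(y_1, y_2) := (1+\tau y_1)(\cos(\tau y_2), \sin(\tau y_2))$, which maps the Euclidean cylindrical box $\mathcal{R}_0 := [-C_0, C_0] \times [-1, 1]$ diffeomorphically onto a region sandwiched between $\mathcal R_{\tau/2}$ and $\mathcal R_\tau$. Setting $\tilde {\bf u} := {\bf u} \circ \Phi$ and $\tilde {\bf v} := {\bf v} \circ \Phi$, a direct computation using the polar Laplacian $\Delta = \partial_r^2 + r^{-1}\partial_r + r^{-2}\partial_\theta^2$ shows that $\tilde {\bf u}$ solves an $N$-membrane system on $\mathcal{R}_0$ with rescaled forces $\tau^2 {\bf f}$ plus a geometric error of order $\tau$, while $\tilde {\bf v}$ satisfies the same system with error bounded by $\tau^2 \cdot c_0 \sigma \tau^{-2} + O(\tau) \le C c_0 \sigma$ once $\tau_0$ is chosen small.

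Next, the 2-homogeneity $v_k(r,\theta) = r^2 \bar v_k(\theta)$, with $\bar v_k(\theta) := v_k(1,\theta)$, yields the decomposition
\[
\tilde v_k(y) = (1+\tau y_1)^2 \bar v_k(\tau y_2) = V_k(y_2) + g_k(y),
\]
where $V_k(y_2) := \bar v_k(\tau y_2)$ is purely 1D and $g_k(y) := (2\tau y_1 + \tau^2 y_1^2)\bar v_k(\tau y_2)$ is smooth of size $O(\tau)$. Using the identity $\Delta v_k(1,\theta) = 4\bar v_k + \bar v_k''$, one verifies that $V$ solves the 1D Euler--Lagrange system in $y_2$ with forces $\tau^2 {\bf f}$ and error $O(c_0 \sigma) + O(\tau^2)$. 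Setting $\hat{\bf u} := \tilde {\bf u} - {\bf g}$ (which preserves the membrane ordering since $g_k - g_{k+1}$ is controlled by $\tilde v_k - \tilde v_{k+1} \le \tilde u_k - \tilde u_{k+1}$), the hypothesis $u_k \ge v_k + \sigma |x|^2$ on $\{\theta = l\}$ translates to $\hat u_k \ge V_k + \sigma(1 + O(\tau))$ on the slice $\{y_2 = l/\tau\}$, and ${\bf u} \ge {\bf v}$ translates to $\hat {\bf u} \ge V$ throughout $\mathcal{R}_0$. After absorbing the $O(\tau)$ correction from $\Delta {\bf g}$ into the ambient forces, Lemma \ref{Han1d2} applied to the pair $(\hat{\bf u}, V)$ on $\mathcal{R}_0$ yields $\hat u_j \ge V_j + c_0' \sigma$ on $\tfrac12 \mathcal{R}_0$ for exactly the indices $j \in J_k$ in the statement, as the $\sigma$-connectedness conditions on $V$ over $[-1/2, 1/2]$ pull back through $\Phi$ to the stated conditions on ${\bf v}$ over $\theta \in [-\tau/2, \tau/2]$. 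Undoing the change of variables and using $|x|^2 = 1 + O(\tau)$ on $\mathcal R_{\tau/2}$ gives the conclusion after a final shrinking of $c_0$.

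The chief obstacle is the simultaneous handling of two competing error sources --- the $O(\tau)$ geometric corrections coming from both the polar metric and the 2-homogeneity decomposition, and the $O(c_0 \sigma)$ analytic tolerance with which ${\bf v}$ satisfies the Euler--Lagrange system --- both of which must be absorbed into the $c_0 \sigma$ slack allowed by Lemma \ref{Han1d2}. This forces a careful universal choice of $\tau_0$ and $\sigma_0$ to maintain $O(\tau^2) \ll c_0 \sigma$ along the argument; when this cannot be arranged cleanly, an intrinsic repetition of the proof of Lemma \ref{Han1d2} directly in the polar rectangle --- building the analogous barrier of the form $V + c\sigma\bigl(r^2 - 4 C_0^{-2} \tau^{-2}(r-1)^2 - 1\bigr){\bf 1}$ in $(r,\theta)$ --- avoids the reduction and keeps all computations intrinsic.
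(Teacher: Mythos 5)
Your main plan (flatten $\mathcal R_\tau$ to a Euclidean box via $\Phi$ and invoke Lemma \ref{Han1d2}) has a genuine gap that the paper's own approach is careful to avoid. The polar Laplacian acting on a $2$-homogeneous function $v(r,\theta)=r^2\bar v(\theta)$ gives $\triangle v = \bar v'' + 4\bar v$, so when you peel off the angular profile $V_k(y_2)=\bar v_k(\tau y_2)$ you obtain
$$V_k'' = \tau^2\bar v_k'' = \tau^2(\triangle v_k - 4\bar v_k),$$
and the term $-4\tau^2\bar v_k$ enters as an error of size $C\tau^2$ (with $\bar v_k=O(1)$, not $O(\sigma)$). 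You classify this as "$O(\tau^2)$" which you then hope to absorb into the $c_0\sigma$ error budget of Lemma \ref{Han1d2}, but the hypotheses of Lemma \ref{Han1d3} impose no lower bound $\sigma\gtrsim\tau^2$; indeed in the sole application (the proof of Lemma \ref{l52}, with $\tau\sim r$ and $\sigma\sim\eps_m$, where $r$ can be as large as a universal constant while $\eps_m\le C\eps$) one has $\tau^2\gg\sigma$ from the outset, so the absorption is impossible. The same $O(\tau^2)$ error appears a second time in $\triangle_y g_k = 2\tau^2\bar v_k+O(\tau^3)$, which means $\hat{\bf u}=\tilde{\bf u}-{\bf g}$ is not a genuine solution in the sense required by Lemma \ref{Han1d2} either, and the defect is again at the fatal scale $\tau^2$ rather than $\sigma$.

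The paper sidesteps this entirely by never treating the $4v$ term as an error: it works intrinsically in the angular variable and observes that the $2$-homogeneous extension of a function on $[-\tau,\tau]\subset\partial B_1$ is a solution of the $N$-membrane problem for the \emph{shifted} one-dimensional operator $-\partial_{\theta\theta}-4$, which is still positive definite for $\tau<\pi/4$ and satisfies a Harnack inequality of the form $\partial_{\theta\theta}w+4w\le 0,\ w\ge 0\Rightarrow w\le Cw(0)$. With this, the entire proof of Lemma \ref{Han1d} (and then of Lemma \ref{Han1d2}) transfers verbatim to the circle, without any residual $\tau^2$ term. Your fallback remark ("intrinsic repetition of the proof of Lemma \ref{Han1d2} directly in the polar rectangle") gestures in the right direction, but it does not identify the shifted operator, does not note that the one-dimensional $N$-membrane problem must be re-solved for this operator and that its Harnack inequality is what makes Lemma \ref{Han1d} and Corollary \ref{c51} carry over, and the barrier you write down ($V + c\sigma\bigl(r^2-4C_0^{-2}\tau^{-2}(r-1)^2-1\bigr){\bf 1}$) has the roles of the angular and radial variables confused relative to the structure of Lemma \ref{Han1d2}, where the $1$D direction corresponds to $\theta$, not $r$. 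As written, the proposal is missing the key observation of the paper's proof, and the primary route you propose is quantitatively incorrect in the regime $\sigma\ll\tau^2$ that actually occurs.
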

 
 The proof of Lemma \ref{Han1d3} follows as the one of Lemma \ref{Han1d2} after we establish a version of the 1D lemma, Lemma \ref{Han1d}, on the unit circle. We omit the details but point out some of the changes in this setting.
 
  We consider functions ${\bf v}$ on small intervals $[-\tau,\tau]$ on the unit circle which solve the $N$-membrane problem for the operator $-
  \partial_{\theta 
  \theta} - 4$ which is positive definite if $\tau < \pi/4$. Then the homogenous 2 extension of ${\bf v}$ solves the $N$-membrane problem in the 
  corresponding sector in $\R^2$. The energy corresponding to the new operator has the form 
  $$ \int_{-\rho}^\rho \sum \omega_k\left(\frac 12 |v_k'|^2 - 2 v_k^2 + f_k v_k\right) \, d \theta,$$
  end the existence of solutions follows in the same way as before. The proof of Lemma \ref{Han1d} is identical since the following Harnack inequality continues to hold:
$$ \partial_{\theta \theta} w + 4 w \le 0, \quad \mbox{and} \quad w \ge 0 \quad  \Longrightarrow \quad w \le C w(0) \quad \mbox{ in $[-\tau,\tau]$}.$$
We are ready to prove Lemma \ref{l52} by comparing ${\bf u}$ with appropriate translations of ${\bf v}$ that are homogenous of degree 2, and make use of Lemma \ref{Han1d3} above.

\begin{proof}[Proof of Lemma \ref{l52}]
Assume for simplicity that $z = 1/2 e_1$, and choose $\rho$ universal such that (see \eqref{rtau}) $\mathcal R_{4 \rho r} \subset B_r(e_1).$

We prove by induction on $m \ge 0$ that in $B_{r}(z)$ with $r = \bar c \rho^{m}$, for some $\bar c$ small to be specified later, as long as $r \ge C \eps^{1/2}$ we have
\begin{equation}\label{5000}
v_k + \zeta_{k,m}^- |x|^2 \le u_k \le  v_k + \zeta_{k,m}^+ |x|^2,
\end{equation}
$$\zeta_{k,m}^\pm = \zeta_{k,m} \pm \eps_m, \quad \quad \eps_m:=8 (1-c)^m \eps,$$
for some $c>0$ small universal, and constants $\zeta_{k,m}$ for which $v_k + \zeta_{k,m} |x|^2$ is admissible. 

Moreover, the constants $\zeta_{k,m}$ are all equal when $k$ belongs to the $i$-th group  $k \in \{ k_{i-1} +1,.., k_{i} \}$ and 
\begin{equation}\label{vkevk}
\mbox{the line $\{ x \cdot \nu_i\}=0$ intersects $B_r(z)$.}
\end{equation}

Notice that our hypothesis $|{\bf u}-{\bf v}| \le \eps$ implies that $\zeta_{k,m} \in [-16 \eps, 16 \eps]$.

When $m=0$ we can take $\zeta_{k,0}= 0$ by hypothesis.

Assume the induction hypothesis holds for $r=r_m$. We want to show that that \eqref{5000} holds in $B_{\rho r}(z)$ for some constants $\xi_{k}^\pm$ with 
$$\zeta_{k,m}^-\le \xi_k^- \le \xi_k^+ \le \zeta_{k,m}^+, \quad \quad \xi_k^+ - \xi_k^- \le (1-c)(\zeta_{k,m}^+-\zeta_{k,m}^-),$$
and $v_k + \xi_k^\pm |x|^2$ are admissible, and with $\xi^\pm_{k+1}=\xi_k^\pm$ whenever the 
condition \eqref{vkevk} holds for $B_{\rho r}$. 
Then we define $\zeta_{k,m+1}$ as the averages of $\xi_k^\pm$ and the conclusion follows for $m+1$.

We pick a unit 
direction $\bar \nu$ close to the direction $e_1$ of $z$
$$ |\bar \nu- e_1| \le \rho r,$$ 
such that a $c \, r$ neighborhood of the ray of direction $\bar \nu$ does not intersect the set $D^\eps$ (defined in \eqref{Deps}) in 
$B_r (z)$. This is possible since $r \ge C \eps^{1/2}$. Assume that at $\frac 12 \bar \nu$, $u_k$ is closer to the upper bound in \eqref{5000} i.e.
\begin{equation}\label{5001}
u_k (\frac 12 \bar \nu) \ge (v_k  + (\zeta^-_{k,m} + \eps_m)|x|^2)(\frac 12 \bar \nu).
\end{equation}
By Lemma \ref{l51}, outside $D^\eps$
$$\left|\triangle \left(u_k- (v_k  + \zeta_{k,m}^- |x|^2) \right)\right| \le \delta \eps + 2 |\zeta_{k,m}^-| \le 40 \eps \le  \bar c \eps_m r^{-2}.$$  
By Harnack inequality applied to the difference $$u_k- \left(v_k  + \zeta_{k,m}^- |x|^2 \right) \ge 0$$ we find that \eqref{5001} can be extended to
$$ u_k \ge v_k  + \zeta_{k,m}^- |x|^2 + c' \eps_m \ge v_k  + (\zeta_{k,m}^-+ c'\eps_m) |x|^2,$$
for some $c'$ universal on the whole ray 
 $$ B_{r/2}(z) \cap \{t \bar \nu| \quad t \ge 0\},$$
 provided that $\bar c$ is sufficiently small.
Now we can apply Lemma \ref{Han1d3} to $u_k(x)$ (in fact the quadratic rescalings $4 u_k(x/2) $) and  $v_k  + \zeta_{k,m}^- |x|^2$ in $\mathcal R_{8 \rho r}$ with $\sigma:=c' \eps_m$, 
since the error for the approximate solutions is bounded by
$$40 \eps \le \bar c \eps_m r^{-2} \le c_0 \sigma (8 \rho \, r)^{-2},$$ 
and obtain
$$ u_j \ge v_j  + (\zeta_{j,m}^- +c''\eps_m) |x|^2,$$
in $B_{\rho r}(z)$ for all $j \in J_k$, for some $c''$ small, universal. As in Remark \ref{r52}, the righthand sides correspond to an admissible family in $B_{\rho r}(z)$. Moreover, they change by the same amount on a set of indices $j$ that belong to an $i$-th group $\{k_{i-1} +1,.., k_{i} \}$ for which $\{x \cdot \nu_i\}$ intersects $B_{\rho r} (z)$, since in this case the coincidence sets $\{v_{j-1}=v_{j}\}$ cover more than $1/10$ of the interval $\theta \in [-4 \rho r, 4 \rho r]$  on the unit circle $\partial B_1 $. This means that we can choose $\xi_k^\pm$ accordingly in $B_{\rho r}$ and the lemma is proved.

\end{proof}

 
 \section{Regular intersection points}
 
 In this section we study the regularity of the free boundaries for solutions ${\bf u}$ that stay close to the blow-up cone
 $${\bf p_0}(x):= \frac 12 (x_2^+)^2 {\bf f},$$
and prove Theorem \ref{TIntro2} which we recall.
 \begin{thm}\label{Treg}
 Assume $d=2$ and $$|{\bf u}-{\bf p}_0| \le \eps_0 \quad \mbox{in $B_1$.}$$ 
 Then each $\Gamma_i$ is a $C^{1,log}$ curve in $B_{1/2}$.
 \end{thm}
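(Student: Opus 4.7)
The plan is to iterate the scheme of Proposition \ref{PM} at every free boundary point in $B_{1/2}$, and then compare the resulting tangent directions between different centers to extract a logarithmic modulus of continuity. Since ${\bf p}_0 \in \mathcal P^c$, the hypothesis $|{\bf u}-{\bf p}_0|\le \eps_0$ directly gives ${\bf u}\in \mathcal S(1,{\bf p}_0,\eps_0)$ at the origin with trivial linearization ${\bf b}_0=0$. Iterating Proposition \ref{PM}, at each dyadic scale $\rho^n$ we obtain an approximating solution ${\bf p}(\cdot,{\bf b}_n)$ with error $\eps_n$, and by Lemma \ref{Sequences} the errors satisfy $\sum_{n\ge k}\eps_n \le C/k$. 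The limiting linearizations determine a unique tangent line $\ell_0$ to every $\Gamma_i$ at the origin, with rate $C(-\log|x|)^{-1}$---this is precisely Theorem \ref{Td2} specialized to ${\bf p}={\bf p}_0$.

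Next, we apply the same scheme at each $y_0\in\Gamma_i\cap B_{1/2}$. Let $K(y_0)$ denote the indices $j$ such that $\Gamma_j$ passes through $y_0$. For the remaining indices, we would like the strict separation $u_j>u_{j+1}$ to hold in a full ball around $y_0$, so that the problem decouples there into a smaller $|K(y_0)|+1$ membrane problem. The resulting reduced blow-up is of the same ${\bf p}_0$-type (all remaining coincidence sets lie on the same half-plane, up to a common small rotation coming from the iteration at the origin), and by induction on $N$ the iteration at $y_0$ produces a unique tangent direction $\ell_{y_0}$ for each $\Gamma_i$ with $i\in K(y_0)$. The base case $N=2$ is the classical result for the scalar obstacle problem.

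Finally, to derive the global $C^{1,\log}$ regularity we compare the two iterations. For $|y_0|=r$, the approximating cones centered at $0$ and at $y_0$ must coincide up to scale $\gtrsim r$ (they approximate the same ${\bf u}$ in overlapping balls of comparable size), so the tangent directions $\ell_0$ and $\ell_{y_0}$ differ by at most the tail $\sum_{n:\,\rho^n\le r}\eps_n \le C(-\log r)^{-1}$. Writing each $\Gamma_i$ locally as a graph $x_2=g_i(x_1)$ over $\ell_0$, this yields $|g_i'(a)-g_i'(b)|\le C(-\log|a-b|)^{-1}$, the desired $C^{1,\log}$ modulus.

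The main technical obstacle will be the decoupling step at points $y_0$ where only some of the free boundaries meet: we must show quantitatively that the non-interacting membranes are strictly separated in a ball around $y_0$ of radius commensurate with $|y_0|$, so that the iteration at $y_0$ can start with a genuinely smaller membrane problem and the induction hypothesis becomes applicable with a small enough closeness parameter for the reduced ${\bf p}_0$-type cone. This separation follows from the quadratic growth estimates of Lemma \ref{c11} combined with the thin-strip confinement of Lemma \ref{l300} at the scale $r\sim|y_0|$ produced by the iteration at the origin, but the bookkeeping needs to be done carefully so that the error parameters at $y_0$ remain universally controlled.
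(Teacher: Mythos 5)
Your proposal identifies the correct global structure (iterate a dichotomy, induct on $N$, sum a logarithmic tail of errors), but it has two genuine gaps that the paper's Section 7 is almost entirely devoted to repairing, and which you flag but do not resolve.

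First, Proposition \ref{PM} cannot be iterated at a point $y_0 \notin \cap\Gamma_i$. The approximating cones ${\bf p}(\cdot,{\bf b})={\bf h}(x_2,x_1{\bf b})$ in Definition \ref{S} all have their free boundaries passing through the center, so the very hypothesis ${\bf u}\in\mathcal S(r,{\bf p}_0,\eps)$ at $y_0$ forces all coincidence sets to pass through $y_0$. You propose to repair this by decoupling at $y_0$ into a $(|K(y_0)|+1)$-membrane problem immediately, but the nearest missed free boundary can lie at an arbitrarily small distance $d$ from $y_0$; the decoupled problem then lives only on a ball of radius $\sim d$, and it is not at all clear that the rescaled closeness parameter remains $\le\eps_0$ there. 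The paper resolves this by enlarging the class of comparison cones to ${\bf p}(\cdot,{\bf b}_0,{\bf b}_1)={\bf h}(x_2,{\bf b}_0+x_1{\bf b}_1)$ (Definition \ref{pbb}), where the extra translation parameter ${\bf b}_0$ tracks the offset of the free boundaries from the center; Proposition \ref{P5} iterates this and only decouples (via Remark \ref{RFB} and Lemma \ref{dic3}, alternative b)) when $|{\bf b}_0|$ becomes of order $\gtrsim\eps$, at which scale the separation is quantitatively large enough to make the induction hypothesis applicable with a \emph{universal} $\eps$. Your "thin-strip confinement'' heuristic is in the right spirit but does not by itself control the parameters of the reduced problem; the extra parameter ${\bf b}_0$ and the two-case analysis in Lemma \ref{-1} are what make the bookkeeping close.

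Second, and more seriously, the energy dichotomy of Proposition \ref{PM} relies on the lower bound $W({\bf u},\rho)\ge W({\bf p}_0)$, which in Section 5 follows from Weiss monotonicity together with the fact that ${\bf p}_0$ is the least-energy cone \emph{under the assumption} $0\in\cap\Gamma_i$. When you apply the scheme at a point through which not all free boundaries pass (the regime where ${\bf b}_0\ne 0$ but is still small), the blow-up is no longer a cone in $\mathcal C_2$ and this lower bound does not come for free. Your proposal does not address this at all. The paper devotes Lemmas \ref{-3}, \ref{-2}, and in particular \ref{wpb3} to proving the substitute lower bound $W({\bf p}(\cdot,{\bf b}))\ge W({\bf p}_0)+c\eps^{3/2}$ whenever ${\bf b}/\delta\eps^{1/2}$ is away from the line $s\tau$; Lemma \ref{-2} constructs, for each ${\bf b}$, an auxiliary solution ${\bf u}_{\bf b}$ with $0\in\cap\Gamma_i({\bf u}_{\bf b})$ that is $\eps$-approximated by ${\bf p}(\cdot,{\bf b})$, so the genuine Weiss bound can be ported over. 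Without this input, the alternative a2) of Lemma \ref{Preg} (the substitute for the second alternative of Proposition \ref{PM}) has nothing anchoring the decay, and the series $\sum\eps_m$ is not controlled. This is the crucial technical content of the section and cannot be bypassed by a cosine-like comparison of iterations centered at $0$ and at $y_0$.
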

 
 We prove Theorem \ref{Treg} by induction on the number of membranes $N$. One of the technical points is that we need a lower bound for the Weiss energy, see Lemma \ref{wpb3}, which is not obvious since we no longer assume $0 \in \cap \Gamma_i$.
 
 Similar to Definition \ref{S}, we approximate solutions ${\bf u}$ by the slightly more general functions from Definition \ref{pbb} 
 $$ {\bf p}(x,{\bf b}_0,{\bf b}_1)={\bf h}(x_2,{\bf b}_0 + x_1 {\bf b}_1) , \quad {\bf b}_i \in {\bf B}({\bf p}_0).$$

 \begin{prop}\label{P5}
 Assume that a solution $\bf u$ to the problem $P_0$ satisfies
 \begin{equation}\label{6000}
 |{\bf u} - {\bf p}(\cdot, {\bf b}_0,{\bf b}_1)| \le \eps r^2 \quad \mbox{in $B_r$}, 
 \end{equation}
$$\mbox{for some} \quad {\bf b}_i \in B({\bf p}_0), \quad \mbox{with} \quad |{\bf b}_0| \le \eps^{1/2}r, \quad |{\bf b}_1| \le 2 \delta \eps^{1/2}.$$
Then
 \begin{equation}\label{6002}
 |{\bf u} - {\bf p}(\cdot, {\bf b}_0',{\bf b}_1')| \le  \frac{\eps}{2}  (\rho r)^2 \quad \mbox{in $B_{\rho r}$}, 
 \end{equation}
 with ${\bf b}'_i \in B({\bf p}_0)$ and
\begin{equation}\label{6003}
 |{\bf b}'_0-{\bf b}_0| \le C_0 \eps r, \quad |{\bf b}'_1-{\bf b}_1| \le C_0 \eps.
 \end{equation}
The constant $C_0$ depends only on the dimension $d=2$, $\rho \le \rho_0$ universal, $\delta \le \delta(\rho)$ depending on $\rho$, and $\eps \le \eps_0(\delta,\rho)$ sufficiently small.
  \end{prop}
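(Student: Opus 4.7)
The plan is to follow the compactness-contradiction strategy of Proposition \ref{PM}, but now using the full two-parameter family of approximate solutions ${\bf p}(\cdot,{\bf b}_0,{\bf b}_1)$ from Definition \ref{pbb}. First, by the degree-two homogeneity of ${\bf h}$ in $(x,{\bf b})$, rescaling ${\bf u}(rx)/r^2$ reduces the hypothesis to $r=1$ with parameters $({\bf b}_0/r,{\bf b}_1)$; thus it suffices to establish the conclusion in the normalized form. Assume by contradiction sequences ${\bf u}_m$, $({\bf b}_{0,m},{\bf b}_{1,m})$ and $\eps_m\to 0$ satisfying \eqref{6000} in $B_1$ for which no update satisfying \eqref{6002}--\eqref{6003} exists in $B_\rho$.

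The next step is to derive analogues of Lemma \ref{l300} and Lemma \ref{l36} for the enlarged class ${\bf p}(\cdot,{\bf b}_0,{\bf b}_1)$. Since Lemma \ref{L32} already gives the same Euler--Lagrange error $C|{\bf b}_1|^2$ and the same asymptotic expansion, the proofs transfer almost verbatim: the free boundaries lie in a $C\sqrt{\eps}$ strip around $\{x_2=0\}$, and $|\triangle(u_{m,j}-p_j(\cdot,{\bf b}_{0,m},{\bf b}_{1,m}))|\le\delta\eps_m$ outside this strip. Extracting subsequences ${\bf b}_0:=\lim\eps_m^{-1/2}{\bf b}_{0,m}$ and ${\bf b}_1:=\lim\eps_m^{-1/2}{\bf b}_{1,m}$ (both of norm $\le\delta$), the rescaled errors
$${\bf w}_m := \eps_m^{-1}\bigl({\bf u}_m - {\bf p}(\cdot,{\bf b}_{0,m},{\bf b}_{1,m})\bigr)$$
converge uniformly on compacts of $B_{3/4}$ (and in $C^1_{\mathrm{loc}}$ off $\{x_2=0\}$) to a limit ${\bf w}$ with $\|{\bf w}\|_{L^\infty}\le 1$, satisfying $w_j=w_l$ on $\{x_2\le 0\}$ (since all left branches of ${\bf p}_0$ coincide and are zero, the coincidence persists in a uniform neighborhood of the left half-axis for ${\bf u}_m$) and, in each quadrant of $\{x_2>0\}$,
$$\triangle w_j \;=\; -2e_j({\bf b}_1)\chi_{\{x_1>0\}} - 2e_j(-{\bf b}_1)\chi_{\{x_1<0\}}.$$

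Writing $w_j=\gamma_j^{\pm}x_2^2+t_j^{\pm}x_1 x_2 + \tilde w_j$ on $\{x_2>0\}$ with $\gamma_j^{\pm},t_j^{\pm}$ determined by ${\bf e}(\pm{\bf b}_1)$, the remainder $\tilde w_j$ is bounded and harmonic on $\{x_2>0\}\cap B_{3/4}$ with continuous zero boundary data extended by $w_j=w_l$ on $\{x_2\le 0\}$. Standard harmonic expansion near the origin gives
$$\tilde w_j(x) = c_j x_2 + d_j x_1 x_2 + O(|x|^3\log^{-1}|x|),$$
and the same analysis on $\{x_2<0\}$ produces coefficients which, combined with the averaging constraint $\sum\omega_k w_k \equiv 0$ from \eqref{avei} and the forced equality $b_j^-=0$ in $B({\bf p}_0)$, yield admissible vectors ${\bf c},{\bf d}\in B({\bf p}_0)$. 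Set ${\bf b}_{0,m}'={\bf b}_{0,m}+\eps_m{\bf c}$ and ${\bf b}_{1,m}'={\bf b}_{1,m}+\eps_m{\bf d}$; by Lemma \ref{L32}(b),
$${\bf p}(x,{\bf b}_{0,m}',{\bf b}_{1,m}') = {\bf p}(x,{\bf b}_{0,m},{\bf b}_{1,m}) + \eps_m(c_j+d_j x_1)x_2 + O\bigl(\eps_m(\eps_m^{1/2}+\eps_m|x|^2)\bigr),$$
and subtracting from ${\bf u}_m$ the linear pieces are exactly cancelled. What remains in $B_\rho$ is of size $C\eps_m\rho^3\log^{-1}(1/\rho)+o(\eps_m)$, which is smaller than $(\eps_m/2)\rho^2$ once $\rho$ is chosen sufficiently small and $m$ sufficiently large. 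This contradicts the assumed failure of \eqref{6002}--\eqref{6003}.

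The main obstacle is the algebraic admissibility step in the final paragraph: extracting $({\bf c},{\bf d})\in B({\bf p}_0)^2$ from the harmonic expansion of $\tilde{\bf w}$. One must verify the branch compatibility $b_j^-=0$ (inherited from the fact that the left-half of ${\bf u}_m$ is a single coincidence region up to $O(\sqrt{\eps_m})$ so that all $w_{m,j}$ share the same left-side trace) together with the averaging condition $\sum\omega_j c_j=\sum\omega_j d_j=0$ (which follows from $\sum\omega_k u_{m,k}\equiv 0$ combined with the harmonicity of the average of ${\bf p}(\cdot,{\bf b}_0,{\bf b}_1)$). Once admissibility is established, the dimension count $\dim B({\bf p}_0)^2 = 2(N-1)$ matches exactly the linear-plus-$x_1x_2$ modes of harmonic functions vanishing on $\{x_2=0\}$, so the projection is surjective and no orthogonality obstruction analogous to Lemma \ref{eb} arises.
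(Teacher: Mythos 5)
Your overall plan coincides with the paper's: linearize around ${\bf p}(\cdot,{\bf b}_0,{\bf b}_1)$, read off the update for ${\bf b}_0,{\bf b}_1$ from the $x_2$ and $x_1 x_2$ modes of the limiting error, check admissibility in $B({\bf p}_0)$ (the left components are forced to vanish and the averaging constraint is inherited from $\sum \omega_k u_k = \sum \omega_k p_k = 0$), and close with Lemma~\ref{hxb}. Your observation that $\dim B({\bf p}_0)^2$ matches the relevant linear and bilinear modes, so that no orthogonality obstruction in the spirit of Lemma~\ref{eb} can occur, correctly explains why there is no Weiss dichotomy here. The paper reaches the same conclusion by a direct argument (no compactness-contradiction), but that is a harmless difference of framing.

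However, there is a genuine error in your third paragraph. You assert that the limiting error ${\bf w}$ satisfies $\triangle w_j = -2e_j({\bf b}_1)\chi_{\{x_1>0\}} - 2e_j(-{\bf b}_1)\chi_{\{x_1<0\}}$. This formula (Corollary~\ref{C1}) relies on the quadratic homogeneity of ${\bf e}$ together with the fact that the argument of ${\bf e}$ is $x_1{\bf b}_1$, i.e.\ ${\bf b}_0 = 0$. In the present setting $\triangle w_j = -\partial_{x_1 x_1} e_j({\bf b}_0 + x_1 {\bf b}_1)$ with ${\bf b}_0 \ne 0$ generically, and since ${\bf e}$ is only piecewise quadratic, $\partial_{x_1x_1} e_j$ is piecewise constant in $x_1$ with breakpoints at the values of $x_1$ where the line ${\bf b}_0 + x_1{\bf b}_1$ crosses the singular locus of ${\bf e}$ --- not at $x_1 = 0$. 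Consequently the decomposition $w_j = \gamma_j^{\pm} x_2^2 + t_j^{\pm} x_1 x_2 + \tilde w_j$ does not render $\tilde w_j$ harmonic (nor even continuous across $\{x_1 = 0\}$ unless $\gamma_j^+ = \gamma_j^-$), so the ``standard harmonic expansion'' step is invalid as written; the extra factor $\log^{-1}|x|$ in the remainder is also spurious. The paper avoids all this (see Corollary~\ref{C12}) by never claiming the explicit piecewise formula for $\triangle w_j$: it only uses $|\triangle w_j| \le \delta$ away from $\{x_2 = 0\}$, and absorbs the non-harmonicity into the Taylor error, obtaining $|{\bf w} - x_2({\bf d}_0 + x_1{\bf d}_1)| \le C_0(\rho^3 + \delta)$ in $B_\rho$. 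This is what forces $\delta \le \delta(\rho)$ in the statement of the proposition --- a constraint you do not track because you believed the explicit formula held. Dropping the inhomogeneous/harmonic split and working directly with the bound $|\triangle w_j| \le \delta$ repairs the argument.
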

 
 After rescaling it suffices to prove the proposition for $r=1$.
 
 First we estimate the change in ${\bf h}(x,{\bf b})$ as we vary ${\bf b}$.
 
 \begin{lem}\label{hxb}
 $$\left|{\bf h}(x,{\bf b} + {\bf d}) - ({\bf h}(x,{\bf b}) + x {\bf d} )\right| \le C  |{\bf d}|(|{\bf b}| +|{\bf d}|)$$
 \end{lem}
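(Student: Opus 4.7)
My plan is to reduce the estimate to two cases according to the size of $|x|$ relative to $\Lambda := |{\bf b}|+|{\bf d}|$, with the split made at a fixed large universal constant $C_0$. The dichotomy is dictated by Lemma \ref{L2.2}: the clean linear-plus-constant asymptotic expansion for ${\bf h}(\cdot,{\bf b})$ only holds once $|x| \gtrsim \|{\bf b}\|$, so I need a separate argument to handle the ``core'' $|x| \lesssim \Lambda$, where the two profiles ${\bf h}(\cdot,{\bf b})$ and ${\bf h}(\cdot,{\bf b}+{\bf d})$ may still contain free-boundary information.

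In the outer region $|x| \ge C_0 \Lambda$, Lemma \ref{L2.2} applies simultaneously to both ${\bf h}(x,{\bf b})$ and ${\bf h}(x,{\bf b}+{\bf d})$, giving the exact formulas $h_i(x,{\bf b}) = p_i(x) + b_i x + e_i({\bf b})$ and $h_i(x,{\bf b}+{\bf d}) = p_i(x) + (b_i+d_i) x + e_i({\bf b}+{\bf d})$. Subtracting, the quantity to be estimated collapses to $e_i({\bf b}+{\bf d}) - e_i({\bf b})$. Since the error map ${\bf e}$ is piecewise quadratic and homogeneous of degree $2$ (Definition \ref{ER} and Lemma \ref{L2.3}), its gradient satisfies $|\nabla {\bf e}({\bf b})| \le C|{\bf b}|$, and integrating along the segment from ${\bf b}$ to ${\bf b}+{\bf d}$ yields the desired bound $C|{\bf d}|(|{\bf b}|+|{\bf d}|)$.

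In the inner region $|x| \le C_0 \Lambda$, I would exploit the joint homogeneity of ${\bf h}$ of degree $2$ in $(x,{\bf b})$. Assuming $\Lambda>0$ (otherwise the estimate is trivial), I set $\tilde x := x/\Lambda$, $\tilde{\bf b} := {\bf b}/\Lambda$, $\tilde{\bf d} := {\bf d}/\Lambda$, so that $|\tilde x|,|\tilde{\bf b}|,|\tilde{\bf d}|$ are all universally bounded. By the homogeneity,
$$h_i(x,{\bf b}+{\bf d}) - h_i(x,{\bf b}) - x d_i \;=\; \Lambda^2 \bigl[ h_i(\tilde x, \tilde{\bf b}+\tilde{\bf d}) - h_i(\tilde x,\tilde{\bf b}) - \tilde x \tilde d_i \bigr].$$
On this bounded set, the joint $C^{1,1}$ regularity of ${\bf h}$ from Lemma \ref{L2.3} gives $|h_i(\tilde x, \tilde{\bf b}+\tilde{\bf d}) - h_i(\tilde x,\tilde{\bf b})| \le C |\tilde{\bf d}|$, and of course $|\tilde x \tilde d_i| \le C |\tilde{\bf d}|$. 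Unscaling produces a bound of order $\Lambda^2 \cdot |\tilde{\bf d}| = \Lambda |{\bf d}| = |{\bf d}|(|{\bf b}|+|{\bf d}|)$, as required.

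I do not expect any real obstacle: the lemma is essentially a packaging of the structural properties of ${\bf h}$ already established in Section 3 — joint $C^{1,1}$ regularity, joint degree-$2$ homogeneity, and the explicit asymptotic expansion outside a neighborhood of size $\|{\bf b}\|$. The only minor point to check is that the outer regime applies to both ${\bf h}(\cdot,{\bf b})$ and ${\bf h}(\cdot,{\bf b}+{\bf d})$ at the same threshold $|x| = C_0\Lambda$, but this is automatic since both of the intervals $[-C\|{\bf b}\|, C\|{\bf b}\|]$ and $[-C\|{\bf b}+{\bf d}\|, C\|{\bf b}+{\bf d}\|]$ supplied by Lemma \ref{L2.2} lie inside $[-C\Lambda, C\Lambda]$.
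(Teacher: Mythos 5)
Your proof is correct and follows essentially the same route as the paper: rescale by $\Lambda = |{\bf b}|+|{\bf d}|$ (the paper normalizes $\Lambda=1$ upfront, you rescale in the inner region), split at $|x| \sim \Lambda$, use the explicit expansion of Lemma \ref{L2.2} in the outer region and the joint Lipschitz continuity of ${\bf h}$ from Lemma \ref{L2.3} in the inner region. The only minor difference is that the paper handles the outer region by observing the left-hand side is constant there and extending by continuity from $|x|=C$, whereas you bound $e_i({\bf b}+{\bf d}) - e_i({\bf b})$ directly via the degree-one homogeneity of $\nabla{\bf e}$; both are valid.
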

 
 \begin{proof}
 By the homogeneity of ${\bf h}$ we may assume that $|{\bf b}|+|{\bf d}|=1$. Then by Lemma \ref{L2.2} we know that the left hand side is constant when $x$ is outside the interval $[-C,C]$. So it suffices to prove the inequality when $|x|\le C$. Now the inequality follows from the Lipschitz continuity of ${\bf h}$ in its second variable.

 \end{proof}
 
 Next we establish in the context of Proposition \ref{P5} the estimate for the rescaled error of ${\bf u}-{\bf p}$ in terms of the distance to the $x_2$ axis, as we did in Lemma \ref{l36}. 

\begin{lem}\label{l365}Assume that $ {\bf u}$ satisfies \eqref{6000} with $r=1$. Then in $B_{1/2}$
$$ |{\bf u} - {\bf p}(\cdot, {\bf b}_0,{\bf b}_1)| \le C \, \eps (|x_2|+ \sqrt \eps)^\alpha,$$
for some $\alpha>0$ small, universal.
\end{lem}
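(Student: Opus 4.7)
The plan is to mirror the induction scheme in the proof of Lemma \ref{l36}, with the only essential change being that the barrier profile is now obtained by perturbing the first argument ${\bf b}_0$ in ${\bf p}(\cdot, {\bf b}_0, {\bf b}_1)$, rather than generating a nonzero first argument from scratch.

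First I would establish the analog of Lemma \ref{l300}: since $|{\bf b}_0| \le \sqrt{\eps}$ and $|{\bf b}_1| \le 2\delta \sqrt{\eps}$, the free boundaries of ${\bf p}(\cdot, {\bf b}_0, {\bf b}_1)$ (and hence of ${\bf u}$, by \eqref{6000}) are contained in the strip $\{|x_2| \le C\sqrt{\eps}\} \cap B_{3/4}$, and outside this strip $|\triangle(u_i - p_i(\cdot, {\bf b}_0, {\bf b}_1))| \le \delta \eps$ by Lemma \ref{L32}(a). Then, fixing $Z := (z, 0)$ with $|z| \le 1/2$, I would prove by induction on $k \ge 0$ that
$$|u_i - p_i(\cdot, {\bf b}_0, {\bf b}_1)| \le \eps_k := \eps (1-c)^k \quad \text{in } B_{r_k}(Z), \quad r_k := \rho^{k+1},$$
valid as long as $r_k \ge C'\sqrt{\eps}$, for universal constants $\rho$, $c$, $C'$.

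For the induction step, assuming the bound holds at scale $r_k$ and that at $Y := Z + \tfrac{1}{2} r_k e_2$ one has $u_j(Y) \ge p_j(Y, {\bf b}_0, {\bf b}_1)$ for some $j$, I would compare ${\bf u}$ to the subsolution
$${\bf v}(x) := {\bf p}(x, {\bf b}_0 + {\bf d}_0, {\bf b}_1) + (c_1 \eps_k q((x-Z)/r_k) - \eps_k) {\bf 1},$$
where ${\bf d}_0 := c_1 \eps_k r_k^{-1} {\bf t}$ with the same auxiliary ${\bf t} \in B({\bf p}_0)$ and polynomial $q$ as in the proof of Lemma \ref{l36}. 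Lemma \ref{hxb} plays the role played by Lemma \ref{L2.2} before: it gives
$${\bf p}(x, {\bf b}_0 + {\bf d}_0, {\bf b}_1) = {\bf p}(x, {\bf b}_0, {\bf b}_1) + x_2 {\bf d}_0 + O\bigl(|{\bf d}_0|(|{\bf b}_0| + |x_1|\,|{\bf b}_1| + |{\bf d}_0|)\bigr),$$
and in $B_{r_k}(Z)$ the error term is bounded by $C \eps_k r_k^{-1} \sqrt{\eps} \le \delta \eps_k$ provided $C'$ is chosen large depending on $\delta$. The leading correction $x_2 {\bf d}_0$ plays exactly the same role as the $c_1 \eps_k r_k^{-1} t_i x_2^+$ term in \eqref{rkZ}, since at the top face $x_2 \sim r_k$ gives $|x_2 {\bf d}_0| \sim \eps_k$. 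The boundary comparison on $\partial R$ and the verification that ${\bf v}$ is a subsolution then go through verbatim, using Lemma \ref{L32}(a) to bound the Euler–Lagrange error of ${\bf p}(\cdot, {\bf b}_0 + {\bf d}_0, {\bf b}_1)$ by $C|{\bf b}_1|^2 \le \delta \eps_k r_k^{-2}$ together with $\triangle q = 1$.

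The main obstacle is the bookkeeping required to see that the error from Lemma \ref{hxb} is absorbed — this is where the slightly larger bound $|{\bf b}_0| \le \sqrt{\eps}$ (not $\delta \sqrt{\eps}$) forces the lower cutoff for $r_k$ to be $C'\sqrt{\eps}$ with $C'$ depending on $\delta$. The Harnack step on $\partial R \cap \{x_2 = 4\rho r_k\}$ needs to use the index set $J$ associated with ${\bf p}_0$, which is the same on both sides of the $x_2$-axis since the free boundaries stay in the thin strip. Once the induction is complete, the conclusion $|{\bf u} - {\bf p}(\cdot, {\bf b}_0, {\bf b}_1)| \le C \eps (|x_2| + \sqrt{\eps})^\alpha$ in $B_{1/2}$ follows by the standard dyadic argument, with $\alpha := -\log(1-c)/\log(1/\rho) > 0$, together with the trivial bound below the cutoff scale.
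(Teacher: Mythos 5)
Your proposal is correct and follows essentially the same route as the paper: the induction from Lemma \ref{l36} with the comparison function ${\bf p}(x,{\bf b}_0 + {\bf d}, {\bf b}_1) + (c_1\eps_k q - \eps_k){\bf 1}$, with the new ingredient being Lemma \ref{hxb} (applied at ${\bf b} = {\bf b}_0 + x_1{\bf b}_1$) to bound the barrier's deviation from ${\bf p}(\cdot,{\bf b}_0,{\bf b}_1) + x_2{\bf d}$ by $C\eps_k r_k^{-1}\sqrt{\eps}$, absorbed by taking the lower cutoff $r_k \geq C'\sqrt{\eps}$ with $C'$ large. The only cosmetic difference is that the paper attributes the choice of $C'$ to the constants $c_1, c_2, \mu$ rather than to $\delta$, but this is just a matter of where the same error is absorbed in the barrier comparison.
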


\begin{proof}
The proof is essentially the same with the one of Lemma \ref{l36}, after replacing ${\bf p}(\cdot,{\bf b})$ by ${\bf p}(\cdot,{\bf b}_0, {\bf b}_1)$. A few comments are in order. 

First we remark that the approximate solution solves the Euler-Lagrange equations with error $C |{\bf b}_1|^2 \le \delta \eps$ as before, and is not affected by the presence of ${\bf b}_0$, see Lemma \ref{L32}. 

The comparison function ${\bf v}$ in $B_{r_k}(Z)$ is defined as before
$${\bf v}(x):={\bf p}(x_2,  {\bf b}_0 + {\bf d},{\bf b}_1) + (c_1 \eps_k  q((x-Z)/r_k) -\eps_k) {\bf 1} ,$$
with ${\bf d}$, $q$ as in \eqref{dq}. The inequality \eqref{rkZ} is then replaced by
\begin{equation}\label{rkZ2}
|{\bf p}(x, {\bf b}_0 + {\bf d}, {\bf b}_1) - {\bf p}(x,  {\bf b}_0, {\bf b}_1) -  x_2 {\bf d}| \le \frac{C}{ C_1'} \eps_k \quad \mbox{in} \quad B_{r_k}(Z),
\end{equation}
and the rest of the proof remains the same, by choosing $C_1'$ sufficiently large depending on the other constants $c_1$, $c_2$ and $\mu$. We no longer use Lemma \ref{L32} to establish \eqref{rkZ2}, but Lemma \ref{l365} above with ${\bf b}={\bf b}_0 + x_1 {\bf b}_1$. Then $|{\bf b}| \le 2 \eps^{1/2}$ and, since $|{\bf d}| \le C \eps_k r_k^{-1}$ and $r_k \ge C_1' \eps^{1/2}$, the left hand side in \eqref{rkZ2} is bounded by
$$ C \eps^{1/2} \eps_k r_k^{-1} \le \frac{C}{C_1'} \eps_k.$$
\end{proof}

\begin{rem}\label{RFB}As a consequence of Lemma \ref{l365} and of the quadratic separation of consecutive membranes from their common free boundary, we find that in $B_{1/2}$ the free boundaries $\Gamma_i({\bf u})$ of ${\bf u}$ lie in a $\eps^{\frac 12 + \frac \alpha 4}$ neighborhood of the corresponding free boundaries of the approximate solution ${\bf p}(x, {\bf b}_0,{\bf b}_1)={\bf h}(x_2,{\bf b}_0+ x_1{\bf b}_1)$. In particular $\Gamma_i({\bf u})$ lie in an $C \delta \eps^{1/2}$ neighborhood of the free boundaries $x_2=\Gamma_i({\bf b}_0)$ of the exact solution ${\bf p}(x,{\bf b}_0,0)={\bf h}(x_2,{\bf b_0})$.

Assume that the free boundaries of ${\bf h}(x_2,{\bf b_0})$ separate of order $\eps^{1/2}$, i.e. there exists an interval $[a- c_0 \eps^{1/2},a + c_0 \eps^{1/2}]$ for some $c_0$ small, which does not intersect the $\Gamma_i({\bf b}_0)$, but at least one of these points falls to the left of this interval and at least one to the right. Assume $\delta \ll c_0$ is sufficiently small. Then the free boundaries $\Gamma_i({\bf u})$ do not intersect the strip
$$ S:=\{ |x_2-a| \le \frac {c_0}{2} \eps^{1/2}\},$$ and the $N$-membrane problem decouples into several multi-membrane problems in $B_{1/2}$ involving fewer membranes.

Indeed, for each set of indices $j \in J$ for which $u_j$ agree in the strip $S$, we replace $u_j$ by $u_{J}$ to the right of the strip (we think $x_2$ is the horizontal direction). If there are $J_1$,..,$J_l$ such sets, then we obtain a multi-membrane problem involving $l$-membranes. The free boundaries of the new problem coincide with the free boundaries of ${\bf u}$ that were on the left of the strip $S$. On the other hand, for each set $J$, $u_j-u_J$ solves a multi-membrane problem which has $\Gamma_j(\bf u)$ with $j \in J$ as free boundaries, which lie to the right of the strip $S$. 
The same decoupling procedure can be performed to the approximate solution ${\bf p}(x, {\bf b}_0,{\bf b}_1)$, hence the decoupled multi-membrane problems in $B_{1/2}$ are still $\eps$-approximated by corresponding functions of the type ${\bf p}(\cdot, {\bf b}_0,{\bf b}_1)$.
\end{rem}

Also Lemma \ref{l365} implies the uniform convergence of the rescaled errors. 
\begin{cor}\label{C12}
If 
$$ |{\bf u}_m - {\bf p}(\cdot, {\bf b}^m_0,{\bf b}^m_1)| \le \eps_m \quad \mbox{in $B_1$}, \quad \mbox{with} \quad |{\bf b}^m_0| \le \eps_m^{1/2}, \quad |{\bf b}^m_1| \le 2 \delta \eps_m^{1/2},$$
for a sequence of $\eps_m \to 0$, then, up to a subsequence, each of the rescaled error functions
$$\eps_m^{-1} \left(u_{m,j} - p_j(\cdot,{\bf b}^m_0, {\bf b}^m_1 )\right)$$ converges uniformly in $B_{1/2}$ to a limit $w_j$ that satisfies 
$$\|w_j\|_{L^\infty} \le 1, \quad w_j =0  \quad \mbox{on $x_2=0$,}$$
and $$|\triangle w_j| \le \delta \quad \mbox{away from $\{x_2=0\}$.}$$

\end{cor}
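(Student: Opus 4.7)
The plan is to establish Corollary~\ref{C12} by a compactness argument mirroring Corollary~\ref{C1}, feeding off two estimates already in place: the H\"older-type control from Lemma~\ref{l365} and the Euler--Lagrange defect from Lemma~\ref{L32}(a). Set $w_{m,j} := \eps_m^{-1}(u_{m,j} - p_j(\cdot,\mathbf{b}_0^m,\mathbf{b}_1^m))$. The hypothesis gives $\|w_{m,j}\|_{L^\infty(B_1)} \le 1$ directly, and Lemma~\ref{l365} provides the refined bound
$$|w_{m,j}(x)| \le C(|x_2| + \sqrt{\eps_m})^{\alpha} \qquad \text{in } B_{1/2},$$
a uniform modulus of continuity transversal to $\{x_2 = 0\}$, which in particular pins $w_{m,j}$ to $O(\eps_m^{\alpha/2})$ on that line and so forces any cluster point $w_j$ to vanish there.

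Next I would establish the Laplacian bound away from the thin strip $\{|x_2| \le C\sqrt{\eps_m}\}$. Arguing as in Lemma~\ref{l300} (see also Remark~\ref{RFB}), since $\mathbf{p}_0 \in \mathcal P^c$ has its coincidence sets confined to one half-line, consecutive branches of the approximate solution separate quadratically away from $\{x_2 = 0\}$; coupled with the quadratic separation of $u_{m,i}$ from $u_{m,i+1}$ away from their common free boundary, this localizes every $\Gamma_i(\mathbf{u}_m)$ to the strip $\{|x_2| \le C\sqrt{\eps_m}\}$. Outside this strip, in each open connected component of the common coincidence configuration the Euler--Lagrange equation gives $\triangle u_{m,j} = f_I$ exactly, while Lemma~\ref{L32}(a) yields $|\triangle p_j - f_I| \le C|\mathbf{b}_1^m|^2 \le C\delta^2\eps_m \le \delta\eps_m$. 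Dividing by $\eps_m$ gives
$$|\triangle w_{m,j}| \le \delta \qquad \text{on } \{|x_2| \ge C\sqrt{\eps_m}\} \cap B_{3/4}.$$

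Finally I would extract the limit. For any compact $K \subset B_{1/2}\setminus\{x_2 = 0\}$ and all sufficiently large $m$, the set $K$ lies in $\{|x_2| \ge C\sqrt{\eps_m}\}$, so the $L^\infty$ bound together with the uniform Laplacian bound gives interior H\"older equicontinuity of $w_{m,j}$ on $K$ by standard elliptic estimates. Arzel\`a--Ascoli and a diagonal extraction produce a subsequence converging locally uniformly on $B_{1/2}\setminus\{x_2 = 0\}$ to a limit $w_j$; the H\"older modulus from the first paragraph then promotes this to uniform convergence on all of $B_{1/2}$ and forces $w_j = 0$ on $\{x_2 = 0\}$. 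Passing to the limit in the Laplacian estimate on the shrinking strip-complement delivers $|\triangle w_j| \le \delta$ in $B_{1/2}\setminus\{x_2 = 0\}$. No serious obstacle arises here: the substantive analytic content (quadratic-type decay of the rescaled error toward $\{x_2 = 0\}$) is already packaged in Lemma~\ref{l365}, and the present statement is a clean compactness repackaging of those estimates together with the Euler--Lagrange bound from Lemma~\ref{L32}(a).
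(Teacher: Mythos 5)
Your proposal is correct and follows essentially the same route as the paper: the paper obtains this corollary directly from Lemma \ref{l365} (together with the localization of the free boundaries and the Laplacian bound outside the strip $\{|x_2|\le C\sqrt{\eps_m}\}$, exactly as in Lemmas \ref{l300}--\ref{l36} and Corollary \ref{C1}), and then passes to the limit by the same compactness argument you describe. No gaps; your write-up just makes explicit the Arzel\`a--Ascoli extraction and the use of the modulus $C(|x_2|+\sqrt{\eps_m})^{\alpha}$ that the paper leaves implicit.
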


\

{\it Proof of Proposition \ref{P5}}

The rescaled error functions 
$$\eps^{-1} \left(u_j - p_j(\cdot,{\bf b}_0, {\bf b}_1 )\right)$$
are well approximated in $B_{1/2}$ by continuous functions $w_j$ which vanish on $x_2 \le 0$ and satisfy $|\triangle w_j|\le \delta $ in $\{x_2>0\}$. 
Denote by ${\bf d}_0$, ${\bf d}_1 \in {\bf B}({\bf p}_0)$ as
$$ d_{0,j}^+=\partial _{x_2} w_j(0), \quad d_{0,j}^-=0, \quad \quad  d_{1,j}^+=\partial _{x_1x_2} w_j(0), \quad \quad d_{1,j}^-=0.$$
Then $|{\bf d}_i| \le C_0$, and
$$ |{\bf w} - x_2 ({\bf d}_0 + x_1{\bf d}_1)| \le C_0 (\rho^3 + \delta) \quad \mbox{in $B_\rho$},$$
for a constant $C_0$ that depends only on the dimension $d=2$. If $\rho \le \rho_0$ universal, and $\delta \le \delta(\rho)$ depending on $\rho$, then the right hand side is less than $\rho^2/4$. 

By Lemma \ref{hxb}
$${\bf p}(x, {\bf b}_0 + \eps {\bf d}_0, {\bf b}_1 + \eps {\bf d}_1) -{\bf p}(x, {\bf b}_0, {\bf b}_1) =  \eps x_2 ({\bf d}_0 + x_1{\bf d}_1) + O(\eps^{3/2}),$$
and we obtain the desired result by choosing ${\bf b}_0'={\bf b}_0 + \eps {\bf d}_0$, ${\bf b}_1'={\bf b}_1 + \eps {\bf d}_1$.

\qed

\begin{rem}\label{dyc} Assume that in $B_1$ we satisfy \eqref{6000} and in addition ${\bf b}_0=0$. We have the following dichotomy depending on the size of ${\bf d}_0$ in the proof above.

a) If 
\begin{equation}\label{csub1}
|{\bf d}_0| \le c (\rho_0)=:c_1
\end{equation} then we may choose ${\bf b}_0'=0$ and satisfy the conclusion
 $$ |{\bf u} - {\bf p}(\cdot, 0,{\bf b}_1')| \le  \frac \eps 2  \rho_0 ^2 \quad \mbox{in $B_{\rho_0}$}, \quad \quad |{\bf b}_1'-{\bf b}_1|\le C_0 \eps.$$

Moreover, a similar analysis as in Proposition \ref{PM} can be performed. If ${\bf b}_1/\delta \eps^{1/2}$ is at distance at most $\mu_0$ (with $\mu_0$ small universal) away from the line $\{s \tau| s \in \R\}$ then, as in the last part of the proof of Proposition \ref{PM}, after a rotation of coordinates as in \eqref{change} we may reduce to the case when ${\bf b}_1$ satisfies the improved bound $|{\bf b}_1| \le \delta \eps^{1/2} /4$. Then  ${\bf u} \in \mathcal S (\rho_0,{\bf p}_0, \frac \eps 2)$ and the approximate solutions 
${\bf v}_1$, ${\bf v}_{\rho_0}$ for ${\bf u}$ in $B_1$ respectively $B_{\rho_0}$ satisfy
$|{\bf v}_1-{\bf v}_{\rho_0}| \le C \eps$. 

Assume now that ${\bf b}_1/\delta \eps^{1/2}$ is at distance greater than $\mu_0/2$ away from the line $\{s \tau| s \in \R\}$. Then in the proof of Proposition \ref{P5}, by Corollary 
\ref{cor31}, the right hand side of $\triangle {\bf w}$ is constant in each quadrant in $\{ x_2>0\}$ but has a discontinuity jump greater than 
$c(\delta, \mu_0)>0$ across $\{x_1=0\}$. This implies that ${\bf w}$ cannot be homogenous of degree 2 in the annulus $B_{1/2} \setminus B_{1/4}$ which, as in Proposition \ref{PM} implies the energy inequality 
\begin{equation}\label{wineq}
W({\bf u}, \rho_0) \le W({\bf u},1) - c \eps^2,
\end{equation}
for some $c$ small depending on $\delta$ and $\mu_0$.

b) If $|{\bf d}_0| \ge c_1$ then we satisfy the conclusion 
 $$ |{\bf u} - {\bf p}(\cdot, {\bf b}_0',{\bf b}_1')| \le \eps   \rho_1 ^2 \quad \mbox{in $B_{\rho_1}$, and} \quad \quad |{\bf b}_0'| \ge c_1 \eps,$$
for some small $\rho_1$, provided that $\delta$ is chosen small, depending on $\rho_1$.
 
\end{rem}

Next we show that when we end up in the situation b), then the $N$-membrane problem near the origin can be reduced to one involving fewer membranes. For this we need to iterate Proposition \ref{P5} from scale 1 to scale $\eps^{1/2}$. Precisely, let us assume that, as a starting point we have
$$ |{\bf u} - {\bf p}(\cdot, {\bf b}_0,{\bf b}_1)| \le \eps \rho_1^2 \quad \mbox{in $B_{\rho_1}$},$$ 
with $$|{\bf b}_0| \le \frac \eps 2, \quad \quad |{\bf b}_1| \le \delta \eps^{1/2}.$$ 
We can iterate the Proposition with $r=\rho_1^m$ till $r \sim \eps^{1/2}$ and obtain
\begin{equation}\label{ree12}
|{\bf u} - {\bf p}(\cdot, \bar {\bf b}_0, \bar {\bf b}_1)| \le \eps r^2 \quad \mbox{in $B_r$}, \quad \mbox{with $r=\eps^{1/2}$,}
\end{equation}
with 
\begin{equation}\label{barbf}
|\bar {\bf b}_0-  {\bf b}_0 | \le 2 C_0 \rho_1 \eps, \quad \quad |\bar {\bf b}_1-  {\bf b}_1 | \le  C |\log \eps | \eps, 
\end{equation}
(in the last step of the iteration we applied the proposition for some $\rho \in [\rho_1, \rho_1^2]$.)
Here $\rho_1$ is chosen small such that $4C_0 \rho_1 \le  c_1 \le 1$ (see \eqref{csub1}) and throughout the iteration the inequalities
$$|\bar {\bf b}_0| \le \eps, \quad |\bar {\bf b}_1| \le 2 \delta \eps^{1/2},$$
are satisfied. Moreover, if $|{\bf b}_0| \ge c_1 \eps$ then $|\bar {\bf b}_0| \ge \frac{c_1}{2} \eps$.

We rescale \eqref{ree12} to the unit ball and obtain that
$$ | r^{-2} {\bf u}(rx) - {\bf p}(x, r^{-1}\bar {\bf b}_0, \bar {\bf b}_1)| \le \eps \quad \mbox{if $x \in B_1$,} \quad \quad r=\eps^{1/2}.$$
If $0$ belongs to one of the free boundaries of ${\bf u}$, say $0 \in \Gamma_{i_0}$, and $|{\bf b}_0| \ge c_1 \eps$ then we are in the setting of Remark \ref{RFB}. Precisely we find that in $B_1$, $r^{-1}\Gamma_{i_0}$ is the free boundary of a solution $\tilde u_r$ to a multiple membrane problem involving fewer membranes, which satisfies back the hypothesis \eqref{6000} with the same value of $\eps$. We summarize the above discussion in the next lemma.

\begin{lem}\label{dic3}
Assume that ${\bf u} \in \mathcal S(1,{\bf p}_0, \eps)$ for some $\eps \le \eps_0$, i.e.
$$ |{\bf u} - {\bf p}(\cdot, 0,{\bf b}_1)| \le \eps \quad \mbox{in $B_1$, with} \quad |{\bf b}_1| \le \delta \eps^{1/2},$$
and $0 \in \Gamma_{i_0}({\bf u})$, for some $i_0$. Then one of the following alternative hold

a) $$ |{\bf u} - {\bf p}(\cdot, 0,{\bf b}'_1)| \le \frac \eps 2 \rho_0^2 \quad \mbox{in $B_{\rho_0}$, and} \quad |{\bf b}'_1-{\bf b}_1| \le C_0 \eps,$$

b) $$\Gamma_{i_0} \cap B_r \subset \{ |x_n| \le C \eps^{1/2} r\} \quad \mbox{ if $r \in [\eps^{1/2},1]$.}$$ 
When $r= \eps^{1/2}$, $\Gamma_{i_0}$ is a free boundary
 to a solution $\tilde {\bf u}$ to the multiple membrane problem in $B_r$ involving fewer membranes than $N$. Moreover, $\tilde {\bf u}$ satisfies
 $$ |\tilde {\bf u} - \tilde {\bf p}(\cdot, \tilde {\bf b}_0, \tilde {\bf b}_1)| \le 2\eps r^2 \quad \mbox{in $B_r$}, \quad \quad |\tilde {\bf b}_0| \le (2\eps)^{1/2}r, \quad |{\bf b}_1| \le \delta (2 \eps)^{1/2}.$$
Also $0 \notin \cap \Gamma_i$.

\end{lem}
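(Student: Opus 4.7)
The lemma packages Remark \ref{dyc} together with the iteration of Proposition \ref{P5} outlined in the paragraph preceding its statement; my plan is simply to make both pieces explicit and then extract the geometric content via Remark \ref{RFB}.

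First I would invoke Proposition \ref{P5} at scale $r=1$, producing the approximating vectors $({\bf d}_0,{\bf d}_1)$ appearing in Remark \ref{dyc}, and split on the size of ${\bf d}_0$. If $|{\bf d}_0|\le c_1$ then alternative (a) of Remark \ref{dyc} gives alternative (a) of the lemma directly, by keeping ${\bf b}_0'=0$ and absorbing the perturbation into ${\bf b}_1'$ at the cost $|{\bf b}_1'-{\bf b}_1|\le C_0\eps$. If instead $|{\bf d}_0|>c_1$, Remark \ref{dyc}(b) produces a new pair $({\bf b}_0^{(1)},{\bf b}_1^{(1)})$ with $|{\bf b}_0^{(1)}|\ge c_1\eps$ and an improvement of the approximation on $B_{\rho_1}$, so the iteration regime takes over.

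Next I would iterate Proposition \ref{P5} dyadically from scale $\rho_1$ until the scale first crosses $\eps^{1/2}$. At each step the scale contracts by a factor in $[\rho_1,\rho_1^2]$ and the error $\eps$ halves, so the total number of steps is $O(|\log\eps|)$. The shift in ${\bf b}_0$ is controlled by the geometric series $\sum C_0\eps\rho_1^k\le 2C_0\rho_1\eps$, yielding the first estimate in \eqref{barbf}; the shift in ${\bf b}_1$ picks up $C_0\eps$ per step and accumulates to $C|\log\eps|\,\eps$. Choosing $\rho_1$ small enough that $4C_0\rho_1\le c_1$ guarantees both that the lower bound $|\bar{\bf b}_0|\ge c_1\eps/2$ is preserved along the iteration and that the hypotheses $|{\bf b}_0|\le \eps^{1/2}r$, $|{\bf b}_1|\le 2\delta\eps^{1/2}$ remain valid throughout, with $\delta$ then chosen small relative to $\rho_1$ to absorb the logarithmic drift in ${\bf b}_1$. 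The intermediate containment $\Gamma_{i_0}\cap B_r\subset\{|x_2|\le C\eps^{1/2}r\}$ for $r\in[\eps^{1/2},1]$ is a scale-by-scale consequence of Lemma \ref{l51} (equivalently, of Lemma \ref{l365}) applied to the rescaled solutions furnished by the iteration.

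At the terminal scale $r=\eps^{1/2}$, rescaling \eqref{ree12} to the unit ball gives that $r^{-2}{\bf u}(rx)$ is $\eps$-close to ${\bf p}(x,r^{-1}\bar{\bf b}_0,\bar{\bf b}_1)$ with $|r^{-1}\bar{\bf b}_0|\ge c_1\eps^{1/2}/2$. Since $\bar{\bf b}_0\in B({\bf p}_0)$ has a component of size at least $c_1\eps/2$ and the positions of the free boundaries $\Gamma_i(\bar{\bf b}_0)$ of ${\bf h}(x_2,\bar{\bf b}_0)$ depend linearly on $\bar{\bf b}_0$, after rescaling the free boundaries of the model ${\bf h}(x_2,r^{-1}\bar{\bf b}_0)$ split into at least two clusters separated by a gap of order $\eps^{1/2}$ on the $x_2$-axis. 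This places us precisely in the hypothesis of Remark \ref{RFB}, whose decoupling then expresses $\Gamma_{i_0}$ (the cluster that passes through the origin, since $0\in\Gamma_{i_0}$) as a free boundary of a reduced multi-membrane problem in $B_r$ involving strictly fewer than $N$ membranes, approximated to order $\eps$ by a function ${\bf p}(\cdot,\tilde{\bf b}_0,\tilde{\bf b}_1)$ of the same type. The main obstacle I expect is not conceptual but a careful bookkeeping point: verifying quantitatively that the lower bound $|\bar{\bf b}_0|\ge c_1\eps/2$ really survives the $O(|\log\eps|)$ iteration steps (the constant $c_1$ must dominate $2C_0\rho_1$) and that the resulting $\eps^{1/2}$-gap in the rescaled free boundary positions is large enough to activate Remark \ref{RFB}, which amounts to checking that the linear map ${\bf b}_0\mapsto(\Gamma_1({\bf b}_0),\dots,\Gamma_{N-1}({\bf b}_0))$ is uniformly bi-Lipschitz on $B({\bf p}_0)$.
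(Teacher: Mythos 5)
Your proposal follows the same route as the paper: the lemma is essentially a repackaging of Remark \ref{dyc} (dichotomy on $|{\bf d}_0|$), the iteration of Proposition \ref{P5} to scale $\eps^{1/2}$ producing \eqref{ree12}--\eqref{barbf}, and the decoupling of Remark \ref{RFB}. However, there is one place where the logic is genuinely incomplete, and it is exactly the point you flagged as the ``main obstacle.'' The step where you argue that $|\bar{\bf b}_0|\ge c_1\eps/2$, combined with bi-Lipschitzness of $\Gamma\mapsto{\bf b}_0$, forces the model free boundaries $\Gamma_i(r^{-1}\bar{\bf b}_0)$ to split into at least two clusters is not correct as stated. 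The translation direction $\tau\in B({\bf p}_0)$ from Definition \ref{tau} gives ${\bf h}(x,s\tau)={\bf p}_0(x+s)$, whose free boundaries all sit at $-s$; thus $\bar{\bf b}_0$ can have norm $\gg\eps$ while all $\Gamma_i(\bar{\bf b}_0)$ coincide. Bi-Lipschitzness of $\Gamma\leftrightarrow{\bf b}_0$ gives $\max_i|\Gamma_i(\bar{\bf b}_0)|\gtrsim\eps$, not that two of them differ. The hypothesis $0\in\Gamma_{i_0}$ is what saves the argument, and it must be invoked \emph{before}, not after, asserting the splitting: since $\Gamma_{i_0}({\bf u})$ passes through the origin and (Remark \ref{RFB}) lies in a $C\delta\eps^{1/2}$-neighborhood of the model $\Gamma_{i_0}(r^{-1}\bar{\bf b}_0)$, one has $|\Gamma_{i_0}(\bar{\bf b}_0)|\lesssim\delta\eps$; equivalently $\bar{\bf b}_0$ has a projection onto the $\tau$-line of size $O(\delta\eps)$, so its transverse component has size $\gtrsim c_1\eps$ once $\delta$ is small. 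It is this transverse component that produces a $j\ne i_0$ with $|\Gamma_j(\bar{\bf b}_0)|\gtrsim\eps$, hence a gap of order $\eps^{1/2}$ between $\Gamma_{i_0}$ and $\Gamma_j$ after rescaling, which is precisely what activates Remark \ref{RFB}. (This also shows why $N=2$ can never land in alternative (b).)

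Two minor points. In the iteration from scale $\rho_1$ to $\eps^{1/2}$ the paper keeps $\eps$ \emph{fixed}, as stated explicitly in Case 1 of the proof of Lemma \ref{-1}, and \eqref{ree12} has $\eps r^2$, not $(\eps/2^m)r^2$, on the right-hand side; you write that ``the error $\eps$ halves,'' yet your geometric series $\sum C_0\eps\rho_1^k$ treats $\eps$ as fixed, so your prose and your computation are inconsistent, though the computation is the correct one. And the map $\Gamma\mapsto{\bf b}_0$ is piecewise linear rather than linear (Lemma \ref{L2.3}); this does not affect the argument since bi-Lipschitzness is all that is used.
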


Alternative b) reduces the situation to one involving fewer membranes.

It remains to investigate alternative a). While ${\bf u}$ improves at a $C^{2,\alpha}$ rate as we zoom in $B_{\rho_0}$, the bound on the size of ${\bf b}_1$ can deteriorate. Part a) implies that 
\begin{equation}\label{eps'}
{\bf u} \in \mathcal S(\rho_0,{\bf p}_0, \eps') \quad \mbox{ with} \quad \eps'=\eps + C(\delta) \eps^{3/2}.
\end{equation} 

As we iterate part a) we want to show that the approximating polynomials converge. It suffices to prove the following lemma.

\begin{lem}\label{Preg}
Assume that the hypothesis of Lemma \ref{dic3} hold and ${\bf u}$ satisfies the alternative a). Then either a1) or a2) below hold

a1)
\begin{equation}\label{a1}
{\bf u} \in \mathcal S(\rho_0,{\bf p}_0, \frac \eps 2),
\end{equation}

a2)
\begin{equation}\label{a2}
{\bf u} \in \mathcal S(\rho_0,{\bf p}_0, 2\eps) \quad \mbox{and} \quad  W({\bf p}_0) + c \eps^{3/2} \le W({\bf u},\rho_0) \le W({\bf u},1) - c \eps^2.
 \end{equation}
 
 In both cases $|{\bf v}_1-{\bf v}_{\rho_0}|_{L^\infty(B_1)} \le C \eps$ where ${\bf v}_1$, ${\bf v}_{\rho_0}$ denote the approximate solutions 
 for ${\bf u}$ in $B_1$ respectively $B_{\rho_0}$. 
\end{lem}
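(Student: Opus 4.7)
The strategy follows the dichotomy of Proposition \ref{PM}, now with the Weiss energy tracked on both sides. From alternative (a) of Lemma \ref{dic3} we have
$$|{\bf u} - {\bf p}(\cdot, 0, {\bf b}'_1)| \le \tfrac{\eps}{2}\rho_0^2 \quad \mbox{in } B_{\rho_0}, \qquad |{\bf b}'_1 - {\bf b}_1| \le C_0\eps,$$
and I would split the argument according to the angular position of the rescaled vector $\widehat{\bf b}'_1 := {\bf b}'_1/[\delta(\eps/2)^{1/2}]$ relative to the line $\{s\tau : s \in \R\} \subset B({\bf p}_0)$, with threshold $\mu_0 > 0$ a small universal constant supplied by Corollary \ref{cor31}.

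When $\operatorname{dist}(\widehat{\bf b}'_1, \{s\tau\}) \le \mu_0$, I write ${\bf b}'_1 = (\eps/2)^{1/2}(s\tau + {\bf d})$ with $|{\bf d}|$ of order $\mu_0\delta$ and $|s|$ of order $\delta$, and rotate the coordinate axes by angle $\sim (\eps/2)^{1/2}|s|$ exactly as in the computation \eqref{change}. In the rotated frame the approximant becomes ${\bf p}(\cdot, 0, (\eps/2)^{1/2}{\bf d})$ up to an error $O(\eps|x|^2)$ that is absorbed into the $\tfrac{\eps}{2}\rho_0^2$ slack, and $|(\eps/2)^{1/2}{\bf d}| \le \delta(\eps/2)^{1/2}$, so conclusion (a1) follows. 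The proximity $|{\bf v}_1 - {\bf v}_{\rho_0}|_{L^\infty(B_1)} \le C\eps$ then combines the $O(\eps^{1/2})$ rotation angle with the shift $|{\bf b}'_1 - {\bf b}_1| \le C_0\eps$ and the Lipschitz continuity of ${\bf h}(\cdot, \cdot)$ in its second argument (Lemma \ref{hxb}).

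When $\operatorname{dist}(\widehat{\bf b}'_1, \{s\tau\}) > \mu_0/2$, the inclusion ${\bf u} \in \mathcal S(\rho_0, {\bf p}_0, 2\eps)$ is immediate. The upper Weiss decay $W({\bf u}, \rho_0) \le W({\bf u}, 1) - c\eps^2$ is obtained by replaying verbatim the compactness argument of Proposition \ref{PM}: if it failed along a sequence $\eps_m \to 0$, the rescaled errors would converge via Corollary \ref{C12} to a homogeneous-of-degree-two limit, forcing ${\bf e}({\bf b}^*) = {\bf e}(-{\bf b}^*)$, where ${\bf b}^* = \lim \eps_m^{-1/2}{\bf b}'_{1,m}$. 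Corollary \ref{cor31} would then place $\widehat{\bf b}^*$ on $\{s\tau\}$, contradicting the standing hypothesis of this branch.

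The main obstacle is the lower bound $W({\bf u}, \rho_0) \ge W({\bf p}_0) + c\eps^{3/2}$, identified in the section's opening as the delicate new ingredient arising from the fact that $0$ is not assumed to lie in $\cap\Gamma_i$. My plan is to reduce it to the separate lower-bound lemma (Lemma \ref{wpb3}): the separation $\operatorname{dist}(\widehat{\bf b}'_1, \{s\tau\}) \ge \mu_0/2$ forces $|{\bf b}'_1| \ge c\eps^{1/2}$ (since $\{s\tau\}$ passes through the origin) and hence $|{\bf e}({\bf b}'_1) - {\bf e}(-{\bf b}'_1)| \ge c\eps$ by Corollary \ref{cor31}. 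Expanding $W({\bf p}(\cdot, 0, {\bf b}'_1))$ around $W({\bf p}_0)$ extracts a strictly positive contribution of order $\eps$ from this twist, and a reverse comparison of the Weiss energies of ${\bf u}$ and its approximant (mirroring the proof of Lemma \ref{L41} but in the opposite direction) transfers this positivity to ${\bf u}$ with loss $O(\eps^{3/2})$, delivering the required $c\eps^{3/2}$ gap.
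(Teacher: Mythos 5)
Your proposal follows the paper's argument essentially step for step: the same dichotomy on the direction of the normalized ${\bf b}_1$ relative to the $\tau$-line with threshold $\mu_0$, alternative a1 via the rotation of coordinates from Remark~\ref{dyc}~a), the upper Weiss decay via the compactness argument and Corollary~\ref{cor31}, and the lower bound reduced to Lemma~\ref{wpb3} combined with the reverse Weiss comparison (the paper's Lemma~\ref{-3}). The only slip is a bookkeeping one in the last step: the gain from Lemma~\ref{wpb3} is $c\,\eps^{3/2}$ (not of order $\eps$), and the loss in the reverse comparison of Lemma~\ref{-3} is $C\eps^2$ (not $O(\eps^{3/2})$); it is precisely this $\eps^{3/2}$ versus $\eps^2$ gap that lets the chain $W({\bf u},\rho_0)\ge W({\bf p}(\cdot,{\bf b}))-C\eps^2\ge W({\bf p}_0)+c\,\eps^{3/2}$ close without any constant-tracking.
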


The Lemma is essentially included in Proposition \ref{PM} except the crucial lower bound on $W({\bf u},\rho_0)$. The statement that  $W({\bf p}_0) \le W({\bf u},\rho_0)$ allows one to 
prove the convergence of $\sum \eps_k$ as in Section 4. The inequality follows easily when $0 \in \cap \Gamma_i$ by the Weiss monotonicity formula 
and the fact that ${\bf p}_0$ is the least energy solution. However for the general case we need to establish a lower bound on the energy of 
approximate solutions the type $W({\bf p}(\cdot, {\bf b})) \ge W({\bf p}_0) - C \eps^2$.

First we establish the opposite inequality in \eqref{4100} of Lemma \ref{L41}.
\begin{lem}\label{-3}
Assume that ${\bf u} \in \mathcal S(1,{\bf p}_0, \eps)$ is $\eps$-approximated in $B_1$ by ${\bf v}:={\bf p}(\cdot,{\bf b})$. Then
$$W({\bf u},r) \ge W({\bf v}) - C(r) \eps^2.$$
\end{lem}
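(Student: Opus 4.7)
The strategy is to mimic the proof of \eqref{4100} in Lemma \ref{L41}, but aimed at the reverse inequality. Writing $\mathbf v = \mathbf u + \eps \mathbf w$ with $|\mathbf w|\le 1$, the same expansion used there gives
$$W(\mathbf v, r) - W(\mathbf u, r) = \eps\, r^{-(n+2)} I_2 + \eps^2\, r^{-(n+2)} I_1,$$
with $I_1, I_2$ as in the proof of \eqref{4100}. The trivial bound $I_1\le C$ follows from the gradient estimates \eqref{nabw}--\eqref{nabw2} on $\mathbf w$, so the whole task reduces to the upper bound $I_2 \le C\eps$, which is the opposite sign of what was used in Lemma \ref{L41}.

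The boundary portion of $I_2$ is handled exactly as in the proof of \eqref{4100}: since $\mathbf v$ is $2$-homogeneous, $u_{k,\nu}-\frac{2}{r}u_k=-\eps(w_{k,\nu}-\frac{2}{r}w_k)$ on $\partial B_r$, and \eqref{nabw}--\eqref{nabw2} give an $O(\eps)$ contribution in both directions, so this part is automatic.

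For the interior part $\int_{B_r}\sum\omega_k(f_k-\triangle u_k)w_k\,dx$ I would integrate by parts twice to swap $\triangle u_k$ for $\triangle v_k$, obtaining
$$\int_{B_r}\sum\omega_k(f_k-\triangle u_k)w_k\,dx=\int_{B_r}\sum\omega_k(f_k-\triangle v_k)w_k\,dx-\eps\int_{B_r}\sum\omega_k|\nabla w_k|^2\,dx+\eps\int_{\partial B_r}\sum\omega_k w_{k,\nu}w_k\,d\sigma.$$
The $-\eps\|\nabla w\|_{L^2}^2$ term is non-positive (which is exactly the sign that helps for an upper bound), and the last boundary term is $O(\eps)$ by the same gradient estimates. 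For the remaining volume integral, in each open coincidence region $R_I$ of $\mathbf v$ Lemma \ref{L31}(a) yields $|\triangle v_I-f_I|\le C|\mathbf b|^2\le C\eps$, and since $\triangle v_k=\triangle v_I$ for $k\in I$ and $\sum_{k\in I}\omega_k(f_k-f_I)=0$, the integrand rewrites as $\sum_{k\in I}\omega_k(f_k-f_I)(w_k-w_I)$ plus an $O(\eps)$ error that integrates to $O(\eps)$.

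At this point Lemma \ref{l300} is key: outside the strip $S:=\{|x_2|\le C\sqrt\eps\}$ the coincidence patterns of $\mathbf u$ and $\mathbf v$ agree, so on $R_I\setminus S$ one has simultaneously $v_k=v_I$ and $u_k=u_I$, hence $w_k-w_I=0$. The remaining integral is thus localized to $S\cap B_r$, and this strip estimate is the main obstacle: the crude bound using $|S\cap B_r|=O(\sqrt\eps)$ together with $|w|\le C\eps^{\alpha/2}$ on $S$ from Lemma \ref{l36} only gives $O(\eps^{(1+\alpha)/2})$. To push this down to the required $O(\eps)$, I would exploit that $\mathbf v$ restricted to each vertical slice $\{x_1=\mathrm{const}\}$ is an \emph{exact} $1$D solution of the $N$-membrane problem; the $1$D variational inequality then applies slicewise against the trace of $\mathbf u$ and produces a sign-definite contribution on each slice. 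Integrating in $x_1$ and combining with the pointwise approximate Euler--Lagrange bound $|\triangle v_I-f_I|\le C\eps$ outside $S$ closes the estimate, with the constant $C(r)$ absorbing the $r^{-(n+2)}$ scaling factor and the constants from the $1$D arguments.
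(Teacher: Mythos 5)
Your proposal eventually reaches the paper's key idea—using the slicewise $1$D variational inequality exploiting that ${\bf v}$ is an exact solution in $x_2$—but the route has a genuine gap at the very first step.

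The claim that ``the trivial bound $I_1\le C$ follows from the gradient estimates'' is false. With your convention ${\bf v}={\bf u}+\eps{\bf w}$ and $I_1=\int_{B_r}\sum\frac{\omega_k}{2}|\nabla w_k|^2\,dx-r^{-1}\int_{\partial B_r}\sum\omega_k w_k^2\,d\sigma$, the estimates \eqref{nabw}--\eqref{nabw2} give $|\nabla w|\lesssim\eps^{-1/2}$ on the strip $\{|x_2|\le C\sqrt\eps\}$, which contributes $\int_{\text{strip}}|\nabla w|^2\sim\eps^{-1}\cdot\sqrt\eps=\eps^{-1/2}$ (and the complement of the strip contributes $\sim\eps^{\alpha-1/2}$ when $\alpha<1/2$). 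So $I_1$ is bounded \emph{below} by $-C$ but is \emph{not} bounded above by a constant, and you cannot dispose of it before attacking $I_2$. Your plan ``so the whole task reduces to the upper bound $I_2\le C\eps$'' therefore does not go through as stated: you would be losing a term of order $\eps^2\cdot\eps^{-1/2}=\eps^{3/2}$. What rescues your calculation is that the $-\eps\int|\nabla w|^2$ generated by your double integration by parts inside $I_2$ exactly cancels the problematic $\int|\nabla w|^2$ sitting inside $\eps I_1$; you must therefore combine $\eps I_2+\eps^2 I_1$ \emph{before} estimating, rather than treating $I_1$ as a separate ``trivial'' term. You notice the $-\eps\|\nabla w\|_{L^2}^2$ term ``is non-positive, which helps,'' but you do not observe that it is essential for cancellation rather than merely a bonus.

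The paper sidesteps this entirely by flipping the decomposition and writing ${\bf u}={\bf v}+\eps{\bf w}$. Then the linear term becomes $I_2=\int_{B_r}\sum\omega_k(f_k-\triangle v_k)w_k\,dx+\int_{\partial B_r}\sum\omega_k\bigl(v_{k,\nu}-\tfrac2r v_k\bigr)w_k\,d\sigma$, and since ${\bf v}$ is $2$-homogeneous the boundary term vanishes identically; one is left to show $I_2\ge -C\eps$, which follows in one line from $\omega_k(f_k-\partial_{x_2x_2}v_k)w_k\ge0$ (the $1$D variational inequality written pointwise, ${\bf v}$ being an exact $1$D solution and ${\bf u}$ an admissible competitor in each slice $x_1=\mathrm{const}$) together with $|\partial_{x_1x_1}v_k|\le C\eps$. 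No integration by parts in the bulk, no region decomposition, and only lower bounds are needed on both $I_1$ and $I_2$, which are trivial or one-line. Your detour through the coincidence regions $R_I$ and the strip estimate is not needed: the slicewise variational inequality holds everywhere in $B_r$, not just on the strip, and directly gives the bound on the full volume integral. In summary, the key idea is correctly identified at the end of your proposal, but the entry point ($I_1\le C$) is false and the region/strip discussion is a red herring; the cleanest fix is simply to start from ${\bf u}={\bf v}+\eps{\bf w}$.
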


\begin{proof}
The proof is essentially the same as \eqref{4100} in Lemma \ref{L41} after reverting the roles of ${\bf u}$ and ${\bf v}$. We write ${\bf u}={\bf v} + \eps {\bf w},$ with $|{\bf w}| \le 1.$ 
Then, we write
$$W({\bf u}, r) = W({\bf v},r) + \eps^2 r^{n-2} I_1 +  \eps r^{n-2}  I_2,$$
with
$$I_1:= \int_{B_r}\sum \,  \frac{\omega_k}{2}|\nabla w_k|^2 dx - r^{-1} \int_{\partial B_r} \sum \omega_k w_k^2 \, d \sigma,$$
\begin{align*}
I_2:=&\int_{B_{r}} \sum \omega_k (\nabla v_k \cdot \nabla w_k + f_k w_k) dx - 
\int_{\partial B_r} \sum\omega_k  \, \frac 2 r v_k w_k d \sigma\\
= &\int_{B_{r}} \sum \omega_k (f_k -\triangle v_k) w_k dx 
\end{align*}
Now we use the fact that ${\bf v}$ is a solution in the $x_2$ variable and find (see \eqref{EL1})
$$\omega_k (f_k -\partial_{x_2x_2} v_k) w_k \ge 0.$$
Since $|\partial_{x_1x_1} v_k | \le  \delta \eps$, we find
$$\omega_k (f_k -\triangle v_k) w_k \ge \omega_k (f_k -\partial_{x_2x_2} v_k) w_k - C |\partial_{x_1x_1} v_k | \ge - C \eps,$$
which together with $I_1 \ge -C$ gives the desired conclusion.

\end{proof}

In the next lemma we show that each ${\bf p}(\cdot,{\bf b})$ $\eps$-approximates at leat one solution for which all the free boundaries intersect at the origin.

\begin{lem}\label{-2}
Given ${\bf b} \in {\bf B}({\bf p}_0)$ with $|{\bf b}| \le \delta^{1/2} \eps$, there exists ${\bf u}_{\bf b} \in \mathcal S(1,{\bf p}_0, \eps)$ with $0 \in \cap \Gamma_i$ which is $\eps$-approximated in $B_1$ by ${\bf p}(\cdot,{\bf b})$.
\end{lem}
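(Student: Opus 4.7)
The plan is to construct $\mathbf{u}_{\mathbf{b}}$ as a minimizer of the $N$-membrane functional with suitably perturbed boundary data, then apply a topological degree argument in an $(N-1)$-dimensional parameter space to force all free boundaries through the origin. The key dimensional coincidence is that $B(\mathbf{p}_0)$ has dimension $N-1$, matching the number of free boundaries to be pinned.

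First I would introduce a family of admissible perturbations $\eta^{\mathbf{c}}$ of the boundary trace $\mathbf{p}(\cdot,\mathbf{b})|_{\partial B_1}$, parameterized by $\mathbf{c}\in\R^{N-1}$ with $|\mathbf{c}|\le\gamma$ ($\gamma$ a small parameter to be chosen). Each $\eta^{\mathbf{c}}$ respects the ordering constraint and the average-zero condition, and is normalized so that varying $c_i$ primarily affects the relative heights of $u_i$ and $u_{i+1}$. Let $\mathbf{u}^{\mathbf{c}}$ be the unique minimizer in $B_1$ with boundary data $\mathbf{p}(\cdot,\mathbf{b})+\eta^{\mathbf{c}}$. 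By Lemma \ref{L31}(a), $\mathbf{p}(\cdot,\mathbf{b})$ solves the Euler--Lagrange system with $O(|\mathbf{b}|^2)=O(\delta\eps^2)$ error, and using the subharmonicity of $\sum\omega_i(u_i-v_i)^2$ (as recalled in Section 2), a direct comparison gives $\|\mathbf{u}^{\mathbf{c}}-\mathbf{p}(\cdot,\mathbf{b})\|_{L^\infty(B_1)}\le C(\delta\eps^2+\gamma)$; for $\gamma\le c\eps$ this places $\mathbf{u}^{\mathbf{c}}\in\mathcal{S}(1,\mathbf{p}_0,\eps)$.

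Next I would define the deviation map $\Phi\colon\{|\mathbf{c}|\le\gamma\}\to\R^{N-1}$ by $\Phi(\mathbf{c})_i:=u^{\mathbf{c}}_i(0)-u^{\mathbf{c}}_{i+1}(0)\ge 0$. Since $\mathbf{p}(\cdot,\mathbf{b})(0)=0$ by $2$-homogeneity, $\Phi(0)=O(\delta\eps^2)$. By stability of minimizers $\Phi$ is continuous, and comparison with the unconstrained harmonic Dirichlet problem (whose values at $0$ are literally the boundary averages by the mean value property) suggests that, up to a correction of size $O(\delta\eps^2)$, $\Phi(\mathbf{c})$ depends nondegenerately and essentially linearly on $\mathbf{c}$. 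Choosing $\gamma=K\delta\eps^2$ with $K$ large, the map $\Phi$ has nonzero Brouwer degree at $0$ on the ball $\{|\mathbf{c}|\le\gamma\}$ and therefore a zero $\mathbf{c}^*$ inside. Setting $\mathbf{u}_{\mathbf{b}}:=\mathbf{u}^{\mathbf{c}^*}$ gives a solution whose $N$ membranes all coincide at the origin, i.e.\ $0\in\cap_i\{u_i=u_{i+1}\}$. To upgrade this to $0\in\cap_i\Gamma_i$, I would invoke the $\eps$-closeness to $\mathbf{p}(\cdot,\mathbf{b})$: on whichever side of $\{x_2=0\}$ the branches of $p_i,p_{i+1}$ separate with quadratic growth, the same holds for $\mathbf{u}_{\mathbf{b}}$ outside a $C\sqrt{\eps}$ neighborhood of $\{x_2=0\}$ (as in Lemma \ref{l300}), so $0$ lies simultaneously in the closure of $\{u_i>u_{i+1}\}$ and of the coincidence set, placing it on $\Gamma_i$.

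The main obstacle will be the nondegeneracy claim behind the degree computation: one must show that the map $\mathbf{c}\mapsto(u^{\mathbf{c}}_i(0)-u^{\mathbf{c}}_{i+1}(0))_i$ has an invertible effective linearization, despite the obstacle constraint coupling the membranes. The cleanest route is probably to compare $\Phi$ with its analogue for the unconstrained harmonic problem (exact linearity) and bound the nonlinear correction induced by the active obstacle contacts; this correction arises only in $C\sqrt{\eps}$-neighborhoods of the coincidence arcs on $\partial B_1$ and therefore contributes at the quadratic scale $\delta\eps^2$, which is dominated by $\gamma=K\delta\eps^2$ for $K$ large. The geometric structure used here is exactly the piecewise-linear invertible map from free-boundary positions to $B(\mathbf{p}_0)$ established in the proof of Lemma \ref{L2.3}.
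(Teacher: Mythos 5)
Your plan shares the paper's general shape (perturb the boundary data by a parameter living in an $(N-1)$-dimensional space, then use a topological fixed-point argument to pin all free boundaries through the origin), and the observation that the map $\Gamma\mapsto{\bf b}_0(\Gamma)$ from Lemma~\ref{L2.3} supplies the right coordinates is exactly the mechanism the paper exploits. However, there is a fatal gap in the topological step, and a secondary gap in what the zero of your map actually buys.

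The degree argument as you have set it up cannot work. Your map $\Phi({\bf c})_i = u^{\bf c}_i(0)-u^{\bf c}_{i+1}(0)$ is, by the constraint $u_i\ge u_{i+1}$, valued in the \emph{closed positive orthant} $[0,\infty)^{N-1}$. If $\Phi$ avoids $0$ on $\partial B_\gamma$, its image $\Phi(\partial B_\gamma)$ lies in the open orthant $(0,\infty)^{N-1}$, which is convex and does not contain $0$; hence $\Phi|_{\partial B_\gamma}$ is null-homotopic in $\R^{N-1}\setminus\{0\}$ and the Brouwer degree of $\Phi$ at $0$ is necessarily $0$, so you cannot conclude existence of a zero inside. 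Moreover, even if you did find ${\bf c}^*$ with $\Phi({\bf c}^*)=0$, this only gives $0\in\{u_i=u_{i+1}\}$ for each $i$; it does not give $0\in\Gamma_i=\partial\{u_i>u_{i+1}\}$ unless you also know $0$ is a limit of non-coincidence points, which your argument does not establish (the quadratic-separation remark only says $u_i>u_{i+1}$ a definite distance $\sim\sqrt\eps$ away, not arbitrarily close to $0$). Both problems are solved simultaneously in the paper by replacing $\Phi$ with the \emph{signed} distance
$$ z_i:= \operatorname{dist}(0,\Gamma_i)\,\chi_{\{u_i=u_{i+1}\}}(0)-\sqrt{(u_i-u_{i+1})(0)}\,\chi_{\{u_i>u_{i+1}\}}(0), $$
which takes values in all of $\R$, is continuous in ${\bf u}$, and vanishes precisely when $0\in\Gamma_i$. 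The degree argument then has something to work with. There is also an unjustified claim in your plan: the assertion that $\Phi$ is ``essentially linear with an invertible effective linearization, up to $O(\delta\eps^2)$'' does not follow from the subharmonicity of $\sum\omega_i(u_i-v_i)^2$, which only gives $L^\infty$ stability; the dependence of a constrained minimizer on its boundary data is nonlinear through the free boundary, and the comparison with the unconstrained harmonic problem is not a small correction (already the model solution ${\bf p}_0$ is not harmonic). The paper instead establishes the needed coercivity by iterating Proposition~\ref{P5} from scale $1$ down to scale $\eps^{1/2}$ to track how the free boundaries of ${\bf u}_\Gamma$ actually move with $\Gamma$, compares them with the $1$D solution ${\bf h}(x_2,{\bf b}_0(\Gamma))$, and uses the resulting estimate $\Gamma\cdot{\bf z}_\Gamma\sim|\Gamma|^2$ on $\partial B_{c'}$ to close the degree argument. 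Your sketch omits this scale-propagation step entirely; it is the quantitative heart of the proof.
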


\begin{proof}
For each solution ${\bf u}$ we associate the vector ${\bf z}\in \R^{n-1}$ given by
$$ z_i:= dist(0,\Gamma_i) \, \chi_{\{u_i=u_{i+1}\}}(0) - \sqrt{(u_i-u_{i+1})(0)} \, \chi_{\{u_i>u_{i+1}\}}(0).$$
The quadratic growth of $u_i-u_{i+1}$ away from its zero set implies that ${\bf u} \mapsto {\bf z} ({\bf u})$ is a continuous map, 
and $0 \in \Gamma_i({\bf u})$ if and only if $z_i=0$. 
Moreover, if we consider the solutions ${\bf h}(x_2,{\bf b_0})$ with free boundaries $x_2=\Gamma_i({\bf b_0})$, 
then the corresponding $z_i$ satisfies 
\begin{equation}\label{z/g}
c \le z_i / \Gamma_i({\bf b_0}) \le C.
\end{equation}

For any vector $\Gamma \in \R^{n-1}$ with $|\Gamma| \le c'$ we associate the corresponding vector ${\bf b}_0(\Gamma)  \in {\bf B}({\bf p}_0)$ for which $h(x_2,{\bf b}_0)$ has free boundaries $\Gamma$. Recall from Section 2 that $\Gamma \mapsto {\bf b}_0(\Gamma)$ is a bi-Lipschitz map. We choose $c'$ small universal such that $|{\bf b}_0| \le 1/2$.

We consider the solutions ${\bf u}_{\Gamma}$ in $B_1$ with boundary data 
${\bf p}(x,\eps {\bf b}_0(\Gamma), {\bf b})$. 
We claim that one of these functions satisfies the conditions of the Lemma.

Notice that since ${\bf p}(x,\eps {\bf b}_0, {\bf b})$ solves the Euler-Lagrange equations with error $\delta \eps$ we know that
$$ |{\bf u}_{\Gamma}-{\bf p}(x,\eps {\bf b}_0, {\bf b})| \le \delta \eps \quad \mbox{in $B_1$.}$$
On the other hand, by Lemma \ref{hxb},
$${\bf p}(x,\eps {\bf b}_0, {\bf b})={\bf p}(x,{\bf b}) + \eps x {\bf b}_0 + O(\eps^{3/2}),$$
which imply that ${\bf u}_\Gamma$ is $\eps$-approximated in $B_1$ by ${\bf p}(\cdot,{\bf b})$.

If $\delta$ is sufficiently small then
$$ |{\bf u}_{\Gamma}-{\bf p}(x,\eps {\bf b}_0, {\bf b})| \le \eps \rho^2_1 \quad \mbox{in $B_{\rho_1}$.}$$
and the arguments before Lemma \ref{dic3} applies. In particular the free boundaries of the rescaling $$\tilde {\bf u}_{\Gamma}(x):=r^{-2}{\bf u}_{\Gamma}(rx) \quad \mbox{ with $r= \eps^{1/2}$,}$$ 
are in $B_{1/2}$ in a $C \delta \eps^{1/2}$ neighborhood of the free boundaries of ${\bf h}(x_2,r \bar {\bf b}_0)$ for some $\bar {\bf b}_0$ that satisfies $$|\bar {\bf b}_0- {\bf b}_0| \le 2C_0 \rho_1,$$
(see Remark \ref{RFB} and \eqref{ree12}-\eqref{barbf} with ${\bf b}_0$, $\bar{\bf b}_0$ replaced by $\eps {\bf b}_0$ and $\eps \bar{\bf b}_0$).

 Thus the free boundaries of $\tilde {\bf u}_\Gamma$ are in a $c(\rho_1,\delta) \eps^{1/2}$ neighborhood of the free boundaries of ${\bf h}(x_2, \eps^{1/2} {\bf b}_0)$ with $c(\rho_1,\delta)  \to 0$ as $\rho_1$, $\delta \to 0$.

This means that the vector $${\bf y}_{\Gamma}:=\eps^{-1/2}{\bf z}(\tilde {\bf u}_\Gamma)$$ associated to the rescaled solution $\tilde {\bf u}_\Gamma$ above is in a $c(\rho_1,\delta)$ neighborhood of the vector $${\bf z}_\Gamma := {\bf z}({\bf h}(x_2, {\bf b}_0)).$$ corresponding to ${\bf h}(x_2, {\bf b}_0)$.

We can find the desired solution to ${\bf y}_\Gamma=0$ by a standard topological argument. 
Indeed, by \eqref{z/g} we know that $\Gamma \cdot {\bf z}_\Gamma \sim |\Gamma|^2$ hence $ \Gamma \cdot {\bf z}_\Gamma \ge c_1>0$ when $|\Gamma|=c'$. 
Then  $\Gamma \cdot {\bf y}_\Gamma >0$ when $\Gamma \in \partial B_{c'}$ provided that $c_1(\rho_1,\delta)$ is sufficiently small. 
This implies that we can find $\Gamma \in B_{c'}$ such that ${\bf y}_\Gamma=0$.

\end{proof}

As a corollary of Lemma \ref{-2} we obtain by \eqref{4100} that if $|{\bf b}| \le \delta \eps^{1/2}$ then
\begin{equation}\label{wpb}
W({\bf p}(\cdot, {\bf b})) \ge W({\bf u}_{\bf b},1/2) - C \eps^2 \ge W({\bf p}_0) - C \eps^2,
\end{equation}
where ${\bf u}_{\bf b}$ is the solution provided by Lemma \ref{-2}. 

The lower bound on $W({\bf p}(\cdot, {\bf b})) $ can be improved when ${\bf b}/\delta \eps^{1/2}$ is at distance greater than $\mu_0$ away from the 
line $\{s \tau| s \in \R\}$. For this we apply inductively Proposition \ref{P5} from scale 1 to scale $r=\eps^{1/2}$ to the function ${\bf u}_{\bf b}$ of Lemma \ref{-2}. Notice that we cannot end up in alternative b) of Remark \ref{dyc} (or Lemma \ref{dic3}) since $0 \in \cap \Gamma_i$. The iteration requires $m_0 \sim |\log \eps|$ steps and the distance from the corresponding sequence of ${\bf b}_1$'s to the $s \tau$-line remains greater than $\mu_0/2$ throughout. From Remark \ref{dyc} part a) we obtain that (see \eqref{wineq})
$$ W({\bf p}_0) \le W({\bf u}_{\bf b},\rho_0^m) \le W({\bf u}_{\bf b}, \rho_0) - (m-1) c \eps^2,$$
hence
$$ W({\bf u}_{\bf b},\rho_0) \ge W({\bf p}_0)  + c |\log \eps| \eps^2.$$
Then, by the first inequality in \eqref{wpb} we find
\begin{equation}\label{wpb2}
W({\bf p}(\cdot, {\bf b})) \ge  W({\bf p}_0)  + c |\log \eps| \eps^2.
\end{equation}
In the next lemma we show that the right hand side can be improved further, and obtain the reversed inequality to \eqref{4101} in Lemma \ref{L41}.

\begin{lem}\label{wpb3}
$$W({\bf p}(\cdot, {\bf b})) \ge  W({\bf p}_0)  + c \eps^{3/2},$$
if ${\bf b}/\delta \eps^{1/2}$ is at distance greater than $\mu_0$ away from the 
line $\{s \tau| s \in \R\}$.
\end{lem}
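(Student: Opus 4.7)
My plan is to produce (via Lemma \ref{-2}) an exact solution ${\bf u}_{\bf b}$ whose free boundaries all meet at the origin, lower bound $W({\bf u}_{\bf b}, 1/2)$ by combining Weiss monotonicity with an iterated per-step energy decrement, and then transfer this bound to $W({\bf p}(\cdot, {\bf b}))$ via inequality \eqref{4100} of Lemma \ref{L41}.

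First I would invoke Lemma \ref{-2} to obtain a solution ${\bf u}_{\bf b} \in \mathcal S(1, {\bf p}_0, \eps)$ with $0 \in \cap \Gamma_i$ and $|{\bf u}_{\bf b} - {\bf p}(\cdot, {\bf b})| \le \eps$ in $B_1$. Because all free boundaries of ${\bf u}_{\bf b}$ pass through the origin, Weiss monotonicity applies at $0$: $W({\bf u}_{\bf b}, r)$ is non-decreasing in $r$ and its limit as $r \to 0$ is $W({\bf q})$ for the blow-up cone ${\bf q}$, which satisfies $W({\bf q}) \ge W({\bf p}_0)$ by the minimality of ${\bf p}_0$ among two-dimensional cones. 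Hence $W({\bf u}_{\bf b}, r) \ge W({\bf p}_0)$ for every $r > 0$.

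Next I would iterate dyadically the dichotomy of Proposition \ref{P5} / Remark \ref{dyc} starting from scale $1/2$. The hypothesis that ${\bf b}/\delta\eps^{1/2}$ is at distance at least $\mu_0$ from the $\tau$-line forces, via Corollary \ref{cor31}, a discontinuity jump of $\Delta{\bf w}$ of the rescaled error across $\{x_1=0\}$ of size at least $c|{\bf b}|^2 \gtrsim \eps$. As in the proof of Proposition \ref{PM} (the argument ruling out homogeneity of degree $2$ via Lemma \ref{c1}), this produces the per-step Weiss decrement
\[
W({\bf u}_{\bf b}, \rho_0^{k+1}) \le W({\bf u}_{\bf b}, \rho_0^k) - c\eps^2.
\]
Because the approximating vector drifts only by $C_0 \eps$ per iteration, while distance to the $\tau$-line is measured in units of $\delta\eps^{1/2}$, the far-from-the-$\tau$-line regime can be maintained for up to $m_* \sim \eps^{-1/2}$ iterations---substantially more than the $|\log \eps|$ steps used to derive the weaker \eqref{wpb2}. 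Summing the per-step decrements yields $W({\bf u}_{\bf b}, 1/2) - W({\bf u}_{\bf b}, \rho_0^{m_*}) \ge c\, m_* \eps^2 \ge c' \eps^{3/2}$, and combining with $W({\bf u}_{\bf b}, \rho_0^{m_*}) \ge W({\bf p}_0)$ gives $W({\bf u}_{\bf b}, 1/2) \ge W({\bf p}_0) + c' \eps^{3/2}$. Applying inequality \eqref{4100} in the form $W({\bf p}(\cdot, {\bf b})) \ge W({\bf u}_{\bf b}, 1/2) - C\eps^2$ and absorbing the $C\eps^2$ error into $c' \eps^{3/2}$ for $\eps$ sufficiently small yields the desired conclusion.

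The main obstacle is making rigorous the $\sim\eps^{-1/2}$ iteration count. A naive iteration of Proposition \ref{P5} is limited to $\sim |\log\eps|$ steps by the ${\bf b}_0$-constraint $|{\bf b}_0| \le \eps^{1/2} r$, which only delivers the weaker $c|\log\eps|\eps^2$ bound of \eqref{wpb2}. Overcoming this requires carefully interleaving the far-from-$\tau$ energy decrement with the rotation-and-realignment procedure from the last part of the proof of Proposition \ref{PM}: each time the iterated ${\bf b}_1^{(k)}/\delta\eps^{1/2}$ threatens to approach the $\tau$-line, one rotates coordinates to reduce $|{\bf b}_1^{(k)}|$ (as in \eqref{change}) without losing track of the total drift, so that either the full $\eps^{-1/2}$ iterations are carried out in the far regime or the accumulated Weiss decrement already exceeds $c\eps^{3/2}$ at the first exit.
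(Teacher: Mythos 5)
Your plan correctly assembles the players (Lemma \ref{-2}, Weiss monotonicity, Lemma \ref{-3}, the far-from-$\tau$ decrement), and indeed the paper uses exactly these ingredients to derive the \emph{weaker} inequality \eqref{wpb2}, namely $W({\bf p}(\cdot,{\bf b})) - W({\bf p}_0) \ge c\,|\log\eps|\,\eps^2$. But the leap from $|\log\eps|$ steps to $\sim\eps^{-1/2}$ steps is where your argument breaks down, and the ``interleaving'' idea does not repair it. The rotation-and-realignment from \eqref{change} removes only the component of ${\bf b}_1$ along the line $\{s\tau\}$; it cannot decrease the orthogonal component, which is precisely what is bounded \emph{below} by $\mu_0\,\delta\,\eps^{1/2}$ under the lemma's hypothesis. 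So after any rotation you still have $|{\bf b}_1|\ge \mu_0\,\delta\,\eps^{1/2}$, and the admissibility constraint $|{\bf b}_1|\le \delta\,\eps_m^{1/2}$ in $\mathcal S(\rho_0^m,{\bf p}_0,\eps_m)$ prevents the declared error $\eps_m$ from decreasing. As Lemma \ref{Preg}~a2) records, in the far-from-$\tau$ alternative the declared error can only grow (by a factor of $2$ per step), which caps the iteration at $O(|\log\eps|)$ steps — exactly the obstruction you flagged, and one your fix does not remove. Since $|\log\eps|\,\eps^2 \ll \eps^{3/2}$, this route cannot reach the statement.

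The paper instead closes the gap by a mechanism your sketch does not contain: an explicit quadratic-rescaling computation of the Weiss energy. Writing ${\bf b}=\eps^{1/2}{\bf d}$, the key claim is
\[
W({\bf p}(\cdot,{\bf b})) - W({\bf p}_0) = \eps^{3/2}\,g({\bf d}) + O(\eps^2),
\]
for a continuous $g$ depending only on ${\bf d}$. This is proved by splitting $B_1$ into three angular regions and using the anisotropic rescaling $w_\eps(x_1,x_2)=\eps\,w_1(x_1, x_2/\eps^{1/2})$ to extract the exact $\eps^{3/2}$ prefactor from the integral over the thin cone $\{|x_2|\le C\eps^{1/2}|x_1|\}$. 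Once this identity is in hand, the iterated bound \eqref{wpb2} (which you can derive by your approach) becomes the hypothesis, not the conclusion: if $g({\bf d})=0$ then $O(\eps^2)\ge c\,|\log\eps|\,\eps^2$, which is false for small $\eps$, so $g>0$ on the compact set of admissible ${\bf d}$ far from the $\tau$-line; by continuity $g$ is bounded below there, and the $\eps^{3/2}$ term dominates the $O(\eps^2)$ error. In short, the missing ideas are (i) the multiplicative factorization $\eps^{3/2}g({\bf d})$ obtained by quadratic rescaling, and (ii) using the weaker $|\log\eps|\eps^2$ bound not as the final answer but as a non-degeneracy certificate forcing $g>0$.
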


\begin{proof}
We claim that if ${\bf v}:= {\bf p}(\cdot, {\bf b})$, with ${\bf b}=\eps^{1/2} {\bf d}$ for some ${\bf d}$ with $|{\bf d}| \le 1$ then
\begin{equation}\label{cl6}
W({\bf v}) = \eps^{3/2} g({\bf d}) + O (\eps^2),
\end{equation}
for some continuous function $g({\bf d})$. The inequality \eqref{wpb2} implies that if ${\bf d}$ is at distance greater than $\delta \mu_0$ away from the 
line $\{s \tau| s \in \R\}$, then $g({\bf d}) >0$ and the lemma easily follows. It remains to prove the claim \eqref{cl6}.

Since ${\bf v}$ is homogenous of degree 2 we find
$$ W({\bf v}) = \int_{B_1} (v_i f_i - \frac 12 v_i \triangle v_i)\omega_i \, dx.$$
Using the same formula for ${\bf p}_0$ and that $$\int_{B_1} (v_i \triangle p_{0,i}  - p_{0,i }\triangle v_i) \omega_i \, dx  =0,$$
we get
$$W({\bf v})- W({\bf p}_0) = \int_{B_1} (v_i - p_{0,i})(f_i - \frac 12 \triangle v_i-  \frac 12 \triangle p_{0,i})\omega_i \, dx. $$
We split the integral on the right hand side into 3 angular regions: $$A_1:=\{|x_2| \le C \eps^{1/2} |x_1| \}, \quad A_2:=\{x_2 > C \eps^{1/2} |x_1| \}, \quad A_3:=\{x_2 < C \eps^{1/2} |x_1| \}.$$
In $A_3$, ${\bf v}={\bf p}_0 = 0$ and the integral is $0$. We show that the integrals in $A_1 \cap B_1$ and $A_2 \cap B_1$ have the same form as the right hand side of \eqref{cl6}.

In $A_2\cap B_1$, this follows easily from Lemma \ref{L31} which gives
$$ v_i - p_{0,i}= \eps^{1/2} d_i x_1 x_2 + O(\eps),$$
$$ f_i - \frac 12 \triangle v_i-  \frac 12 \triangle p_{0,i}=-  \eps  (e_i({\bf d}) \chi_{\{x_1>0\}} + e_i(-{\bf d}) \chi_{\{x_1<0\}}).$$
In $A_1\cap B_1$ we use that $|{\bf v}|, |{\bf p}_0| \le C \eps$, and we replace the integral in $A_1 \cap B_1$ by the integral in $T_\eps:=A_1 \cap \{ |x_1| <1\}$ since their difference is $O (\eps ^{5/2})$. Also we may replace our function by
$$w_\eps:=(v_i - p_{0,i})(f_i - \frac 12 \partial_{22} v_i-  \frac 12 \triangle p_{0,i})\omega_i $$
which differs from the original function by $O(\eps^2)$, and we integrate them in a domain of measure $\sim \eps^{1/2}$. However, the function $w_\eps$ is obtained from $w_1$ by the quadratic rescaling in the second variable $w_{\eps}(x_1,x_2)= \eps w_1(x_1, x_2/\eps^{1/2})$ which means that
$$ \int_{T_\eps} w_{\eps} dx = \eps^{3/2} \int_{T_1} w_{1} dx.$$
The claim follows since the right hand side depends (continuously) only on ${\bf d}$.

\end{proof}

\

{\it Proof of Lemma \ref{Preg}.}
We distinguish two cases as in Remark \ref{dyc} part a) depending on whether or not ${\bf b}_1/\delta \eps^{1/2}$, with ${\bf b}_1$ as in Lemma \ref{dic3}, is $\mu_0$ close to the $s \tau$-line. If ${\bf b}_1/\delta \eps^{1/2}$ is $\mu_0$ close to this line then we already showed in Remark \ref{dyc} that alternative \eqref{a1} holds. Otherwise the alternative \eqref{a2} holds since, by Lemma \ref{-3} and Lemma \ref{wpb3}
$$W({\bf u}, \rho_0) \ge W({\bf p}(\cdot, {\bf b})) - C \eps^2 \ge W({\bf p}_0) + c \eps^{3/2}.$$

\qed

\

The proof of Theorem \ref{Treg} follows from the following lemma.

\begin{lem}\label{-1}
Assume that $0 \in \Gamma_{i_0}$ and for some $\eps \le \eps_0$ small, and with $r=1$,
\begin{equation}\label{indh}
|{\bf u} - {\bf p}(\cdot, {\bf b}_0,{\bf b}_1)| \le \eps r^2\quad \mbox{in $B_r$},\mbox{for some} \quad |{\bf b}_0| \le \eps^{1/2} r, \quad |{\bf b}_1| \le \delta \eps^{1/2}.
\end{equation}
Then there exists a unit direction $\nu$ with $|\nu-e_2| \le C \eps^{1/2}$ such that
$$ \Gamma_{i_0} \subset \left \{|x\cdot \nu| \le C|x| \left( \eps^{-1/2} + |\log|x||\right)^{-1} \right \}.$$
\end{lem}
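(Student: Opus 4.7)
The plan is to prove Lemma \ref{-1} by induction on the number of membranes $N$, combined with a dyadic iteration of Proposition \ref{P5}. I would iterate Proposition \ref{P5} at scales $r_k = \rho_0^k$, producing approximating vectors $({\bf b}_0^{(k)}, {\bf b}_1^{(k)})$ and errors $\eps_k$. The iteration continues with $\eps_{k+1} = \eps_k/2$ unless one of two obstructions arises. First, if ${\bf b}_1^{(k)}/(\delta \eps_k^{1/2})$ drifts away from the line $\{s\tau\}$, the dichotomy of Remark \ref{dyc} applies: either the distance is below $\mu_0$, in which case a rotation of coordinates (as in \eqref{change}) absorbs ${\bf b}_1$ and restores the improvement, or the distance exceeds $\mu_0$, in which case Lemmas \ref{wpb3} and \ref{-3} yield a Weiss energy drop of order $\eps_k^2$ at the cost of $\eps_{k+1}\le C\eps_k$. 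Second, if the approximating free boundaries of ${\bf h}(x_2,{\bf b}_0^{(k)})$ separate at scale $\eps_k^{1/2}$, alternative (b) of Lemma \ref{dic3} decouples the system into a multi-membrane problem with strictly fewer membranes in which $\Gamma_{i_0}$ remains a free boundary, and the induction hypothesis is applied there.

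I would then apply Lemma \ref{Sequences} to $(\eps_k, w_k)$ with $w_k:=W({\bf u},r_k)-W({\bf p}_0)$. The upper bound $w_{k+1}\le C\eps_k^{3/2}$ comes from Lemma \ref{L41} applied at scale $r_k$, and the nonnegativity of $a_k := w_k+c'\eps_k^2$ is supplied by Lemma \ref{-3} combined with Lemma \ref{wpb}. The conclusion of the lemma is $\sum_{n\ge k}\eps_n \le M/k$. Letting $\nu_k$ denote the unit normal (after the cumulative rotations from the first obstruction) to the approximating free boundary line of ${\bf p}(\cdot,{\bf b}_0^{(k)},{\bf b}_1^{(k)})$ corresponding to $\Gamma_{i_0}$, one has $|\nu_{k+1}-\nu_k|\le C\eps_k$, so $\nu_k\to\nu$ with $|\nu-\nu_k|\le CM/k$, while $|\nu-e_2|\le C\eps^{1/2}$ follows from the initial bound $|{\bf b}_1|\le\delta\eps^{1/2}$.

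To conclude, for $x\in\Gamma_{i_0}$ with $|x|\sim r_k$, Remark \ref{RFB} places $x$ inside an $O(\eps_k^{1/2+\alpha/4}r_k)$ neighborhood of the approximating line $\{y\cdot\nu_k=c_k\}$ whose offset satisfies $|c_k|\le C\eps_k^{1/2}r_k$; combining this with the triangle-inequality estimate $|\nu-\nu_k|\le C\min(M/k,\eps^{1/2})$ gives
$$|x\cdot\nu| \le |x\cdot\nu_k| + |x|\,|\nu-\nu_k| \le C|x|(\eps^{-1/2}+|\log|x||)^{-1},$$
as claimed. The principal obstacle to executing this plan rigorously is the bookkeeping around the second obstruction: after a decoupling at some intermediate scale $r_{k_*}$, one must verify that the direction estimates from the original iteration at scales $k<k_*$ patch compatibly with those produced by the inductive hypothesis on fewer membranes at scales $k\ge k_*$, so that a single limit direction $\nu$ satisfies the estimate uniformly across all scales.
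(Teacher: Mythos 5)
Your plan follows the paper's route in its essential elements: induction on $N$, dyadic iteration of Proposition~\ref{P5}, the three-way dichotomy (a1/a2 of Lemma~\ref{Preg} plus the decoupling alternative~(b) of Lemma~\ref{dic3}), the Weiss-energy bookkeeping with $w_k := W({\bf u},\rho_0^k)-W({\bf p}_0)$ controlled from above by Lemma~\ref{L41} and from below via Lemmas~\ref{-3}, \ref{wpb}--\ref{wpb3}, and the final patching across the decoupling scale using the induction hypothesis. The closing concern you raise about matching directions across the decoupling scale is indeed the point the paper handles at the end of Case~2, and your answer (the gluing is controlled because $\eps_{m_1}^{1/2}$ absorbs into the same modulus) is the right one.

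There is, however, a genuine gap in how you pass from the iteration to the final inclusion. You cite the \emph{statement} of Lemma~\ref{Sequences}, which gives only $\sum_{n\ge k}\eps_n \le M/k$. This loses the $\eps$-dependence entirely: it tells you nothing about the total accumulated rotation at scale $1$, so it does not yield $|\nu - e_2|\le C\eps^{1/2}$, nor does it produce the $\eps^{-1/2}$ offset in the modulus $\left(\eps^{-1/2}+|\log|x||\right)^{-1}$. Your attempted fix --- claiming $|\nu - e_2|\le C\eps^{1/2}$ ``from the initial bound $|{\bf b}_1|\le\delta\eps^{1/2}$'' and then writing $|\nu-\nu_k|\le C\min(M/k,\eps^{1/2})$ --- does not go through: the direction $\nu$ is the limit of the $\nu_k$, and the cumulative rotation from scale~$1$ to scale~$0$ is bounded only by $\sum_{m\ge 0}\eps_m \le M$, which is a universal constant, not $O(\eps^{1/2})$. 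The bound $|{\bf b}_1^{(k)}|\le\delta\eps_k^{1/2}$ controls the \emph{residual} tilt after the $k$-th rotation, not the total rotation accumulated. What is actually needed is the \emph{refined} inequality $\eps_m \le C\big(\eps^{-1/2}+m\big)^{-2}$, and the paper obtains it not from the stated conclusion of Lemma~\ref{Sequences} but by re-running its proof with the seed $a_0 \sim \eps^{3/2}$ (supplied by Lemma~\ref{L41}), which propagates to $a_m \le \big(a_0^{-1/3}+c\,m\big)^{-3}$. This is the one substantive piece missing from your argument; once inserted, the rest of your plan is sound.

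A lesser remark: the paper splits into Case~1 ($|{\bf b}_0|\ge 3C_0\eps$) and Case~2 ($|{\bf b}_0|\le 3C_0\eps$) before iterating, which lets Case~2 normalize ${\bf b}_0=0$ and lean on Lemma~\ref{dic3} directly. You fold both into a single iteration of Proposition~\ref{P5}, tracking ${\bf b}_0$ throughout; this is equivalent but makes the stopping criterion (when ${\bf b}_0^{(m)}$ exceeds $\eps^{1/2}\rho_0^m$) and the verification that $|\bar{\bf b}_0 - {\bf b}_0|\le 2C_0\eps$ less transparent. It is worth spelling out that invariant explicitly, as it is what justifies dropping into alternative~(b) at the right scale.
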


\begin{proof}
We prove the statement by induction depending on the number $N$ of membranes.

We iterate Proposition \ref{P5} in $B_{\rho_0^m}$ as long as the hypotheses are satisfied. We want to show that
$$\Gamma_{i_0} \cap B_{\rho_0^m} \subset  \left \{|x\cdot \nu| \le C \rho_0^m (\eps^{-1/2} +m)^{-1}\right \}.$$
We distinguish several cases.

{\it Case 1:} $|{\bf b}_0| \ge 3 C_0 \eps$.

We apply Proposition \ref{P5} by keeping $\eps$ fixed through the iteration (by replacing $\eps/2$ by $\eps$ in \eqref{6002}). Denote by 
${\bf b}_0^m$, ${\bf b}_1^m$ the corresponding vectors in $B_{\rho_0^m}$, and we stop the iteration when ${\bf b}^m_0 > \eps^{1/2} \rho_0^m$. By \eqref{6003}, throughout the iteration $|{\bf b}_0-{\bf b}^m_0| \le 2C _0 \eps$ (provided that $\rho_0$ is chosen small) hence the iteration stops when 
$r_m=\rho^m_0 \sim |{\bf b}_0| \eps^{-1/2} \ge \eps^{1/2}.$ Then we end up in the situation of alternative b) in Lemma \ref{dic3} with $r=r_m$. We may apply the induction hypothesis in $B_r$ (with $\eps$ replaced by $2 \eps$) to the problem involving fewer membranes, and reach the desired result.

\

{\it Case 2:} $|{\bf b}_0| \le 3 C_0 \eps$.

We may replace ${\bf b}_0$ by $0$ and $\eps$ into $C \eps$. After relabeling $\eps$ we reduce to the situation ${\bf u} \in \mathcal S(1,{\bf p}, \eps)$ of Lemma \ref{dic3}.

We iterate Lemmas \ref{dic3} and \ref{Preg} accordingly in $B_{\rho_0^m}$. 

We discuss the estimates as long as we remain in alternative a). By Lemma \ref{Preg}, we obtain that ${\bf u} \in \mathcal S(\rho_0^m,{\bf p}, \eps_m)$ for a sequence $\eps_m$, and the approximating solutions 
${\bf v}_m:= {\bf p}(\cdot, {\bf b}_1^m)$ satisfy
$\|{\bf v}_m - {\bf v}_{m+1}\|_{L^\infty(B_1)} \le C \eps_m$.

Moreover, up to the last value of $m$, $m=m_0$ (possibly infinite) for which alternative a2) applies, we know that $$w_m:= W({\bf u}, \rho_0^m) - W({\bf p}_0) \ge c \eps_m^{3/2}, \quad \forall m\le m_0,$$
hence since, by Lemma \ref{L41}, $w_m \le C \eps_m^{3/2}$ we find that for some $c_1$, $c_1'$ small
$$ a_{m+1} \le a_{m} - c_1 \eps_{m}^2 \le a_{m}-c_1 ' a_{m}^{4/3}, \quad \quad a_m:=w_m +2 c_1 \eps_m^2 \ge 0.$$
 This implies that $a_{m+1}^{-1/3} \ge a_m^{-1/3} + c$, hence 
 $$a_m \le (a_0^{-1/3} + c(m-k))^{-3}, \quad \quad m \le m_0.$$ Using that $a_0 \sim \eps^{3/2}$, $a_m \sim \eps_m^{3/2}$
 we find
 $$ \eps_m \le C (\eps^{-1/2} +  m)^{-2}.$$
  This inequality remains valid if we replace $m_0$ by $m_1 \ge m_0$ with $m_1$ denoting the first value of $m$ (possibly infinite) for which alternative b) holds, since by a1), the values of $\eps_m$ decay geometrically when $m$ goes from $m_0$ to $m_1$. We find
   $$\sum_k^{m_1} \eps_m \le C (\eps^{-1/2} +  k)^{-1}.$$
   This implies that 
  $$\|{\bf v}_m - {\bf v}_{k}\|_{L^\infty(B_1)} \le C (\eps^{-1/2} +  k)^{-1}, \quad \mbox{if $k \le m \le m_1.$}$$
  Then the angle between the rotation directions $\nu_m$ and $\nu_k$ of ${\bf v}_m$, ${\bf v}_{k} $ satisfy the same inequality, and we can use the inequality $\eps_{k}^{1/2} \le C (\eps^{-1/2} +  k)^{-1}$ to deduce that
  $$ \Gamma_{i_0} \cap B_r  \subset \left \{|x\cdot \nu_{m_1}| \le Cr (\eps^{-1/2} +  k)^{-1} \right \} \quad \mbox{if $r \ge \rho_0^k$, $k \le m_1$}.$$
   
 By Lemma \ref{dic3}, part b), the inclusion holds also when $\rho_0^{m_1} \ge r \ge \eps_{m_1}^{1/2} \rho_0^{m_1}$  with $k$ replaced by $m_1$. In the ball of radius $\eps_{m_1}^{1/2} \rho_0^{m_1}$ we can apply the induction hypothesis to obtain that
 $$ \Gamma_{i_0} \cap B_r  \subset \left \{|x\cdot \bar \nu| \le Cr (\eps_{m_1}^{-1/2} +  m-m_1)^{-1} \right \} \quad \mbox{if $r = \rho_0^m \le \eps_{m_1}^{1/2} \rho_0^{m_1}$},$$
 for some direction $\bar \nu$ with $|\bar \nu - \nu_{m_1}| \le C \eps_{m_1}^{1/2}.$ We obtain the desired conclusion with unit direction given by $\bar \nu$ since $\eps_{m_1}^{1/2} \le C (\eps^{-1/2} +  m_1)^{-1}$.

 \end{proof}

\end{document}